\newcommand\Z{{\mathbb Z}}
\newcommand\Q{{\mathbb Q}}
\newcommand\Qbar{{\overline{\mathbb Q}}}
\newcommand\C{{\mathbb C}}
\renewcommand\P{{\mathbb P}}
\newcommand\D{{\mathbb D}}
\renewcommand\H{{\mathbb H}}
\newcommand\Le{{\mathbb L}}
\newcommand\V{{\mathbb V}}
\newcommand\bE{{\mathbb E}}
\newcommand\J{{\mathbb J}}
\newcommand\A{{\mathbb A}}
\newcommand\E{{\mathcal E}}
\newcommand\M{{\mathcal M}}
\newcommand\cO{{\mathcal O}}
\newcommand\cG{{\mathcal G}}
\newcommand\cH{{\mathcal H}}
\newcommand\K{{\mathcal K}}
\newcommand\cP{{\mathcal P}}
\newcommand\cV{{\mathcal V}}
\newcommand{\cJ}{\mathcal{J}}
\newcommand\cA{{\mathcal A}}
\DeclareMathAlphabet\mathbfcal{OMS}{cmsy}{b}{n}
\newcommand\g{{\mathfrak g}}
\newcommand\h{{\mathfrak h}}
\renewcommand\k{{\mathfrak k}}
\newcommand\p{{\mathfrak p}}
\renewcommand\r{{\mathfrak r}}
\newcommand\m{{\mathfrak m}}
\newcommand\e{{\epsilon}}
\newcommand\w{{\omega}}
\newcommand\G{{\Gamma}}
\newcommand\zetabar{{\overline{\zeta}}}
\newcommand\vv{{\vec{\mathsf v}}}
\newcommand\ww{{\vec{\mathsf w}}}
\newcommand\uu{{\vec{\mathsf u}}}
\newcommand\bP{{\boldsymbol{\cP}}}
\newcommand\Pol{{\mathbf{Pol}}}
\newcommand\ee{{\mathbf e}}
\newcommand\bt{{\mathbf t}}
\newcommand\bv{\mathbf{v}}
\newcommand\bx{\mathbf{x}}
\newcommand\by{\mathbf{y}}
\newcommand\bX{\mathbf{X}}
\newcommand\bY{\mathbf{Y}}
\newcommand\bL{\mathbf{L}}
\renewcommand\v{\mathbf{v}}
\newcommand\bmu{{\boldsymbol{\mu}}}
\newcommand\bsigma{{\boldsymbol{\sigma}}}
\newcommand\bsigmabar{{\overline{\boldsymbol{\sigma}}}}
\renewcommand\Vec{{\mathsf{Vec}}}
\newcommand\Rep{{\mathsf{Rep}}}
\newcommand\MHS{{\mathsf{MHS}}}
\newcommand\MTM{{\mathsf{MTM}}}
\renewcommand\sl{\mathfrak{sl}}
\newcommand\SL{{\mathrm{SL}}}
\newcommand\Gm{{\mathbb{G}_m}}
\newcommand\betti{{\mathrm{B}}}
\newcommand\DR{{\mathrm{DR}}}
\newcommand\op{\mathrm{op}}
\renewcommand\ss{\mathrm{ss}}
\newcommand\elp{\mathrm{ell}}
\newcommand\cyc{\mathrm{cyc}}
\newcommand\fte{\mathrm{fte}}
\newcommand\mot{\mathrm{mot}}
\newcommand\dch{\mathrm{dch}}
\newcommand\bdot{{\bullet}}
\newcommand\bs{\backslash}
\newcommand\bbs{{\bs\negthickspace \bs}}
\renewcommand\ll{\langle\langle}
\newcommand\rr{\rangle\rangle}
\newcommand\KZB{\mathrm{KZB}}
\newcommand\KZ{\mathrm{KZ}}
\renewcommand\Pol{{\mathbf{Pol}}}
\newcommand\Gal{{\mathrm{Gal}}}
\newcommand{\piun}{\pi_1^{\mathrm{un}}}
\newcommand\id{\operatorname{id}}
\newcommand\ad{\operatorname{ad}}
\newcommand\Ad{\operatorname{Ad}}
\newcommand\Span{\operatorname{span}}
\newcommand\Spec{\operatorname{Spec}}
\newcommand\Hom{\operatorname{Hom}}
\newcommand\Ext{\operatorname{Ext}}
\newcommand\End{\operatorname{End}}
\newcommand\Aut{\operatorname{Aut}}
\newcommand\Der{\operatorname{Der}}
\newcommand\SDer{\operatorname{SDer}}
\newcommand\SAut{\operatorname{SAut}}
\newcommand\Gr{\operatorname{Gr}}
\newcommand\Isom{\operatorname{Isom}}
\newcommand\Res{\operatorname{Res}}
\newcommand\Sym{\operatorname{Sym}}
\newcommand\Lie{\operatorname{Lie}}
\newcommand\Bl{\operatorname{Bl}}
\newcommand\Li{\mathrm{Li}}
\newcommand\topo{\mathrm{top}}
\newcommand\Ob{\operatorname{Ob}}
\newcommand\per{\operatorname{per}}
\renewcommand\Im{\operatorname{Im}}
\numberwithin{equation}{section}
\newtheorem{theorem}{Theorem}[section]
\newtheorem{lemma}[theorem]{Lemma}
\newtheorem{prop}[theorem]{Proposition}
\newtheorem{cor}[theorem]{Corollary}
\newtheorem{bigtheorem}{Theorem}
\theoremstyle{definition}
\newtheorem{definition}[theorem]{Definition}
\newtheorem{example}[theorem]{Example}
\theoremstyle{remark}
\newtheorem{remark}[theorem]{Remark}
\begin{document}

\tikzset{->-/.style={decoration={
  markings,
  mark=at position #1 with {\arrow{>}}},postaction={decorate}}}
\tikzset{-<-/.style={decoration={
  markings,
  mark=at position #1 with {\arrow{<}}},postaction={decorate}}}

\title[Polylogarithms and Extensions of $\Q$ by $\Q(m)$]{Polylogarithm variations and motivic extensions of $\Q$ by ${\Q(m)}$}
\author{Eric Hopper}
\email{eric@hoppermath.com}
\keywords{Mixed Tate motive, Hodge theory, polylogarithm, tannakian category}
\subjclass[2020]{Primary 14F42; Secondary 11G55 32G34 14D07 18M25}

\begin{abstract}
Deligne and Goncharov constructed a neutral tannakian category of mixed Tate motives unramified over $\Z[\bmu_N,1/N]$. Brown and Hain--Matsumoto computed the depth 2 quadratic relations of the motivic Galois group of this category for $N = 1$. We take the first steps in generalizing their results to all $N \ge 1$ by realizing the generators of the motivic Galois group by derivations on the Lie algebra of the unipotent fundamental group of a restriction of the Tate elliptic curve. 

This representation is compatible with a natural identification of the odd rational $K$-groups of the rings $\Z[\bmu_N,1/N]$ with spaces of $\G_1(N)$ Eisenstein series, thus inducing a natural action of the prime to $N$ part of the Hecke algebra on the $K$-groups.

We establish these results by first showing the inclusion of $\P^1 - \{0,\bmu_N,\infty\}$ into the nodal elliptic curve with a cyclic subgroup of order $N$ removed induces a morphism of mixed Tate motives on unipotent fundamental groups and then by computing the periods of the limit mixed Hodge structure of an elliptic polylogarithm variation of MHS over the universal elliptic curve of $Y_1(N)$. 
\end{abstract}

\maketitle

\setcounter{tocdepth}{1}

\section{Introduction}

Let $N$ be a positive integer. Denote the $N$th roots of unity by $\bmu_N$,\label{not:bmu} and let $\MTM_N$ be the category of mixed Tate motives unramified over the ring $\Z[\bmu_N,1/N]$. In this paper, we study the action of the motivic Galois group $\pi_1(\MTM_N)$ on the unipotent fundamental group of the restriction of the Tate elliptic curve to the tangent vector $\partial/\partial q$ of the $q$-disk with $\bmu_N$ removed. In particular, we realize generators of $\pi_1(\MTM_N)$ by derivations on the Lie algebra of the unipotent fundamental group of this first order Tate curve. This representation is a generalization of the $N = 1$ case computed by Hain and Matsumoto \cite{hain:MEM}. Their work, together with results of Pollack \cite{pollack} and Brown \cite{brown:MMV}, provides an explanation of the Ihara--Takao \cite{ihara} relations in the depth 2 graded quotient of $\pi_1(\MTM_1)$. Goncharov \cite{gonch:sym} has shown non-trivial quadratic relations in the depth 2 graded quotient also exist when $N \ge 5$ is prime. Our result is the first step in computing these relations for all $N \ge 1$. 


\subsection{Mixed Tate motives}

Let $k$ be a number field, $S$ a set of primes, and $\cO_{k,S}$ the ring of $S$-integers in $k$. Using the work of Borel \cite{borel}, Beilinson \cite{beilinson}, and Levine \cite{levine}, Deligne and Goncharov \cite{DG} proved the existence of a $\Q$-tannakian category $\MTM(\cO_{k,S})$ of {\em mixed Tate motives} over $\cO_{k,S}$. One may view these motives as those arising from cohomology groups of genus 0 curves and their moduli spaces \cite{brown:mzv}.

The category $\MTM(\cO_{k,S})$ is equivalent to the category of finite dimensional representations of an affine group scheme $\cG^\DR_{k,S}$ over $\Q$, which is an extension of the multiplicative group $\Gm$\label{not:Gm} by a free prounipotent group $\K^\DR_{k,S}$:
$$
\xymatrix{
1 \ar[r] & \K^\DR_{k,S} \ar[r] & \cG^\DR_{k,S} \ar[r] & \Gm \ar[r] & 1.
}
$$
This extension splits canonically, which implies the Lie algebra of the kernel $\K^\DR_{k,S}$ is canonically isomorphic to the completion of a graded free Lie algebra $\k_{k,S}$. Thus, the category $\MTM(\cO_{k,S})$ is also equivalent to the category of finite dimensional graded representations of $\k_{k,S}$.

In the case $\cO_N := \Z[\bmu_N,1/N]$, Deligne and Goncharov proved the coordinate ring of the unipotent path torsor of $\P^1 - \{0,\bmu_N,\infty\}$ with suitable tangential base points is an ind-object of $\MTM(\cO_N)$. The periods of this object are $\Q$-linear combinations of powers of $2\pi i$ and the {\em multiple polylogarithms}\label{not:Li}
\begin{equation}
    \label{eqn:Lisum}
    \Li_{n_1,\ldots,n_m}(z_1,\ldots,z_m) := \sum_{0<k_1<k_2<\cdots<k_m} \frac{z_1^{k_1}z_2^{k_2}\cdots z_m^{k_m}}{k_1^{n_1}k_2^{n_2}\cdots k_m^{n_m}}
\end{equation}
evaluated at $N$th roots of unity. These values are referred to as {\em $N$-cyclotomic multiple zeta values (MZVs)}. They were first studied in the context of mixed Tate motives by Goncharov in \cite{gonch:mzv}. The integer $m$ is called the {\em depth} of an MZV. The $\Q$-vector space generated by the cyclotomic MZVs is filtered, but not graded, by depth.\footnote{The classical MZV $\zeta(n_1,\dots,n_m)$ is equal to $\Li_{n_1,\ldots,n_m}(1,\dots,1)$, and for example, $\zeta(4) = 4\zeta(1,3)=\zeta(1,1,2)=\frac43 \zeta(2,2)$.} We will prove the following (see Theorem \ref{thm:obj}).
\begin{bigtheorem}
\label{bthm:obj}
Let $E_{\partial/\partial q}$ denote the restriction of the Tate elliptic curve over the tangent vector $\partial/\partial q$. The Lie algebra of the unipotent fundamental group of $E_{\partial/\partial q} - \bmu_N$ with a suitable choice of tangential base point is a pro-object of $\MTM(\cO_N)$. Its periods are $\Q(2\pi i)$-linear combinations of $N$-cyclotomic MZVs. 
\end{bigtheorem}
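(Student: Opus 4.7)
The plan is to bootstrap from the known mixed Tate motive structure on $\piun(X, \vv)$, where $X := \P^1 - \{0, \bmu_N, \infty\}$: by Deligne--Goncharov, this is a pro-object of $\MTM(\cO_N)$ whose periods are $\Q(2\pi i)$-linear combinations of $N$-cyclotomic MZVs. The geometric bridge is that the nodal Tate curve $E_0$ is obtained from $\P^1$ by identifying $0$ and $\infty$, and $\partial/\partial q$ corresponds to a compatible pair of tangent vectors at those points. Thus $E_{\partial/\partial q} - \bmu_N$ is a nodal affine curve whose normalization is $X$, and the inclusion $X \hookrightarrow E_{\partial/\partial q} - \bmu_N$ is precisely the morphism that the abstract asserts induces a map of mixed Tate motives on unipotent fundamental groups.

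First I would pin down the topology and choose a tangential base point $\vv$ on $E_{\partial/\partial q} - \bmu_N$ compatible with one on $X$. By van Kampen, $\piun(E_{\partial/\partial q} - \bmu_N, \vv)$ is generated by the image of $\piun(X, \vv)$ together with a single extra generator coming from a ``gluing path'' in $X$ between the tangent vectors at $0$ and $\infty$, which becomes a loop through the node after identification. In the unipotent setting this exhibits the Lie algebra as a free prounipotent completion on $N + 1$ generators, organized as a semidirect product over the image of $\piun(X, \vv)$.

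Second, I would promote this description to $\MTM(\cO_N)$. The image piece is motivic by Deligne--Goncharov, and the inclusion is motivic by the geometric result stated in the abstract. The additional ``node'' generator is Tate, corresponding in the abelianization to the vanishing cycle class at the $q = 0$ degeneration (a $\Q(0)$ factor of the limit mixed Hodge structure on $H^1$), and is represented concretely by the straight tangential path from $0$ to $\infty$ in $X$. Assembling these pieces via the semidirect product yields the Lie algebra as a pro-object of $\MTM(\cO_N)$, compatibly in the Betti, de Rham, and $\ell$-adic realizations.

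For the periods, the iterated integrals computing the de Rham--Betti comparison on $E_{\partial/\partial q} - \bmu_N$ use the differentials $dt/t$ and $dt/(t - \zeta)$ for $\zeta \in \bmu_N$; after regularization at $\vv$, they pull back under normalization to classical tangential iterated integrals on $X$. By the Deligne--Goncharov dictionary these produce $N$-cyclotomic multiple polylogarithm values, and integration along the node loop contributes the factor $2\pi i$, giving exactly $\Q(2\pi i)$-linear combinations of $N$-cyclotomic MZVs. The main obstacle I anticipate is step two: genuinely promoting the van Kampen gluing to a construction in $\MTM(\cO_N)$ rather than merely in pro-algebraic groups. This requires a careful analysis of the tangential base point at the node, the limit mixed Hodge structure at $q = 0$, and compatible Betti/de Rham/$\ell$-adic realizations of the extra generator.
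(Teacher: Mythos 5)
The geometric picture you describe is correct in outline---the nodal curve $E_{\partial/\partial q}$ is obtained by identifying $0$ and $\infty$ in $\P^1$, and the extra generator (the paper's $\bX$) is a $\Q(0)$ class arising from the vanishing cycle---but two of your proposed steps do not survive scrutiny.

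First, the ``semidirect product over the image of $\piun(X,\vv)$'' is not the right algebraic structure. The Hain map $\Psi_N$ sends $\ee_0 \mapsto \frac{\bX}{e^{\bX}-1}\cdot\bY$, $\ee_\infty \mapsto \frac{\bX}{e^{-\bX}-1}\cdot\bY$, $\ee_\zeta \mapsto \bt_\zeta$; its image is a free rank-$(N{+}1)$ subalgebra of the free rank-$(N{+}1)$ Lie algebra $\p_N = \bL(\bX,\bY,\bt_\zeta)^\wedge/([\bX,\bY]=\sum\bt_\zeta)$, but it is a \emph{proper} subalgebra and is not an ideal (e.g.\ $[\bX,\bt_\zeta] \notin \operatorname{im}\Psi_N$). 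So $\p(E_\uu',\vv_1)$ does not decompose as $\operatorname{im}(\Psi_N) \rtimes \bL(\bX)^\wedge$, and a van Kampen assembly along these lines would not directly produce the object. The paper instead works tannakianly: it exhibits the Hain map as a morphism in Brown's category $\cH$ of Hodge realizations (Proposition~\ref{prop:hain}), transports the known $\cG^\betti_N$-action from $\p(U_N,\vv_1)$ to $\operatorname{im}(\Psi_N)$, and then extends it to all of $\p(E_\uu',\vv_1)$ by verifying that the action is compatible with the explicit relations---the ``star relation'' coming from $\alpha\beta\alpha^{-1}\beta^{-1}=\prod\gamma_\zeta$ and the ``cylinder relation'' $e^\bX e^{\Le\ee_0}e^{-\bX}e^{\Le\ee_\infty}=1$. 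The cylinder relation is where the $\Q(0)$ observation actually gets used: $\cG^\betti_N$ acts trivially on $\bX$, so the relation is preserved.

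Second, and more seriously, the step ``the inclusion is motivic by the geometric result stated in the abstract'' is circular: that claim \emph{is} Theorem~\ref{thm:obj}, the very thing being proven. The paper's logical order is the reverse of what you propose. It first shows the Hain map is a morphism of MHS (hence of objects of $\cH$), uses this to build a $\cG^\betti_N$-action on the generalized Hodge realization $(\p(E_\uu',\vv_1), A_{E_\uu'}, \Theta)$, concludes from this that $\p(E_\uu',\vv_1)$ is a pro-object of $\MTM_N$, and only \emph{then} deduces that the Hain map is motivic. You correctly flag this as ``the main obstacle,'' but the proposed workaround (appeal to the abstract) is not available. The period computation you sketch is fine and essentially matches the paper's, since the inverse monodromy $\Theta$ is written explicitly in terms of the motivic Drinfeld associators $\Phi^\m_{0\eta}$ and exponentials $e^{k\Le\ee_r/N}$, so the periods are visibly $\Q(2\pi i)$-linear combinations of $N$-cyclotomic MZVs.
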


The proof is tannakian in nature. We then show the inclusion $\Gm - \bmu_N \hookrightarrow E_{\partial/\partial q} - \bmu_N$ induces an an action of the motivic Galois group of $\MTM(\cO_N)$ on the Lie algebra of the unipotent fundamental group of $E_{\partial/\partial q} - \bmu_N$ and that this action is canonical. 



\subsection{The depth filtration on {{$\k_N$}}} 

A question of interest is whether the $\Q$-algebra generated by $2\pi i$ and the $N$-cyclotomic MZVs includes all periods of all objects of $\MTM(\cO_N)$. Deligne \cite{deligne:23468} showed this to be the case when $N = 2$, 3, 4, 6, or 8. Other values of $N$, including the $N = 1$ case proven by Brown \cite{brown:MTM}, are more difficult due to the existence of relations in the associated ``depth graded'' of the motivic Lie algebra $\k_N$. Our work is a first step in understanding such relations in the depth 2 graded quotient for general $N$. 

We now define the {\em depth filtration} $D^\bullet$ on the Lie algebra $\k_N$. Let $\bL(\ee_0,\ee_\zeta \mid \zeta \in \bmu_N)^\wedge$ be the completed free Lie algebra on $\{\ee_0,\ee_\zeta \mid \zeta \in \bmu_N\}$. The Knizhnik--Zamolodchikov equations induce a canonical isomorphism 
$$
\Lie \piun(\P^1 - \{0,\bmu_N,\infty\},\vv_1)^\DR \cong \bL(\ee_0,\ee_\zeta \mid \zeta \in \bmu_N)^\wedge. 
$$
Define a decreasing filtration on $\Lie \piun(\P^1 - \{0,\bmu_N,\infty\},\vv_1)^\DR$ corresponding to the degree in the generators $\ee_\zeta$. The depth filtration $D^\bullet$ on $\k_N$ is the pullback of this filtration under the motivic Galois representation 
$$
\k_N \to \Der \Lie \piun(\P^1 - \{0,\bmu_N,\infty\},\vv_1)^\DR.
$$
The filtration $D^\bullet$ of $\k_N$ is dual to an increasing filtration of the periods of $\MTM(\cO_N)$ compatible with the depth filtration of the $\Q$-algebra generated by the $N$-cyclotomic MZVs.

As mentioned earlier, although the Lie algebra of $\k_N$ is free, its associated depth graded Lie algebra $\Gr_D^\bullet \k_N$ is not free in general. When $N = 1$, there are well-known relations \cite{ihara, pollack, brown:MMV,hain:MEM} in $\Gr_D^2 \k_1$ associated to cusp forms of $\SL_2(\Z)$. Dual relations between MZVs were established in \cite{GKZ}. While Deligne \cite{deligne:23468} has shown $\Gr_D^\bullet \k_N$ is free when $N = 2$, 3, 4, 6, and 8, these values are expected to be exceptional. Goncharov \cite{gonch:sym} has proven the existence of relations in $\Gr^2_D \k_N$ when $N$ is prime and at least 5. 


In Section 11, we realize for each $N \ge 1$ the generators of $\Gr_D^1 \k_N$ by derivations on the Lie algebra of the unipotent fundamental group of $E_{\partial/\partial q} - \bmu_N$. More precisely, we compute the motivic Galois representation 
$$
\phi_\elp : \Gr_D^1\k_N \longrightarrow \Der \left[\Lie \piun(E_{\partial/\partial q} - \bmu_N,\vv_1)^\DR/D^2\right].
$$
The Lie algebra $\Lie \piun(E_{\partial/\partial q} - \bmu_N,\vv_1)^\DR$ is canonically isomorphic to the completed Lie algebra
$$
\bL(\bX,\bY,\bt_\zeta \mid \zeta \in \bmu_N)^\wedge/\Big([\bX,\bY] = \sum_{\zeta\in\bmu_N} \bt_\zeta\Big),
$$ 
and its depth filtration is given by the degree in the letters $\bt_\zeta$:
$$
D^m = \{\text{Lie words with degree in $\bt_\zeta \ge m$}\}. 
$$
Define the derivations
$$
\e^\op_{m+1,\zeta} \equiv \ad (\bY^{m-1} \cdot (\bt_\zeta + (-1)^{m+1}\bt_\zetabar)) \bmod D^2
$$
(full formula in \cite[\S7]{hopper}). 
The following is a weak version of Theorem \ref{thm:head}. 

\begin{bigtheorem}
\label{bthm:head}
The action of $\k_N$ on $\Lie \piun(E_{\partial/\partial q} - \bmu_N,\vv_1)/D^2$ factors through the abelianization map $\k_N \to \Gr_D^1\k_N$. The quotient $\Gr_D^1\k_N$ has canonical generators $\bsigma_{m,\zeta}$ indexed by integers $m \ge 2$ and primitive $N$th roots of unity $\zeta \in \bmu_N$. Complex conjugation induces the relations $\bsigma_{m,\zeta} = (-1)^{m+1}\bsigma_{m,\zetabar}$. The action of $\bsigma_{m,\zeta} \in \Gr_D^1\k_N$ on $\Lie \piun(E_{\partial/\partial q} - \bmu_N,\vv_1)/D^2$ is given by
$$
\phi_\elp(\bsigma_{m,\zeta}) \equiv \e^\op_{m+1,\zeta} + \sum_{\substack{\eta \in \bmu_N \\ \text{not primitive}}} c_\eta \e^\op_{m+1,\eta} \bmod D^2
$$
where $c_\eta \in \Q$. 
\end{bigtheorem}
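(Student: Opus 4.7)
The plan is to combine the tannakian structure from Theorem \ref{bthm:obj} with an explicit period computation for the limit mixed Hodge structure of an elliptic polylogarithm variation on the universal elliptic curve of $Y_1(N)$, degenerated at the cusp $\partial/\partial q$. Because the depth filtration on $\k_N$ was defined via the action on $\Lie \piun(\P^1 - \{0,\bmu_N,\infty\},\vv_1)^\DR$, the first step is to verify that the motivic inclusion $\Gm - \bmu_N \hookrightarrow E_{\partial/\partial q} - \bmu_N$ sends each de Rham generator $\ee_\zeta$ to the depth-$1$ generator $\bt_\zeta$ of $\Lie\piun(E_{\partial/\partial q} - \bmu_N,\vv_1)^\DR$. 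This makes the induced action of $\k_N$ compatible with both depth filtrations, so that $D^2\k_N$ raises depth by at least $2$ and acts trivially modulo $D^2$. Since all generators of $\k_N$ have depth at least $1$, commutators $[x,y]$ with $x,y \in \k_N$ raise depth by at least $2$ as well, so $[\k_N,\k_N]$ also acts trivially and the action factors through the abelianization $\k_N \twoheadrightarrow \Gr_D^1 \k_N$.

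Next, I would identify the canonical generators. The weight-$m$ part of $\Gr_D^1 \k_N$ is dual to $\Ext^1_{\MTM(\cO_N)}(\Q,\Q(m)) \cong K_{2m-1}(\cO_N) \otimes \Q$ by Borel and Deligne--Goncharov. Under the action of $\Gal(\Q(\bmu_N)/\Q)$ this $K$-group decomposes into eigenspaces indexed by primitive $N$th roots of unity modulo complex conjugation, yielding the generators $\bsigma_{m,\zeta}$. The sign relation $\bsigma_{m,\zeta} = (-1)^{m+1}\bsigma_{m,\zetabar}$ then follows from the Betti action of complex conjugation on these cyclotomic Ext classes, combined with the fact that the real Frobenius $F_\infty$ acts on $\Q(m)$ by $(-1)^m$.

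The main computation is to determine $\phi_\elp(\bsigma_{m,\zeta})$ modulo $D^2$. I would build the elliptic polylogarithm variation of MHS on the universal elliptic curve of $Y_1(N)$ with its fiberwise $N$-torsion removed, take its limit at the cusp $\partial/\partial q$ to obtain a pro-MHS on $\Lie\piun(E_{\partial/\partial q} - \bmu_N, \vv_1)$, and extract its period matrix modulo $D^2$. By Theorem \ref{bthm:obj} these periods are $\Q(2\pi i)$-linear combinations of cyclotomic MZVs, and the depth-$1$ entries land in polylogarithm values at $N$th roots of unity. Tannakian duality then pairs these periods with the Ext classes defining $\bsigma_{m,\zeta}$, and matching the result against the period calculus of the candidate derivations $\e^\op_{m+1,\zeta}$ from \cite[\S7]{hopper} identifies $\phi_\elp(\bsigma_{m,\zeta})$ with the claimed sum. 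The correction coefficients $c_\eta$ on non-primitive roots arise because the Kronecker--Eisenstein series at a primitive torsion point has a $q$-expansion whose decomposition over characters of $(\Z/N\Z)^\times$ mixes primitive and non-primitive contributions in each weight.

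The technical heart of the argument is this final period computation: one must isolate the depth-$1$ component of the limit MHS from the $q$-expansion of the Kronecker--Eisenstein generating series while carefully controlling the normalization at the tangential base point $\vv_1$. Once that is in place, matching to the derivations $\e^\op_{m+1,\zeta}$ is a formal consequence of the tannakian dictionary together with the motivic compatibility of the polylogarithm variation established in Theorem \ref{bthm:obj}.
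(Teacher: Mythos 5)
Your outline reproduces the paper's high-level architecture (the depth argument for factoring through $\k_N\to\Gr_D^1\k_N$, the use of Theorem \ref{bthm:obj}, and a tannakian pairing of limit periods with Ext classes), but two load-bearing steps are missing or misdirected. First, the generators: a ``Galois eigenspace decomposition'' of $K_{2m-1}(\cO_N)\otimes\Q$ does not by itself produce the canonical $\bsigma_{m,\zeta}$, because the stated formula requires $\bsigma_{m,\zeta}$ to be normalized against the specific extension $E_{m,\zeta}$ whose period is $\Li_m(\zeta)$. For that one needs that the classes $E_{m,\zeta}$ with $\zeta\in\bmu_N$ primitive and $\Im\zeta>0$ form a basis of $\Ext^1_{\MTM_N}(\Q,\Q(m))$ (Proposition \ref{prop:extbasis}), which rests on Goncharov's theorem that the values $\Li_m(\zeta)$ span $K_{2m-1}(\cO_N)\otimes\Q$ together with the distribution relations used to rewrite non-primitive classes in the primitive basis. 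Without this input you get neither the coefficient $1$ on $\e^\op_{m+1,\zeta}$ nor any handle on the $c_\eta$. Relatedly, your proposed mechanism for the $c_\eta$ (a character decomposition of the Kronecker--Eisenstein $q$-expansion) is not the operative one: the $c_\eta$ are fixed by linear algebra in $\Ext^1$ alone, namely by the fact that $\bsigma_{m,\zeta}$ acts nontrivially on $E_{m,\eta}$ for $\eta$ non-primitive exactly through the distribution-relation expansion of $E_{m,\eta}$ in the primitive basis (cf.\ Example \ref{ex:E1}); no Eisenstein series enter at this point.

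Second, the central computation is only described, and the route you propose (reading the limit period matrix directly off the KZB/Kronecker--Eisenstein side) is not what makes the problem tractable. The paper computes the limit MHS of the \emph{cyclotomic} polylog quotient $\p(U_N,\vv_1)/D^2$ --- regularized transport along $\dch$ from $\ww_1$, producing the periods $\Li_m(\zeta)$ --- and then transfers to $\p(E_\uu',\vv_1)/D^2$ through the Hain map, which Theorem \ref{thm:obj} makes motivic and which Section \ref{sec:depth} shows is strict for the depth filtrations; this is precisely how the rational (Betti) structure of the elliptic limit MHS is controlled. Even then an extra step remains that your sketch omits: the rational lift of $\bX$ in the limit is pinned down by invariance under the local monodromy $\exp(2\pi i\,\bY\partial/\partial\bX)$ at $q=0$, and it is this computation that yields the de~Rham splitting $1+\Li_m(\zeta)\,\e^\op_{m+1,\zeta}$ of the pullback of $V_{m,\zeta}\oplus V_{m,\zetabar}$, i.e.\ the reason the answer is expressed in the operators $\e^\op_{m+1,\zeta}$ rather than some other lift of $\bY^{m-1}\cdot\bt_\zeta$. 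If you work directly with the $q$-expansion you must still supply this control of the $\Q$-structure at the tangential base point $\vv_1$; as written, ``extract its period matrix modulo $D^2$'' stands in for the entire content of Sections \ref{sec:polMHS}--\ref{sec:gen}. (Minor point: in the first step you should also record why $\phi_\cyc(\k_N)\subseteq D^1$, namely that $\k_N$ acts trivially on the semisimple object $\p(\Gm,\vv_1)$; this is what makes commutators land in $D^2$.)
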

The result follows from the observation that $\Gm - \bmu_N$ includes into the degeneration of the an elliptic curve with a cyclic subgroup of order $N$ removed. This inclusion induces a degeneration of the elliptic and classical (cyclotomic) polylogarithm variations. This degeneration is well-known in the $N = 1$ case. For example, it was used by Hain to study the KZB connection \cite[\S3]{hain:kzb} and by Huber--Kings to study the Bloch--Kato conjectures for the Riemann zeta function \cite{HK}. Our proof considers the degeneration for general $N \ge 1$ to calculate the periods of the limit MHS of the elliptic polylog variation. 

The derivations $\e^\op_{m+1,\zeta}$ appearing in Theorem \ref{bthm:head} are cyclotomic generalizations of derivations $\e_{m}$ first defined by Tsunogai \cite[\S3--4]{tsunogai} and appear in the universal elliptic KZB connection \cite{CEE,LR,hain:kzb}. We explicitly compute the constants $c_\eta$ when $N$ is a prime power and when $N = 6$. Theorem \ref{bthm:head} is a generalization of the $N = 1$ case proven in \cite[Thm 29.4]{hain:MEM}. Importantly, since the map $\phi_\elp$ is injective on $\Gr_D^1 \k_N$, the depth two relations between the generators $\bsigma_{m,\zeta}$ correspond exactly to quadratic relations between the derivations $\e_{m+1,\zeta}$.

\subsection{Hecke action on $K$-groups}

A second consequence of the injectivity of $\phi_\elp$ is the computation of a natural Hecke action on the groups $K_{2m-3}(\cO_N) \otimes \Q$ when $m \ge 3$. When $m \ge 3$, the KZB connection canonically identifies $\Ext^1_{\MTM(\cO_N)}(\Q,\Q(m-1))$ with the dual space of the $\Q$-span of the Eisenstein series 
$$
G_{m,\zeta}(\tau)  = \sum_{\substack{k,\ell \in \Z \\ (k,\ell) \neq (0,0)}} \frac{\zeta^k}{(k\tau + \ell)^m}
$$
with $\zeta \in \bmu_N$ primitive. When $p \nmid N$, this space is $T_p$-invariant. Therefore, Deligne and Goncharov's isomorphism $\Ext^1_{\MTM(\cO_N)}(\Q,\Q(m-1)) \cong K_{2m-3}(\cO_N) \otimes \Q$ implies the following.
\begin{bigtheorem}
    \label{bthm:hecke}
    When $m \ge 3$, there is a natural identification of $K_{2m-3}(\cO_N) \otimes \Q$ with the dual of a subspace of $\G_1(N)$ Eisenstein series of weight $m$, which induces an action of the prime to $N$ Hecke operators on $K_{2m - 3}(\cO_N) \otimes \Q$.
\end{bigtheorem}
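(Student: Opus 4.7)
The plan is to combine three ingredients already assembled in the preceding paragraphs of the introduction. First, Deligne--Goncharov \cite{DG} give a canonical isomorphism
$$
K_{2m-3}(\cO_N)\otimes\Q \;\cong\; \Ext^1_{\MTM(\cO_N)}(\Q,\Q(m-1))
$$
for $m\ge 3$. Second, the elliptic KZB period calculation carried out in the body of the paper (underpinned by the injectivity of $\phi_\elp$ in Theorem \ref{bthm:head}) furnishes a canonical identification of this Ext group with the dual of the $\Q$-subspace
$$
V_m \;:=\; \bigoplus_{\substack{\zeta\in\bmu_N \\ \mathrm{primitive}}} \Q\cdot G_{m,\zeta}
$$
of the weight-$m$ Eisenstein space for $\G_1(N)$. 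Given these, the theorem reduces to showing that $V_m$ is stable under the Hecke operators $T_p$ for $p\nmid N$; transporting the dual action along the two identifications then yields the claimed action on $K_{2m-3}(\cO_N)\otimes\Q$.

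To prove $T_p$-stability, I would decompose $V_m$ into simultaneous Hecke eigenspaces. Expanding $G_{m,\zeta}$ as a discrete Fourier transform in $\zeta$, one writes it as a finite $\Q$-linear combination of the standard Eisenstein eigenforms attached to pairs of Dirichlet characters $(\chi,\mathbf{1})$ mod $N$. This exhibits $V_m$ as precisely the span of those eigenforms whose second character is trivial. Each $T_p$ with $p\nmid N$ acts diagonally on this character decomposition with eigenvalue $1+\chi(p)p^{m-1}$, so $V_m$ is preserved; inverting the Fourier transform shows that $T_pG_{m,\zeta}$ is again a $\Q$-linear combination of the $G_{m,\zeta'}$ with $\zeta'$ primitive.

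The main obstacle is conceptual rather than computational: one must confirm that the KZB identification of step two is genuinely canonical, so that the transported Hecke action does not depend on bookkeeping choices. This canonicity is built into the motivic construction, since the identification arises from the $\k_N$-action on $\Lie\piun(E_{\partial/\partial q}-\bmu_N,\vv_1)^\DR$ rather than from a choice of basis, and the relation $\bsigma_{m,\zeta}=(-1)^{m+1}\bsigma_{m,\zetabar}$ is precisely the parity symmetry of the $G_{m,\zeta}$. Once Theorems \ref{bthm:obj} and \ref{bthm:head} and the period calculation behind step two are in hand, no further choices are introduced and the remaining manipulations are formal.
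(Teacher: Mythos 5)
Your proposal is correct and follows the paper's overall architecture --- the Deligne--Goncharov isomorphism $K_{2m-3}(\cO_N)\otimes\Q\cong\Ext^1_{\MTM_N}(\Q,\Q(m-1))$, the KZB-induced identification of this Ext group with the dual of $\E_{m,N}=\Span_\Q\{G_{m,\zeta}\mid\zeta\in\bmu_N\ \text{primitive}\}$ (which the paper proves is an isomorphism by showing $\psi_{m,N}^\vee$ is injective, using injectivity of $\phi_\elp$ and the map \eqref{eqn:eop}, and then counting dimensions), and finally transport of the $T_p$-action --- but your treatment of the Hecke-stability step is genuinely different. The paper never passes to eigenforms: it computes directly from the lattice definition that $T_pG_{m,\zeta}=p^{m-1}G_{m,\zeta}+G_{m,\zeta^p}$, and since $p\nmid N$ the map $\zeta\mapsto\zeta^p$ permutes the primitive $N$th roots of unity, so stability of $\E_{m,N}$ is immediate; this closed formula is also what lets the paper write down the explicit transported operator on extensions, $T_p:\Li_{m}(\zeta)\mapsto p^{m}\Li_{m}(\zeta)+\Li_{m}(\zeta^p)$ (formula \eqref{eqn:heckeLi}), and check compatibility with the level-shift maps $[d]$ and the distribution relations. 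Your character-decomposition route also works --- setting $F_\chi=\sum_{a\in(\Z/N\Z)^\times}\chi(a)G_{m,e^{2\pi ia/N}}$ and applying the same $T_p$ formula gives $T_pF_\chi=(p^{m-1}+\overline{\chi}(p))F_\chi$, so each Fourier component is an eigenvector and stability follows --- though note your stated eigenvalue $1+\chi(p)p^{m-1}$ has the characters in the wrong slots (it should be of the shape $\chi(p)+p^{m-1}$ up to replacing $\chi$ by $\overline{\chi}$); this does not affect stability, but the eigenform detour is heavier than needed and, unlike the paper's computation, does not by itself hand you the explicit formula for the induced action on $K_{2m-3}(\cO_N)\otimes\Q$ or the verification that it respects the distribution relations among the $\Li_{m-1}(\zeta)$, which is where the ``naturality in the level'' part of the statement is actually checked. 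Your remarks on canonicity are consistent with the paper: there the independence of choices is encoded in the commutative square relating $\psi_{m,N}^\vee$, $\phi_\elp$, and the operator \eqref{eqn:eop}, together with the compatibility of $\psi_{m,N}$ with $[d]$ and $T_p$.
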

For $p \nmid N$ prime, we give an explicit formula of the action of $T_p$ on the Hodge realizations of extensions in $\Ext^1_{\MTM(\cO_N)}(\Q,\Q(m-1))$. The details can be found in \S\ref{sec:hecke}.

\subsection{Outline}

We begin in Sections 2 and 3 by reviewing definitions and setting notation for working in the category of mixed Tate motives. This includes a discussion of the Betti and de~Rham realization of the motivic Galois group of $\MTM_N$ and their actions on motivic periods. In Section 4, we review from \cite{DG} the motivic unipotent path torsor of $\P^1 - \{0 , \mu_N, \infty\}$. We also introduce Brown's category of generalized Hodge structures \cite{brown:periods} and Hain's construction of canonical mixed Hodge structures on fundamental groups \cite{hain:bowdoin}. Section 5 begins with a summary of the level $N$ KZB connection derived by Calaque and Gonzalez \cite{CG} and the level $N$ generalization of the Hain map relating the KZ and KZB connections along the Tate curve $E_{\partial/\partial q}$ \cite{hain:kzb, hopper}. These facts are gathered in Section 6 to prove Theorem \ref{bthm:obj}. 

In Section 7, we define the depth filtrations on the KZ and KZB local systems, define the cyclotomic and elliptic polylogarithm quotients, and show their compatibility with the Hain map. Section 8 applies the Hain map to compute the limit MHS of the polylogarithm variations at the identity of the Tate curve. It follows from Theorem \ref{bthm:obj} that these MHS are Hodge realizations of mixed Tate motives. In Sections 9 and 10, we proceed to compute the motivic Galois actions on these motivic MHS. These formulas allow us to prove Theorems \ref{bthm:head} and \ref{bthm:hecke} in Sections 11 and 12, respectively. 

The appendices include relevant background information on tannakian categories, admissible variations of MHS, iterated integrals, and transport of linearized connections. Appendix E is an index of notation. \\

\noindent {\em Acknowledgements:} Many of the results in this paper appeared in my Ph.D. thesis \cite{thesis} completed under the supervision of Richard Hain at Duke University. I am grateful for his guidance in approaching these problems and many suggestions in the writing of this manuscript. Further results are from my time at the Department of Mathematics at the University of Rochester.

\section{Notation and conventions}

We work in the category of complex analytic varieties unless otherwise noted.

Throughout, we denote by $\gamma$ the matrix $\begin{pmatrix} a & b \cr c & d \end{pmatrix} \in \SL_2(\Z)$, and $a$, $b$, $c$, and $d$ will refer to its entries. The group $\SL_2(\Z)$ acts on the upper half plane $\h$\label{not:h} in the standard way 
$$
\gamma : \tau \longmapsto \frac{a \tau + b}{c \tau + d}.
$$

We will use the topologist's convention for composition of paths. If $X$ is a topological space, $\alpha,\beta : [0,1] \to X$, and $\alpha(1) = \beta(0)$, then $\alpha\beta$ denotes the path by first proceeding along $\alpha$ and then along $\beta$.

The one-dimensional pure $\Q$-Hodge structure of type $(-n,-n)$ will be denoted by $\Q(n)$. Its $\Q$-Betti and $\Q$-de~Rham generators are $\Q\ee^\betti$ and $\Q\ee^\DR$, respectively. The Betti to de~Rham comparison isomorphism takes $\ee^\betti$ to $(2\pi i)^n \ee^\DR$.

Suppose that $F$ is a field of characteristic 0 and that $V$ is a finite dimensional vector space over $F$. Denote by $\bL(V)$ the free Lie algebra on $V$. Recall that the universal enveloping algebra of $\bL(V)$ is the tensor algebra $T(V)$. Let $\bL(V)^\wedge$ be the completion of $\bL(V)$ with respect to its lower central series. Let $T(V)^\wedge$ be the completion of $T(V)$ with respect to powers of the augmentation ideal $I = \ker \psi$ where $\psi : T(V) \to F$ and $\psi(v) = 0$ for all $v \in V$.

The adjoint action of an element $x \in T(V)$ on $y \in \bL(V)$ will be denoted $x \cdot y$. This notation also applies to completions. If $a_n \in F$ and $x\in V$, then 
$$
\left(\sum_{n = 0}^\infty a_nx^n\right)\cdot y := \sum_{n = 0}^\infty a_n \ad_x^n(y).
$$
Also, when it is clear we are working in the derivation algebra $\Der \g$ of a Lie algebra $\g$ with trivial center, such as a free Lie algebra of rank $>1$, we will view $\g$ as a subalgebra of $\Der \g$ via the adjoint action $\ad : \g \to \Der \g$.

If $X$ is a Riemann surface, we let $\piun(X,x)$\label{not:piun} denote the unipotent completion of the fundamental group $\pi_1(X,x)$ over $\Q$. We also set $\p(X,x)$\label{not:p} to be the $\Q$-Lie algebra of $\piun(X,x)$. For background on unipotent completion, see \cite[\S5]{hopper}.


\section{Review of mixed Tate motives}

We first review some basic definitions and results of Deligne and Goncharov \cite{DG}. We assume familiarity with neutral tannakian categories, but a brief review is included in Appendix \ref{sec:tannaka}.




\subsection{Betti and de~Rham realizations}


The category $\MTM(\cO_{k,S})$ of mixed Tate motives over the ring $\cO_{k,S}$\label{not:OkS} of $S$-integers of a number field $k$ is neutral tannakian with respect to the natural fiber functors \label{not:ff}
$$
\omega^\DR : \MTM(\cO_{k,S}) \to \Vec_k \quad \text{and} \quad \omega^\betti : \MTM(\cO_{k,S}) \to \Vec_\Q.
$$
The images $\omega^\DR(V)$ and $\omega^\betti(V)$ of an object $V$ of $\MTM(\cO_{k,S})$ are called the de~Rham and Betti realizations of $V$, respectively. We will often use the shorthand $V^\DR_k$\label{not:VDRk} and $V^\betti$\label{not:VB} for $\omega^\DR(V)$ and $\omega^\betti(V)$, respectively. Each object $V$ of $\MTM(\cO_{k,S})$ has a weight filtration $M_\bullet$ by subobjects of $V$ in $\MTM(\cO_{k,S})$. It induces filtrations on $V^\DR_{k}$ and $V^\betti$. The de~Rham realization has an additional filtration $F^\bullet$ called the Hodge filtration. For each embedding $\sigma : k \hookrightarrow \C$, there is a comparison isomorphism of filtered vector spaces
\begin{equation}
    \label{eqn:csigma}
    c_{V,\sigma}^{\DR,\betti} : (V^\DR_{k},M_\bullet) \otimes_\sigma \C \xrightarrow{\sim} (V^\betti,M_\bullet) \otimes_\Q \C.
\end{equation}

\subsection{Simple objects of $\MTM(\cO_{k,S})$}
The simple object $\Q(-1)$ is defined to be the motive of $H^1(\Gm)$. The de~Rham realization $\Q(-1)^\DR_{k}$ is the algebraic de~Rham cohomology $H_\DR^1(\Gm) \cong k[\frac{dz}{z}]$ with coefficients in $k$. The Betti realization $\Q(-1)^\betti$ is the singular cohomology with rational coefficients $H^1_\mathrm{sing}(\Gm,\Q)$. The comparison isomorphism
$$
c_{\Q(-1),\sigma}^{\DR,\betti} : H^1_\DR(\Gm) \otimes \C \to H_\betti^1(\Gm) \otimes \C
$$
maps $[\frac{dz}{z}] \mapsto 2\pi i \gamma^\vee$, where $\gamma$ is the positive generator of $H_1^{\mathrm{sing}}(\Gm,\Z)$ and $\gamma^\vee$ denotes its Poincar\'e dual. 

We then set $\Q(-n) := \Q(-1)^{\otimes n}$ for any $n \in \Z$. The Hodge and weight filtrations of $V = \Q(n)$ are given by 
$$
V^\DR = F^{-n}V^\DR \supset F^{-n + 1}V^\DR = \{0\}
$$
and 
$$
\{0\} = M_{-2n - 1}V \subset M_{-2n}V = V.
$$

The de~Rham and Hodge realizations of $V = \Q(n)$ are given by $V^\DR = k$ and $V^\betti = \Q$. The comparison isomorphism $c_{V,\sigma}^{\DR,\betti} : V^\DR \otimes \C \to V^\betti \otimes \C$ is multiplication by $(2\pi i)^{-n}$. 


\subsection{Canonical $\Q$-structure of $V^\DR$}
\label{sec:vDRQ}

Define a functor $\w : \MTM(\cO_{k,S}) \to \Vec_\Q$ by\label{not:DGw}
$$
\omega(V) := \bigoplus_{m \in \Z} \Hom_{\MTM(\cO_{k,S})}(\Q(m),\Gr_{-2m}^M V).
$$
It is a fiber functor and $\omega(V)$ has Hodge and weight filtrations defined by $$
F^{-m}W_{-2m} \omega(V) = \Hom_{\MTM(\cO_{k,S})}(\Q(m),\Gr_{-2m}^M V).
$$

The de~Rham realization $V^\DR_{k}$ of every object $V$ of $\MTM(\cO_{k,S})$ has a canonical splitting
\begin{equation}
    \label{eqn:DRsplit}
    V^\DR_{k} \cong \bigoplus F^{-m}V^\DR_{k} \cap M_{-2m}V^\DR_{k} \cong \bigoplus \Gr_{-2m}^MV^\DR_{k}
\end{equation}
as $k$-vector spaces. By \cite[Prop. 2.10]{DG}, for each $V$ in $\MTM(\cO_{k,S})$, there is a canonical isomorphism
\begin{equation}
    \label{eqn:wisom}
    V^\DR_{k} \cong \omega(V) \otimes_\Q k.
\end{equation}
that preserves the Hodge and weight filtrations. Thus, the $\Q$-vector space $\omega(V)$ is a rational form of $V^\DR_k$. We will denote the $\Q$ vector space $\w(V)$ by $V^\DR$.\label{not:VDR} With this notation $V^\DR_{k} \cong V^\DR\otimes_\Q k$. The comparison isomorphism \eqref{eqn:csigma} becomes 
$$
c_V^{\DR,\betti} : V^\DR \otimes_\Q \C \xrightarrow{\sim} V^\betti \otimes_\Q \C.
$$
Note that this isomorphism still depends on the choice of embedding $\sigma : k \hookrightarrow \C$. 

\begin{example}
\label{ex:wQ}

Let $U = \A^1 - D$ where $D$ is a finite set of points defined over $k$. For each $s \in D$, the map $z \mapsto z - s$ induces an isomorphism $H^1(\Gm) \to H^1(\A^1-\{s\})$ between objects of $\MTM(\cO_{k,S})$. The inclusion $U \hookrightarrow \A^1 - \{s\}$ induces a morphism $H^1(\A^1 - \{s\}) \to H^1(U)$ between objects of $\MTM(\cO_{k,S})$. We then have the commutative diagram below where the horizontal maps are each the canonical isomorphism \eqref{eqn:wisom}.
$$
\xymatrix{
H^1_\DR(\Gm) \ar[r]^-\cong \ar[d] & \omega(H^1(\Gm)) \otimes k \ar[d] \cr
H^1_\DR(\A^1 - \{s\}) \ar[r]^-\cong & \omega(H^1(\A^1 - \{s\})) \otimes k \cr
H^1_\DR(U) \ar[r]^-\cong \ar[u] & \omega(H^1(U)) \otimes k \ar[u]
}
$$
Since the class $\frac{dw}{w} \in H^1_\DR$ is rational in $\omega(H^1(\Gm)) \otimes k$, it follows that $\frac{dw}{w-s}$ is rational (in the sense of \eqref{eqn:wisom}) in both $H^1_\DR(\A^1 - \{s\})$ and $H^1_\DR(U)$. 

For each embedding $\sigma : k \hookrightarrow \C$ we  have the commutative diagram
$$
\xymatrix{
H^1_\DR(U) \otimes_\sigma \C \ar[r]^{c^{\DR,\betti}_{V,\sigma}} & H^1_B(U) \otimes \C \ar@{=}[d] \cr
\omega(H^1(U)) \otimes_\sigma \C \ar[u]^\cong \ar[r]^-{c^{\DR,\betti}_V} & H^1_B(U) \otimes \C.
}
$$
The comparison isomorphism $c^{\DR,\betti}_{V,\sigma}$ maps the rational generator $\frac{dw}{w-s}$ of $H^1_\DR(U)$ to $ 2\pi i\gamma_{\sigma(s)}^\vee$, where $\gamma_{\sigma(s)}$ is the homology class of a small positive loop around $\sigma(s)$.  
\end{example}

Taken together, the de~Rham realization $V^\DR$ over $\Q$, the Betti realization $V^\betti$, the filtrations $M_\bullet$ and $F^\bullet$, and the comparison isomorphism $c_V^{\DR,\betti}$ form a mixed Hodge structure that we call the ``Hodge realization'' of $V$. More precisely, the $\Q$-vector space $(c_V^{\DR,\betti})^{-1}(V^\betti)$ underlies a MHS whose complexification is $V^\DR \otimes \C$. This MHS has weight graded quotients of Tate type, and its periods are the matrix coefficients of $c^{\DR,\betti}_V$. These numbers are also called the {\em periods} of the mixed Tate motive $V$.

\subsection{The $\ell$-adic realization}

There are also fiber functors $\omega_\ell : \MTM(\cO_{k,S}) \to \Vec_{\Q_\ell}$ for each prime $\ell$ that does not divide $N$. The $\Q_\ell$-vector space $V_\ell := \omega_\ell(V)$ is a $\Gal(\Qbar/k)$-representation filtered by $M_\bullet$, unramified outside $\ell$, and crystalline at $\ell$. It is called the $\ell$-adic \'etale realization of $V$. There is also a comparison isomorphism of filtered $\Q_\ell$-vector spaces
$$
c_V^{B,\ell} : (V^\betti,M_\bullet) \otimes \Q_\ell \xrightarrow{\sim} (V_\ell,M_\bullet).
$$
We only mention these realizations for the sake of completeness as the Hodge realization of $\MTM(\cO_{k,S})$ is fully faithful \cite{DG}, and thus the $\ell$-adic realizations are completely determined by the de~Rham and Betti realizations together with their filtrations and comparison isomorphism $c^{\DR,\betti}_V$. In this paper, we focus on the de~Rham and Betti fiber functors.

\subsection{Extensions}
\label{sec:MTMext}

Deligne and Goncharov \cite{DG} showed the extensions of simple objects in $\MTM(\cO_{k,S})$ have the property 
\begin{equation}
    \label{eqn:K}
    \Ext^j_{\MTM(\cO_{k,S})}(\Q,\Q(n)) \cong 
    \begin{cases}
        \Q & j = n = 0 \cr
        K_{2n - 1}(\cO_{k,S}) \otimes \Q & j = 1, n \ge 0 \cr
        0 & \text{otherwise},
    \end{cases}
\end{equation}
where $K_\bullet(\cO_N)$ are the algebraic $K$-groups of $\cO_N$. 

 In the later sections, we will focus on the case where $k$ is the cyclotomic field $\Q(\bmu_N)$ and $S$ is the set of primes dividing $N$. In this setting, $\cO_{k,S} = \Z[\bmu_N,1/N]$, which we shall denote by $\cO_N$.\label{not:ON} We further abbreviate $\MTM(\cO_N)$ to $\MTM_N$.\label{not:MTMN} Restricting to this case, it follows from Borel \cite{borel:rank} that
\begin{equation}
\label{eqn:dim}
\dim_\Q \Ext^1_{\MTM_N}(\Q,\Q(n)) = \begin{cases}
    1 & N = 1, \text{$n$ odd $\ge 3$} \cr
    0 & N = 1, \text{$n$ otherwise} \cr
    1 & N = 2, \text{$n$ odd} \cr
    0 & N = 2, \text{$n$ even} \cr
    \varphi(N)/2 - 1 + \upomega(N) & N \ge 3, n = 1 \cr
    \varphi(N)/2 & N \ge 3, n \ge 2,
\end{cases}
\end{equation}
where $\upomega(N) = \sum_{p\mid N} 1$ is the prime omega function and $\varphi(N)$ is the Euler phi function. In \S \ref{sec:gen}, we will specify a natural basis of $\Ext^1_{\MTM_N}(\Q,\Q(n))$.  

\subsection{Tannakian fundamental group} 

Let $\MTM(\cO_{k,S})^\ss$ denote the category of semi-simple mixed Tate motives over $\cO_N$. This category is tannakian with respect to the de~Rham and Betti fiber functors. For $\bullet \in \{\DR, B\}$, the action of $\pi_1(\MTM(\cO_{k,S})^\ss,\omega^\bullet)$ on $\omega^\bullet(\Q(-1))$ determines the action on any object of $\MTM(\cO_{k,S})^\ss$. The complex points of $\pi_1(\MTM(\cO_{k,S})^\ss,\omega^\bullet)$ are given by
\begin{align*}
    \pi_1(\MTM(\cO_{k,S})^\ss,\omega^\bullet)(\C) &= \Aut^\otimes(\omega^\bullet|_{\MTM(\cO_{k,S})^\ss})(\C) \cr
    &= \Aut^\otimes(\omega^\bullet|_{\Q(-1)})(\C) \cr
    &= \Aut_\C(\Q \otimes \C) \cr
    &= \Gm.
\end{align*}
The simple object $\Q(m)$ corresponds to the $m$th power of the standard character of $\Gm$. 

Let $\cG_{k,S}^\bullet$\label{not:GbulletN} denote the fundamental group of $\MTM(\cO_{k,S})$ with respect to the fiber functor $\omega^\bullet$ for $\bullet \in \{\DR,\betti\}$. The inclusion $\MTM(\cO_{k,S})^\ss \to \MTM(\cO_{k,S})$ induces a surjection $\chi^\bullet: \cG_{k,S}^\bullet \to \Gm$. The kernel $\mathcal{K}_{k,S}^\bullet$\label{not:KNS} acts trivially on objects of $\MTM(\cO_{k,S})^\ss$ and thus is prounipotent. Thus, for each fiber functor, we have the short exact sequence
\begin{equation}
    \label{eqn:Gses}
    \xymatrix{1 \ar[r] & \mathcal{K}_{k,S}^\bullet \ar[r] & \cG_{k,S}^\bullet \ar[r]^-{\chi^\bullet} & \Gm \ar[r] & 1.}
\end{equation}
In light of \S\ref{sec:vDRQ}, the de~Rham realization $V^\DR$ of every object $V$ of $\MTM(\cO_{k,S})$ has a canonical splitting
$$
V^\DR \cong \bigoplus F^mV^\DR \cap M_{2m}V^\DR
$$
as $\Q$-vector spaces. Thus, the functor $\Gr_\bullet^M : \MTM(\cO_{k,S}) \to \MTM(\cO_{k,S})^\ss$ induces a splitting of $\cG_{k,S}^\DR \to \Gm$ and defines an isomorphism 
$$
\cG_{k,S}^\DR \cong \mathcal{K}_{k,S}^\DR \rtimes \Gm.
$$

The Lie algebra of $\mathcal{K}_{k,S}^\DR$ is the de~Rham realization of a pro-object $\k^\wedge_{k,S}$ of $\MTM(\cO_{k,S})$. It is therefore the degree completion of its de~Rham realization $\k_{k,S}^\DR := \w(\k_{k,S}^\wedge)$\label{not:kOkS}. It is referred to as the {\em motivic Lie algebra} of $\MTM(\cO_{k,S})$. It is free. The de~Rham realization $V^\DR$ of each object $V$ of $\MTM(\cO_{k,S})$ is a graded representation of $\k_{k,S}^\DR$, and $\MTM(\cO_{k,S})$ is equivalent to the category of graded representations of $\k_{k,S}^\DR$ on finite dimensional rational vector spaces. 





\subsection{Motivic periods}
\label{sec:mperiods}

We define the rings of {\em de~Rham} and {\em Betti periods} of $\MTM(\cO_{k,S})$ to be the coordinate rings of $\cG_{k,S}^\DR$ and $\cG^\betti_{k,S}$, respectively 
$$
\cP^\bullet_{k,S} := \cO(\cG^\bullet_{k,S}) \quad \text{for $\bullet \in \{\DR, B\}$}. 
$$
We also define the ring of {\em motivic periods} of $\MTM(\cO_{k,S})$ by 
$$
\cP^\m_{k,S} := \cO(\Isom^\otimes(\omega,\omega^\betti)). \label{not:POkS}
$$
The left and right actions of $\cG_{k,S}^\DR$ and $\cG_{k,S}^\betti$ on $\Isom^\otimes(\omega,\omega^\betti)$ induce right and left coactions 
\begin{equation}
    \label{eqn:coaction}
    \Delta^\DR : \cP^\m_{k,S} \to \cP^\m_{k,S} \otimes \cP_{k,S}^\DR \quad \text{and} \quad \Delta^\betti : \cP^\m_{k,S} \to \cP^\betti_{k,S} \otimes \cP^\m_{k,S}.
\end{equation}
The action of $\cG^\DR_{k,S}$ is well-defined because the isomorphism \eqref{eqn:wisom} is canonical. 

Let $R$ be a subring of $\C$. Then $\cG_{k,S}^\bullet(R) = \Hom(\cP^\bullet_{k,S},R)$ and the coactions $\Delta^\bullet$ induce {\em Galois actions} of $\cG_{k,S}^\bullet(R)$ on $\cP^\m_{k,S}$. The de~Rham action is given by 
\begin{align*}
\cG^\DR(R) \times \cP^\m_{k,S} &\longrightarrow \cP^\m_{k,S} \otimes R \cr
(g, p^\m) & \longmapsto ((\id \otimes g) \circ \Delta^\DR)(p^\m).
\end{align*}
The Betti Galois action is defined analogously. 

The motivic periods of $\cP_{k,S}^\m$ may be written more concretely as symbols $[V,\eta,\gamma]^\m$, where $V \in \Ob(\MTM(\cO_{k,S}))$, $\eta \in V^\DR$, and $\gamma \in (V^\betti)^\vee$. In many settings, these symbols may be viewed as formal unevaluated integrals, which satisfy bilinearity relations and respect tensor products and morphisms in $\MTM(\cO_{k,S})$ \cite[\S 2]{brown:periods}. For each object $V$, there is a motivic comparison $c_V^\m$ 
\begin{equation}
\begin{aligned}
    \label{eqn:cVm}
    c_V^\m : V^\DR &\longrightarrow V^\betti \otimes \cP_{k,S}^\m \cr
    \eta &\longmapsto \sum_j[V,\eta,e_j^\vee] e_j,
\end{aligned}
\end{equation}
where $\{e_j\}$ is a basis of $V^\betti$. This comparison may be viewed as a $\cP^\m_{k,S}$-rational point of $\Isom^\otimes(\omega,\omega^\betti)$ and thus is natural with respect to the morphisms in $\MTM(\cO_{k,S})$. 

The {\em period map} $\per : \cP^\m_{k,S} \to \C$\label{not:per} sends 
$$
\per : [V,\gamma,\eta] \mapsto \gamma(c_V^{\DR,\betti}(\eta)),
$$
where $c_V^{\DR,\betti} : V^\DR \otimes \C \to V^\betti\otimes B$ is the comparison isomorphism of $V$. This map may be interpreted as the evaluation of the integral of the ``form'' $\eta$ against the ``cycle'' $\gamma$. It is straightforward to confirm that the diagram
$$
\xymatrix{V^\DR \ar[r]^-{c_V^\m} \ar[dr]_{c_V^{\DR,\betti}} & V^\betti \otimes \cP_{k,S}^\m \ar[d]^{\id \otimes \per} \cr & V^\betti \otimes \C}
$$
commutes. In the case $V^\DR$ and $V^\betti$ are the de~Rham and Betti cohomology of a variety defined over $\Q$, the image of $\per$ contains periods in the sense of Kontsevich and Zagier \cite{KZ}. Restricted to mixed Tate motives, Grothendieck's {\em period conjecture} claims the map $\per : \cP^\m_{k,S} \to \C$ is injective \cite{grothendieck}. An equivalent claim is that all polynomial relations between periods of objects of $\MTM(\cO_{k,S})$ are of ``geometric origin'' (i.e. arise from algebraic change of variables and Stokes' theorem) \cite{andre}. 
\begin{example}
The {\em Lefshetz period} is the simplest non-trivial example of a motivic period. Recall that $\Q(-1) = H^1(\Gm)$. Let $\sigma \in H^1_B(\Gm,\Q)^\vee$ be dual to a simple closed positively-oriented loop about the origin. Then the Lefshetz period, denoted $\Le$,\label{not:Le} is given by
$$
\Le = [H^1(\Gm),[dz/z],\sigma]^\m.
$$
Evaluating gives $\per \Le = \sigma(c_{\Q(-1)}^{\DR,\betti}([dz/z])) = 2\pi i$.
\end{example}

The Galois actions of $\cG^\bullet_{k,S}(R)$ on $\cP^\m_{k,S}$ may be described in terms of matrix entries. Every $g \in \cG_{k,S}^\bullet(R)$ is a natural isomorphism of $\omega^\bullet \otimes R$. Thus, for every realization $\omega^\bullet(V)$, the element $g$ defines an automorphism of $\omega^\bullet(V) \otimes R$. The left action of $\cG^\DR_{k,S}$ is given by 
$$
g \cdot [V,\eta,\gamma]^\m = [V,g \cdot \eta,\gamma]^\m \in \cP_{k,S}^\m \otimes R
$$ 
for $g \in \cG^\DR_{k,S}$. The right action of $\cG_B^N$ is analogous:
$$
[V,\eta,\gamma]^\m \cdot g = [V,\eta,\gamma \cdot g]^\m \in \cP_{k,S}^\m \otimes R
$$
for $g \in \cG^\betti_{k,S}$. This action is equivalent to the coaction \eqref{eqn:coaction} \cite[\S 2.3]{brown:periods}.

\begin{example}
We can compute the action of $\cG_{k,S}^\DR$ and $\cG_{k,S}^\betti$ on the Lefschetz period $\Le$. Since $\Q(-1)$ is semi-simple, the action of $\cG^\bullet_{k,S}(\C)$ factors through $\chi^\bullet : \cG^\bullet_{k,S} \to \Gm$. Thus, if $g \in \cG^\DR_{k,S}(\C)$, we have 
$$
g \cdot \Le = [H^1(\Gm),g \cdot [dz/z], \gamma] = [H^1(\Gm),\chi^\DR(g)^{-1}[dz/z],\gamma] = \chi^\DR(g)^{-1}\Le.
$$
Similarly, if $g \in \cG^\betti_{k,S}$, then 
$$
\Le \cdot g = [H^1(\Gm), [dz/z], \gamma \cdot g] = [H^1(\Gm),[dz/z],\chi^\betti(g)^{-1}\gamma] = \chi^\betti(g)^{-1}\Le.
$$
\end{example}

\section{The motivic path torsor of \texorpdfstring{$\P^1 - \{0,\bmu_N,\infty\}$}{TEXT}}
\label{sec:KZ}

Deligne and Goncharov \cite{DG} showed the coordinate ring of the motivic path torsor of $\P^1 - \{0,\bmu_N,\infty\}$ with appropriate choice of base points is an object of $\MTM(\cO_N)$ where $\cO_N = \Z[\bmu_N,1/N]$. This object is a rich source of motivic periods. These periods will appear as coefficients of the motivic version of the cyclotomic Drinfeld associator.


\subsection{Brown's category of generalized Hodge realizations}

Brown \cite[\S 3]{brown:periods} defines a category $\cH$\label{not:HR} of generalized Hodge realizations. An object $V$ of $\cH$ is a triple $(V^\betti,V^\DR,c)$ consisting of \begin{enumerate}
    \item A finite-dimensional $\Q$-vector space $V^\betti$ with a finite increasing filtration $M_\bullet$,
    \item A finite-dimensional $\Q$-vector space $V^\DR$ with finite increasing filtration $M_\bullet$ and finite decreasing filtration $F^\bullet$,
    \item An isomorphism 
    $$
    c : V^\DR \otimes \C \xrightarrow{\sim} V^\betti \otimes \C,
    $$
    \item An involution $F_\infty : V^\betti \xrightarrow{\sim} V^\betti$ called the {\em real Frobenius}. 
\end{enumerate}
These are also required to satisfy the following two conditions.
\begin{itemize}
    \item If $c_\DR$ (resp. $c_B$) is the $\C$-antilinear involution on $V^\DR \otimes \C$ (resp. $V^\betti \otimes \C$), then $c \circ c_\DR = (F_\infty \otimes c_B) \circ c$. 
    \item The filtrations $M_\bullet V^\betti$ and $cF^\bullet(V^\DR \otimes \C)$ equip $V^\betti$ with a graded-polarizable $\Q$-MHS. 
\end{itemize}
There are fiber functors $\omega^\betti_\cH$ and $\omega^\DR_\cH$\label{not:HRff} from $\cH$ to $\Vec_\Q$ taking $V$ to its corresponding realization. Deligne \cite[\S 1]{deligne:P1} showed $\cH$ is tannakian with respect to both $\omega^\betti_\cH$ and $\omega^\DR_\cH$.

The category $\MTM(\cO_{k,S})$ of mixed Tate motives over $\cO_{k,S}$ embeds as a full subcategory of $\cH$ via the functor
$$
\omega^\cH : V \longmapsto (V^\betti,V^\DR,c^{\DR,\betti}_V)
$$
\cite[\S 1]{DG}. Also, the discussion of motivic periods in \S\ref{sec:mperiods} extends analogously to the category $\cH$, where the ring of periods $\cP^\m_\cH$ is the coordinate ring $\cO(\Isom^\otimes(\omega^\DR_\cH,\omega^\betti_\cH))$. Thus, since $\omega^\cH$ is fully faithful, it induces an inclusion 
$$
\cP^\m_{k,S} \hookrightarrow \cP^\m_\cH.
$$

In the next section, we review the construction of the canonical MHS on fundamental groups. These MHS will be viewed as objects of $\cH$. 

\subsection{The MHS of fundamental groups}
\label{sec:piMHS}

There is a canonical MHS on the fundamental group of any smooth pointed complex algebraic variety $X$ \cite{hain:bowdoin, hain:drt}. We recall the construction in the case $U = \P^1 - (D \cup \{\infty\})$, where $D \subset \A^1(k)$ is a finite set of points defined over a number field $k$. 

Let $I$ be the augmentation ideal of the group algebra $\Q\pi_1(U,x)$. Denote the $I$-adic completion by $\Q\pi_1(U,x)^\wedge$ (see \cite[\S 5]{hopper}). We also define the $I$-adic completion of the path torsor $\pi(U;x,y)$ by 
\begin{equation}
    \label{eqn:torsor}
    \Q\pi(U;x,y)^\wedge := \varprojlim_n \Q\pi(U;x,y)/I^n\pi(U;x,y).
\end{equation}


Denote the 1-form $\frac{dw}{w - s}$ by $\omega_s$.\label{not:ws} Let $V$ be the $\Q$-vector space
$$
V = \omega(H^1(U)) = \bigoplus_{s \in D} \Q \omega_s
$$
(see Example \ref{ex:wQ}). The dual space $V^\vee = \Hom_\Q(V,\Q)$ has basis $\{\ee_s \mid s \in D\}$.\label{not:J} Define the tensor algebra $A := T(V^\vee)$. It has the structure of a Hopf algebra with coproduct generated by $\Delta \ee_s = \ee_s \otimes 1 + 1 \otimes \ee_s$ for each $s \in D$. There is also an augmentation ideal $J$ of $A$ generated by $V^\vee$. Denote the $J$-adic completion of $A$ by $A^\wedge$. There are natural Hodge and weight filtrations on $A^\wedge$ since each $\ee_s$ has type $(-1,-1)$. 

Define the universal connection form 
$$
\Omega = \sum_{\substack{s \in D}}\omega_s\ee_s \in V \otimes V^\vee.
$$
Chen's transport of $\Omega$ \eqref{eqn:chenT} defines a map 
\begin{equation}
    \label{eqn:chen}
    \Theta_{x,y} : \C\pi(U;x,y)^\wedge \to A^\wedge \otimes \C.
\end{equation}
This is an isomorphism of complete Hopf algebras \cite[(3.5)]{chen}. It induces a MHS on $\Q\pi(U;x,y)^\wedge$. The periods are iterated integrals of the 1-forms $\omega_s$ over paths from $x$ to $y$. The triple 
$$
(\Q\pi(U;x,y)^\wedge,A^\wedge,\Theta_{x,y})
$$ 
is a pro-object of Brown's category $\cH$ \cite[\S 5]{hain:bowdoin}.  

\begin{remark}
In the case $x = y$, the completion $\Q\pi_1(U,x)^\wedge$ is the universal enveloping algebra of Lie algebra $\p(U,x)$ over $\Q$. The natural inclusion $\p(U,x) \hookrightarrow \Q\pi_1(U,x)^\wedge$ induces a MHS on $\p(U,x)$. The comparison isomorphism $\p(U,x) \to \bL(V^\vee)^\wedge$ is also Chen's transport of $\Omega$ but each $\ee_s$ acts by left adjoint. 
\end{remark}

The coordinate ring $\cO(A^\wedge \otimes \C)$ is the $\C$-vector space spanned by the iterated integrals
$$
\int \omega_{s_1} \cdots \omega_{s_r}.
$$
This is the tensor coalgebra on $H^0(\P^1,\Omega_{\P^1}^1(\log D))$ with the shuffle product \eqref{eqn:shuffle}. Moreover, $\cO(A^\wedge \otimes \C)$ is endowed with a $\Q$-mixed Hodge structure. Since each $\omega_s$ is of type $(1,1)$, the Hodge and weight filtrations are 
\begin{align*}
    F^p &= \Span\left\{\int \omega_{s_1} \cdots \omega_{s_r} \middle| r \ge p\right\} \cr
    W_{2m+1} = W_{2m} &= \Span\left\{\int \omega_{s_1} \cdots \omega_{s_r} \middle| r \le m \right\}.
\end{align*}
The $\Q$-structure is spanned by the integrals which take values in $\Q$ over paths in $\pi(U;x,y)$. 

By the construction above, the path torsors define a variation of MHS (VMHS) over $U \times U$ 
$$
\{\Q\pi(U;x,y)^\wedge\}_{(x,y)} \to U \times U.
$$
The connection form $\Omega$ is in $F^{-1}W_{-2}$ and has pro-nilpotent residues. Monodromy $\Theta_{x,x}$ is the identity on the weight graded quotients and thus is unipotent. Thus, the relative weight filtration at each point in $(D\times U) \cup (U \times D)$ is equal to the weight filtration. Hence, the variation is admissible \cite{hainzucker}. See Appendix \ref{sec:avmhs} for background on VMHS and admissibility. 

Restricting the variation to the diagonal, the fibers are each completed fundamental groups $\Q\pi_1(U,x)^\wedge$. Transport from $x$ to $y$ is conjugation by $\Theta_{y,x}$. The inclusion $\p(U,x) \hookrightarrow \Q\pi_1(U,x)^\wedge$ generates a sub-VMHS over $U$
\begin{equation}
    \label{eqn:pvar}
    \{\p(U,x)\}_x \to U.
\end{equation}
Transport from $x$ to $y$ is left adjoint by $\Theta_{y,x}$.

\subsection{Tangential base points} 

Let $X$ be a Riemann surface and $D$ a finite set of points on $X$. The {\em real oriented blow-up} $\Bl_D^oX$ of $X$ at $D$ is a Riemann surface with boundary circle replacing each point of $D$. There exists a continuous projection $\Bl_D^oX \to X$, where the preimage of a point $P \in D$ is a circle viewed as the quotient of $T_PX - \{0\}$ by the positive real numbers.  The restriction  of the projection $(\Bl_D^oX - \partial \Bl_D^oX) \to X - D$ is a biholomorphism. 

Each tangent vector $\vec{v} \in T_PX -\{0\}$ determines a point $[\vec{v}] \in \Bl_D^oX$. Deligne \cite{deligne:P1} defines the fundamental group $\pi_1(X - D,\vec{v})$ of $X-D$ with tangential base point $\vec{v}$ to be $\pi_1(\Bl_D^oX,[\vec{v}])$. When $r$ is a positive real number, the biholomorphism $(\Bl_D^oX - \partial \Bl_D^oX) \to X - D$ induces a canonical isomorphism between $\pi_1(X - D,\vec{v})$ and $\pi_1(X - D,r\vec{v})$. However, the limit MHS on the unipotent completions $\piun(X - D,\vec{v})$ and $\piun(X - D,r\vec{v})$ are {\em not} necessarily isomorphic unless the local monodromy at $P$ is trivial (see Appendix \ref{sec:avmhs}).

\subsection{Realizations of \texorpdfstring{{{$\tensor*[_x]{\Pi}{^\mot_y}$}}}{TEXT}}

We can now describe the Hodge realization and motivic periods of Deligne and Goncharov's object in the category $\MTM_N$ of mixed Tate motives over $\cO_N = \Z[\bmu_N,1/N]$. These are discussed in greater detail in \cite[\S 5]{DG} and \cite[\S 2]{brown:P1}. 

For convenience, we set 
$$
U_N = \P^1 - \{0,\bmu_N,\infty\} = \Gm - \bmu_N.
$$
\label{not:UN}Observe that $U_N$ is a scheme over $\Q$
$$
U_N = \Spec \Q[Z]/(Z^N - 1).
$$
Fix tangential base points $\ww_\zeta = \zeta\partial/\partial w \in T_0U_N$\label{not:wz} and $\vv_\zeta = \zeta\partial/\partial w \in T_\zeta U_N$\label{not:vz} for all $\zeta \in \bmu_N$ (see Figure \ref{fig:bp}). 
\begin{figure}
\centering
    \begin{tikzpicture}[scale=1]
        \draw[thick,fill=gray!20] (0, 0) circle (2cm);
        
        \draw[red,thick,<->] (.201,.618) -- (.417,1.284);
        \draw[red,thick,<->] (-.526,.382) -- (-1.092,.794);
        \draw[red,thick,<->] (-.526,-.382) -- (-1.092,-.794);
        \draw[red,thick,<->] (.201,-.618) -- (.417,-1.284);
        \draw[red,thick,<->] (.65,0) -- (1.35,0);
        
        \draw[blue,thick,->] (0,0) -- (.108,.333);
        \draw[blue,thick,->] (0,0) -- (-.283,.206);
        \draw[blue,thick,->] (0,0) -- (.108,-.333);
        \draw[blue,thick,->] (0,0) -- (-.283,-.206);
        \draw[blue,thick,->] (0,0) -- (.35,0);
        
        \draw[fill=black] (0, 0) circle (.07cm);
        \draw[fill=black] (1, 0) circle (.07cm);
        \draw[fill=black] (.309, .951) circle (.07cm);
        \draw[fill=black] (-.809, .588) circle (.07cm);
        \draw[fill=black] (-.809, -.588) circle (.07cm);
        \draw[fill=black] (.309, -.951) circle (.07cm);
        
        \draw (.417,1.434) node {{\color{red} $\vv_\zeta$}};
        \draw (.108,.483) node {{\color{blue} $\ww_\zeta$}};
        
    \end{tikzpicture}
    \hspace{.8cm}
    \begin{tikzpicture}[scale=1]
        \draw[thick,fill=gray!20] (0, 0) circle (2cm);
        \draw[fill=white] (0, 0) circle (.15cm);
        \draw[fill=white] (1, 0) circle (.15cm);
        \draw[fill=white] (.309, .951) circle (.15cm);
        \draw[fill=white] (-.809, .588) circle (.15cm);
        \draw[fill=white] (-.809, -.588) circle (.15cm);
        \draw[fill=white] (.309, -.951) circle (.15cm);
        
        \draw[fill=red,red] (1.15, 0) circle (.05cm);
        \draw[fill=red,red] (.355, 1.094) circle (.05cm);
        \draw[fill=red,red] (-.93, .676) circle (.05cm);
        \draw[fill=red,red] (-.93, -.676) circle (.05cm);
        \draw[fill=red,red] (.355, -1.094) circle (.05cm);
        
        \draw[fill=red,red] (.85, 0) circle (.05cm);
        \draw[fill=red,red] (.263, .808) circle (.05cm);
        \draw[fill=red,red] (-.688, .5) circle (.05cm);
        \draw[fill=red,red] (-.688, -.5) circle (.05cm);
        \draw[fill=red,red] (.263, -.808) circle (.05cm);
        
        \draw[fill=blue,blue] (.15, 0) circle (.05cm);
        \draw[fill=blue,blue] (.046, .143) circle (.05cm);
        \draw[fill=blue,blue] (-.121, .088) circle (.05cm);
        \draw[fill=blue,blue] (-.121, -.088) circle (.05cm);
        \draw[fill=blue,blue] (.046, -.143) circle (.05cm);
        
        \draw (.417,1.384) node {{\color{red} $\vv_\zeta$}};
        \draw (.108,.333) node {{\color{blue} $\ww_\zeta$}};

    \end{tikzpicture}
    \caption{The tangential base points $\pm\vv_\zeta$ and $\ww_\zeta$ in $U_N$ and $\Bl^o_{\{0,\bmu_N\}}U_N$}
    \label{fig:bp}
\end{figure}
Set $\tensor*[_x]{\Pi}{^\betti_y} := \Q\pi(U_N;x,y)^{\wedge}$, the $I$-adic completion of the path torsor \eqref{eqn:torsor}. Likewise, set $\tensor*[_x]{\Pi}{^\DR_y}$ to be the tensor coalgebra $A^\wedge$ as defined in \S\ref{sec:piMHS} with $U = U_N$ and $D = \{0\} \cup \bmu_N$. There is a comparison isomorphism 
\begin{equation}
    \label{eqn:cBdR}
    c^{B,\DR} : \tensor*[_x]{\Pi}{^\betti_y} \otimes_\Q \C \to \tensor*[_x]{\Pi}{^\DR_y} \otimes_\Q \C
\end{equation}
as defined in \eqref{eqn:chen}. 
\begin{theorem}[\cite{DG}]
\label{thm:DG}
Suppose $x,y,z \in \{\pm\vv_\zeta, \ww_\zeta \mid \zeta \in \bmu_N\}$. The coordinate rings $\cO(\tensor*[_x]{\Pi}{^\betti_y})$ and $\cO(\tensor*[_x]{\Pi}{^\DR_y})$ are the Betti and de~Rham realizations, respectively, of an ind-object $\cO(\tensor*[_x]{\Pi}{^\mot_y})$ in the category $\MTM_N$. The comparison isomorphism is induced by \eqref{eqn:cBdR}. The coproducts $\cO(\tensor*[_x]{\Pi}{^\mot_z}) \to \cO(\tensor*[_x]{\Pi}{^\mot_y}) \otimes \cO(\tensor*[_y]{\Pi}{^\mot_z})$ are morphisms in $\MTM_N$.
\end{theorem}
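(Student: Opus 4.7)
The plan is to follow the strategy of Deligne and Goncharov: realize each finite-dimensional quotient of $\cO(\tensor*[_x]{\Pi}{^\bullet_y})$ by a power of the augmentation ideal as the (dual of the) relative cohomology of a product of copies of $U_N$, and then argue that this cohomology is a mixed Tate motive unramified outside primes dividing $N$.

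First I would use the Beilinson--Chen formula: for (ordinary) basepoints $x,y \in U_N$, there is a canonical isomorphism
$$
\bigl(\cO(\Pi_{x,y})/J^{n+1}\bigr)^{\vee} \;\cong\; H^n\bigl(U_N^{n},\; \Sigma_{x,y}^{(n)}\bigr),
$$
where $\Sigma_{x,y}^{(n)}$ is the union of the consecutive diagonals $\{z_i = z_{i+1}\}$ together with the face divisors $\{z_1 = x\}$ and $\{z_n = y\}$, and the pairing is given by iterated integrals. Both sides are naturally the Betti and de~Rham realizations of an object in the triangulated category of motives over $\cO_N$, and the comparison \eqref{eqn:cBdR} becomes the usual comparison for this relative cohomology. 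Taking the pro-limit in $n$ and dualizing produces the claimed ind-object.

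Next I would show this relative cohomology motive lies in $\MTM_N$ by induction on $n$. Using the residue/excision spectral sequence associated to the stratification of $\Sigma_{x,y}^{(n)}$ by multi-diagonal intersections, the associated graded is built from cohomologies of products of open strata, each isomorphic to a product of copies of $U_N$. Since $H^\bullet(U_N)$ is a sum of pure Tate motives (copies of $\Q$ and $\Q(-1)$ indexed by the punctures), the graded pieces are pure Tate of the correct weight, so the total motive is an iterated extension of pure Tate motives and hence mixed Tate.

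The main obstacle is verifying the \emph{unramifiedness} condition defining $\MTM_N$: the $\ell$-adic realization must be unramified outside primes dividing $N\ell$ and crystalline at $\ell$. This reduces to showing the pair $(U_N^{n}, \Sigma_{x,y}^{(n)})$ has good reduction over $\Spec \cO_N$, which ultimately comes from the fact that $Z^N - 1$ is separable away from $N$; Deligne--Goncharov obtain this via Beilinson--Levine's construction of $\MTM_N$ as the heart of a $t$-structure on Voevodsky/Levine motives over $\cO_N$. To handle the tangential basepoints $\pm\vv_\zeta$ and $\ww_\zeta$, I would replace the pair above by its specialization at the corresponding boundary stratum of a toroidal compactification of $U_N^{n}$; compatibility of the nearby cycles functor with the motivic $t$-structure preserves the mixed Tate property. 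Finally, the coproduct compatibility of path concatenation $\cO(\tensor*[_x]{\Pi}{^\mot_z}) \to \cO(\tensor*[_x]{\Pi}{^\mot_y}) \otimes \cO(\tensor*[_y]{\Pi}{^\mot_z})$ is induced geometrically by the natural ``insertion'' morphism $(U_N^{m+n}, \Sigma_{x,z}^{(m+n)}) \to (U_N^{m}, \Sigma_{x,y}^{(m)}) \times (U_N^{n}, \Sigma_{y,z}^{(n)})$, which is manifestly a morphism in the category of motives and hence in $\MTM_N$.
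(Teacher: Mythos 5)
The paper cites this result from \cite{DG} without giving a proof, so there is no in-paper argument to compare against. Your outline does track the Deligne--Goncharov strategy --- Beilinson's formula $H^n(U_N^n,\Sigma_{x,y}^{(n)})$ for the truncated coordinate ring, devissage along the stratification by face divisors and diagonals to land in the mixed Tate heart, an unramifiedness check, and then an extension to tangential basepoints --- but two of the mechanisms you invoke are not the ones that are actually used.

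First, $\MTM(\cO_N)$ is not constructed in \cite{DG} as the heart of a $t$-structure on a category of Voevodsky/Levine motives over $\Spec\cO_N$; it is defined as the full subcategory of $\MTM(\Q(\bmu_N))$ (which \emph{is} such a heart over the field) consisting of objects whose $\ell$-adic realizations are unramified away from $N\ell$ and crystalline at $\ell$. Consequently unramifiedness of the path-torsor motive is not established by proving ``good reduction of the pair $(U_N^n,\Sigma_{x,y}^{(n)})$ over $\Spec\cO_N$'' in a geometric sense. Deligne and Goncharov instead use their criterion that a mixed Tate motive over $\Q(\bmu_N)$ lies in $\MTM(\cO_N)$ if and only if every length-two subquotient which is an extension of $\Q$ by $\Q(1)$ has class in $\cO_N^\times\otimes\Q\subset\Q(\bmu_N)^\times\otimes\Q = \Ext^1_{\MTM(\Q(\bmu_N))}(\Q,\Q(1))$. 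For the path torsor these $\Ext^1$ classes are logs of differences $\zeta-\zeta'$ with $\zeta,\zeta'\in\{0\}\cup\bmu_N$, which are $S$-units exactly because $Z^N-1$ is separable away from $N$; that is the rigorous content of your ``separability'' heuristic, but the vehicle is the $\Ext^1$ criterion, not an integral model of the pair. Second, \cite{DG} do not invoke a motivic nearby-cycles or specialization functor, which was not available with the required $t$-exactness; their \S5 treats tangential basepoints by a more explicit limiting construction that exploits the $\Gm$-torsor structure of the punctured tangent space and the combinatorics of the boundary divisors. With these two steps replaced by the arguments \cite{DG} actually make, your plan is sound.
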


\subsection{Periods of \texorpdfstring{{{$\tensor*[_x]{\Pi}{^\mot_y}$}}}{TEXT}}

The universal 1-form $\Omega$ is given by 
$$
\Omega = \omega_0\ee_0 + \sum_{\zeta \in \bmu_N} \omega_\zeta\ee_\zeta.
$$
By the theorem, the transport $c^{B,\DR}$ of $\Omega$ is the transpose of the comparison isomorphism of $\cO(\tensor*[_x]{\Pi}{^\mot_y})$. Let $\cP_N^\m$\label{not:PN} denote the motivic periods of $\MTM_N$. The matrix coefficients of $c^{B,\DR}$ are in $\cP_N^\m$, and thus $c^{B,\DR}$ restricts to a homomorphism
$$
c^{B,\DR} : \Q\pi_1(U_N;x,y)^\wedge \to \cP^\m_N\ll\ee_0,\ee_\zeta \mid \zeta \in \bmu_N\rr
$$
of cocommutative Hopf algebras. The image is generated as an algebra by two distinct types of elements: exponentials and Drinfeld associators. 

\subsubsection{Exponentials}

In this case, suppose $x,y \in \{\pm\vv_\zeta, \ww_\zeta \mid \zeta \in \bmu_N\}$ are anchored at the same point $r \in \{0\} \cup \bmu_N$. Let $\gamma_r^{x,y}$ denote the counterclockwise path about $r$ from $x$ to $y$. If $x = y$, then $\gamma_r^{x,x}$ is a simple closed loop about $r$ based at $x$. Then
\begin{align*}
    c^{B,\DR}(\gamma_r^{x,y}) &= \sum_{[\omega_{r_1}|\cdots|\omega_{r_n}] \in \cO(\tensor*[_x]{\Pi}{^\DR_y})}  [\cO(\tensor*[_x]{\Pi}{^\mot_y}),[\omega_{r_1}|\cdots|\omega_{r_n}],(\gamma_r^{x,y})^\vee]^\m\ee_{r_1}\cdots \ee_{r_n} \cr
    &= \sum_{n \ge 0} [\cO(\tensor*[_x]{\Pi}{^\mot_y}),[\underbrace{\omega_{r}|\cdots|\omega_{r}}_{n \text{ times}}],(\gamma_r^{x,y})^\vee]^\m\ee_r^n \cr
    &= \sum_{n \ge 0} \frac{(k\Le/N)^n\ee_r^n}{n!} \cr
    &= \exp(k\Le\ee_r/N),
\end{align*}
where $k \in \{1,2,\ldots, N\}$ such that $y = e^{2\pi i k/N}x$. 

\subsubsection{Cyclotomic Drinfeld associators}
\label{sec:drinfeld}

Fix $x = \ww_1$ and $y = -\vv_1$. Let $\dch \in \pi_1(U_N;\ww_1,-\vv_1)$\label{not:dch} denote the straight line path, or ``droit chemin,'' from $\ww_1$ and $-\vv_1$. Then 
\begin{equation}
    \label{eqn:drinfeld}
    c^{B,\DR}(\dch) = \sum_{[\omega_{r_1}|\cdots|\omega_{r_n}] \in \cO(\tensor*[_x]{\Pi}{^\DR_y})} [\cO(\tensor*[_x]{\Pi}{^\mot_y}),[\omega_{r_1}|\cdots|\omega_{r_n}], \dch^\vee]^\m\ee_{r_1}\cdots \ee_{r_n},
\end{equation}
where 
$$
\per [\cO(\tensor*[_x]{\Pi}{^\mot_y}),[\omega_{r_1}|\cdots|\omega_{r_n}], \dch^\vee]^\m = \int_\dch \omega_{r_1} \cdots \omega_{r_n}. 
$$
To account for the limit MHS at $\vv_0$ and $-\vv_1$, the integrals above are regularized at $\vv_0$ if $r_1 = 0$ and regularized at $-\vv_1$ if $r_n \neq 0$ (see \S\ref{sec:reg}). By \eqref{eqn:Li}, each integral evaluates to an $N$-cyclotomic MZV (see \S\ref{sec:Nmzv}) \cite{furusho}. Thus, we shall refer to these unevaluated motivic periods as {\em motivic $N$-cyclotomic MZVs}. The right hand side of \eqref{eqn:drinfeld} is called the {\em motivic Drinfeld associator} and denoted $\Phi^\m_{01}$.\label{not:drin} It is group-like in $\cP^\m_N\ll \ee_0,\ee_\zeta\rr$ \cite{drinfeld}. The image of the straight line path from $\ww_\eta$ to $-\vv_\eta$ under $c^{B,\DR}$ is obtained by the change of variables $w \mapsto w\eta$. This is simply the image of $\Phi_{01}^\m$ under the action $\ee_r \mapsto \ee_{r\eta}$ for all $r \in \{0\} \cup \bmu_N$. We denote this associator by $\Phi_{0\eta}^\m$. Its periods are also the motivic $N$-cyclotomic MZVs.



\section{The KZ and KZB local systems}

Suppose $X$ is a Riemann surface. Recall the notation $\piun(X,x)$ for the unipotent completion of $\pi_1(X,x)$ and $\p(X,x)$ for its Lie algebra.  

\subsection{The KZ local system}

Denote by $\bP_N^{\KZ,\topo}$ the $\Q$-local system over $U_N$ whose fiber over $w \in U_N$ is $\p(U_N,w)$. The completed group ring $\Q \pi_1(U_N,w)^\wedge$ is canonically isomorphic to the universal enveloping algebra of the completed free Lie algebra $\p(U_N,w)$. Thus, the discussion in \S\ref{sec:piMHS} gives $\bP_N^{\KZ,\topo}$ the structure of an admissible VMHS over $U_N$. After tensoring with $\C$, each fiber is isomorphic to the completed free Lie algebra \label{not:KZ}
\begin{align*}
\p_N^\KZ :&= \bL(\ee_0,\ee_\infty,\ee_\zeta \mid \zeta \in \bmu_N)^\wedge \big/ \left(\ee_0 + \ee_\infty + \sum_\zeta \ee_\zeta = 0\right) \cr
&\cong \bL(\ee_0,\ee_\zeta \mid \zeta \in \bmu_N)^\wedge,
\end{align*}
Define $\bP^\KZ_N$ to be the trivial bundle $\p^\KZ_N \times U_N$ over $U_N$ with flat connection
\begin{equation}
\label{eqn:KZ}
\nabla_{\KZ_N} = d + \Omega_{\KZ_N} = d + \ee_0\frac{dw}{w} + \sum_{\zeta \in \bmu_N} \ee_\zeta \frac{dw}{w - \zeta},
\end{equation}
where $\ee_0$ and $\ee_\zeta$ act on fibers $\p_N^\KZ$ via left adjoint. This is a well-known cyclotomic generalization of the Knizhnik--Zamolodchikov (KZ) connection \cite{KZe,gonch:poly}. The variation $\bP^\KZ_N$ is a special case of \eqref{eqn:pvar}. In particular, it is a unipotent admissible VMHS and is isomorphic as a flat vector bundle to $\bP^{\KZ,\topo}_N$.

\begin{prop}
\label{prop:pobj}
At each tangential base point $\vec{v} \in \{\ww_\zeta,\pm\vv_\zeta \mid \zeta \in \bmu_N\}$, the limit MHS of $\bP^\KZ_N$ is the Hodge realization of a pro-object of $\MTM_N$.
\end{prop}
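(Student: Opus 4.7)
The plan is to derive the proposition from Theorem~\ref{thm:DG} by extracting the Lie algebra of the motivic unipotent fundamental group as a pro-object of $\MTM_N$. Apply Theorem~\ref{thm:DG} with $x = y = \vec v$: one obtains an ind-object $A := \cO(\tensor*[_{\vec v}]{\Pi}{^\mot_{\vec v}})$ of $\MTM_N$ whose Betti and de~Rham realizations are the complete Hopf algebras $\cO(\piun(U_N,\vec v)^\betti)$ and $\cO(\piun(U_N,\vec v)^\DR)$, and whose comparison is induced by \eqref{eqn:cBdR}. The Hopf algebra structure on $A$ is motivic: its commutative multiplication and unit come from its scheme structure, its counit is evaluation at the constant path, and its comultiplication is the $y = \vec v$ specialization of the coproduct of Theorem~\ref{thm:DG}. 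Consequently $\Pi := \Spec A$ is a prounipotent group scheme in pro-$\MTM_N$, and its Lie algebra $L$ is a pro-object of $\MTM_N$.

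The realizations of $L$ are easily identified. Its Betti realization is the Lie algebra of primitive elements of $\Q\pi_1(U_N, \vec v)^\wedge$, which by the theory of unipotent completion is $\p(U_N, \vec v)$. Its de~Rham realization is the Lie algebra of primitive elements of the complete tensor algebra $T(V^\vee)^\wedge$ on $V^\vee = \Q\langle \ee_0, \ee_\zeta \mid \zeta \in \bmu_N\rangle$; by the Milnor--Moore theorem this is the free Lie algebra $\bL(V^\vee)^\wedge \cong \p^\KZ_N$. The comparison isomorphism $c^{B,\DR}$ of \eqref{eqn:cBdR} is itself a map of Hopf algebras, so it restricts on primitives to a Lie algebra isomorphism $\p(U_N,\vec v) \otimes \C \xrightarrow{\sim} \p^\KZ_N \otimes \C$ given by (regularized) Chen transport of the connection form $\Omega_{\KZ_N}$.

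What remains is to identify the Hodge realization of $L$ with the limit MHS of $\bP^\KZ_N$ at $\vec v$, and this is where I expect the only real work. For an interior base point $w \in U_N$, the fiber of $\bP^\KZ_N$ at $w$ is by construction (\S\ref{sec:piMHS}) the canonical MHS on $\p(U_N,w)$ defined via Chen transport of $\Omega_{\KZ_N}$, and this agrees with the Hodge realization of $L$ at $w$ because both express the canonical MHS on the fundamental group. For tangential $\vec v$, the admissibility of $\bP^\KZ_N$ (asserted after \eqref{eqn:KZ}) implies that its limit MHS at $\vec v$ is computed by regularized Chen transport along a path ending at the boundary point $[\vec v]$ of the real oriented blow-up \cite{hainzucker, deligne:P1, hain:bowdoin}; this is exactly the recipe giving the Hodge realization of $L$ at $\vec v$. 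Checking that the Hodge and weight filtrations and the comparison isomorphism coincide under these two constructions then completes the proof.
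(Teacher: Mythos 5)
Your proposal is correct and follows essentially the same route as the paper: the paper likewise deduces the result from Theorem~\ref{thm:DG}, simply noting that $\p(U_N,\vec v)$ is (dual to) a subquotient of the motivic coordinate ring $\cO(\tensor*[_{\vec v}]{\Pi}{^\mot_{\vec v}})$ — your extraction of the Lie algebra via primitives/Milnor--Moore is equivalent to the paper's identification with the Zariski tangent space. Your added discussion matching the limit MHS at $\vec v$ with the Hodge realization coming from regularized Chen transport is more explicit than the paper's two-line proof, but it is the same argument, with the tangential base points already covered by Theorem~\ref{thm:DG}.
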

\begin{proof}
The Lie algebra $\p(U_N,\vec{v})$ is canonically isomorphic to the Zariski tangent space of $\tensor*[_{\vec{v}}]{\Pi}{^\DR_{\vec{v}}}$ and thus dual to a subquotient of $\cO(\tensor*[_{\vec{v}}]{\Pi}{^\DR_{\vec{v}}})$. The result follows from Theorem \ref{thm:DG}.
\end{proof}
\begin{cor}
The {\em inverse} transport of the KZ connection along the straight line path $\dch$ from $\p(U_N,\ww_1)$ to $\p(U_N,-\vv_1)$ is the {\em left} adjoint of the motivic Drinfeld associator $\Phi^\m_{01}$.
\end{cor}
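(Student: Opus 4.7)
The plan is to identify the KZ parallel transport along $\dch$ with conjugation by the Chen transport $\Theta(\dch)$ inside the completed enveloping algebra, then use the standard identity ``conjugation by a group-like $\Phi$ equals the iterated left adjoint action of $\Phi^{-1}$'' to pass to the left adjoint formulation.

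Under the flat isomorphism $\bP^\KZ_N \cong \bP^{\KZ,\topo}_N$ supplied by Chen's transport, the KZ parallel transport along a path $\gamma \colon x \to y$ corresponds to the natural topological identification $\p(U_N,x) \to \p(U_N,y)$, $\alpha \mapsto \gamma^{-1}\alpha\gamma$. Conjugating by the fiberwise isomorphisms $\Theta_x,\Theta_y$ into the trivialization $\p^\KZ_N$, this becomes conjugation by $\Theta(\gamma)$ in the completed enveloping algebra of $\p^\KZ_N$, restricted to the primitive Lie subalgebra. For $\gamma = \dch$, Proposition \ref{prop:pobj} and Theorem \ref{thm:DG} promote $\Theta(\dch)$ to the motivic element $\Phi^\m_{01}$ defined in \eqref{eqn:drinfeld}, so the (motivic) KZ transport along $\dch$ is conjugation by $\Phi^\m_{01}$ inside $\cP^\m_N \ll \ee_0,\ee_\zeta \rr$.

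Since $\Phi^\m_{01}$ is group-like (\S\ref{sec:drinfeld}), a standard computation in a completed cocommutative Hopf algebra shows that conjugation by a group-like element $\Phi$, restricted to the Lie subalgebra, coincides with the iterated left adjoint action of $\Phi^{-1}$; equivalently, the \emph{inverse} of conjugation by $\Phi$ is the iterated left adjoint action of $\Phi$. (On a primitive $v$, writing $\Phi = \exp(x)$ with $x$ primitive, both sides equal $e^{\ad_x}(v)$.) Applied to $\Phi = \Phi^\m_{01}$, this yields the statement of the corollary.

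I expect the main technical obstacle to be the bookkeeping of sign and inverse conventions, in particular verifying that Chen's transport of $\dch$ produces $\Phi^\m_{01}$ itself rather than its inverse. This reduces to the order-of-integration convention for $\int_\dch \omega_{r_1}\cdots \omega_{r_n}$ fixed in \S\ref{sec:drinfeld} and the topologist's convention for path composition used throughout the paper. Once these are matched with the direction of the parallel transport ODE for the left-adjoint action, the argument is essentially a single application of the group-like identity above.
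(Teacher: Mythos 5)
Your proposal is correct and takes essentially the same route as the paper: the paper's proof likewise identifies transport of the Lie-algebra variation with (left) adjoint action by the Chen series of the path (the discussion at the end of \S\ref{sec:piMHS} together with Chen's formula $T(\dch)^{-1} = c^{B,\DR}(\dch)$) and then invokes Theorem \ref{thm:DG} to read $c^{B,\DR}(\dch)$ as $\Phi^\m_{01}$. One caution on wording: your identity ``conjugation by a group-like $\Phi$ equals the iterated left adjoint action of $\Phi^{-1}$'' holds only under your implicit convention that conjugation by $\Phi$ means $u \mapsto \Phi^{-1} u \Phi$ (which is indeed the map your transport computation produces from $\alpha \mapsto \gamma^{-1}\alpha\gamma$); with the usual convention $u \mapsto \Phi u \Phi^{-1}$, conjugation by a group-like $\Phi$ is the left adjoint action of $\Phi$ itself, which is how the paper phrases it, so your final conclusion agrees with the corollary once the conventions are matched as you indicate.
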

\begin{proof}
By the discussion at the end of \S\ref{sec:piMHS}, transport of $\bP^\KZ_N$ from -$\vv_1$ to $\ww_1$ along the inverse of $\dch$ is left adjoint by 
$$
\Theta_{\ww_1,-\vv_1}^\KZ = T(\dch)^{-1} = c^{B,\DR}(\dch) = \Phi_{01}^\m.
$$
\end{proof}
\begin{remark}
\label{rem:eQ}
As dual elements of the algebraic de~Rham cohomology $H^1_\DR(U_N)$, the basis elements $\ee_\zeta$ are defined over $\Q(\bmu_N)$. However, by the argument in Example \ref{ex:wQ}, they form a $\Q$-basis of $\omega(\p(U_N,\vv_1)) = \p(U_N,\vv_1)^\DR$.
\end{remark}




\subsection{The KZB local system}
\label{sec:kzb}

The KZB local system is a genus 1 analogue of the KZ local system. It is defined over a universal elliptic curve, which we will briefly describe now. For any congruence subgroup $\G \subset \SL_2(\Z)$ of level $N$, define the modular curve $Y_\G := \G \bbs \h$.\label{not:YG} Define the universal elliptic curve $\E_\G \to Y_\G$\label{not:EG} by the quotient $\E_\G = (\G \ltimes \Z^2) \bbs (\h \times \C)$. The fiber of $\E_\G$ above $[\tau] \in Y_\G$ is the elliptic curve $E_\tau := \C/\Lambda_\tau$, where $\Lambda_\tau := \Z \oplus \tau\Z$. Both $Y_\G$ and $\E_\G$ will be regarded as complex analytic varieties (or orbifolds when $N < 3$). The modular curve $Y_\G$ has natural compactification $X_\G$\label{not:XG} by gluing in disks at each of the cusps in $\G \bs \P^1(\Q)$. The universal curve $\E_\G \to Y_\G$ has a compactification $\overline{\E}_\G \to X_\G$ by gluing in nodal cubic curves and polygons on $\P^1$'s above each cusp of $X_\G$. 

Let $\E_\h$ denote the pullback of $\E_\G$ to $\h$. Denote the set of $N$-torsion sections of $\E_\G$ and $\E_\h$ by $\E_\G[N]$\label{not:EGN} and $\E_\h[N]$, respectively. If $\alpha \in \E_\G[N]$, it pulls back to a unique section $\tilde{\alpha} \in \E_\h[N]$. The lift $\tilde{\alpha}$ has coordinates $(x_\alpha,y_\alpha) \in (N^{-1}\Z/\Z)^2$ such that 
\begin{equation}
\label{eqn:lift}
    \tilde{\alpha} : \tau \longmapsto x_\alpha+y_\alpha\tau \bmod \Lambda_\tau.
\end{equation}
This defines a group isomorphism $\E_\h[N] \to (N^{-1}\Z/\Z)^2$, and the natural right action of $\SL_2(\Z)$ on $(\Z/N\Z)^2$ induces a right action on $\E_\h[N]$. Then 
$$
\E_\G[N] = \E_\h[N]^\G \cong \left((N^{-1}\Z/\Z)^2\right)^\G.
$$
The closure of a section $\alpha \in \E_\G[N]$ intersects the $\P^1$ and $E_0$ components of the singular fibers of $\overline{\E}_\G$ at an $N$th root of unity. 

For the purposes of this paper, we will focus on the congruence subgroup 
$$
\G_1(N) = \left\{ \begin{pmatrix} a & b \cr c & d \end{pmatrix} \equiv \begin{pmatrix} 1 & \ast \cr 0 & 1 \end{pmatrix} \bmod N \right\} \subset \SL_2(\Z). 
$$
The $N$-torsion sections of $\E_{\G_1(N)}$ form a group isomorphic to $\bmu_N$ since the $N$ sections of $\E_{\G_1(N)}[N]$ intersect the nodal cubic $E_0$ over $\tau = i \infty$ at the roots of unity $\bmu_N \subset \Gm \subset E_0$. Set $\E_{\G_1(N)}' := \E_{\G_1(N)} - \E_{\G_1(N)}[N]$\label{not:EGminus}. The fiber of $\E_{\G_1(N)}'$ above $[\tau]$ is $E_\tau' := E_\tau - \langle 1/N \rangle$. 

The $\G_1(N)$ KZB local system $\bP_{\G_1(N)}^\topo$ is defined over $\E_{\G_1(N)}'$, where the fiber above $[\tau,x]$ is $\p(E_\tau',x)$. After tensoring with $\C$, each fiber is non-canonically isomorphic to the completed free Lie algebra\label{not:pN}\label{not:XY}
$$
\p_N := \bL\left(\bX,\bY,\bt_\alpha \mid \zeta \in \bmu_N \right)^\wedge \big/ \left(\sum_{\alpha} \bt_\alpha = [\bX,\bY]\right). 
$$
The $\G_1(N)$ KZB connection $\nabla_{\G_1(N)}^\KZB = d + \Omega_{\G_1(N)}^\KZB$, where 
$$
\Omega_{\G_1(N)}^\KZB \in \Omega^1(\h \times \C, \log \Lambda_\tau) \otimes \Der \p_N,
$$
is a flat meromorphic connection on the trivial bundle $\p_N \times \h \times \C \to \h \times \C$ that is invariant with respect to $\G_1(N) \ltimes \Z^2$. The full formula of the connection form can be found in \cite[\S7,\S12]{hopper}, but for the purposes of this paper we only need its formula in a neighborhood of the singular fiber $E_0$ of $\E_{\G_1(N)}$ above $\tau = i\infty$. In this neighborhood, the connection form simplifies to
\begin{multline}
\label{eqn:KZBrest}
    \Omega_{\G_1(N)}' = \left(\bY \frac{\partial }{\partial\bX} + \frac12 \sum_{\substack{\zeta \in \bmu_N \\ m \ge 0}} \frac{B_{2m+2}}{(2m)!(2m+2)} \e_{2m+2,\zeta}\right) \frac{dq}{q} \cr
    + \frac{\bX}{e^\bX - 1} \cdot \bY \, \frac{dw}{w} + \sum_{\zeta \in \bmu_N} \bt_\zeta \, \frac{dw}{w - \zeta},
\end{multline}
where $B_{2m+2}$ are Bernoulli numbers, $q = e^{2\pi i \tau}$, $w = e^{2\pi i z}$ and $\e_{2m + 2,\zeta} \in \Der \p_N$. This formula is adapted from the bilevel KZB equations of Calaque and Gonzalez \cite{CG}, which are a generalization of the level 1 KZB connection first derived by Calaque, Enriquez, and Etinghof \cite{CEE} and Levin and Racinet \cite{LR}.

Since the connection on the trivial bundle $\p_N \times \h \times \C \to \h \times \C$ is invariant with respect to $\G_1(N) \ltimes \Z^2$, it descends to a meromorphic connection on the quotient bundle over $\E_{\G_(N)}'$, which we will call $\bP_{\G_1(N)}$.
\begin{theorem}[{\cite[Thm 9.2]{hopper}}]
Parallel transport of the KZB connection induces an isomorphism between $\bP_{\G_1(N)}$ and the flat vector bundle $\bP_{\G_1(N)}^\topo \otimes \cO_{\E_{\G_1(N)}'}$.
\end{theorem}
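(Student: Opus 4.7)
The plan is to construct the isomorphism upstairs on the universal cover and then descend, comparing horizontal sections of the KZB connection with sections of the topological local system first on fibers, then in the modular direction. Pull both bundles back along $\h \times \C \to \E_{\G_1(N)}'$ (with the preimage of the $N$-torsion sections removed). The bundle $\bP_{\G_1(N)}^\topo \otimes \cO$ is defined by descent of a $(\G_1(N) \ltimes \Z^2)$-equivariant local system, and the KZB connection on the trivial bundle $\p_N \times \h \times \C$ is equivariant by its construction in \cite{CG} and \cite{hopper}. It therefore suffices to produce a $(\G_1(N) \ltimes \Z^2)$-equivariant isomorphism of flat bundles on the cover.

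For a fixed $\tau_0 \in \h$, restrict the KZB connection to the fiber $\{\tau_0\} \times (\C - \Lambda_{\tau_0}\cdot N^{-1}\Z^2)$. The restricted connection is a flat meromorphic connection on a trivial $\p_N$-bundle over $E_{\tau_0}'$ with logarithmic singularities at the punctures whose residues are precisely the generators $\bt_\zeta$ (visible in \eqref{eqn:KZBrest} and in its natural extension from a neighborhood of the cusp to all of $E_{\tau_0}'$). By the iterated-integral theory recalled in \S\ref{sec:piMHS}, specialized to the punctured elliptic curve via Hain's construction in \cite{hain:kzb}, parallel transport of this fiberwise connection is a canonical isomorphism between $\p_N$ and $\p(E_{\tau_0}', x)$. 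This is the pointwise version of the desired identification.

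It remains to show that these pointwise isomorphisms glue into a flat isomorphism as $\tau$ varies, i.e., that the $d\tau$ component of $\Omega_{\G_1(N)}^\KZB$ matches the natural connection on the variation $\{\p(E_\tau', x)\}_{[\tau,x]}$ underlying $\bP_{\G_1(N)}^\topo$. Near the cusp $\tau = i\infty$, the Eisenstein-weighted derivations $\e_{2m+2,\zeta}$ appearing in \eqref{eqn:KZBrest} are exactly the de~Rham images of the geometric monodromy derivations that control how $\p(E_\tau', x)$ degenerates to $\p(E_0 - \bmu_N, \vv_1)$. One verifies the match by computing both sides in a punctured neighborhood of the cusp using the Hain map, which identifies the limit local system of $\bP_{\G_1(N)}^\topo$ at $\partial/\partial q$ with $\p_N$ and the geometric monodromy operators with the $\e_{m,\zeta}$. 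Once this is established at one cusp, modular equivariance propagates it across all of $\E_{\G_1(N)}'$, and equivariance of the resulting isomorphism under $\G_1(N) \ltimes \Z^2$ is automatic because both the KZB connection and the functorially-defined topological local system are equivariant.

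The main obstacle is the third step: checking that the specific Eisenstein weights built into the $d\tau$ part of the KZB connection (the Bernoulli/Eisenstein expansions in \cite{CG}) are precisely those determined by the geometric variation of the punctured elliptic curve's unipotent fundamental group. For $N=1$ this is one of the core computations of \cite{hain:kzb}; for general $N$ it reduces, via the Hain map treated in Sections 5 and 8 of the present paper, to verifying that the limit MHS at the cusp is the one predicted by Theorem \ref{bthm:obj}. The remaining steps (fiberwise Chen's theorem and $\G_1(N) \ltimes \Z^2$-equivariance) are then relatively formal.
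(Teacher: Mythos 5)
The paper itself does not prove this statement: it is imported verbatim from \cite[Thm 9.2]{hopper}, so your proposal can only be judged against the standard argument given there. Your overall architecture (work $(\G_1(N)\ltimes\Z^2)$-equivariantly on $\h\times\C$, identify fibers first, then globalize) is the right one, but the substantive step is missing. The real content of the theorem is fiberwise: that transport/monodromy of the KZB connection restricted to a fiber $E_\tau'$ induces an isomorphism $\Q\pi_1(E_\tau',x)^\wedge \cong U\p_N$, equivalently $\p(E_\tau',x)\cong\p_N$. You assert this by appealing to ``Chen's theory specialized via Hain's construction in \cite{hain:kzb}'', but that reference is the level~$1$ case; in level $N$ the assertion \emph{is} \cite[Thm 9.2]{hopper}, the statement to be proven, so the appeal is circular. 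What must actually be checked, and is never supplied, is that the fiberwise connection takes values in derivations preserving the lower central series (so monodromy is pro-unipotent and factors through $\piun(E_\tau',x)$ by the universal property), and that the induced map on $H_1$ is an isomorphism --- concretely, that the images of $\alpha$, $\beta$, $\gamma_\zeta$ have leading terms $\bX$, $\bY$ and conjugates of $\bt_\zeta$; freeness/equality of graded dimensions on both sides (as in the style of Lemma \ref{lem:free}) then upgrades the $H_1$-isomorphism to an isomorphism of the completed Lie algebras. This residue/$H_1$ computation is the heart of the proof and is absent.

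Your Step 3 is also misdirected and, as stated, invalid. The theorem concerns the underlying flat bundles only, not a match between the KZB form and a pre-specified connection on a variation: once the fiberwise identification is defined by parallel transport of the full flat connection along loops lying in the fibers, compatibility with transport in the modular direction is automatic from flatness (homotopy invariance) and multiplicativity, $T(c^{-1}\gamma c)=T(c)^{-1}T(\gamma)T(c)$, so no comparison of $d\tau$-components and in particular no verification of the Eisenstein/Bernoulli coefficients is needed for this statement (those coefficients matter for flatness and for the Hodge-theoretic assertions, not for identifying the flat bundle). Conversely, the argument you do offer --- check the match near the cusp via the Hain map and then ``propagate by modular equivariance'' --- does not work: the $\G_1(N)\ltimes\Z^2$-translates of a cuspidal neighborhood are horoball regions and do not cover $\E_{\G_1(N)}'$, so equivariance cannot propagate an identity of connection forms from the cusp to the whole universal curve; one would instead have to invoke flatness and analytic continuation of the relevant flat $\Hom$-section, or simply avoid the issue by the multiplicativity argument above.
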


Furthermore, we may define Hodge, weight, and relative weight filtrations on the completed Lie algebra $\p_N$:
\begin{align}
\begin{split}
    \label{eqn:filtrations}
    F^{-p} &= \{x \in \p_N \mid \deg_\bY(x) + \sum\deg_{\bt_\zeta}(x) \leq p\} \cr
    W_{-m} &= \{x \in \p_N \mid \deg_\bY(x) + \deg_\bX(x) + 2 \sum\deg_{\bt_\zeta}(x) \geq m\} \cr 
    M_{-m} &= \{x \in \p_N \mid 2\deg_\bY(x) + 2\sum\deg_{\bt_\zeta}(x) \geq m\}.
\end{split}
\end{align}
The KZB connection takes values in $F^{-1}W_0 \Der \p_N$, and thus $F^\bullet$ and $W_\bullet$ filter the bundle $\bP_{\G_1(N)}$ with connection $\nabla^\KZB_{\G_1(N)}$. 
\begin{theorem}[{\cite[\S11]{hopper}}]
\label{thm:avmhs}
Together with these filtrations, $\mathbfcal{P}_{\G_1(N)}^\topo \to \E_{\G_1(N)}'$ is a pro-object of the category of admissible variations of MHS. The MHS on each fiber $\p(E',x)$ is the canonical MHS on $\p(E',x)$ in terms of \cite{hain:bowdoin,hain:drt}.
The relative weight filtration of the limit MHS at $\partial/\partial q + \partial/\partial w$ anchored at identity of the singular fiber $E_0$ (the point $(q = 0, w = 1)$) is $M_\bullet$.
\end{theorem}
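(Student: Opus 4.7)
The plan is as follows. The first two assertions of the theorem---that $\mathbfcal{P}_{\G_1(N)}^\topo$, equipped with the filtrations of \eqref{eqn:filtrations}, is a pro-object of admissible VMHS, and that its restriction to each smooth fiber $E_\tau'$ recovers Hain's canonical MHS on $\p(E_\tau',x)$---are already established in \cite[\S11]{hopper}. Hence the substantive content of the statement is the identification of the relative weight filtration of the limit MHS at the tangential base point $\partial/\partial q + \partial/\partial w$ anchored at $(q=0, w=1) \in E_0$ with the filtration $M_\bullet$ defined in \eqref{eqn:filtrations}. I will focus on this part.

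The starting point is the explicit formula \eqref{eqn:KZBrest} for $\Omega_{\G_1(N)}'$ in a neighborhood of $(q=0,w=1)$. Since $w = 1$ is the identity of the smooth locus of $E_0$, both $dq/q$ and $dw/(w-1)$ are logarithmic 1-forms at this point, producing two commuting nilpotent monodromy logarithms
\begin{equation*}
N_q \;=\; \bY\tfrac{\partial}{\partial \bX} \;+\; \tfrac12\sum_{\zeta \in \bmu_N,\, m\ge 0} \tfrac{B_{2m+2}}{(2m)!(2m+2)}\,\e_{2m+2,\zeta},
\qquad
N_w \;=\; \ad_{\bt_1}.
\end{equation*}
By Kashiwara's criterion for admissibility, the relative weight filtration of the sum $N_q + N_w$ with respect to $W_\bullet$ exists and is characterized on each weight-graded piece $\Gr^W_k \p_N$ as the Jacobson--Morozov filtration of the induced action, centered at $k$.

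The next step is a reduction and a direct check. Since $\ad_{\bt_1}$ sends $W_{-k}$ into $W_{-k-2}$ strictly, it acts as zero on $\Gr^W_\bullet \p_N$, so only $N_q$ contributes to the Jacobson--Morozov calculation on the graded. Furthermore, the correction terms $\e_{2m+2,\zeta}$ appearing in $N_q$ are built from brackets with $\bt_\zeta$ and are strictly $W$-weight-lowering by the formulas in \cite[\S7]{hopper}, so on $\Gr^W_\bullet$ the action of $N_q$ reduces to that of its leading derivation $\bY\,\partial/\partial\bX$. Since both $W_\bullet$ and $M_\bullet$ are bracket-compatible and $\bY\,\partial/\partial\bX$ is a derivation, it suffices to verify the claim on the generators. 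On $\Gr^W_{-1}\p_N = \Q\bX \oplus \Q\bY$ the leading term is a single Jordan block $\bX \mapsto \bY \mapsto 0$, whose Jacobson--Morozov filtration centered at $-1$ places $\bX$ in weight $0$ and $\bY$ in weight $-2$, matching \eqref{eqn:filtrations}; on $\Gr^W_{-2}\p_N$ the classes $\bt_\zeta$ lie in the kernel of the leading action and receive $M$-weight $-2$, again matching \eqref{eqn:filtrations}.

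The main obstacle I expect is the bookkeeping in extending the check on generators to all of $\p_N$: one must track how the defining relation $[\bX,\bY] = \sum_\zeta \bt_\zeta$ interacts with the passage to $\Gr^W_\bullet$, and confirm that the $\e_{2m+2,\zeta}$ terms really are $W$-weight-lowering in the relevant sense. Both checks are routine but tedious consequences of the explicit formulas in \cite{hopper}. Once these are in hand, uniqueness of the relative weight filtration forces it to coincide with $M_\bullet$ globally on $\p_N$, completing the proof.
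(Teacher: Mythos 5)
The paper itself gives no proof of this statement---it is imported wholesale from \cite[\S11]{hopper}---so I can only judge your argument on its own terms. Your overall strategy is the natural one: identify the two commuting residues $N_q$ and $N_w=\ad\bt_1$ at the corner $(q,w)=(0,1)$, observe that the limit MHS along $\partial/\partial q+\partial/\partial w$ has relative weight filtration $M(N_q+N_w,W_\bullet)$, and compare with the candidate $M_\bullet$ of \eqref{eqn:filtrations}. However, there is a genuine gap in how you conclude. The relative weight filtration is \emph{not} ``characterized on each weight-graded piece'' by the centered Jacobson--Morozov filtrations: by Definition \ref{def:admissible} it is the unique filtration satisfying \emph{both} (a) $N M_r\subseteq M_{r-2}$ (and $NW_m\subseteq W_m$) \emph{and} (b) the isomorphism condition $N^r:\Gr^M_{m+r}\Gr^W_m\xrightarrow{\ \sim\ }\Gr^M_{m-r}\Gr^W_m$. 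A filtration compatible with $W_\bullet$ is in general not determined by the filtrations it induces on $\Gr^W_\bullet$; the $M$-weights here genuinely ``cross'' the $W$-weights (e.g.\ $\bX\in\Gr^W_{-1}$ has $M$-weight $0$ while $\bt_\zeta\in\Gr^W_{-2}$ has $M$-weight $-2$), which is exactly the situation where such non-uniqueness occurs. So verifying (b) on $\Gr^W_\bullet$ and then invoking uniqueness does not force the relative filtration to equal $M_\bullet$. You must also verify (a) for the \emph{full} residue $N_q+N_w$, including the $\e_{2m+2,\zeta}$ corrections and $\ad\bt_1$; this is easy from \eqref{eqn:filtrations} and \eqref{eqn:KZBrest}, since every term either replaces an $\bX$ by a $\bY$ or introduces one $\bt_\zeta$ (modulo higher depth), hence lowers the $M$-weight by exactly $2$---but your write-up never states this condition, and without it the final appeal to uniqueness is incomplete.

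A second, smaller issue: the step ``it suffices to verify the claim on generators'' for the centered monodromy filtration on $\Gr^W_\bullet\p_N$ is not justified merely by bracket-compatibility of $M_\bullet$ and $W_\bullet$. The clean argument is that $\bY\,\partial/\partial\bX$ completes to an $\sl_2$-triple of derivations of $\p_N$ (raising operator $\bX\,\partial/\partial\bY$, both annihilating the $\bt_\zeta$ and preserving the relation $[\bX,\bY]=\sum_\zeta\bt_\zeta$), and that for a monomial the quantity ($M$-weight minus $W$-weight) equals the $h$-weight $\deg_\bX-\deg_\bY$; since the monodromy filtration of the lowering operator of an $\sl_2$-action is the $h$-weight filtration, condition (b) then holds on all of $\Gr^W_\bullet\p_N$, not just on generators. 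With these two repairs---the $\sl_2$-triple argument for (b) and the explicit check of (a)---your argument goes through and presumably matches the proof in \cite[\S11]{hopper} in spirit.
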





\subsection{The Hain map}
\label{sec:hain}

The key observation we will exploit is that he KZB connection along the singular fiber $E_0$ of $\bP_{\G_1(N)}$ ``degenerates'' to the $N$-cyclotomic KZ connection. Hain \cite{hain:kzb} Any elliptic curve $E_\tau = \C/\Lambda_\tau$ may be written as $E_q := \C^\ast/q^\Z$ where $q = e^{2\pi i\tau}$. The usefulness of this observation and the resulting map $\p_N^\KZ \to \p_N$ were made in level 1 by Hain \cite{hain:kzb}. Following the same notation, we will generalize this map for the level $N$ setting. 

Set 
$$
\cA_{|q|} := \{w \in \C^\ast \mid |q|^{1/2} \leq |w| \leq |q|^{-1/2}\}.
$$
Denote the inner and outer boundaries of this annulus by $\partial_-\cA_{|q|}$ and $\partial_+\cA_{|q|}$, respectively. Then $E_q$ is the quotient of $\cA_{|q|}$ with $w \in \partial_+\cA_{|q|}$ identified with $qw \in \partial_-\cA_{|q|}$. The real oriented blow-up $\Bl_{\bmu_N}^oE_q$ is a similar quotient of $\Bl_{\bmu_N}^o \cA_{|q|}$ (see Figure \ref{fig:E}).

\begin{figure}
    \centering
    \begin{tikzpicture}[scale=1.3]
        \draw[very thick,fill=gray!20] (0, 0) circle (2cm);
        \draw[very thick,fill=white] (0, 0) circle (.25cm);
        \draw[fill=white] (1, 0) circle (.15cm);
        \draw[fill=white] (.309, .951) circle (.15cm);
        \draw[fill=white] (-.809, .588) circle (.15cm);
        \draw[fill=white] (-.809, -.588) circle (.15cm);
        \draw[fill=white] (.309, -.951) circle (.15cm);
        
        \draw[red] plot [smooth cycle, tension=.7] coordinates {(1.15, 0) (1.4, .6) (1.202, 1.202) (0, 1.7) (-1.202, 1.202) (-1.7, 0) (-1.202, -1.202) (0, -1.7) (1.202, -1.202) (1.4, -.6)};
        \draw[->, red] (0, 1.7) arc (90:91:1.7);
        
        \draw[blue] (1.15, 0) .. controls (1, -.6) and (.7, 0) .. (.25, 0);
        \draw[blue] (1.15, 0) -- (2, 0);
        \draw[<-, blue] (1.6, 0) -- (2, 0);
        
        
        
        \draw[fill=black] (1.15, 0) circle (.05cm);
        \draw[fill=black] (-2, 0) circle (.05cm);
        \draw[fill=black] (-.25, 0) circle (.05cm);
       
        \draw (-2, 0) node[left] {$w$};
        \draw (-.2, 0) node[left] {$qw$};
        \draw (1.05, .1) node[above] {$\vv_1$};
        \draw (-1.2, 1.2) node[right] {{\color{red} $\alpha$}};
        \draw (1.6, 0) node[below] {{\color{blue} $\beta$}};
        
    \end{tikzpicture}
    \qquad
    \begin{tikzpicture}[scale=1.3]
        \draw[very thick,fill=gray!20] (0, 0) circle (2cm);
        \draw[very thick,fill=white] (0, 0) circle (.25cm);
        \draw[fill=white] (1, 0) circle (.15cm);
        \draw[fill=white] (.309, .951) circle (.15cm);
        \draw[fill=white] (-.809, .588) circle (.15cm);
        \draw[fill=white] (-.809, -.588) circle (.15cm);
        \draw[fill=white] (.309, -.951) circle (.15cm);
        
        
        
        \draw[black!60!green,->] plot [smooth] coordinates {(1.15, 0) (1, -.2) (.8, 0) (.559, .951)};
        \draw[black!60!green] (.559, .951) arc (0:225:.25);
        \draw[black!60!green] plot [smooth] coordinates {(.132, .774) (.9, -.2) (1.15, 0)};
        
        \draw[black!60!green,->] plot [smooth] coordinates {(1.15, 0) (.9, -.25) (-.1, .5) (-.986, -.411)};
        \draw[black!60!green] (-.986, -.411) arc (135:315:.25);
        \draw[black!60!green] plot [smooth] coordinates {(-.632, -.765) (-.1, .4) (.9, -.3) (1.15, 0)};
        
        \draw[fill=black] (1.15, 0) circle (.05cm);
       
        \draw (1.05, .1) node[above] {$\vv_1$};
        \draw (.559, .951) node[right] {{\color{black!60!green} $\gamma_\zeta$}};
        
    \end{tikzpicture}
    \caption{$\Bl_{\bmu_N}^oE_q$ seen as quotient of $\Bl_{\bmu_N}^o \cA_{|q|}$ with generators of $\pi_1(E_q - \bmu_N,\vv_1)$}
    \label{fig:E}
\end{figure}

Roughly speaking, as $q \to 0$, the space $\cA_{|q|}$ converges to $\Bl_{\{0,\infty\}}^o\P^1$ and $\Bl_{\bmu_N}^o\cA_{|q|}$ converges to $\Bl_{\{0,\bmu_N,\infty\}}^o\P^1$. Thus, as $R \to 0$, the elliptic curve $E_{Re^{i\theta}}$ converges to the quotient of $\Bl_{\{0,\infty\}}^o\P^1$ with the boundary circles at 0 and $\infty$ identified by multiplication by $e^{i\theta}$. We denote this space by $E_{e^{i\theta}\partial/\partial q}$.\label{not:Eu} It is the ``first order smoothing'' of the nodal elliptic curve in the direction of $e^{i\theta}\partial/\partial q$. The scheme $U_N$ includes into $E_{e^{i\theta}\partial/\partial q}$ in the obvious way. 

Fix $\uu = \partial/\partial q$ at the origin of the $q$-disk.\label{not:u} We observe that the inclusion $U_N \hookrightarrow E_\uu$ is compatible with the $\KZ_N$ and $\G_1(N)$ KZB connections. That is, upon a change of basis, the linearization of the KZB bundle \eqref{eqn:KZBrest} over $E_\uu$ ``pulls back" to the $\KZ_N$ bundle $\bP^\KZ_N$ over $U_N$. 

\begin{prop} 
\label{prop:hain}
Let $E_\uu' := E_\uu - \bmu_N$.\label{not:Eup} The inclusion $U_N \hookrightarrow E_\uu'$ induces the commutative diagram
\begin{equation}
\label{eqn:haindiag}
\xymatrix{\p(U_N, \vv_1) \ar[r] \ar[d]^{\KZ_N} & \p(E_\uu', \vv_1) \ar[d]^{\KZB_{\G_1(N)}} \\ \p_N^\KZ \ar[r]^{\Psi_N} & \p_N.}
\end{equation}
The vertical maps are the comparison isomorphisms of the canonical MHS on $\p(U_N,\vv_1)$ and $\p(E_\uu',\vv_1)$ induced by transport of the the $\KZ_N$ and $\G_1(N)$ KZB connections, respectively. The lower horizontal map $\Psi_N$ is the level $N$ {\em ``Hain map"}
\begin{equation}
    \label{eqn:hain}
    \Psi_N : \left\{ 
    \begin{array}{lll}
        \ee_0 & \longmapsto & \frac{\bX}{e^{\bX} - 1} \cdot \bY \\
        \ee_\infty & \longmapsto & \frac{\bX}{e^{-\bX} - 1}\cdot \bY \\
        \ee_\zeta & \longmapsto & \bt_\zeta.
    \end{array}
    \right.
\end{equation}
Furthermore, $\Psi_N$ is a morphism of MHS that preserves relative weight filtrations.
\end{prop}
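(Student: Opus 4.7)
The plan is to establish the commutativity of \eqref{eqn:haindiag} by verifying that the pullback of the KZB connection $1$-form along $U_N \hookrightarrow E_\uu'$ agrees with $\Psi_N$ applied to the KZ form $\Omega_{\KZ_N}$, and then to invoke naturality of Chen's iterated-integral transport; the MHS statement follows by inspecting images of generators under $\Psi_N$ with respect to the filtrations \eqref{eqn:filtrations}.

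First I would verify that $\Psi_N$ descends to the quotient $\p_N^\KZ$. The classical identity $\tfrac{x}{e^x-1} + \tfrac{x}{e^{-x}-1} = -x$ gives
\[
\Psi_N(\ee_0 + \ee_\infty) = \left(\frac{\bX}{e^\bX-1} + \frac{\bX}{e^{-\bX}-1}\right)\cdot \bY = -[\bX,\bY],
\]
so $\Psi_N(\ee_0 + \ee_\infty + \sum_\zeta \ee_\zeta) = -[\bX,\bY] + \sum_\zeta \bt_\zeta$, which vanishes in $\p_N$ by its defining relation. Next, formula \eqref{eqn:KZBrest} exhibits the $dw$-portion of the KZB connection form near $E_0$ as
\[
\frac{\bX}{e^\bX-1}\cdot \bY \, \frac{dw}{w} + \sum_{\zeta\in\bmu_N} \bt_\zeta\,\frac{dw}{w-\zeta},
\]
which is exactly $\Psi_N(\Omega_{\KZ_N})$. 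Pulling the KZB form back along the inclusion $U_N \hookrightarrow E_\uu'$ (restriction to the first-order thickening at $q=0$) discards the $dq/q$-coefficient, producing $\Psi_N(\Omega_{\KZ_N})$ as the restricted form. Since parallel transport is determined by the connection $1$-form and Chen's iterated integrals are functorial with respect to Lie algebra maps compatible with the forms, the two compositions in \eqref{eqn:haindiag} agree on transport, and passing to logarithms (restriction to Lie algebras of unipotent fundamental groups) yields commutativity.

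For the MHS claim: each generator $\ee_s$ of $\p_N^\KZ$ is of Hodge type $(-1,-1)$, so lies in $F^{-1}\cap M_{-2}$. By \eqref{eqn:filtrations}, both $\bt_\zeta$ and every term $\ad_\bX^n(\bY)$ appearing in $\Psi_N(\ee_0)$ and $\Psi_N(\ee_\infty)$ satisfy $\deg_\bY + \sum_\zeta \deg_{\bt_\zeta} = 1$ and $2\deg_\bY + 2\sum_\zeta \deg_{\bt_\zeta} = 2$, hence lie in $F^{-1}\cap M_{-2}$. Since $\Psi_N$ is a Lie algebra homomorphism preserving both filtrations on generators, it preserves them globally. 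By Theorem \ref{thm:avmhs}, $M_\bullet$ is the relative weight filtration of the KZB variation at $\uu$ and is the weight filtration of the limit MHS on $\p(E_\uu', \vv_1)$; hence $\Psi_N$ is simultaneously a morphism of limit MHS and preserves relative weight filtrations. The most delicate technical point will be justifying the restriction of the KZB form to the first-order smoothing $E_\uu$: one must match the KZB connection near the two branches $w = 0$ and $w = \infty$ of the nodal cubic using the $\Z$-periodicity of \eqref{eqn:KZBrest} in $z = \log w/2\pi i$, and verify that only the $dw$-coefficients contribute to monodromy of loops in $E_\uu' - \bmu_N$ transverse to the $q$-direction.
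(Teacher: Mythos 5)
Your treatment of the de~Rham side is essentially the paper's: you check that $\Psi_N$ kills $\ee_0+\ee_\infty+\sum_\zeta\ee_\zeta$ (a well-definedness check the paper leaves implicit, and a nice addition), you identify the $dw$-part of \eqref{eqn:KZBrest} pulled back along $U_N\hookrightarrow E_\uu'$ with $\Psi_N(\Omega_{\KZ_N})$, and you verify compatibility with $F^\bullet$ and $M_\bullet$ using \eqref{eqn:filtrations} and the fact that each $\ee_r$ has type $(-1,-1)$. All of that is correct and parallels the paper's proof.

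However, there is a genuine gap in the step from ``the forms agree'' to ``the diagram \eqref{eqn:haindiag} commutes and $\Psi_N$ is a morphism of MHS.'' The right-hand vertical arrow is the comparison isomorphism of the \emph{canonical limit} MHS on $\p(E_\uu',\vv_1)$, i.e.\ it is defined by \emph{regularized} transport of the full KZB connection at the tangential base point $\partial/\partial q+\partial/\partial w$ over the cusp; its $\Q$-structure involves the limits $\lim_{q\to 0}q^{-L}$ of \eqref{eqn:Qstructure} in the $q$-direction, not merely transport of the $dw$-coefficients of \eqref{eqn:KZBrest} along paths in the first-order smoothing. Your appeal to ``parallel transport is determined by the connection form'' only compares transports of the restricted (linearized) connection on $E_\uu'$ with the KZ transport; it does not show that this linearized transport computes the same comparison, and hence the same rational (Betti) structure, as the two-variable regularized limit defining the canonical MHS. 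That identification is exactly what the paper supplies via the proposition and corollary of Appendix~\ref{sec:transport} and Theorem~\ref{thm:hainCan} (regularized transport of the full connection to a tangent vector on the divisor equals transport of the linearized connection on the normal bundle), and it is the ingredient the paper invokes to conclude that $\Psi_N$ preserves rational structures. Your closing paragraph flags a ``delicate point,'' but frames it as matching the connection near the two branches of the node and isolating monodromy contributions, which is not the missing step; without an argument of the type of Theorem~\ref{thm:hainCan}, the claim that $\Psi_N$ is a morphism of MHS (rather than merely a bifiltered map of complexified de~Rham fibers) is not established.
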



\begin{proof}
Recall the restriction of $\G_1(N)$ $\KZB$ to a first order neighborhood of the nodal cubic \eqref{eqn:KZBrest}. Pulling back along the inclusion $U_N \hookrightarrow E_\uu'$ yields the connection
$$
d + \frac{\bX}{e^\bX - 1} \cdot \bY \, \frac{dw}{w} + \sum_{\zeta \in \bmu_N} \bt_\zeta \, \frac{dw}{w - \zeta}.
$$
This is precisely the image of the cyclotomic KZ connection \eqref{eqn:KZ} under the given formula \eqref{eqn:hain} for $\Psi_N$.
 
For the last claim, recall each $\ee_r$ is of type $(-1,-1)$ and $M_\bullet = W_\bullet$ on $\p^\KZ_N$. It follows that $\Psi_N$ is compatible with the filtrations on $\p_N$ defined in \S\ref{sec:kzb}. It remains to show that $\Psi_N$ preserves rational structures. Since the Hain map is induced by the relationship between $\KZ_N$ and $\G_1(N)$ KZB, this problem is equivalent to showing the MHS on $\p(E_\uu',\vv_1)$ induced by the upper horizontal map in \eqref{eqn:haindiag} is the same as the canonical MHS induced by the $\G_1(N)$ KZB connection. This follows from Theorem \ref{thm:hainCan}. 
\end{proof}

\section{The motivic limit MHS of \texorpdfstring{$\bP_{\G_1(N)}$}{TEXT}}
\label{sec:pEv}


The Lie algebra $\p(E_\uu',\vv_1)$ is endowed with the limit MHS of the elliptic KZB variation on $\bP_{\G_1(N)}$ at $\partial/\partial q + \partial/\partial w$ above the cusp $q = 0$. We now argue this MHS, like $\p(U_N,\v_1)$, is in fact the Hodge realization of a pro-object of $\MTM_N$. This will build on the results about the cyclotomic KZ connection from the previous section.

Define loops $\alpha$ and $\beta$ in $\pi_1(E_q - \bmu_N,\vv_1)$ to be the standard generators of the fundamental group of a torus (see Figure \ref{fig:E}). 
For each $\zeta \in \bmu_N$, there is a generator $\gamma_\zeta \in \pi_1(E_q-\bmu_N,\vv_1)$ chosen to be the homotopy class of the loop traveling clockwise half way around the boundary circle at 1, counter clockwise around the inner boundary circle $|w| = |q|^{1/2}$ (as necessary) and then counterclockwise around the boundary circle at $\zeta$ (see green loops in Figure \ref{fig:E}). Dividing $\beta$ into two paths, one can easily verify the expected relation
$$
\alpha\beta\alpha^{-1}\beta^{-1} = \gamma_1\gamma_\zeta \cdots \gamma_{\zeta^{N-1}},
$$
where here $\zeta = e^{2\pi i/N}$. 

Letting $q \to 0$ in the direction of $\uu = \partial/\partial q$, the homotopy classes of $\alpha$, $\beta$, and $\gamma_\zeta$ for $\zeta \in \bmu_N$ converge to generators of the topological fundamental group $\pi_1(E_\uu',\vv_1)$. We will construct a MHS on $\p(E_\uu',\vv_1)$ by transporting the $\KZ_N$ connection form along these generators of $\pi_1(E_\uu,\vv_1)$ as in \S\ref{sec:piMHS} and then applying the Hain map. 

Recall from \S\ref{sec:drinfeld} the regularized transport of $\KZ_N$ from $\ww_\eta \in T_0U_N$ to $-\vv_\eta \in T_\eta U_N$ is the motivic Drinfeld associator $\Phi_{0\eta}^\m \in \cP_N^\m\ll\ee_0,\ee_\zeta\mid\zeta \in \bmu_N\rr$. The involution $w \mapsto 1/w$ allows us to define a motivic transport from $\vv_1$ to $\infty$, denoted $\Phi_{1\infty}^\m$ by exchanging $\ee_0$ for $\ee_1$, $\ee_1$ for $\ee_\infty = -\ee_0 - \ee_1 - \sum_\zeta \ee_\zeta$, and $\ee_\zeta$ for $\ee_\zetabar$ in the formula of $\Phi_{01}^\m$ \eqref{eqn:drinfeld}. 

For the remainder of the section, denote by $A_{E_\uu'}$ the group algebra 
$$
A_{E_\uu'} := \Q\ll\bX,\bY,\bt_\zeta\rr/\Bigg(\sum_{\zeta \in \bmu_N} \bt_\zeta = [\bX,\bY]\Bigg).
$$
Following the paths in Figure \ref{fig:E}, {\em inverse} monodromy 
\begin{equation}
    \label{eqn:Pmono}
    \Theta : \pi_1(E_\uu',\vv_1) \to \cP^\m_N \otimes A_{E_\uu'}
\end{equation}
is given by the formulas
\begin{align*}
    \alpha & \longmapsto \Phi_{\infty 1}^\m e^{\Le \ee_\infty} \Phi_{1\infty}^\m \cr
    \beta & \longmapsto e^{\Le \ee_1/2} \Phi_{01}^\m e^{-\bX} \Phi_{1\infty}^\m \cr
    \gamma_\zeta & \longmapsto e^{\Le \ee_1/2} \Phi_{01}^\m e^{k\Le \ee_0/N}\Phi_{\zeta 0}^\m e^{-\Le \ee_\zeta}\Phi_{0\zeta }^\m e^{-k\Le  \ee_0/N}\Phi_{10}^\m e^{-\Le \ee_1/2},
\end{align*}
where $\zeta = e^{2\pi ik/N}$ for $k \in \{0,1,\ldots,N-1\}$ and the motivic Drinfeld associators $\Phi^\m$ actually indicate their image under the Hain map. The $e^{-\bX}$ in the formula for $\beta$ corresponds to the factor of automorphy of the KZB local system associated with identifying the inner and outer boundaries of the annulus (see \cite[\S6.2]{hopper} and Figure \ref{fig:kzE}). Note that $\Theta(\gamma_1)$ simplifies to $e^{-\Le\ee_1}$. 

\begin{figure}[!ht]
    \centering
    \begin{tikzpicture}[scale=1.3]
        \draw[red, thick,fill=gray!20] (0, 0) circle (2cm);
        \draw[very thick,fill=white] (0, 0) circle (.25cm);
        \draw[fill=white] (1, 0) circle (.15cm);
        \draw[fill=white] (.309, .951) circle (.15cm);
        \draw[fill=white] (-.809, .588) circle (.15cm);
        \draw[fill=white] (-.809, -.588) circle (.15cm);
        \draw[fill=white] (.309, -.951) circle (.15cm);
       
        \draw[thick, red, ->-=.4, -<-=.7] (1.15, 0) -- (2, 0);
        \draw[<-, thick, red] (-1.414, -1.414) arc (225:230:2);
        \draw[<-, thick, red] (-1.414, 1.414) arc (135:180:2);
        \draw[thick, blue, -<-=.55] (.85, 0) -- (.25, 0);
        \draw[thick, blue, ->-=.7] (.85, 0) arc (180:360:.15);
        \draw[thick, blue, ->-=.6] plot [smooth, tension=1.2] coordinates {(1.15, 0) (1.625,-.2) (2, 0)};
        \draw[thick, blue, dashed, -<-=.3, -<-=.7] plot [smooth, tension=1.2] coordinates {(.25, 0) (1.225, -1.9) (2, 0)};
        
        \draw (-2, 0) node[right] {$e^{\Le \ee_\infty}$};
        \draw (1, -.15) node[below] {$e^{\Le \ee_1/2}$};
        \draw (.55, 0) node[above] {$\Phi_{01}$};
        \draw (1.575, 0) node[above] {$\Phi_{1\infty}^{\pm 1}$};
        \draw (1.85,-1.75) node {$e^{-\bX}$};
        
        \draw[fill=black] (1.15, 0) circle (.05cm);

    \end{tikzpicture}
    \qquad
    \begin{tikzpicture}[scale=1.3]
        \draw[black, thick,fill=gray!20] (0, 0) circle (2cm);
        \draw[fill=white] (0, 0) circle (.25cm);
        \draw[fill=white] (1, 0) circle (.15cm);
        \draw[fill=white] (.309, .951) circle (.15cm);
        \draw[fill=white] (-.809, .588) circle (.15cm);
        \draw[thick, black!60!green, fill=white] (-.809, -.588) circle (.15cm);
        \draw[fill=white] (.309, -.951) circle (.15cm);
        
        \draw[thick, black!60!green, ->-=.4, -<-=.7] (.85, 0) -- (.25, 0);
        \draw[thick, black!60!green, ->-=.4, -<-=.7] (.85, 0) arc (180:360:.15);
        \draw[thick, black!60!green, ->-=.4, -<-=.7] (.25, 0) arc (0:216:.25);
        \draw[thick, black!60!green, ->-=.4, -<-=.7] (-.202, -.147) -- (-.688, -.5);
        \draw[thick, black!60!green, <-] (-.930, -.676) arc (216:270:.15);
        
        \draw (1, -.15) node[below] {$e^{\mp\Le \ee_1/2}$};
        \draw (.55, 0) node[above] {$\Phi_{10}^{\pm 1}$};
        \draw (-.445, -.323) node[above left] {$\Phi_{0\zeta}^{\pm 1}$};
        \draw[->] plot [smooth, tension=1.2] coordinates {(-.2, 1.1) (-.25, .7) (-.2, .35)}; 
        \draw (-.2, 1.25) node {$e^{\pm k\Le\ee_0/N}$};
        \draw (-.809, -.738) node[below] {$e^{-\Le\ee_\zeta}$};

        \draw[fill=black] (1.15, 0) circle (.05cm);

    \end{tikzpicture}
    
    \caption{Inverse transport operators for $\alpha$, $\beta$, and $\gamma_\zeta$}
    \label{fig:kzE}
\end{figure}

The associators $\Phi^\m$ satisfy relations compatible with the topology of $E_\uu$. The loop $\alpha$ in Figure \ref{fig:E} is homotopy equivalent to the loop $\alpha'$ based at $\vv_1$ in Figure \ref{fig:star}. Set $\xi = e^{2\pi i/N}$. Then
\begin{multline*}
    \Theta(\alpha') = e^{-\Le \ee_1/2} \Phi_{01}^\m e^{-\Le\ee_0/N} (\Phi_{0\xi}^\m)^{-1}   e^{-\Le \ee_\xi}\Phi_{0\xi}^\m\cdots \cr
    \cdots e^{-\Le \ee_{\xi^{N-1}}}\Phi_{0\xi^{N-1}}^\m e^{-\Le\ee_0/N}(\Phi_{01}^\m)^{-1} e^{- \Le\ee_1/2}.
\end{multline*}
Since the $\KZ_N$ connection is flat, we know $\per \Theta(\alpha) = \per \Theta(\alpha')\in \C \otimes A_{E_\uu'}$. We shall call this the ``star relation.'' It is a cyclotomic version of Drinfeld's ``hexagon relation'' \cite{drinfeld}.

\begin{figure}[!ht]
\centering
    \begin{tikzpicture}[scale=1.3]
        \draw[white, thick,fill=gray!20] (0, 0) circle (2cm);
        \draw[fill=white] (0, 0) circle (.25cm);
        \draw[fill=white] (1, 0) circle (.15cm);
        \draw[fill=white] (.309, .951) circle (.15cm);
        \draw[fill=white] (-.809, .588) circle (.15cm);
        \draw[fill=white] (-.809, -.588) circle (.15cm);
        \draw[fill=white] (.309, -.951) circle (.15cm);
        
        \draw[thick, red] (.857, -.046) arc (198:360+162:.15);
        \draw[thick, red, -<-=.3] (.857, -.046) -- (.246, -.046);
        \draw[thick, red, ->-=.7] (.857, .046) -- (.246, .046);
        
        \draw[thick, red] (.309, .801) arc (270:360+270-36:.15);
        \draw[thick, red, -<-=.3] (.309, .801) -- (.12, .219);
        \draw[thick, red, ->-=.7] (.221, .83) -- (.032, .248);
        
        \draw[thick, red] (-.666, .541) arc (360-18:720-54:.15);
        \draw[thick, red, -<-=.3] (-.666, .541) -- (-.172, .182);
        \draw[thick, red, ->-=.7] (-.72, .467) -- (-.226, .107);
        
        \draw[thick, red] (-.72, -.467) arc (54:360+18:.15);
        \draw[thick, red, ->-=.3] (-.666, -.541) -- (-.172, -.182);
        \draw[thick, red, -<-=.7] (-.72, -.467) -- (-.226, -.107);
        
        \draw[thick, red] (.221, -.83) arc (90+36:450:.15);
        \draw[thick, red, ->-=.3] (.309, -.801) -- (.12, -.219);
        \draw[thick, red, -<-=.7] (.221, -.83) -- (.032, -.248);
        
        \draw[thick, red] (.246, .046) arc (10.26:72-10.26:.25);
        \draw[thick, red] (.032, .248) arc (72+10.26:144-10.26:.25);
        \draw[thick, red] (-.226, .107) arc (144+10.26:216-10.26:.25);
        \draw[thick, red] (-.172, -.182) arc (216+10.26:288-10.26:.25);
        \draw[thick, red] (.12, -.219) arc (288+10.26:360-10.26:.25);
        
        \draw[dashed, thick, red, ->-=.4, -<-=.7] (1.15, 0) -- (2, 0);
        \draw[dashed, ->, thick, red] (-1, 1.732) arc (120:238:2);
        \draw[dashed, ->, thick, red] (2, 0) arc (0:118:2);
        \draw[dashed, thick, red] (-1, -1.732) arc (240:360:2);
        
        \draw[fill=black] (1.15, 0) circle (.05cm);
        \draw (-1, 1) node {{\color{red} $\alpha'$}};
        \draw (1.45, 1.45) node[above,right] {{\color{red} $\alpha$}};
        
    \end{tikzpicture}
\caption{The star relation}
\label{fig:star}
\end{figure}

\begin{lemma}
The star relation is motivic, that is $\Theta(\alpha) =\Theta(\alpha')$ in $\cP^\m_N \otimes A_{E_\uu'}$.
\end{lemma}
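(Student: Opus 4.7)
The plan is to lift both sides of the claimed equality to the motivic unipotent path torsor of $U_N$, where the equality becomes a purely topological consequence of homotopy. The key point is that, in contrast to $\beta$ (whose formula contains the factor of automorphy $e^{-\bX}$ arising from the boundary identification $w \sim qw$), neither $\alpha$ nor $\alpha'$ crosses the identification of the inner and outer boundaries of $\cA_{|q|}$. Consequently, both loops lift canonically to based loops $\tilde\alpha, \tilde\alpha'$ in $U_N = \P^1 - \{0,\bmu_N,\infty\}$ anchored at $\vv_1$: the loop $\tilde\alpha$ is a small simple positive loop around $\infty$, while $\tilde\alpha'$ is the star-shaped loop that dives from $\vv_1$ toward $0$, visits each $\zeta \in \bmu_N$ in cyclic order by excursions from a tangential base point at $0$, and returns.

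The first step is to match formulas. Extending $\Psi_N$ $\cP_N^\m$-linearly to universal enveloping algebras, I verify that $\Theta(\alpha) = \Psi_N(\Theta_{\KZ_N}(\tilde\alpha))$ and $\Theta(\alpha') = \Psi_N(\Theta_{\KZ_N}(\tilde\alpha'))$, where $\Theta_{\KZ_N} : \pi_1(U_N,\vv_1) \to \cP_N^\m \otimes U(\p_N^\KZ)$ is the motivic monodromy obtained from the universal $\KZ_N$ 1-form (cf. the computation of the motivic Drinfeld associators in Section \ref{sec:drinfeld}). For $\tilde\alpha$ this is transparent: conjugating a small motivic loop $e^{\Le \ee_\infty}$ at $\infty$ by a motivic path to $\vv_1$ yields $\Phi_{\infty 1}^\m \cdot e^{\Le \ee_\infty} \cdot \Phi_{1\infty}^\m$. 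For $\tilde\alpha'$, one reads off the formula term-by-term: the $N$ factors $(\Phi_{0\xi^k}^\m)^{-1} e^{-\Le \ee_{\xi^k}} \Phi_{0\xi^k}^\m$ are the regularized motivic transports of loops around each $\xi^k$ from tangential base points at $0$, while the intermediate $e^{-\Le \ee_0/N}$ factors encode the small arcs of angle $2\pi/N$ around $0$ between successive excursions. The outer conjugation by $e^{-\Le \ee_1/2} \Phi_{01}^\m$ and its inverse brings the whole composite from $0$ back to $\vv_1$.

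Next, $\tilde\alpha$ and $\tilde\alpha'$ are homotopic in $U_N$. This is a standard relation in the fundamental group of a punctured sphere: the positive simple loop around $\infty$ is homotopic to a product of positive simple loops around all other punctures $0, \zeta \in \bmu_N$, appropriately conjugated back to $\vv_1$. The star-shaped $\tilde\alpha'$ is precisely such a product, and the sum of the intermediate arcs at $0$ closes up to the missing full loop around $0$ (accounting for the total factor $e^{-\Le\ee_0}$ hidden in the $N$ copies of $e^{-\Le\ee_0/N}$).

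Finally, by Theorem \ref{thm:DG}, the coordinate ring $\cO(\tensor*[_{\vv_1}]{\Pi}{^\mot_{\vv_1}})$ is an ind-object of $\MTM_N$, so the motivic monodromy $\Theta_{\KZ_N}$ depends only on the homotopy class. Hence $\Theta_{\KZ_N}(\tilde\alpha) = \Theta_{\KZ_N}(\tilde\alpha')$ in $\cP_N^\m \otimes U(\p_N^\KZ)$, and applying $\Psi_N$ yields the star relation in $\cP_N^\m \otimes A_{E_\uu'}$. The main obstacle will be Step 1: carefully bookkeeping tangential base points, regularization conventions, and the direction of each $\Phi^\m$ so that the star formula really is the image under $\Psi_N$ of a single regularized motivic transport along a homotopy class of loop in $U_N$.
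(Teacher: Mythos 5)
Your proposal is correct and follows essentially the same route as the paper: both arguments reduce the statement to the fact that $\alpha$ and $\alpha'$ (which avoid the boundary identification and hence live in $U_N$) are homotopic in $U_N$, and then invoke Theorem \ref{thm:DG} together with the naturality of the motivic comparison to conclude that the motivic transport depends only on the homotopy class, the star formulas being images under $\Psi_N$ of composites of motivic transports. Your extra bookkeeping of the lifts and regularized factors is just a more explicit version of what the paper leaves implicit.
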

\begin{proof}
The motivic comparison isomorphism $c_\m$ \eqref{eqn:cVm} is natural with respect to morphisms in $\MTM_N$. Thus, by Theorem \ref{thm:DG}, we have the commutative diagram
$$
\xymatrix{\pi_1(U_N,\vv_1) \times \pi_1(U_N,\vv_1) \ar[r] \ar[d]_{\Theta} & \pi_1(U_N,\vv_1) \ar[d]^{\Theta} \cr \cP_N^\m\ll\ee_0,\ee_\zeta\rr \otimes \cP_N^\m\ll\ee_0,\ee_\zeta\rr \ar[r] & \cP_N^\m\ll\ee_0,\ee_\zeta\rr,}
$$
where the top horizontal map is composition of loops and the bottom horizontal map is multiplication. Since $\alpha^{-1}\alpha'$ is homotopic to the constant path, the image of $\alpha^{-1}\alpha'$ in the bottom right must be 1. Hence, $\Theta(\alpha) = \Theta(\alpha')$.
\end{proof}
There is also a motivic ``cylinder relation'' involving the factor of automorphy.
\begin{lemma}
The group algebra $\cP^\m_N \otimes A_{E_\uu'}$ has the relation
$$
e^\bX e^{\Le\ee_0} e^{-\bX}e^{\Le \ee_\infty} = 1.
$$
\end{lemma}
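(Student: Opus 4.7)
The plan is to observe that the cylinder relation is a purely algebraic identity in $A_{E_\uu'}$, which reduces via Proposition \ref{prop:hain} to a generating function calculation. Since $\Le$ enters only as a scalar in $\cP^\m_N$, the relation in $\cP^\m_N \otimes A_{E_\uu'}$ holds if and only if $e^\bX \ee_0 e^{-\bX} = -\ee_\infty$ in $A_{E_\uu'}$; once that is established, exponentiating gives
$$
e^\bX e^{\Le\ee_0} e^{-\bX} = e^{\Le(e^\bX \ee_0 e^{-\bX})} = e^{-\Le \ee_\infty},
$$
and right-multiplication by $e^{\Le \ee_\infty}$ yields the result.

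To verify the identity $e^\bX \ee_0 e^{-\bX} = -\ee_\infty$, I would combine the Hain map formulas $\Psi_N(\ee_0) = \frac{\bX}{e^\bX - 1}\cdot \bY$ and $\Psi_N(\ee_\infty) = \frac{\bX}{e^{-\bX} - 1}\cdot \bY$ from Proposition \ref{prop:hain} with the standard group-algebra identity $e^\bX \xi e^{-\bX} = e^{\ad_\bX}\xi$. The computation then reduces to the Bernoulli generating function identity
$$
e^x \cdot \frac{x}{e^x - 1} = \frac{x e^x}{e^x - 1} = -\frac{x}{e^{-x} - 1},
$$
which applied to the adjoint action of $\bX$ on $\bY$ yields
$$
e^{\ad_\bX}\Psi_N(\ee_0) = e^{\ad_\bX}\bigl(\tfrac{\ad_\bX}{e^{\ad_\bX} - 1}\bY\bigr) = -\tfrac{\ad_\bX}{e^{-\ad_\bX} - 1}\bY = -\Psi_N(\ee_\infty),
$$
completing the proof.

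This algebraic identity admits a transparent topological interpretation that parallels the star-relation argument above: the factor of automorphy $e^{-\bX}$ built into the KZB connection implements the gluing $w \mapsto qw$ identifying the inner and outer boundaries of the annulus $\cA_{|q|}$, and the cylinder relation says that a loop once around the inner boundary (monodromy $e^{\Le\ee_0}$) becomes, after crossing through the identification, the inverse of a loop around the outer boundary (monodromy $e^{\Le\ee_\infty}$) in the smoothing $E_\uu$. There is really no obstacle here; the Bernoulli identity is precisely the coincidence that makes the Hain map intertwine the cyclotomic KZ and $\G_1(N)$ KZB connections along the first-order Tate curve, and the cylinder relation is the exponentiated shadow of that intertwining.
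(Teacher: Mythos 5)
Your proof is correct, and it fills in precisely the step the paper leaves implicit. The paper's own argument is the one-line geometric observation: the factor of automorphy $e^{-\bX}$ identifies the boundary circle at $\infty$ with the circle at $0$, from which it asserts $e^\bX \ee_0 e^{-\bX} = -\ee_\infty$ and then multiplies by $\Le$ and exponentiates, deferring the computational verification to a citation of Hain's Lemma~18.1. You instead carry out that computation directly: interpreting $\ee_0$ and $\ee_\infty$ as their images $\Psi_N(\ee_0) = \frac{\bX}{e^\bX-1}\cdot\bY$ and $\Psi_N(\ee_\infty) = \frac{\bX}{e^{-\bX}-1}\cdot\bY$ under the Hain map (which is indeed how they must be read inside $A_{E_\uu'}$), using $e^\bX \xi e^{-\bX} = e^{\ad_\bX}\xi$, and reducing to the generating function identity $\frac{x e^x}{e^x-1} = -\frac{x}{e^{-x}-1}$. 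Both arguments hinge on the same key conjugation identity $e^\bX \ee_0 e^{-\bX} = -\ee_\infty$; the paper derives it from the topology of the gluing $w \mapsto qw$, while you derive it algebraically, making the proof self-contained and making explicit why the specific Bernoulli-type series in $\Psi_N$ is exactly what forces the relation. Your closing exponentiation step is also sound, since conjugation by $e^\bX$ is an algebra automorphism and $\Le$ is a central scalar; note only that the ``only if'' direction of your opening reduction is unnecessary --- the ``if'' direction suffices.
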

\begin{proof}
Apply the factor of automorphy $e^{-\bX}$ to identify the circle at $\infty$ to the circle at zero to observe 
$$
e^\bX \ee_0 e^{-\bX} = -\ee_\infty.
$$ 
Then multiply both sides by $\Le$ and exponentiate. Alternatively, see \cite[Lemma 18.1]{hain:kzb} for a computational proof.
\end{proof}

\begin{cor}
Inverse monodromy $\Theta : \p(E_{\uu}',\vv_1) \to \cP^\m_N \otimes A_{E_\uu'}$ is well-defined. 
\end{cor}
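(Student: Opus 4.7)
The plan is to reduce the well-definedness of $\Theta$ to a single algebraic identity in $\cP^\m_N \otimes A_{E_\uu'}$, then verify that identity using the star and cylinder relations. Concretely, $\pi_1(E_\uu', \vv_1)$ is the fundamental group of a once-punctured torus with $N-1$ additional punctures (the points $\xi, \xi^2, \ldots, \xi^{N-1}$ with $\xi = e^{2\pi i/N}$), so it admits a one-relator presentation on $\alpha, \beta, \gamma_1, \gamma_\xi, \ldots, \gamma_{\xi^{N-1}}$ with the surface relation
$$
[\alpha, \beta] \;=\; \gamma_1 \gamma_\xi \cdots \gamma_{\xi^{N-1}}
$$
already noted in the text. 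Since the Malcev (unipotent) Lie algebra $\p(E_\uu', \vv_1)$ inherits the same presentation — as the quotient of the Malcev completion of the free group on these generators by the closed ideal generated by this single relation — it suffices to verify the identity
$$
\Theta(\alpha)\,\Theta(\beta)\,\Theta(\alpha)^{-1}\,\Theta(\beta)^{-1} \;=\; \Theta(\gamma_1)\,\Theta(\gamma_\xi)\cdots \Theta(\gamma_{\xi^{N-1}})
$$
in $\cP^\m_N \otimes A_{E_\uu'}$.

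To verify it, I would first replace $\Theta(\alpha)$ by $\Theta(\alpha')$ using the preceding star-relation lemma; this puts $\Theta(\alpha)$ into a normal form built out of the same associators and exponentials $e^{-\Le \ee_{\xi^k}}$ that appear in each $\Theta(\gamma_{\xi^k})$. Next, I would compute the right-hand side $\prod_{k=0}^{N-1} \Theta(\gamma_{\xi^k})$ by telescoping: between consecutive factors, the trailing $\Phi_{10}^\m e^{-\Le\ee_1/2}$ of $\Theta(\gamma_{\xi^k})$ meets the leading $e^{\Le\ee_1/2}\Phi_{01}^\m$ of $\Theta(\gamma_{\xi^{k+1}})$, and likewise the $e^{-k\Le\ee_0/N}$ and $e^{(k+1)\Le\ee_0/N}$ combine. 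After cancellation the surviving factors recover the precise shape of $\Theta(\alpha')$ sandwiched around a central commutator piece.

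The central piece is where the factor of automorphy $e^{-\bX}$ enters, contributed by $\Theta(\beta) = e^{\Le\ee_1/2}\Phi_{01}^\m e^{-\bX}\Phi_{1\infty}^\m$. Here I would invoke the cylinder relation $e^\bX e^{\Le\ee_0}e^{-\bX}e^{\Le\ee_\infty} = 1$ to commute $e^{-\bX}$ past the $\KZ$-theoretic pieces while introducing the compensating $e^{\Le\ee_\infty}$ appearing in $\Theta(\alpha) = \Phi_{\infty 1}^\m e^{\Le\ee_\infty}\Phi_{1\infty}^\m$; this is exactly the monodromy contributed by the $\beta$-cycle identifying the two boundary circles of the annulus $\cA_{|q|}$.

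The main obstacle is purely combinatorial: keeping track of all the boundary exponentials $e^{\pm\Le\ee_1/2}$, $e^{\pm k\Le\ee_0/N}$, and the pairs $\Phi_{0\zeta}^\m (\Phi_{0\zeta}^\m)^{-1}$ that must cancel, and simultaneously showing that the conjugation by $e^{-\bX}$ lines up correctly via the cylinder relation. Once every piece is accounted for, both sides of the surface relation collapse to the same word in $\cP^\m_N \otimes A_{E_\uu'}$, proving well-definedness.
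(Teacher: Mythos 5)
Your proposal follows essentially the same route as the paper: the paper's proof is precisely the observation that well-definedness reduces to the single surface relation, confirmed by checking $\Theta(\alpha\beta\alpha^{-1}\beta^{-1}) = \Theta(\gamma_1\gamma_\xi\cdots\gamma_{\xi^{N-1}})$ via the cylinder relation (with the star relation available from the preceding lemma). Your telescoping computation simply spells out the verification that the paper leaves implicit, so the argument is correct and matches the paper's approach.
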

\begin{proof}
Apply the previous lemma to confirm 
$$
\Theta(\alpha\beta\alpha^{-1}\beta^{-1}) = \Theta(\gamma_1\gamma_{\xi}\gamma_{\xi^2} \cdots \gamma_{\xi^{N-1}}).
$$
\end{proof}
\begin{theorem}
\label{thm:obj}
The generalized Hodge realization 
$$
\left(\p(E_\uu',\vv_1), A_{E_\uu'}, \Theta\right) \in \Ob(\cH)
$$
is the Hodge realization of a pro-object of $\MTM_N$, and the Hain map $\Psi_N$ is motivic.
\end{theorem}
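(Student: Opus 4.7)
The plan is to build the claimed pro-object of $\MTM_N$ by reading motivic data directly off the explicit formulas for the inverse monodromy $\Theta$ on the generators $\alpha,\beta,\gamma_\zeta$ of $\pi_1(E_\uu',\vv_1)$ displayed above the statement. The first observation is that each of $\Theta(\alpha),\Theta(\beta),\Theta(\gamma_\zeta)$ is a product of three kinds of factors: (i) motivic Drinfeld associators $\Phi^\m_{0\eta}$, whose coefficients lie in $\cP^\m_N$ by Theorem \ref{thm:DG}; (ii) exponentials $e^{\Le\ee_\bullet}$, whose coefficients are powers of the Lefschetz period of the simple motive $\Q(-1)$; and (iii) the geometric factor of automorphy $e^{-\bX}$, which has coefficients in $\Q\subset\cP^\m_N$. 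After applying the Hain map $\Psi_N$ componentwise (so that $\ee_0\mapsto\frac{\bX}{e^\bX-1}\cdot\bY$, etc., land inside $\p_N\subset A_{E_\uu'}$), the entries of $\Theta$ all lie in $\cP^\m_N\otimes A_{E_\uu'}$. The star and cylinder relations, already verified to be motivic in the two preceding lemmas, together with the corollary that $\Theta$ is well-defined on $\p(E_\uu',\vv_1)$, ensure that this $\cP^\m_N$-rational comparison descends to the completed group algebra $\Q\pi_1(E_\uu',\vv_1)^\wedge$.

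To promote the resulting generalized Hodge realization from an object of $\cH$ to a pro-object of $\MTM_N$, I would work modulo the $n$th power of the augmentation ideal $J$ of $A_{E_\uu'}$ for each $n$. Each finite-dimensional quotient $A_{E_\uu'}/J^n$ carries the filtrations \eqref{eqn:filtrations}, with $\bX,\bY,\bt_\zeta$ $\Q$-rational of explicit Hodge--weight types, and the matrix entries of $\Theta\bmod J^n$ span a finite-dimensional subspace of $\cP^\m_N$. Interpreted via \S\ref{sec:mperiods}, this is exactly a $\cP^\m_N$-point of $\Isom^\otimes(\omega,\omega^\betti)$ on the pair $(A_{E_\uu'}/J^n,\Q\pi_1(E_\uu',\vv_1)/I^n)$, equivalently a compatible pro-representation of $\cG_N^\DR$ and $\cG_N^\betti$ on de Rham and Betti sides. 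Taking the inverse limit over $n$ then yields the desired pro-object of $\MTM_N$, whose Hodge realization is $(\p(E_\uu',\vv_1),A_{E_\uu'},\Theta)$.

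The motivicity of $\Psi_N$ follows from Proposition \ref{prop:hain} once the first part is established: the diagram \eqref{eqn:haindiag} identifies $\Psi_N$ as the de Rham comparison of the map of path-torsor algebras induced by the inclusion $U_N\hookrightarrow E_\uu'$, defined over $\Q$. The domain is a pro-object of $\MTM_N$ by Theorem \ref{thm:DG}, the target is a pro-object by the first part of this proof, and the Hain map matches these two structures because it is exactly the restriction of the KZB connection form to the cusp.

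I expect the real work to be in the second paragraph: promoting the $\cP^\m_N$-rationality of individual matrix coefficients of $\Theta$ to a bona fide Tannakian structure requires that the generators $\bX,\bY,\bt_\zeta$ of the de Rham side transform under $\cG_N^\DR$ in a way compatible with the Hopf/coproduct structure and with the relation $\sum\bt_\zeta=[\bX,\bY]$. This I would verify by rewriting each factorization $\Theta(\alpha)$, $\Theta(\beta)$, $\Theta(\gamma_\zeta)$ as a composition of motivic morphisms coming from the coproducts $\cO(\tensor*[_x]{\Pi}{^\mot_z})\to\cO(\tensor*[_x]{\Pi}{^\mot_y})\otimes\cO(\tensor*[_y]{\Pi}{^\mot_z})$ of Theorem \ref{thm:DG} combined with the canonical motivic exponentials of $\Le$, so that the Hopf compatibility is inherited from the well-established motivic structure on the $U_N$ path torsors.
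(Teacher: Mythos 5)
Your overall skeleton --- motivicity of the $U_N$ path torsor, the explicit formulas for $\Theta$ on $\alpha,\beta,\gamma_\zeta$, and the motivic star and cylinder relations --- assembles the same inputs the paper uses, but the decisive step is exactly the one you defer to your last paragraph, and the mechanism you propose there cannot carry it. Having all matrix entries of $\Theta$ in $\cP^\m_N$ does not by itself yield ``a compatible pro-representation of $\cG_N^\DR$ and $\cG_N^\betti$'' on $A_{E_\uu'}/J^n$ and $\Q\pi_1(E_\uu',\vv_1)/I^n$: to turn the coaction on periods into a coaction on the object itself you must specify how the Galois group acts on the new de~Rham generator $\bX$ (equivalently on the Betti loop $\beta$), which does \emph{not} come from $U_N$, and you must check that this action preserves the defining relation $\sum_{\zeta}\bt_\zeta=[\bX,\bY]$. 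Your plan to inherit the compatibility ``from the well-established motivic structure on the $U_N$ path torsors'' cannot settle this, precisely because $\bX$ and $\beta$ lie outside that structure; treating $e^{-\bX}$ as having rational coefficients silently assumes the Galois action on $\bX$ is trivial, which is the very point needing justification, and you invoke the cylinder relation only for the well-definedness of $\Theta$, not for the well-definedness of the Galois action.

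The paper closes this gap by a different, more structural route: since $\left(\p(E_\uu',\vv_1),A_{E_\uu'},\Theta\right)$ is already an object of Brown's tannakian category $\cH$, the group $\pi_1(\cH,\omega^\betti_\cH)$ acts on it a priori, and one only has to show that this action factors through $\cG^\betti_N$. The Hain map is a morphism in $\cH$, hence equivariant, and its injectivity defines the factored action on the image of $\Psi_N$; the extension to all of $\p(E_\uu',\vv_1)$ is then forced because $\bX$ spans a copy of $\Q(0)$ (on which $\cG^\betti_N$ acts trivially) while $\ee_0$ and $\ee_\infty$ span copies of $\Q(1)$, so the motivic cylinder relation is automatically respected; full faithfulness of $\MTM_N\hookrightarrow\cH$ then produces the motive, and motivicity of $\Psi_N$ follows as you indicate from Proposition \ref{prop:hain}. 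If you wish to keep your direct construction, you must add exactly these two points: declare the action on $\bX$ trivial because it spans a $\Q(0)$, and use the motivic cylinder relation to verify that this declaration is consistent with the relation $\sum_\zeta\bt_\zeta=[\bX,\bY]$ (equivalently with the commutator relation among $\alpha$, $\beta$, and the $\gamma_\zeta$); without them the second paragraph of your argument does not go through.
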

\begin{proof}
By Proposition \ref{prop:pobj}, $\p(U_N,\vv_1)$ is an object of $\MTM_N$. The Hodge realization 
$$
\left(\p(U_N,\vv_1),\Q\ll\ee_0,\ee_\zeta\mid \zeta \in \bmu_N\rr,\Theta_{\vv_1,\vv_1}^\KZ\right)
$$
is an object of Brown's category $\cH$. The action of $\pi_1(\cH,\omega_\cH^\betti)$ on $\p(U_N,\vv_1)$ factors through $\cG^\betti_N = \pi_1(\MTM_N,\omega^\betti)$. 
The Hain map
$$
\Psi_N : \left(\p(U_N,\vv_1),\Q\ll\ee_0,\ee_\zeta\mid \zeta \in \bmu_N\rr,\Theta_{\vv_1,\vv_1}^\KZ\right) \longmapsto \left(\p(E_\uu',\vv_1), A_{E_\uu'}, \Theta\right)
$$
is a morphism in $\cH$ and thus equivariant with respect to the action of $\pi_1(\cH,\omega^\betti_\cH)$. This is summarized in the following commutative diagram, where the horizontal maps are induced by $\Psi_N$, the bending vertical maps are the $\pi_1(\cH,\omega^\betti)$-action, and upper straight vertical maps are induced by the Hodge realization functor $\MTM_N \to \cH$.
$$
\xymatrix{\pi_1(\cH, \omega^\betti) \times \p(U_N,\vv_1) \ar[d] \ar[r]^{\Psi_N} \ar@/_4pc/[dd] & \pi_1(\cH, \omega^\betti) \times \p(E_\uu',\vv_1) \ar[d] \ar@/^4pc/[dd] \cr 
\cG^\betti \times \p(U_N,\vv_1) \ar[d] \ar[r]^{\Psi_N} & \cG^\betti \times \p(E_\uu',\vv_1) \ar@{.>}[d] \cr 
\p(U_N,\vv_1) \ar[r]^{\Psi_N} & \p(E_\uu',\vv_1)}
$$
To prove the claim, it is sufficient to establish the existence of the dotted bottom right vertical action $\cG^\betti_N \times \p(E_\uu',\vv_1) \to \p(E_\uu',\vv_1)$ such that the diagram commutes. Since the Hain map is injective, this action can easily be defined on the image of $\Psi_N$. To see the action of $\pi_1(\cH,\omega^\betti_\cH)$ on the entirety of $\p(E_\uu',\vv_1)$ factors through $\cG_N^\betti$, it remains to show $\cG_N^\betti$ respects the cylinder relation. However, we know this is the case since $\ee_0$ and $\ee_\infty$ each span copies of $\Q(1)$ and $\bX$ spans a copy of $\Q(0)$ in $\p(E_\uu',\vv_1)$. Since $\cG_N^\betti$ acts trivially on $\Q(0)$, the action respects the cylinder relation.  Thus, there is a well-defined $\cG^\betti_N$-action on $\p(E_\uu',\ww_1)$. Hence, $\p(E_\uu',\ww_1)$ is the Betti realization of an object of $\MTM_N$. 
\end{proof}


\section{Depth filtrations and polylogarithm quotients}
\label{sec:depth}

Theorem \ref{thm:obj} establishes the existence of representations $\phi_\cyc$ and $\phi_\elp$ of $\cG^\DR_N$ that fit into a commutative diagram
$$
\xymatrix{\cG^\DR_N \ar[r]^-{\phi_\cyc} \ar[rd]_-{\phi_\elp} & \Aut \p(U_N,\vv_1)^\DR \ar[d]^{\Psi_N} \cr & \Aut \p(E_\uu',\vv_1)^\DR,}
$$
where the vertical map is induced by the Hain map. Restricting to the unipotent radical $\mathcal{K}_N^\DR$\label{not:KN} of $\cG_N^\DR$ \eqref{eqn:Gses} induces representations of the motivic Lie algebra $\k_N^\DR$.\label{not:kN}
\begin{equation}
\label{eqn:k}
\xymatrix{\k_N^\DR \ar[r]^-{\phi_\cyc} \ar[rd]_-{\phi_\elp} & \Der \p(U_N,\vv_1)^\DR \ar[d]^{\Psi_N} \cr & \Der \p_N(E_\uu',\vv_1)^\DR.}
\end{equation}
As discussed in the introduction, relations between motivic periods of $\MTM_N$ are closely related to relations in the associated depth graded of the Lie algebra $\k_N^\DR$. We now define this depth filtration. 

\subsection{Filtrations and compatibility}

Fix a (possibly tangential) base point $x$ of $U_N$. The inclusion $U_N \hookrightarrow \Gm$ induces a map $\r_\cyc : \p(U_N,x)^\bullet \to \p(\Gm,x)^\bullet$ for $\bullet \in \{B,\DR\}$. Define the {\em depth filtration} on $\p(U_N,x)^\bullet$ by 
$$
D^d \p(U_N,x)^\bullet = \begin{cases}
\p(U_N,x)^\bullet & d = 0 \cr
\ker\r_\cyc & d = 1 \cr
[D^1,D^{d-1}] & d > 1.
\end{cases}
$$
We may identify $\p(U_N,x)^\DR$ with $\bL( \ee_0,\ee_\zeta \mid \zeta \in \bmu_N)^\wedge$ and $\p(\Gm,x)^\DR$ with $\bL(\ee_0)^\wedge$ via the KZ connection \eqref{eqn:KZ} and \eqref{eqn:chen}. Then $\r(\ee_0) = \ee_0$ and $\r(\ee_\zeta) = 0$. Thus, $D^d\p(U_N,w)^\DR$ is spanned by the Lie words with degree at least $d$ in the $\ee_\zeta$ terms.

There is an analogous filtration in the elliptic case. 
Let $E_q = \Gm/q^\Z$ be a smooth elliptic curve and let $E_q' = E_q - \bmu_N$. Fix a (possibly tangential) base point $x \in E_q'$. The inclusion $E_q' \hookrightarrow E_q$ induces a map $\r_\elp : \p(E_q',x)^\bullet \to \p(E_q,x)^\bullet$.  The depth filtration on $\p(E_q',x)$ is then
$$
D^d \p(E_q',x)^\bullet = \begin{cases}
\p(E_q',x)^\bullet & d = 0 \cr
\ker\r_\elp & d = 1 \cr
[D^1,D^{d-1}] & d > 1.
\end{cases}
$$
Use the KZB connection to identify $\p(E_q',x)^\DR$ with 
$$
\bL(\bx,\by,\bt_\zeta)^\wedge/\left(\sum\bt_\zeta = [\bx,\by]\right)
$$
and $\p(E_q,x)^\DR$ with $\bL(\bx,\by)^\wedge$. Then $\r_\elp(\bx) = \bx$, $\r_\elp(\by) = \by$, and $\r_\elp(\bt_\zeta) = 0$.
Thus, $D^d\p(E_q',x)^\DR$ is spanned by the Lie words with degree at least $d$ in the $\bt_\zeta$ terms. 


\def\f{\mathfrak{f}}

\begin{lemma}
\label{lem:free}
Suppose $\varphi : \f \to \f'$ is a homomorphism of free Lie algebras. If the induced map $\varphi_\ast : H_1(\f) \to H_1(\f')$ is injective, then $\varphi$ is injective and strict with respect to the lower central series. That is,
$$
\varphi(L^j \f') = \varphi(\f)\cap L^j\f'.
$$ 
To prove the result, it suffices to show that $\Gr^\bdot_L \f \to \Gr^\bdot_L\f'$ is injective.
\end{lemma}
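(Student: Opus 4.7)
The plan is to follow the hint and show that $\Gr_L^\bullet \varphi : \Gr_L^\bullet \f \to \Gr_L^\bullet \f'$ is injective; I will first explain why this suffices, then why it holds. For the sufficiency: free Lie algebras (and their completions, which is the setting of \S\ref{sec:KZ}) are separated with respect to the lower central series, so $\bigcap_j L^j \f = 0$. If $0 \ne x \in \ker \varphi$, pick $j$ maximal with $x \in L^j \f$; then $0 \ne [x] \in \Gr_L^j \f$, yet $\Gr_L^j \varphi([x]) = [\varphi(x)] = 0$, a contradiction. For strictness, the inclusion $\varphi(L^j \f) \subseteq \varphi(\f) \cap L^j \f'$ is trivial. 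Conversely, given $x \in \f$ with $\varphi(x) \in L^j \f'$, let $i$ be maximal with $x \in L^i \f$; then $[\varphi(x)] = \Gr_L^i \varphi([x])$ is a nonzero element of $\Gr_L^i \f'$ by injectivity on the associated graded, which forces $\varphi(x) \notin L^{i+1} \f'$, hence $i \ge j$ and $x \in L^j \f$.

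The injectivity of $\Gr_L^\bullet \varphi$ follows from two classical facts. First, by a theorem of Magnus--Witt, for any free Lie algebra $\bL(V)$ the associated graded $\Gr_L^\bullet \bL(V)$ with respect to its lower central series is canonically isomorphic to $\bL(V)$ itself (graded by bracket length); the same identification applies to the completed free Lie algebra $\bL(V)^\wedge$, whose $\Gr_L^\bullet$ recovers the uncompleted $\bL(V)$. Under the canonical isomorphisms $\Gr_L^\bullet \f \cong \bL(H_1(\f))$ and $\Gr_L^\bullet \f' \cong \bL(H_1(\f'))$, the morphism $\Gr_L^\bullet \varphi$ is the map of free Lie algebras induced by $\varphi_\ast : H_1(\f) \to H_1(\f')$ on generators. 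Second, if a linear map $V \hookrightarrow V'$ is injective then so is the induced map $\bL(V) \to \bL(V')$: by the Poincar\'e--Birkhoff--Witt theorem the corresponding map of universal enveloping algebras is the injection $T(V) \hookrightarrow T(V')$, and $\bL(V) = \bL(V') \cap T(V)$ inside $T(V')$.

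Neither ingredient is difficult, so there is no real obstacle here; the only point requiring care is keeping the completion bookkeeping straight, namely that the associated graded of the completed free Lie algebra $\bL(V)^\wedge$ with respect to its lower central series is again a free (uncompleted) graded Lie algebra on $V = H_1$. Once this is recorded, the two-step reduction above finishes the proof.
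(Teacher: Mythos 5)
Your proof is correct and follows essentially the same route as the paper: both rest on the canonical identification $\Gr_L^\bullet \f \cong \bL(H_1(\f))$ (Magnus--Witt, cited as \cite{serre} in the paper) and the functoriality of this identification, so that injectivity of $\varphi_\ast$ gives injectivity on the associated graded. You merely make explicit the two steps the paper leaves implicit --- the deduction of injectivity and strictness from graded injectivity via separatedness of the lower central series, and the PBW argument that an injection of generators induces an injection of free Lie algebras --- which is a faithful filling-in rather than a different approach.
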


\begin{proof}
Since $\f$ and $\f'$ are free, there are canonical isomorphisms $\Gr^\bdot_L \f \cong \bL(H_1(\f))$ and $\Gr^\bdot_L \f' \cong \bL(H_1(\f'))$ \cite{serre}. We then have the commutative diagram
$$
\xymatrix{\Gr_L^\bullet \f \ar[r]^{\Gr_L^\bullet\varphi} \ar[d] & \Gr_L^\bullet \f' \ar[d] \cr \bL(H_1(\f)) \ar[r]^-{\varphi_\ast} & \bL(H_1(\f')).}
$$
The result follows immediately.
\end{proof}


\begin{prop}
The depth filtration on $\p(U_N,x)$ (resp. $\p(E_q',x))$ induces filtrations $D^\bullet$ on the Betti local system $\bP_N^{\KZ,\topo}$ (resp. $\bP_{\G_1(N)}^\topo$) and de~Rham holomorphic vector bundle $\bP_N^\KZ$ (resp. $\bP_{\Gamma_1(N)}$) with flat connection $\nabla_{\KZ_N}$ (resp. $\nabla_{\KZB_{\G_1(N)}}$). The subbundles $D^d\bP^{\KZ,\topo}_N$ and $D^d\bP^\KZ_N$ (resp. $D^d\bP_{\G_1(N)}^\topo$ and $D^d\bP_{\G_1(N)}$) are isomorphic as flat vector bundles. 
\end{prop}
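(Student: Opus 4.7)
The strategy is to realize $D^1$ on each bundle as the kernel of a morphism of flat bundles induced by the inclusion of an ambient variety; then ``flat'' and ``subbundle'' are automatic, and the higher pieces $D^d = [D^1, D^{d-1}]$ inherit the same properties because the Lie bracket is flat (i.e.\ the bundles in question carry flat Lie-algebra structures, so brackets of flat subbundles are flat subbundles).

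On the Betti side, the inclusion $i\colon U_N \hookrightarrow \Gm$ (resp.\ $j\colon E_q' \hookrightarrow E_q$) is a map of pointed complex-analytic varieties. Fiberwise, applying $\pi_1$ followed by unipotent completion gives precisely the Lie homomorphism $\r_\cyc$ (resp.\ $\r_\elp$). These maps vary naturally in the base point, so by functoriality of unipotent completion they assemble into a morphism of local systems whose fiberwise kernel is $D^1$. Hence $D^1\bP_N^{\KZ,\topo}$ and $D^1\bP_{\G_1(N)}^\topo$ are sub-local systems, and bracket closure extends this to all $D^d$.

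For the de~Rham side I would argue by inspection of the connection forms. For $\bP_N^\KZ$, formula \eqref{eqn:KZ} shows $\nabla_{\KZ_N}$ is built from the derivations $\ad \ee_0$ and $\ad \ee_\zeta$: $\ad\ee_0$ preserves $\deg_{\ee_\zeta}$ and each $\ad\ee_\zeta$ strictly raises it, so both preserve $D^\bullet$ and each $D^d\bP_N^\KZ$ is a flat subbundle. For $\bP_{\G_1(N)}$, the analogous check must be done for the derivation-valued coefficients of $\Omega_{\G_1(N)}^\KZB$; from \eqref{eqn:KZBrest} this is visible near the nodal fiber for $\bY\,\partial/\partial\bX$, $\tfrac{\bX}{e^{\bX}-1}\!\cdot\!\bY$, $\ad\bt_\zeta$, and the $\e_{2m+2,\zeta}$. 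The cleanest way to handle the full connection is to observe that, by the construction of \cite{CG}, $\bP_{\G_1(N)}$ is obtained from a KZB bundle on $\E_{\G_1(N)}$ (with no torsion removed) whose fiber is $\bL(\bX,\bY)^\wedge$; the quotient map $\r_\elp$ is intertwined with the two connections, so it extends to a morphism of flat bundles and $D^1 = \ker\r_\elp$ is flat.

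Finally, the agreement between the Betti and de~Rham filtrations follows from naturality of Chen's transport. The comparison isomorphism $\bP_N^{\KZ,\topo}\otimes_\Q \cO \xrightarrow{\sim} \bP_N^\KZ$ is defined by transport of the universal KZ form and is natural in the inclusion $i$; hence it carries the Betti $D^1$ to the de~Rham $D^1$, and iterated brackets match because the comparison is an isomorphism of bundles of complete Lie algebras. The same argument gives the elliptic statement. The main obstacle I anticipate is the elliptic compatibility check in the previous paragraph: verifying that every derivation-valued coefficient of the full $\Omega_{\G_1(N)}^\KZB$ preserves $D^\bullet$. This is essentially built in to the construction if one accepts the KZB connection on $\bP_{\G_1(N)}$ as inherited from the universal elliptic curve, but would be a nontrivial bookkeeping task from the explicit formulas alone.
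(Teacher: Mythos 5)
Your proposal is correct in substance, but it reaches the conclusion by a genuinely different route than the paper. The paper works with the weight-graded objects: $\Gr^W_\bullet\p^\KZ_N$ and $\Gr^W_\bullet\p_N$ are free, so by the Shirshov--Witt theorem $D^1\Gr^W_\bullet$ is free; both connections take values in depth-preserving derivations, so transport is injective on $\Gr^1_D\Gr^W_\bullet$; Lemma \ref{lem:free} then gives injectivity and strictness on $D^d = L^d(D^1)$, and the statement follows after taking inverse limits over $W_\bullet$. You instead obtain flatness of the Betti filtration from functoriality of unipotent completion, flatness of the de~Rham filtration from inspection of the connection coefficients, and the agreement of the two filtrations from naturality of the comparison with respect to $U_N\hookrightarrow\Gm$ and $E_q'\hookrightarrow E_q$. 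In the cyclotomic case your naturality argument is airtight: pulling back the $\Gm$ form along the inclusion gives exactly $\r_\cyc(\Omega_{\KZ_N})$, so the two Chen transports intertwine $\r_\cyc$, kernels correspond, and the higher steps match because the comparison is an isomorphism of complete Lie algebras. What your route buys is directness --- no freeness or strictness lemma is needed to match $D^1$. What it costs is that, in the elliptic case, everything rests on the assertion that each coefficient of the full $\Omega^\KZB_{\G_1(N)}$ carries every $\bt_\zeta$ into $D^1$ (equivalently, that the connection is valued in $D^0\Der\p_N$, so that the KZB transport descends to the quotient $\p_N/D^1$ compatibly with the topological projection $\p(E_q',x)\to\p(E_q,x)$). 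That assertion is true --- the coefficients are inner derivations together with the special derivations $\e_{m,\zeta}$, and any derivation sending each $\bt_\zeta$ into $D^1$ preserves $D^\bullet$ --- and it is precisely the input the paper also uses without detailed verification, so your deferral is at the paper's own level of rigor.

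Two small points to repair. First, the quotient bundle over the unpunctured $\E_{\G_1(N)}$ has fiber the abelianization $\p_N/D^1\cong\Q\bX\oplus\Q\bY$ (the fiber of $\H$), not $\bL(\bX,\bY)^\wedge$: the relation $[\bX,\bY]=\sum_\zeta\bt_\zeta$ forces $[\bX,\bY]\mapsto 0$, consistent with $\pi_1(E_q)\cong\Z^2$ being abelian. (Section \ref{sec:depth} of the paper is equally loose on this point; it does not affect your argument, since only the kernel is used.) Second, your last step silently identifies the bracket-generated filtration $L^d(D^1)$ with the ``degree $\ge d$ in $\ee_\zeta$ (resp.\ $\bt_\zeta$)'' filtration against which you checked the connection; this holds because $D^1$ is the closed ideal generated by the $\ee_\zeta$ (resp.\ $\bt_\zeta$), which is freely generated in the associated graded by elements of depth one --- this is the one place where the freeness considerations of the paper's proof still enter your argument, and it deserves a sentence.
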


\begin{proof}
The Lie algebras $\Gr^W_\bullet\p_N$ and $\Gr^W_\bullet\p_N^\KZ$ are free and thus by the Shirishov--Witt theorem so are $D^1\Gr^W_\bullet\p_N$ and $D^1\Gr^W_\bullet\p_N^\KZ$. Both the KZ and KZB connections take values in $D^0$ and thus transport along any path is injective on $\Gr^1_D\Gr^W_\bullet$. Since $D^d = L^d(D^1)$, the result follows from the lemma and taking inverse limits with respect to the weight filtration.  
\end{proof}

The choice of notation $D^\bullet$ is intentionally ambiguous since the filtrations are compatible with the respect to the Hain map.
\begin{lemma}
\label{lem:D1}
The Hain map induces an injection 
$$
H_1(\Gr^W_\bullet D^1 \p(U_N,\vv_1)^\DR) \to H_1(\Gr^W_\bullet D^1 \p(E_\uu',\vv_1)^\DR). 
$$
\end{lemma}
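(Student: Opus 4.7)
The plan is to identify $H_1(\Gr^W_\bullet D^1)$ on each side as a free module over a polynomial ring and then read off the induced Hain map on generators. On the cyclotomic side, $\Gr^W \p(U_N,\vv_1)^\DR \cong \bL(\ee_0,\ee_\zeta \mid \zeta \in \bmu_N)$ is free, and the quotient $\p(U_N,\vv_1)^\DR/D^1 \cong \Q\ee_0$ is abelian. By Shirshov--Witt, $D^1$ is itself free, and a standard PBW/adjoint argument identifies $D^1/[D^1,D^1]$ with the free $\Q[\ee_0]$-module on $\{\ee_\zeta \mid \zeta \in \bmu_N\}$, with $\Q$-basis $\{\ad_{\ee_0}^n \ee_\zeta \mid n \geq 0,\ \zeta \in \bmu_N\}$.

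For the elliptic side, first use $\sum \bt_\zeta = [\bX,\bY]$ to eliminate one generator and realize $\Gr^W\p(E_\uu',\vv_1)^\DR$ as a free Lie algebra of rank $N+1$. The quotient $\p_N/D^1$ kills the $\bt_\zeta$'s and forces $[\bX,\bY]=0$, so $\p_N/D^1 \cong \Q\bX \oplus \Q\bY$ is abelian. Hence $D^1/[D^1,D^1]$ is naturally a module over $\Q[\bX,\bY]$, generated as such by the classes of $\bt_\zeta$. I claim this module is free of rank $N$ on $\{\bt_\zeta\}_{\zeta \in \bmu_N}$. One can see this by comparison with the noncommutative-free model: in the auxiliary free Lie algebra $\bL(\bX,\bY,\bt_\zeta)$ (no relation), the analog of $D^1/[D^1,D^1]$ is the free $\Q\ll\bX,\bY\rr$-module on the $\bt_\zeta$, and the relation $r = \sum\bt_\zeta - [\bX,\bY]$ contributes $[\bt_\eta, r] \equiv -[\bt_\eta,[\bX,\bY]] \equiv (\bX\bY-\bY\bX)\cdot \bt_\eta$ modulo $[D^1,D^1]$, together with its $\Q\ll\bX,\bY\rr$-translates, which precisely enforce commutativity of $\bX, \bY$ on the module. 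Alternatively, a weight-by-weight dimension count (using Jacobi together with $[[\bX,\bY],\bt_\zeta] = [\sum_\eta \bt_\eta, \bt_\zeta] \in [D^1,D^1]$ to verify that the weight-$(-2-k)$ piece of $D^1/[D^1,D^1]$ has $\Q$-dimension $N(k+1)$) confirms the freeness.

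With both $H_1$'s identified, the Hain map on them is immediate. Since $\Psi_N(\ee_0) = \frac{\bX}{e^\bX - 1}\cdot \bY = \bY + \sum_{k \geq 1}\frac{B_k}{k!}\ad_\bX^k \bY$ and every $\ad_\bX^k\bY$ with $k \geq 1$ contains $[\bX,\bY] = \sum \bt_\zeta \in D^1$, we have $\Psi_N(\ee_0) \equiv \bY \pmod{D^1}$. Thus the action of $\ee_0$ on $D^1/[D^1,D^1]$ transports to the action of $\bY$; combined with $\Psi_N(\ee_\zeta) = \bt_\zeta$, the induced map on $H_1$ sends $\ad_{\ee_0}^n \ee_\zeta \mapsto \bY^n \cdot \bt_\zeta$. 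These are distinct basis elements of the free $\Q[\bX,\bY]$-module $\bigoplus_\zeta \Q[\bX,\bY]\cdot \bt_\zeta$ (the sub-basis supported in $\bX$-degree zero), hence linearly independent, and injectivity follows.

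The main technical hurdle is the freeness of the elliptic $H_1$ as a $\Q[\bX,\bY]$-module. The delicate point is that the single curvature relation $\sum \bt_\zeta = [\bX,\bY]$ in $\p_N$ must produce exactly the relations in $D^1/[D^1,D^1]$ that collapse the natural noncommutative action of $\bX,\bY$ into a commutative one, without introducing spurious relations among the $\bt_\zeta$'s; the weight-by-weight dimension count sketched above is the cleanest confirmation, matching the Hilbert series of a free $\Q[\bX,\bY]$-module of rank $N$.
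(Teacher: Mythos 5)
Your argument is correct and follows essentially the same route as the paper: the paper's proof simply observes that the cyclotomic $H_1$ is $\bigoplus_{n,\zeta}\Q\,\ee_0^n\cdot\ee_\zeta$, that the elliptic $H_1$ is the free $\Sym(\Q\bX\oplus\Q\bY)$-module $\bigoplus_\zeta \Sym(\Q\bX\oplus\Q\bY)\cdot\bt_\zeta$, and that the Hain map sends $\ee_0^n\cdot\ee_\zeta\mapsto\bY^n\cdot\bt_\zeta$ modulo $D^2$, which is visibly injective. The only difference is that you supply a justification (elimination/Shirshov--Witt, the relation-module analysis, and the weight-by-weight count) for the freeness of the elliptic $H_1$ over $\Q[\bX,\bY]$, a point the paper states without proof.
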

\begin{proof}
Observe
$$
H_1(\Gr^W_\bullet D^1\p(U_N,\vv_1)^\DR) = \bigoplus_{\substack{n \ge 0 \\ \zeta \in \bmu_N}} \Q\ee_0^n\cdot \ee_\zeta
$$
and 
$$
H_1(\Gr^W_\bullet D^1\p(E_\uu',\vv_1)^\DR) = \bigoplus_{\substack{n \ge 0 \\ \zeta \in \bmu_N}} S^m H \cdot \bt_\zeta,
$$
where $H = \Q\bX \oplus \Q\bY$. The Hain map sends $\Psi_N : \ee_0^n \cdot \ee_\zeta \mapsto \bY^n \cdot \bt_\zeta \bmod D^2\p(E_\uu',\vv_1)^\DR$. This is clearly injective.
\end{proof}
\begin{cor}
The Hain map is strict with respect to the depth filtrations.
\end{cor}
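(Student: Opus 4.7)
The plan is to deduce strictness with respect to $D^\bullet$ from Lemma \ref{lem:free} applied to the restriction $\Psi_N|_{D^1}$ of the Hain map to the first depth subalgebras, together with the observation $D^d = L^d(D^1)$, where $L^\bullet$ denotes the lower central series.

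First I would reduce to showing that $\Psi_N$ is strict on $D^1$ with respect to the lower central series. Because $\Psi_N$ is a Lie algebra homomorphism and $D^d = L^d(D^1)$ on both sides, once we know
\[
\Psi_N(D^1\p(U_N,\vv_1)^\DR) = \Psi_N(\p(U_N,\vv_1)^\DR) \cap D^1\p(E_\uu',\vv_1)^\DR
\]
and that $\Psi_N$ is strict with respect to $L^\bullet$ on $D^1$, strictness with respect to $D^\bullet$ follows by induction on $d$.

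Next I would verify the hypotheses of Lemma \ref{lem:free}. The proof of the preceding proposition observed that, by the Shirshov--Witt theorem, the Lie subalgebras $D^1\Gr^W_\bullet \p(U_N,\vv_1)^\DR$ and $D^1\Gr^W_\bullet \p(E_\uu',\vv_1)^\DR$ of the free Lie algebras $\Gr^W_\bullet\p(U_N,\vv_1)^\DR$ and $\Gr^W_\bullet\p(E_\uu',\vv_1)^\DR$ are themselves free. Lemma \ref{lem:D1} provides exactly the needed injectivity on
\[
H_1(\Gr^W_\bullet D^1 \p(U_N,\vv_1)^\DR) \longrightarrow H_1(\Gr^W_\bullet D^1 \p(E_\uu',\vv_1)^\DR).
\]
Applying Lemma \ref{lem:free} to the induced map on weight graded pieces yields injectivity of $\Gr^W_\bullet \Psi_N|_{D^1}$ and its strictness with respect to the lower central series of $D^1\Gr^W_\bullet \p(U_N,\vv_1)^\DR$.

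Finally, I would transfer the conclusion from $\Gr^W_\bullet$ back to the full filtered Lie algebras. The Hain map is compatible with the weight filtration $W_\bullet$ (Proposition \ref{prop:hain}), and both $D^1\p(U_N,\vv_1)^\DR$ and $D^1\p(E_\uu',\vv_1)^\DR$ are complete with respect to $W_\bullet$ since they are closed sub-Lie algebras of the completed Lie algebras. A standard inverse-limit argument, identical in spirit to the one used to prove the preceding proposition, then promotes injectivity and strictness with respect to $L^\bullet$ from $\Gr^W_\bullet$ to the full $D^1$. Combined with the reduction of the first paragraph, this gives strictness of $\Psi_N$ with respect to $D^\bullet$. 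The only subtlety I expect is keeping the two filtrations $W_\bullet$ and $D^\bullet$ properly separated in the inverse-limit step; this is routine because the KZ and KZB connections preserve $D^0$, so the depth and weight filtrations interact in a strict way on each graded piece.
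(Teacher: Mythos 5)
Your proposal is correct and follows essentially the same route as the paper: apply Lemma \ref{lem:free} together with the $H_1$-injectivity of Lemma \ref{lem:D1} to the (free, by Shirshov--Witt) depth-one subalgebras of the weight-graded Lie algebras, use $D^d = L^d(D^1)$ to convert strictness for the lower central series into strictness for $D^\bullet$, and then pass to inverse limits over the weight filtration. The extra bookkeeping you include (the explicit reduction to $D^1$ and the induction on $d$) is just a spelled-out version of what the paper leaves implicit.
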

\begin{proof}
It follows from the Lemmas \ref{lem:free} and \ref{lem:D1} that 
$$
\Gr^W_\bullet \Psi_N : \Gr^W_\bullet\p(U_N,\vv_1)^\DR \to \Gr^W_\bullet \p(E_\uu',\vv_1)^\DR
$$
is strict with respect to $D^\bullet$. Apply inverse limits with respect to the weight filtration for the result.
\end{proof}
The depth filtration $D^\bullet$ pulls back along $\phi_\cyc$ and $\phi_\elp$ in \eqref{eqn:k} to induce a depth filtration on $\k_N$. 

\subsection{Polylog variations}

Next we use these compatible depth filtrations to construct elliptic and cyclotomic polylog VMHS over $\E_{\G_1(N)}'$ and $U_N$, respectively. In the next section, we will use the Hain map to relate their limit MHS at $\uu + \vv_1$. 

\subsubsection{Elliptic case}
\label{sec:ellPol}

The {\em elliptic polylog variation} $\Pol_{\elp,N}$ over $\E_{\G_1(N)}'$ is the quotient $\mathbfcal{P}_{\G_1(N)}/D^2\mathbfcal{P}_{\G_1(N)}$. We have the obvious short exact sequence of VMHS
$$
\xymatrix{0 \ar[r] & \Gr_D^1\bP_{\G_1(N)} \ar[r] & \Pol_{\elp,N} \ar[r] & \Gr_D^0\bP_{\G_1(N)} \ar[r] & 0.}
$$
Define $\H := \Gr_D^0 \bP_{\G_1(N)} = \Gr^W_{-1}.$\label{not:H} The KZB connection on this subquotient variation $\H$ is simply $d +2\pi i\bY\partial/\partial\bX$. It follows that
$$
\Gr^W_\bullet\Gr_D^1 \bP_{\G_1(N)} = \Gr^W_\bullet\bigoplus_{\zeta \in \bmu_N} \Sym \H \cdot \bt_\zeta \cong \Gr^W_\bullet\bigoplus_{\zeta \in \bmu_N} (\Sym \H)(1) 
$$
since each $\bt_\zeta$ generates a constant variation MHS $\Q(1)$ modulo $D^2$. Thus, the polylog variation $\Pol_{\elp,N}$ is a direct sum of extensions of $\H$ by symmetric powers $S^m\H(1)$. This is a level $N$ or cyclotomic generalization of Beilinson and Levin's elliptic polylog variation \cite{BL}. 

\subsubsection{Cyclotomic case}

The {\em cyclotomic polylog variation} is defined analogously except with the $\KZ_N$ VMHS. Define $\Pol^\cyc_N$ to be the quotient $\bP_N^\KZ/D^2\bP_N^\KZ$. As in the elliptic case, there is a short exact sequence 
$$
\xymatrix{0 \ar[r] & \Gr_D^1 \bP_N^\KZ \ar[r] & \Pol^\cyc_N \ar[r] & \Gr_D^0 \bP^\KZ_N \ar[r] & 0.}
$$
We know the holomorphic vector bundle $\Gr^1_D \bP_N^\KZ \otimes \cO_{U_N}$ is spanned by $\ee_0^m \cdot \ee_\zeta$. In the next section, we will show $\Gr^1_D \bP_N^\KZ$ in fact splits as a VMHS
$$
\Gr^M_\bullet\Gr^1_D \bP_N^\KZ \cong \Gr^M_\bullet\bigoplus_{\substack{m \geq 0 \\ \zeta \in \bmu_N}} \Q(m+1)^\DR,
$$
where $\ee_0^m\cdot \ee_\zeta$ each span a copy of $\Q(m+1)^\DR$. Meanwhile, the quotient $\Gr_D^0 \bP_N^\KZ$ is spanned by the class of $\ee_0$, hence $\Gr_D^0 \bP_N^\KZ$ is isomorphic to the constant VMHS $\Q(1)$ over $U_N$. 

\begin{remark}
One can also obtain the variation $\Pol^\cyc_N$ by restricting $\Pol^\elp_N$ to a neighborhood of the cusp $q = 0$ and taking monodromy invariants. We know from \eqref{eqn:KZBrest} that the residue of the connection on $\Pol^\elp_N$ is $\bY\partial/\partial\bX$, and thus monodromy around the cusp is given by $\exp(-\bY\partial/\partial\bX)$. The invariants are generated by $\bY$ and $\bt_\zeta$, which is precisely the image of $\Pol^\cyc_N$ under the Hain map. 
\end{remark}

\section{Limit MHS of polylogarithm variations}
\label{sec:polMHS}

To compute the action \eqref{eqn:k} of $\k_N$, we must know the limit MHS of the cyclotomic and elliptic polylogarithm variations at $\vv_1$. We proceed by first computing the limit MHS of the cyclotomic polylog at $\ww_1$ and then use parallel transport of the $\KZ_N$ connection along $\dch$ to compute the limit MHS of the cyclotomic polylog at $\vv_1$.

\subsection{Preliminaries}

If $X$ is a topological space, denote the free abelian group on the set of homotopy classes of paths in $X$ from $x$ to $y$ by $H_0(P_{x,y}X)$. These groups form a local system
\begin{equation}
\label{eqn:H0}
\{H_0(P_{x,y}X)\}_{(x,y)} \to X \times X.
\end{equation}
If $x = y$, then $H_0(P_{x,y}X)$ is canonically isomorphic to $\Q\pi_1(X,x)$. Powers of the augmentation ideal $I$ of $\Q\pi_1(X,x)$ define a filtration on $H_0(P_{x,y}X)$, which extends to a flat filtration of the local system \eqref{eqn:H0}. Each fiber has a completion $H_0(P_{x,y}X,\Q)^\wedge$ in the $I$-adic topology. The corresponding variation of completed groups
$$
\{H_0(P_{x,y}X,\Q)^\wedge\}_{(x,y)} \to X \times X
$$
is an admissible VMHS whose fiber over $(x,x)$ is canonically isomorphic to the MHS on $\Q\pi_1(X,x)^\wedge$ \cite{HZ}.

\subsection{The limit MHS of $\Pol^\cyc_N$ at $\ww_1$} 

The variation \eqref{eqn:H0} can be described explicitly when $X=U_N$. Restrict the variation $\{H_0(P_{x,y}U_N,\Q)^\wedge\}_{(x,y)}$ to $U_N \times \{\ww_1\}$:
$$
\{H_0(P_{w,\ww_1}U_N,\Q)^\wedge\}_{w\in U_N} \to U_N.
$$
By \eqref{eqn:chen}, the restricted variation is isomorphic, as a flat vector bundle, to 
$$
\C\ll\ee_0,\ee_\zeta \mid \zeta \in \bmu_N \rr \times U_N \to U_N
$$
with connection $\nabla = d + \ee_0 \omega_0 + \sum \ee_\zeta \omega_\zeta$, where each $\ee_r$ acts via left multiplication. This is a cyclotomic version of the classical polylogarithm variation \cite{hain:polylog}.

Recall the augmentation ideal $J$ of $\C\ll \ee_0,\ee_\zeta \mid \zeta \in \bmu_N\rr$. Define the left ideal $\cJ$
of $H_0(P_{\ww_1,\ww_1}U_N)^\wedge$ generated by $\{J\ee_0, \ee_\zeta J\mid \zeta \in \bmu_N\}$. Set $\cJ_m = \cJ + J^m$ for $m \ge 2$. Then the quotient $J/\cJ_m$ has basis $\big\{\ee_0,\ee_\zeta, \ee_0 \ee_\zeta, \ldots, \ee_0^{m - 2}\ee_\zeta \mid \zeta \in \bmu_N\big\}$.
Let $\J_{N,m}$ be the variation over $U_N$ isomorphic to the trivial bundle $J/\cJ_m \times U_N \to U_N$ with connection $\nabla$. Set $\J_N := \varprojlim\limits_m \J_{N,m}$. Then $\J_N$ isomorphic to the trivial bundle $J/\cJ \times U_N \to U_N$ with connection $\nabla$. 
\begin{remark}
As written, $\J_N$ and $\J_{N,m}$ are holomorphic vectors bundles with flat connection and Hodge and weight filtrations. In order to view them as VMHS, we consider the rational structure induced by Chen's transport formula as in \S\ref{sec:piMHS}. 
\end{remark}
\begin{prop}
\label{prop:polisom}
There is an isomorphism of variations of MHS $\Pol^\cyc_N \to \J_N$ given by the expected formula 
\begin{align*}
    \ee_0 &\longmapsto \ee_0  \\
    \ee_0^n\cdot\ee_\zeta &\longmapsto \ee_0^{n} \ee_\zeta.
\end{align*}
\end{prop}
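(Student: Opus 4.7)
The plan is to realize $\Psi$ as the descent of the canonical Lie-algebra-into-group-algebra inclusion $\p(U_N,\ww_1)^\DR \hookrightarrow A := \Q\pi_1(U_N,\ww_1)^{\wedge,\DR}$, then verify compatibility with the flat, Hodge, weight, and rational structures. First I would check well-definedness on fibers. The inclusion carries $\p$ into $J$; I would show it sends $D^2\p$ into $\cJ$ by noting that any Lie word in $D^2$ has at least two letters from $\{\ee_\zeta : \zeta \in \bmu_N\}$ when expanded in $A$, and in each such monomial the first $\ee_\zeta$ is followed by a non-empty suffix of $J$, placing the monomial in $A\cdot\ee_\zeta\cdot J \subset \cJ$. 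Expanding $\ad_{\ee_0}^n(\ee_\zeta) = \sum_{k=0}^n (-1)^k\binom{n}{k}\ee_0^{n-k}\ee_\zeta\ee_0^k$, all terms with $k \ge 1$ have their $\ee_\zeta$ before $\ee_0^k \in J$ and hence vanish modulo $\cJ$, leaving only $\ee_0^n\ee_\zeta$. Comparing with the bases given in the text shows $\Psi$ is a linear bijection on each fiber.

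Second I would verify compatibility with the flat connections. The adjoint KZ connection on $\p/D^2$ expands to $\ad_{\ee_r}(x) = \ee_r x - x\ee_r$ in $A$, while the connection on $\J_N$ is left multiplication by $\ee_r$ on $J/\cJ$. On the $D^1/D^2$ component, the right-multiplication term $x\ee_r$ lies in $\cJ$ by the same suffix argument, so the two connections match. On the residual $\ee_0$-class of $\p/D^1$, the comparison requires more care since $\ee_0\ee_\zeta\not\in\cJ$; I would address this by invoking the canonical de~Rham splitting $\p^\KZ_N/D^2 \cong \Gr_D^0 \oplus \Gr_D^1$, with $\Gr_D^0 \cong \Q(1)$ and $\Gr_D^1 \cong \bigoplus_{m,\zeta}\Q(m+1)$ arising from semisimplicity in $\MTM_N^\ss$, to reduce the check to the graded level, combined with the observation that $\log c(\gamma) \in D^1$ for any loop $\gamma$ in $U_N$ (since any such loop is contractible in $\Gm$), which controls the $\ee_0$-component of monodromy.

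Finally I would verify compatibility with the Hodge, weight, and rational structures. Both sides inherit their Hodge and weight filtrations from the bi-grading in which $\ee_0$ and each $\ee_\zeta$ have Hodge type $(-1,-1)$, and $\Psi$ preserves this grading by inspection. The rational structure on $\Pol^\cyc_N$ comes from $\p(U_N,\ww_1)^\betti$ via Chen's transport, while that on $\J_N$ comes from the Betti path algebra; compatibility follows because $\Psi^\betti$ is the Betti realization of the same inclusion-then-quotient construction. The principal obstacle I foresee is the second step on the $\ee_0$-class, where naive endomorphism comparison of adjoint with left multiplication does not immediately match; one must reduce to the graded pieces via motivic semi-simplicity and the naturality of Chen's transport rather than perform a direct calculation.
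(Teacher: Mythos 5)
Your first and third steps are sound: the inclusion $\p(U_N,\ww_1)^\DR \hookrightarrow \Q\pi_1(U_N,\ww_1)^{\wedge,\DR}$ does carry $D^2$ into $\cJ$, the expansion $\ad_{\ee_0}^n(\ee_\zeta) \equiv \ee_0^n\ee_\zeta \bmod \cJ$ is exactly the key identity in the paper's proof, and the filtrations behave as you say. You also correctly flag the genuine obstacle on the $\ee_0$-class: the constant de~Rham map $\iota : \p^\KZ_N/D^2 \to J/\cJ$ does \emph{not} intertwine the two connections, since $[\Omega,\ee_0] = -\sum_\zeta \omega_\zeta\,\ee_0\cdot\ee_\zeta$ is nonzero modulo $D^2$ while $\Omega\ee_0 \equiv 0 \bmod \cJ$. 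But neither of your two proposed fixes works. The claim that $\log\Theta(\gamma)\in D^1$ for every loop $\gamma$ in $U_N$ is false: $\pi_1(U_N)\to\pi_1(\Gm)\cong\Z$ is surjective, so a small loop about $0$ is not contractible in $\Gm$, and for it $\log\Theta(\gamma)=2\pi i\,\ee_0\notin D^1$. And passing to the depth-graded quotients $\Gr_D^0\oplus\Gr_D^1$ discards precisely the extension class distinguishing $\Pol^\cyc_N$ from a split sum, which is the very thing the proposition must match on both sides; there is no canonical de~Rham splitting of the \emph{depth} filtration of the sort you invoke, and in any case the canonical de~Rham splitting in $\MTM_N$ is of the weight filtration and is a fiberwise statement, not one about variations.

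The paper's argument sidesteps the flatness check for the constant map entirely by exhibiting both $\Pol^\cyc_N$ and $\J_N$ as subquotients of the single admissible variation $\{H_0(P_{w,\ww_1}U_N,\Q)^\wedge\}_{w\in U_N}$. The embedding of $\bP^\KZ_N$ into this variation uses the flat section through the identity at $\ww_1$; in the de~Rham trivialization this is right multiplication by a $w$-dependent grouplike factor equal to $1$ at $\ww_1$, and it is precisely this $w$-dependent factor that supplies the correction on the $\ee_0$-class. The induced map of subquotients is then automatically a morphism of VMHS, and the stated formula merely records its restriction to the limit fiber at $\ww_1$, which follows from your computation $\ee_0^n\cdot\ee_\zeta \equiv \ee_0^n\ee_\zeta \bmod \cJ$. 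So the missing idea is to construct the isomorphism as a map of variations coming from a common ambient object, rather than to try to prove the constant fiberwise formula is flat — it is not.
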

\begin{proof}
Both $\Pol^\cyc_N$ and $\J_N$ are sub-quotients of $\{H_0(P_{w,\ww_1}U_N,\Q)^\wedge\}_{w\in U_N}$. Moreover, we know $\ee_0^n\cdot \ee_\zeta \equiv \ee_0 \ee_\zeta \bmod \cJ$. Thus, the map preserves the connection and the rational structure. 
\end{proof}

Next, for each $\zeta \in \bmu_N$, set $U_\zeta := \Gm - \{\zeta\}$. Define $J_\zeta$ to be the augmentation ideal of $\C\ll \ee_0,\ee_\zeta\rr$ viewed as a subalgebra of $\C\ll \ee_0,\ee_\zeta: \zeta \in \bmu_N\rr$. Define the left ideal $\cJ_{m,\zeta} = \cJ_m \cap \C\ll \ee_0,\ee_\zeta\rr$. Let $\J_{N,m}^\zeta$ be the variation over $U_\zeta$ isomorphic to the trivial bundle $(J_\zeta/\cJ_{m,\zeta}) \times U_\zeta \to U_\zeta$ with connection $\nabla = d + \ee_0\omega_0 + \ee_\zeta\omega_\zeta$.
The rational structure of $\J_{N,m}^\zeta$ is pulled back from $\J_{1,m} = \J_{N,m}^1$ along the map $z \mapsto z/\zeta$. 

The inclusion $U_\zeta \hookrightarrow \Gm$ induces a surjection on completed group algebras $\Q\pi_1(U_\zeta,\ww_1)^\wedge \to \Q\pi_1(\Gm,\ww_1)^\wedge$. This induces a surjection on the quotients
$$
\Upsilon_{N,m}^\zeta : \J_{N,m}^\zeta \big|_{\ww_1} \to \frac{J\cap\C\ll\ee_0\rr}{\cJ_m\cap\C\ll\ee_0\rr} \cong \C\ee_0.
$$
The inclusion $\D^\ast \hookrightarrow U_\zeta$ induces a splitting of map $\Upsilon_{N,m}^\zeta$ above and thus the limit MHS at $\ww_1$ splits as
\begin{equation}
    \label{eqn:Vsplit}
    \J_{N,m}^\zeta \big|_{\ww_1} \cong \Q(1) \oplus \ker \Upsilon_{N,m}^\zeta \big|_{\ww_1},
\end{equation}
where the de~Rham realization of the $\Q(1)$ summand is spanned by $\ee_0$. 
The other summand $\ker \Upsilon_{N,m}^\zeta$ is spanned by $\{\ee_\zeta,\ee_0\ee_\zeta,\ldots,\ee^{m-2}_0\ee_\zeta\}$. 

\begin{lemma}
The limit MHS on $\ker \Upsilon_{N,m}^\zeta$ at $\ww_1$ and $\vv_1$ are isomorphic to $\Q(1) \oplus \Q(2) \oplus \cdots \oplus \Q(m - 1)$, where the de~Rham generator of each summand is $\ee_0^{n-1}\ee_\zeta$. 
\end{lemma}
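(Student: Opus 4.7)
The proof reduces to a concrete computation on the polylog quotient, built on a single key simplification.

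\emph{Reduction to $\zeta = 1$.} Since the rational structure of $\J_{N,m}^\zeta$ is defined as the pullback of $\J_{1,m}$ along the automorphism $z \mapsto z/\zeta$ of $\Gm$, which carries $U_\zeta$ to $U_1$ and the tangential base points of interest to corresponding tangential base points of $U_1$, it suffices to establish the claim for $\ker \Upsilon^1_{N,m}$ over $U_1$.

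\emph{Simplification of the connection.} The key observation is that $\ee_1$ acts as zero on $\ker \Upsilon^1_{N,m}$: for each $n$, one has $\ee_1 \cdot \ee_0^{n-1}\ee_1 = \ee_1 \ee_0^{n-1}\ee_1 \in \ee_1 J \subset \cJ$. Therefore the connection on this quotient simplifies to $\nabla = d + \ee_0 \omega_0$, where $\ee_0$ acts by the nilpotent shift $\ee_0^{n-1}\ee_1 \mapsto \ee_0^n \ee_1$ (annihilated once the resulting word leaves $\ker \Upsilon_{N,m}^1$). In particular, the residue of $\nabla$ at $w = 1$ vanishes, so the variation extends to a unipotent VMHS over $\P^1 - \{0, \infty\}$ with logarithmic singularity only at the origin.

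\emph{Limit MHS at $\vv_1$ and $\ww_1$.} Multivalued flat sections on the universal cover of $\Gm$ are $\tilde\ee_n(w) := \exp(-\log(w)\,\ee_0)(\ee_0^{n-1}\ee_1)$, so that $\tilde\ee_n(1) = \ee_0^{n-1}\ee_1$. Since the extended variation is regular at $w = 1$, the limit MHS at $\vv_1$ is simply the fiber at $1$. Monodromy around $0$ acts by $\exp(2\pi i \ee_0)$, and requiring this action to be $\Q$-rational forces the Betti rational structure on the fiber to be (up to $\Q^\times$-scaling) spanned by $\{(2\pi i)^n \ee_0^{n-1}\ee_1\}_{n=1}^{m-1}$. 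Since each $\ee_0^{n-1}\ee_1$ has Hodge type $(-n,-n)$ and weight $-2n$ inherited from $\p^{\KZ}_N$, this basis exhibits the fiber as $\bigoplus_{n=1}^{m-1}\Q(n)$. For the tangential base point $\ww_1 = 1 \cdot \partial/\partial w$ the regularization convention gives $\log(\ww_1) = \log(1) = 0$, so $\tilde\ee_n$ again evaluates to $\ee_0^{n-1}\ee_1$; equivalently, the regularized Chen transport from $\ww_1$ on $\ker \Upsilon^1_{N,m}$ produces no non-trivial periods (as $\ee_1$ acts trivially, all would-be polylogarithm coefficients are killed), yielding the same splitting.

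\emph{Main obstacle.} The most delicate step is verifying that the $\Q$-structure on the Betti local system of the extended variation agrees with the one inherited from the polylog variation on $U_1$. This amounts to showing that the cyclotomic MZV periods which generically produce non-trivial extensions between the $\Q(n)$'s are absent on $\ker \Upsilon^1_{N,m}$ precisely because $\ee_1$ is forced to zero, leaving only powers of $2\pi i$ from the $\ee_0$-monodromy — exactly what is needed for the MHS to split as a direct sum of Tate structures.
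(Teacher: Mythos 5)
Your opening moves match the paper's: reduce to $U_1$ by the change of variable $w\mapsto w/\zeta$, observe that $\ee_\zeta$ acts as zero on the quotient so the connection simplifies to $d + \ee_0\,\omega_0$, and recognize $\ker\Upsilon^\zeta_{N,m}$ as the $(m-1)$th symmetric power of the logarithm variation (the paper cites Example E.1 for exactly this). The difficulty is your fourth step. The assertion that ``requiring the monodromy $\exp(2\pi i\,\ee_0)$ to be $\Q$-rational forces the Betti structure to be spanned by $(2\pi i)^n\ee_0^{n-1}\ee_1$'' is false, and the splitting does not follow from it. Take $m=3$: for \emph{any} $b\in\C$, the basis $\{\,2\pi i\,\ee_1 + b\,\ee_0\ee_1,\ (2\pi i)^2\ee_0\ee_1\,\}$ is carried to itself over $\Q$ by $\exp(2\pi i\,\ee_0)$ (the monodromy sends $v_1\mapsto v_1+v_2$, $v_2\mapsto v_2$), yet only $b\in(2\pi i)^2\Q$ gives a split MHS. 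Monodromy-rationality constrains nothing beyond what is automatic for any admissible unipotent VMHS; it is the actual $\Q$-Betti lattice at the base point, i.e.\ the image of $\Q\pi_1$ under regularized transport, that must be computed. You flag this in your last paragraph (``verifying that the $\Q$-structure \ldots agrees''), but the heuristic ``$\ee_1$ acts trivially, so the polylogarithm coefficients are killed'' is not a verification: the individual transports $T(\dch)^{\pm1}$ do carry MZV coefficients in the $\ee_0^k\ee_1$ slots, and one must check that these cancel in the conjugate $T(\dch)\exp(2\pi i\,\ee_1)T(\dch)^{-1}$.

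The paper closes precisely this gap by an explicit iterated-integral computation. Working at the base point $\ww_{\zetabar}\in T_0U_1$ (note, not $\ww_1$ — the change of variable carries $\ww_1\in T_0U_\zeta$ to $\ww_{\zetabar}$, a point your reduction step slides past), it takes the loop $\gamma = \lambda_{\zetabar}\cdot\dch\,\sigma_1\,\dch^{-1}\cdot\lambda_{\zetabar}^{-1}$ and shows, using the shuffle and path-composition formulas together with the regularized values $\int_\dch\omega_0^n = 0$ and $\int_{\sigma_1}\omega_{r_1}\cdots\omega_{r_n} = 0$ unless all $r_k = 1$, that
$$\Theta_{\ww_{\zetabar}}(\gamma - 1) \equiv \sum_{n=0}^{m-2}\frac{1}{n!}\,\frac{(2\pi i)^{n+1}}{k^{n}}\,\ee_0^{n}\ee_1 \bmod \cJ^1_{N,m},$$
an element of $\Q(2\pi i)\otimes\ker\Upsilon$. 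Together with $2\pi i\,\ee_0$ being Betti (from the splitting \eqref{eqn:Vsplit}), closure of the Betti lattice under multiplication then forces each $(2\pi i)^{n+1}\ee_0^{n}\ee_1$ to be $\Q$-Betti, giving the splitting with de~Rham generators $\ee_0^{n}\ee_\zeta$. You would need to supply this computation, or an equivalent argument establishing that $[\log T(\dch),\ee_1]\equiv 0 \bmod\cJ^1_{N,m}$ so that $\Theta(\dch\,\sigma_1\,\dch^{-1})\equiv 1 + 2\pi i\,\ee_1$, before the splitting is actually proved.
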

\begin{proof}
 The VMHS $\ker \Upsilon_{N,m}^\zeta$ is isomorphic to the $(m-1)$th symmetric power of the logarithm variation (see Example \ref{ex:Emhs}). Thus, if the limit MHS of $\ker \Upsilon_{N,m}^\zeta$ splits at $\ww_1$, it does at $\vv_1$ as well. To show the limit MHS splits at $\ww_1$, first observe  
\begin{equation}
    \label{eqn:zetaChange}
    H_0(P_{w,\ww_1}U_\zeta,\Q)^\wedge \cong H_0(P_{w\zetabar,\ww_\zetabar}U_1,\Q)^\wedge
\end{equation}
via the change of variables $w \mapsto w/\zeta$ and $\ee_\zeta \mapsto \ee_1$. Let $\gamma$ be the loop in $U_1$ based at $\ww_\zeta$ which travels counterclockwise to $\ww_1$, via $\dch$ to $-\vv_1$, around 1 counterclockwise, and back to to $\ww_\zetabar$.  
\begin{center}
\begin{tikzpicture}[scale=1]
    \draw[thick,fill=gray!20] (0, 0) circle (2cm);
    \draw[fill=white] (0, 0) circle (.25cm);
    \draw[fill=white] (1.25, 0) circle (.25cm);
    \draw[very thick,red,->-=.35,-<-=.75] (.25,0) -- (1,0);
    
    \draw[very thick, red] (-.202, -.147) arc (216:360:.25);
    \draw[very thick, red, ->] (1.5,0) arc (0:360:.25);
        
        
    
    \draw[fill=blue,blue] (-.202, -.147) circle (.05cm);
        
    \draw (-.202, -.147) node[left] {{\color{blue} $\ww_\zetabar$}};
    \draw (.625,0) node[above] {dch};
    \draw (1.25,-.25) node[below] {$\sigma_1$};
    \draw (.077,-.238) node[right,below] {$\lambda_\zetabar$};

    \end{tikzpicture}
\end{center}
For convenience, denote the path from $\ww_\zetabar$ to $\ww_1$ by $\lambda_\zetabar$ and the loop about 1 by $\sigma_1$. We will use Chen's transport formula \eqref{eqn:chen} along $\gamma$ to compute an element of the rational structure of $H_0(P_{\ww_\zetabar,\ww_\zetabar}U_1,\Q)^\wedge$ which descends to a rational vector in $J/\cJ_{N,m}^1$. We first compute some {\em regularized} iterated integrals needed in the proof. These are
\begin{enumerate}
    \item $\displaystyle \int_{\lambda_\zetabar} \underbrace{\omega_0 \cdots \omega_0}_n = \frac{1}{n!}\frac{(2\pi i)^n}{k^n}$ where $0 \leq k < N$ and $\zeta = e^{2\pi ik/N}$,
    \item $\displaystyle \int_\dch \underbrace{\omega_0 \cdots \omega_0}_n = \frac{1}{n!}\left(\int_\dch \omega_0\right)^n = 0$,
    \item $\displaystyle \int_{\sigma_1} \omega_{r_1} \cdots \omega_{r_n} = \frac{1}{n!}\prod_{k = 1}^n \int_{\sigma_1} \omega_{r_k} = 0$ unless $r_k = 1$ for all $k$,
    \item $\displaystyle \int_{\sigma_1} \omega_1 = 2\pi i$.
\end{enumerate}
Chen's transport along the loop based at $\ww_1$ yields
\begin{align*}
    \Theta_{\ww_1}&(\dch \cdot \sigma_1 \cdot \dch^{-1}) \cr
    &\equiv 1 + \sum_{\substack{n \ge 1 \\ r_s \in \{0,1\}}} \ee_{r_1} \cdots \ee_{r_n} \int_\gamma \omega_{r_1} \cdots \omega_{r_n} \cr
    &\equiv 1 + \sum_{\substack{n \ge 1 \\ j + k + \ell = n \\ r_s \in \{0,1\}}} \ee_{r_1} \cdots \ee_{r_n} \int_{\dch} \omega_1 \cdots \omega_{r_j}\int_{\sigma_1} \omega_{r_{j + 1}} \cdots \omega_{r_{j+k}}\int_{\dch^{-1}} \omega_{r_{j+k + 1}} \cdots \omega_{r_n} \cr
    &\equiv 1 + 2\pi i \ee_1 \bmod \cJ_{N,m}^1
\end{align*}
since every product vanishes except when $j = \ell = 0$, $k = 1$, and $r_1 = 1$. Let $1_{\ww_\zetabar}$ be the constant path at $\ww_\zetabar$. It follows that
\begin{align*}
    \Theta_{\ww_\zetabar}(\gamma - 1_{\ww_\zetabar}) &\equiv \left(1 + \sum_{n \ge 1}\int_{\lambda_\zetabar} \underbrace{\omega_0 \cdots \omega_0}_n\right)\left(1 + 2\pi i\ee_1\right)\left(1 + \sum_{n \ge 1}\int_{\lambda_\zetabar^{-1}} \underbrace{\omega_0 \cdots \omega_0}_n\right) - 1 \cr
    &\equiv \sum_{n = 0}^{m-2} \frac{1}{n!} \frac{(2\pi i)^{n+1}}{k^n}
    \ee_0^n\ee_1 \bmod \cJ_{N,m}^1
\end{align*}
is a $\Q$-Betti vector of the limit MHS on $H_0(P_{\ww_\zetabar,\ww_\zetabar}U_1,\Q)^\wedge$ at $\ww_\zetabar$. We also know from \eqref{eqn:Vsplit} and \eqref{eqn:zetaChange} that $2\pi i\ee_0$ is also $\Q$-Betti. Thus, all products of $2\pi i\ee_0$ and $\Theta_{\ww_\zetabar}(\gamma)$ are also $\Q$-Betti. Since the $\Q$-Betti structure is closed under addition and multiplication, we conclude $(2\pi i)^{n + 1} \ee_0^n \ee_1$ is $\Q$-Betti modulo $\cJ^1_{N,m}$ for $n = 0, 1, \ldots, (m - 2)$. The result follows from the change of variables in \eqref{eqn:zetaChange}. 
\end{proof}
\begin{cor}
The limit MHS of $\Pol^\cyc_N$ at $\ww_1$ splits completely as 
$$
\Pol^\cyc_N\big|_{\ww_1} \cong \Q(0) \oplus \bigoplus_{\substack{n \ge 0 \\ \zeta \in \bmu_N}} \Q(n + 1),
$$
where $\ee_0$ spans $\Q(0)^\DR$ and each $\ee_0^n \cdot \ee_\zeta$ spans a copy of $\Q(n + 1)^\DR$. 
\end{cor}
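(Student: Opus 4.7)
The plan is to reduce the corollary to the preceding lemma via Proposition~\ref{prop:polisom} and then assemble the splitting one $\zeta$ at a time. By Proposition~\ref{prop:polisom}, the isomorphism $\Pol^\cyc_N \cong \J_N$ of VMHS reduces the claim to splitting the limit MHS of $\J_N$ at $\ww_1$. Since $\J_N = \varprojlim_m \J_{N,m}$ and the weight graded quotients stabilize for large $m$, it is enough to produce compatible MHS splittings of $\J_{N,m}\big|_{\ww_1}$ for each $m \ge 2$ and then pass to the limit.

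At finite level $m$, I would record the vector space decomposition
$$
\J_{N,m}\big|_{\ww_1} \;=\; J/\cJ_m \;=\; \C\ee_0 \;\oplus\; \bigoplus_{\zeta \in \bmu_N} \ker\Upsilon^\zeta_{N,m}\big|_{\ww_1},
$$
where the first summand is spanned by $\ee_0$ and the $\zeta$-summand is spanned by $\{\ee_0^n\ee_\zeta : 0 \le n \le m-2\}$. The splitting \eqref{eqn:Vsplit} shows that $\C\ee_0$ is a sub-MHS isomorphic to the stated summand (consistent with the identification of $\Gr^0_D \bP^\KZ_N$ with the constant $\Q(1)$). The preceding lemma, applied with one $\zeta$ at a time to $\J^\zeta_{N,m}\big|_{\ww_1}$, identifies each $\ker\Upsilon^\zeta_{N,m}\big|_{\ww_1}$ with $\Q(1)\oplus\Q(2)\oplus\cdots\oplus\Q(m-1)$, with $\ee_0^n\ee_\zeta$ spanning $\Q(n+1)^\DR$ for $0 \le n \le m-2$. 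The inclusion $\C\ll\ee_0,\ee_\zeta\rr \hookrightarrow \C\ll\ee_0,\ee_\eta \mid \eta\in\bmu_N\rr$ induces an inclusion $J_\zeta/\cJ_{m,\zeta} \hookrightarrow J/\cJ_m$, and these inclusions are compatible with the connections, so they furnish each $\ker\Upsilon^\zeta_{N,m}$ as a sub-MHS of $\J_{N,m}\big|_{\ww_1}$. Taking the inverse limit over $m$ assembles all $\Q(n+1)$ for $n \ge 0$ and $\zeta \in \bmu_N$ into the claimed decomposition.

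The main obstacle, I expect, is verifying that the vector space decomposition displayed above is genuinely a direct sum of sub-MHS, and not merely a decomposition at the level of underlying vector spaces. The preceding lemma does the Hodge-theoretic heavy lifting on each $\zeta$-summand individually, so the task reduces to showing that the various $\zeta$-indexed sub-MHS sit inside $\J_{N,m}\big|_{\ww_1}$ without mixing. This should follow from rigidity: each $\Q(n+1)$ is the unique rational line of its weight in its de~Rham direction, so the bigrading by $(n, \zeta)$ on the de~Rham side forces the Betti rational structures to align into the direct sum. Care must be taken with the $\ee_0$-direction, which is shared across all $\J^\zeta_{N,m}\big|_{\ww_1}$, but this shared subspace is precisely the sub-MHS isolated by the splitting \eqref{eqn:Vsplit}, so after quotienting by it the remaining $\zeta$-pieces are linearly independent and their direct sum structure descends from that of $J/\cJ_m$.
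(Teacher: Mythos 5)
Your overall plan (reduce via Proposition~\ref{prop:polisom}, invoke the preceding lemma one $\zeta$ at a time, then assemble) follows the paper's strategy, but the assembling step --- the only genuinely delicate point --- is not justified. You claim that the algebra inclusion $J_\zeta/\cJ_{m,\zeta} \hookrightarrow J/\cJ_m$ is ``compatible with the connections'' and therefore exhibits each $\ker\Upsilon^\zeta_{N,m}$ as a sub-MHS of $\J_{N,m}\big|_{\ww_1}$. Compatibility with the connection only concerns the de~Rham/flat-bundle side; being a sub-MHS requires compatibility with the Betti rational structures, which for $\J_{N,m}$ is defined by transport over paths in $U_N$ and for $\J^\zeta_{N,m}$ by transport over paths in $U_\zeta$. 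The functorial maps induced by geometry go the \emph{other} way: the inclusion $U_N \hookrightarrow U_\zeta$ gives morphisms of VMHS $\J_{N,m} \to \J^\zeta_{N,m}$ (killing $\ee_\eta$ for $\eta \neq \zeta$), not inclusions of $\J^\zeta_{N,m}$ into $\J_{N,m}$. Your fallback ``rigidity'' argument is also not valid: having weight-graded quotients that are sums of Tate twists does not force a mixed Tate MHS to split --- nontrivial extensions of $\Q(a)$ by $\Q(b)$ abound, and indeed the very same variation at the base point $\vv_1$ is a nonsplit extension with periods $\Li_m(\zeta)$ (\S\ref{sec:cycpoly}), so no purely formal weight/rationality argument can produce the splitting; it has to come from the period computation in the lemma together with a genuine structural argument.

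The paper closes exactly this gap by using the correctly oriented morphisms: the maps $\J_{N,m} \to \J^\zeta_{N,m}$ and $\Upsilon_{N,m} : \J_{N,m}\big|_{\ww_1} \to \Q(1)$ realize $\J_{N,m}$ as the pullback of $\bigoplus_\zeta \J^\zeta_{N,m} \twoheadrightarrow \bigoplus_\zeta \Q(1)$ along the diagonal $\Q(1) \to \bigoplus_\zeta \Q(1)$; since the top row splits by \eqref{eqn:Vsplit} and the lemma, the pulled-back bottom row splits, and one transfers the result through Proposition~\ref{prop:polisom}. You could repair your argument in the same spirit without the pullback language by noting that $\Span\{\ee_0^n\ee_\zeta : n \ge 0\} = \ker\Upsilon_{N,m} \cap \bigcap_{\eta\neq\zeta}\ker\bigl(\J_{N,m}\to\J^\eta_{N,m}\bigr)$ is an intersection of kernels of MHS morphisms, hence a sub-MHS, which maps isomorphically (by strictness) onto $\ker\Upsilon^\zeta_{N,m}\big|_{\ww_1}$; but as written, with the inclusion in the wrong direction and the rigidity appeal, the proof has a genuine gap.
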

\begin{proof}
The inclusions $U_N \hookrightarrow U_\zeta$ induce morphisms of VMHS $\J_{N,m} \to \J_{N,m}^\zeta$ for each $\zeta \in \bmu_N$. The inclusion $U_N \hookrightarrow \Gm$ induces a split surjection $\Upsilon_{N,m} : \J_{N,m} \big|_{\ww_1} \to \Q(1)$. The variation $\J_{N,m}$ is the pullback
$$
\xymatrix{
0 \ar[r] & \displaystyle\bigoplus_{\zeta \in \bmu_N} \ker\Upsilon_{N,m}^\zeta \ar[r] & \displaystyle\bigoplus_{\zeta \in \bmu_N} \J_{N,m}^\zeta \ar[r] & \displaystyle\bigoplus_{\zeta \in \bmu_N} \Q(1) \ar[r] & 0 \cr
0 \ar[r] & \ker \Upsilon_{N,m} \ar[r] \ar@{=}[u] & \J_{N,m} \ar[u] \ar[r] & \Q(1) \ar[r] \ar[u]_-\Delta & 0,
}
$$
where $\Delta$ is the diagonal map. After taking inverse limits, it follows from \eqref{eqn:Vsplit} and the previous lemma that the top row (and hence the bottom row) splits. The result follows from Proposition \ref{prop:polisom}.
\end{proof}

\subsection{The limit MHS of $\Pol^\cyc_N$ at $\vv_1$}
\label{sec:cycpoly}

We now compute the limit MHS of $\Pol^\cyc_N$ at $\vv_1$ by transporting the $\Q$-Betti structure of the limit MHS at $\ww_1$ to $\vv_1$. We first transport along the straight line path $\dch$ to $-\vv_1$ and then counter-clockwise around the puncture at $1 \in \Gm$ to $\vv_1$. 

Observe
\begin{align*}
    T(\dch) &\equiv T(\dch^{-1})^{-1} \cr
    &\equiv 1 + \sum_{\substack{m \geq 1 \\ \zeta \in \bmu_N}} \ee_0^m \cdot \ee_\zeta \int_{\dch^{-1}} \underbrace{\omega_0 \cdots \omega_0}_{m}\omega_\zeta \cr
    &\equiv 1 + \sum_{\substack{m \geq 1 \\ \zeta \in \bmu_N}} (-1)^{m+1}\ee_0^m \cdot \ee_\zeta \int_{\dch} \omega_\zeta\underbrace{\omega_0 \cdots \omega_0}_{m} \cr
    &\equiv 1 + \sum_{\substack{m \geq 1 \\ \zeta \in \bmu_N}} (-1)^m\Li_{m+1}(\zetabar)\, \ee_0^{m}\cdot \ee_\zeta \bmod D^2. 
\end{align*}
The third and fourth lines follow from \eqref{eqn:inversion} and \eqref{eqn:Li}, respectively. Applying $T(\dch)$ to the $\Q$-Betti basis of the limit MHS at $\ww_1$ yields the $\Q$-Betti basis of the limit MHS at $-\vv_1$: 
\begin{equation*}
    T(\dch) : \left\{ 
    \begin{array}{rcl}
        (2\pi i)^{n+1}\ee_0^n \cdot \ee_\zeta & \longmapsto & (2\pi i)^{n+1}\ee_0^n \cdot \ee_\zeta  \cr
        & & \cr
        2\pi i\ee_0 & \longmapsto & \displaystyle 2\pi i\ee_0 + 2\pi i\sum_{\substack{m \geq 2 \\ \zeta \in \bmu_N}} (-1)^{m}\Li_m(\zetabar)\,\ee_0^{m}\cdot \ee_\zeta.
    \end{array}
    \right.
\end{equation*}
Finally, transport from $-\vv_1$ to $\vv_1$ is simply $\exp(2\pi i\ee_1/2) \equiv 1 + \pi i \ee_1 \bmod D^2$. Applying this to the image of $T(\dch)$ yields a $\Q$-Betti basis of $\p(U_N,\vv_1)$ given in the following table. 
\begin{center}
    \begin{tabular}{c | l}
        Weight & $\Q$-Betti generators \\
        \hline & \\
        $-2$ & $\displaystyle 2\pi i \ee_0 - \frac{(2\pi i)^2}{2}\ee_0\cdot \ee_1 + 2\pi i \sum_{\substack{m \ge 2 \\ \zeta \in \bmu_N}} (-1)^{m}\Li_m(\zetabar)\, \ee_0^m\cdot \ee_\zeta$ \\
        $-2m-2$ & $(2\pi i)^{m+1} \ee_0^m \cdot \ee_\zeta$
    \end{tabular}
\end{center}
For later use, we define a slightly different basis. 
\begin{lemma}
\label{lem:Li}
If $m \ge 2$ and $\zeta \in \bmu_N$, then
$$
\Li_m(\zeta) + (-1)^m\Li_m(\zetabar) \in (2\pi i)^m\Q
$$
\end{lemma}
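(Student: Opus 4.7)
The plan is to deduce the identity from Jonqui\`ere's inversion formula for the polylogarithm, which states that for $z \in \C \setminus [0,\infty)$ and $m \geq 1$,
$$
\Li_m(z) + (-1)^m \Li_m(1/z) = -\frac{(2\pi i)^m}{m!} B_m\!\left(\frac{1}{2} + \frac{\log(-z)}{2\pi i}\right),
$$
where $B_m$ is the $m$th Bernoulli polynomial and the principal branch of $\log$ is taken. Since the only input from analysis is this single formula, the proof reduces to checking rationality of the specialization.

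First I would handle $\zeta \neq 1$. Writing $\zeta = e^{2\pi i k/N}$ with $0 < k < N$, one has $1/\zeta = \zetabar$, and the principal branch gives $\log(-\zeta) = 2\pi i(k/N - 1/2)$, so the argument of $B_m$ collapses to the rational number $k/N$. Since Bernoulli polynomials have rational coefficients, $B_m(k/N) \in \Q$, and the inversion formula becomes
$$
\Li_m(\zeta) + (-1)^m \Li_m(\zetabar) = -\frac{(2\pi i)^m}{m!} B_m(k/N) \in (2\pi i)^m \Q.
$$

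Next I would handle the case $\zeta = 1$. Here the sum equals $(1 + (-1)^m)\zeta(m)$. For odd $m$ this vanishes, and for even $m$ Euler's formula $\zeta(2n) = -\frac{(2\pi i)^{2n} B_{2n}}{2(2n)!}$ shows it lies in $(2\pi i)^m \Q$. (This is also the consistent limit $k \to 0$ of the formula above, using $B_m(0) = B_m$ for $m \geq 2$.)

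The only subtle point is the branch specialization: one must verify that $\Li_m(\zeta)$ as defined by the convergent series~\eqref{eqn:Lisum} agrees with the analytic continuation employed in Jonqui\`ere's formula when $z$ approaches $\zeta$ from outside $[0,\infty)$. Since the branch cut of $\Li_m$ lies along $[1,\infty)$ and $\zeta \in \bmu_N \setminus \{1\}$ lies on the unit circle away from this cut, the series value and the continued value coincide, so the specialization is unambiguous. This confirms the stated rationality.
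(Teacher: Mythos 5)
Your proof is correct and takes essentially the same route as the paper: the paper derives the needed inversion identity directly from the Fourier expansion $B_m(x) = -\frac{m!}{(2\pi i)^m}\sum_{k\neq 0} e^{2\pi i kx}/k^m$ and sets $x = \frac{1}{2\pi i}\log\zeta \in [0,1)$, which handles $\zeta = 1$ uniformly, whereas you quote Jonqui\`ere's formula (whose content is that same Fourier expansion) and dispose of $\zeta = 1$ separately via Euler's evaluation of $\zeta(2n)$. In both cases the rationality comes down to $B_m(k/N) \in \Q$, and your branch-checking at $z = \zeta$ and the computation $\log(-\zeta) = 2\pi i(k/N - 1/2)$ are correct.
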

\begin{proof}
The $m$th Bernoulli polynomial $B_m(x)$ has Fourier expansion
$$
B_m(x) = -\frac{m!}{(2\pi i)^m} \sum_{k \neq 0} \frac{e^{2\pi i kx}}{k^m}.
$$
Thus, 
\begin{align*}
-\frac{(2\pi i)^m}{m!}B_m(x) &= \sum_{k < 0} \frac{(e^{2\pi i x})^k}{k^m} + \sum_{k > 0} \frac{(e^{2\pi i x})^k}{k^m} \cr
&= \sum_{k > 0} \frac{(e^{-2\pi i x})^k}{(-k)^m} + \sum_{k > 0} \frac{(e^{2\pi i x})^k}{k^m} \cr
&= (-1)^m \Li_m(e^{-2\pi ikx}) + \Li_m(e^{2\pi ikx}).
\end{align*}
Set $x = \frac{1}{2\pi i}\log \zeta$ with choice of logarithm such that $x \in [0,1)$. The result follows from the fact both $x$ and the coefficients of the polynomial $B_m(x)$ are in $\Q$.
\end{proof}
It follows from the lemma that the following generators also form a $\Q$-Betti basis of $\p(U_N,\vv_1)/D^2$. 
\begin{center}
    \begin{tabular}{c | l}
    \label{tab:cycB}
        Weight & $\Q$-Betti generators \\
        \hline & \\
        $-2$ & $\displaystyle 2\pi i \ee_0 - 2\pi i \sum_{\substack{m \ge 2 \\ \zeta \in \bmu_N}} \Li_m(\zeta)\, \ee_0^m\cdot \ee_\zeta$ \\
        $-2m-2$ & $(2\pi i)^{m+1} \ee_0^m \cdot \ee_\zeta$
    \end{tabular}
\end{center}

Let $E_{m,\zeta} \in \Ext_\MHS^1(\Q,\Q(m))$ \label{not:Eext} denote the extension of $\Q$ by $\Q(m)$ with period $\Li_m(\zeta)$. The sub-MHS of $\p(U_N,\vv_1)/D^2$ \label{not:cyc} generated by $\ee_0$ and $\ee_0^m \cdot \ee_\zeta$ is an extension of $\Q(m+1)$ by $\Q(1)$ with period $-\Li_m(\zeta)$, hence isomorphic to $-E_{m,\zeta}(1)$. Thus, $\p(U_N,\vv_1)/D^2$ is the pull-back of the direct product of all extensions $-E_{m,\zeta}(1)$ via the diagonal map $\Delta : \Q(1) \to \prod \Q(1)$. 
$$
\xymatrix{
0 \ar[r] & \displaystyle \prod_{\substack{m \ge 0 \\ \zeta \in \bmu_N}}\Q(m+1) \ar[r] \ar@{=}[d] & \p(U_N,\vv_1)/D^2 \ar[r] \ar[d] & \Q(1) \ar[r] \ar[d]^-\Delta & 0 \cr
0 \ar[r] & \displaystyle\prod_{\substack{m \ge 0 \\ \zeta \in \bmu_N}}\Q(m+1) \ar[r] & \mathbf{E} \ar[r] & \displaystyle\prod_{\substack{m \ge 2 \\ \zeta \in \bmu_N}}\Q(1) \ar[r] & 0
}
$$
The extension $\mathbf{E}$ in the diagram is 
$$
\mathbf{E} = \bigoplus_{\zeta \in \bmu_N} (\Q(1) \oplus \Q(1)) \oplus \bigoplus_{\zeta \in \bmu_N} (\Q(2) \oplus \Q(1)) \oplus \prod_{\substack{m \ge 2 \\ \zeta \in \bmu_N}} -E_{m,\zeta}(1),
$$
where each copy of $\Q(1)\oplus \Q(1)$ is a trivial extension spanned by $2\pi i \ee_0$ and $2\pi i \ee_\zeta$ each copy of $\Q(2) \oplus \Q(1)$ is a trivial extension spanned by $(2\pi i)^2 \ee_0 \cdot \ee_\zeta$ and $2\pi i \ee_0$.

\subsection{The limit MHS of $\Pol^\elp_N$ at $\vv_1$}
\label{sec:lmhsPolEll}

Recall from \S\ref{sec:ellPol} that $\Pol^\elp_N$ is an extension of VMHS
$$
\xymatrix{0 \ar[r] & \displaystyle\bigoplus_{\substack{m \ge 0 \\ \zeta \in \bmu}} S^m \H \cdot \bt_\zeta \ar[r] & \Pol^\elp_N \ar[r] & \H \ar[r] & 0.} 
$$
Limit MHS at $\uu + \vv_1$ fit in the short exact sequence
$$
\xymatrix{0 \ar[r] & \displaystyle\prod_{\substack{m \ge 0 \\\zeta\in\bmu_N}} S^mH_\uu \cdot \bt_\zeta \ar[r] & \p(E_\uu',\vv_1)/D^2 \ar[r] & H_\uu \ar[r] & 0,}
$$
where $H_\uu$ is the limit MHS of $\H$ at $\uu$. Note that $H_\uu \cong \Q(0) \oplus \Q(1)$ with de~Rham generators $\bX$ and $\bY$ (see \cite[Ex.~11.3]{hopper}). 

Let $V_{m,\zeta} \in \Ext_\MHS^1(H_\uu,S^mH_\uu(1))$ be the sub-quotient of $\p(E_\uu',\vv_1)/D^2$ generated by $H_\uu$ and $S^mH_\uu\cdot\bt_\zeta$. As with the cyclotomic polylog, we observe that $\p(E_\uu',\vv_1)/D^2$ is the pull-back of the direct product of the extensions $V_{m,\zeta}$ via the diagonal map $\Delta : H_\uu \to \prod H_\uu$.
$$
\xymatrix{
0 \ar[r] & \displaystyle\prod_{\substack{m \ge 0 \\\zeta\in\bmu_N}} S^mH_\uu \cdot \bt_\zeta \ar[r] \ar@{=}[d] & \p(E_\uu',\vv_1)/D^2 \ar[r] \ar[d] & H_\uu \ar[r] \ar[d]^-\Delta & 0 \cr
0 \ar[r] & \displaystyle\prod_{\substack{m \ge 0 \\\zeta\in\bmu_N}} S^mH_\uu \cdot \bt_\zeta \ar[r] & \displaystyle\prod_{\substack{m \ge 0 \\\zeta\in\bmu_N}} V_{m,\zeta} \ar[r] & \displaystyle\prod_{\substack{m \ge 0 \\\zeta\in\bmu_N}} H_\uu \ar[r] & 0
}
$$
The Hain map preserves depth filtrations and thus induces a map on the polylog quotients. 
\begin{equation}
    \label{eqn:pMHS}
    \xymatrix{
    0 \ar[r] & \displaystyle\prod_{\substack{m \ge 0 \\\zeta\in\bmu_N}} \C\ee_0^m\cdot \ee_\zeta \ar[r] \ar[d] & \p(U_N,\vv_1)/D^2 \ar[r] \ar[d]_{\Psi_N} & \C\ee_0 \ar[r] \ar[d] & 0 \cr
    0 \ar[r] & \displaystyle\prod_{\substack{m \ge 0 \\\zeta\in\bmu_N}} S^mH_\uu \cdot \bt_\zeta \ar[r] & \p(E_\uu',\vv_1)/D^2 \ar[r] & H_\uu \ar[r] & 0
    }
\end{equation}
Our task is to deduce the periods of the lower extension from the periods of the upper extension. First, we consider a decomposition.

\begin{lemma}
The $m$th symmetric power of the limit MHS $H_\uu$ splits as
$$
S^mH_\uu = \Q(0) \oplus \cdots \oplus \Q(m).
$$
with de~Rham generators $\bX^{m-r}\bY^r$ for $r = 0,1,\ldots,m$.
\end{lemma}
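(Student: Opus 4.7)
The plan is to reduce the lemma to the already-established splitting $H_\uu \cong \Q(0) \oplus \Q(1)$ and then apply the (monoidal) symmetric power functor to this direct sum decomposition of mixed Hodge structures.

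Concretely, I would proceed as follows. By the reference to \cite[Ex.~11.3]{hopper} quoted just before the lemma, the limit MHS $H_\uu$ is a \emph{split} MHS with $\bX$ spanning a $\Q(0)$-summand and $\bY$ spanning a $\Q(1)$-summand. Because the category of mixed Hodge structures is symmetric monoidal, the functor $S^m$ is defined on MHS and commutes with direct sums in the standard way:
\[
S^m(H_\uu) \;=\; S^m\bigl(\Q(0)\oplus \Q(1)\bigr) \;\cong\; \bigoplus_{r=0}^m S^{m-r}\Q(0)\otimes S^r\Q(1).
\]
Since $\Q(0)$ and $\Q(1)$ are one-dimensional, their symmetric powers are just their tensor powers, so $S^{m-r}\Q(0)\otimes S^r\Q(1)\cong \Q(0)\otimes \Q(r) \cong \Q(r)$. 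This gives the claimed splitting of $S^m H_\uu$ as a direct sum of Tate Hodge structures.

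For the identification of generators, I would simply track the de Rham side of the isomorphism above: the $r$-th summand $S^{m-r}\Q(0)\otimes S^r\Q(1)$ has de Rham realization spanned by $\bX^{m-r}\otimes \bY^r$, whose image in $S^m H_\uu^{\DR}$ is the monomial $\bX^{m-r}\bY^r$. Since $\bX$ has Hodge type $(0,0)$ and $\bY$ has type $(-1,-1)$, this monomial lies in $F^{-r}\cap M_{-2r}$ and projects to a generator of $\Gr^M_{-2r}$, confirming it generates the $\Q(r)$-summand.

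There is no real obstacle here; the only subtlety is making sure the splitting of $H_\uu$ quoted from \cite{hopper} is a splitting in the category of MHS (not merely of the underlying vector spaces), so that applying $S^m$ in the category of MHS is legitimate. Once that is granted, the statement is purely formal from the monoidal structure of $\MHS$.
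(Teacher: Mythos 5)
Your argument is correct and is exactly the reasoning the paper tacitly relies on: the paper states this lemma without proof, treating it as immediate from the splitting $H_\uu \cong \Q(0)\oplus\Q(1)$ (with $\bX$ of type $(0,0)$ and $\bY$ of type $(-1,-1)$) and the fact that $S^m$ of a split sum of Tate structures splits in $\MHS$. Your extra care in noting that the splitting from \cite[Ex.~11.3]{hopper} is a splitting of MHS, not just of vector spaces, is the only point of substance, and you handle it correctly.
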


Recall $-E_{m,\zeta}(1)$ is the extension of $\Q(1)$ by $\Q(m+1)$ with period $-\Li_m(\zeta)$. The previous diagram decomposes into a direct product of maps $-E_{m,\zeta}(1) \to V_{m,\zeta}$ for $m \ge 2$ (there are also analogous maps $m = 0$ and $m = 1$, but these extensions split and thus have no periods of interest).
$$
\xymatrix{
0 \ar[r] & \C \ee_0^m \cdot \ee_\zeta \ar[r] \ar[d] & -E_{m,\zeta}(1) \ar[r] \ar[d] & \C\ee_0 \ar[r]\ar[d] & 0 \cr
0 \ar[r] & \displaystyle\bigoplus_{r = 0}^m \C\bX^{m-r}\bY^r\cdot \bt_\zeta \ar[r] & V_{m,\zeta} \ar[r] & \C\bX \oplus \C\bY \ar[r] & 0
}
$$
The Betti $\Q$-basis of the top extension $-E_{m,\zeta}(1)$ maps to a Betti $\Q$-basis of the bottom extension $V_{m,\zeta}$.
\begin{center}
    \begin{tabular}{c | l}
        $M$-weight & $\Q$-Betti generator of $V_{m,\zeta}$\\
        \hline & \\
        $-2$ & $2\pi i \bY - 2\pi i \,\Li_m(\zeta)\bY^m\cdot \bt_\zeta$ \\
        & \\
         $-2r - 2$ & $ (2\pi i)^{r+1}\bX^{m-r}\bY^r\cdot\bt_\zeta$
    \end{tabular}
\end{center}
There must also be a $\Q$-Betti vectors
$$
\bX^\betti_\zeta = \bX - \sum_{r = 0}^mc_r\bX^{m-r}\bY^r \cdot \bt_\zeta
$$
in $V_{m,\zeta}$ that projects onto $\bX$ in $H_\uu$. To compute the constants $c_r$, we use that fact that monodromy preserves the rational structure. It follows from \eqref{eqn:KZBrest} and \eqref{eqn:chenT} that the local monodromy operator on $\Pol^\elp_N$ at $q = 0$ is $h_q := \exp(2\pi i \bY \partial/\partial \bX)$. Thus, 
\begin{align*}
(h_q - 1)\bX^\betti_\zeta &\equiv 2\pi i \bY - h_q \left( \sum_{r = 0}^mc_r\bX^{m-r}\bY^r \cdot \bt_\zeta\right) \cr 
&\equiv 2\pi i \bY - 2\pi i \sum_{r= 0}^{m-1} c_r\bX^{m-r-1}\bY^{r+1} \cdot \bt_\zeta \bmod D^2
\end{align*}
is also rational. Given the $\Q$-Betti vectors in the table above, we may choose $c_{m-1} = \Li_m(\zeta)$ and $c_r = 0$ for $r \neq m-1$. We have proven the following.
\begin{prop}
When $m \ge 2$, the extension $V_{m,\zeta}$ has a de~Rham splitting given by 
$$
\left\{
\begin{array}{l}
\bX \longmapsto \bX -\Li_m(\zeta)\bX\bY^{m-1}\cdot \bt_\zeta \cr 
\bY \longmapsto \bY -\Li_m(\zeta)\bY^m \cdot \bt_\zeta.
\end{array}
\right.
$$
\end{prop}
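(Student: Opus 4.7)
The plan is to extract both legs of the de~Rham splitting from the $\Q$-Betti basis of $V_{m,\zeta}$ tabulated in the previous subsection. The $s(\bY)$ direction is essentially immediate: the weight $-2$ $\Q$-Betti generator $2\pi i\,\bY - 2\pi i\,\Li_m(\zeta)\,\bY^m\cdot\bt_\zeta$ from the table lifts $2\pi i\,\bY$, and stripping the overall factor of $2\pi i$ yields the claimed formula for $s(\bY)$. For $s(\bX)$ I would write an ansatz $\bX^\betti_\zeta = \bX - \sum_{r=0}^m c_r\,\bX^{m-r}\bY^r\cdot\bt_\zeta$ with coefficients $c_r \in \C$ to be determined, and use that monodromy preserves rational structure to pin them down.

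By \eqref{eqn:KZBrest}, the local monodromy at the cusp $q=0$ on $\Pol^\elp_N$ is $h_q = \exp(2\pi i\,\bY\,\partial/\partial\bX)$, so the element $(h_q - \id)\,\bX^\betti_\zeta$ must again be a $\Q$-linear combination of the Betti generators. The key technical observation, which is what makes the monodromy computation tractable, is that modulo $D^2$ the operators $\ad_\bX$ and $\ad_\bY$ commute on $D^1$: one has $[\ad_\bX,\ad_\bY] = \ad_{[\bX,\bY]} = \ad_{\sum_\zeta \bt_\zeta}$, which carries $D^1$ into $D^2$. With this commutativity, iteration of $N := \bY\,\partial/\partial\bX$ on $D^1$ acts as a formal differential operator in the two commuting variables, giving
$$
(h_q - \id)(\bX^{m-r}\bY^r\cdot\bt_\zeta) \equiv \sum_{k=1}^{m-r}(2\pi i)^k\binom{m-r}{k}\,\bX^{m-r-k}\bY^{r+k}\cdot\bt_\zeta \bmod D^2.
$$

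Summing against the ansatz and matching coefficients against the Betti basis $\{2\pi i\,\bY - 2\pi i\,\Li_m(\zeta)\,\bY^m\cdot\bt_\zeta\} \cup \{(2\pi i)^{s+1}\,\bX^{m-s}\bY^s\cdot\bt_\zeta : 0 \le s \le m\}$ gives a triangular system for the $c_r$. For $s < m$ the slot carries no $\Li_m(\zeta)$ input, which forces each such $c_r$ to lie in $(2\pi i)^{r+1}\Q$ and thus permits normalization to zero by a $\Q$-Betti adjustment inside the sub. The slot $s = m$ is the only one involving $\Li_m(\zeta)$, and it forces $c_{m-1} = \Li_m(\zeta)$, completing the splitting. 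The main obstacle is the monodromy computation itself: the noncommutativity of $\ad_\bX$ and $\ad_\bY$ rules out a direct binomial expansion, and the entire argument pivots on the mod-$D^2$ collapse of the commutator exhibited above. Once that simplification is in hand, the remaining coefficient matching is routine triangular linear algebra.
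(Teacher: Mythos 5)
Your proposal is correct and follows essentially the same route as the paper: read off the $\bY$-leg from the $\Q$-Betti table, posit $\bX^\betti_\zeta = \bX - \sum_r c_r\,\bX^{m-r}\bY^r\cdot\bt_\zeta$, and pin down the $c_r$ by requiring that the monodromy $h_q = \exp(2\pi i\,\bY\,\partial/\partial\bX)$ preserve the rational structure, normalizing the residual $(2\pi i)$-rational ambiguities away by adding $\Q$-Betti vectors. Your treatment is in fact slightly more careful than the paper's, which records only the first-order term of $h_q - 1$ and absorbs the higher binomial terms implicitly, whereas you keep them and resolve the resulting triangular system explicitly (using the mod-$D^2$ commutativity of $\ad_\bX$ and $\ad_\bY$, which the paper also uses tacitly); the conclusions agree.
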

\begin{cor}
\label{cor:Vsplit}
When $m \ge 2$ and $\zeta \neq 1$, the pull-back of $V_{m,\zeta} \oplus V_{m,\zetabar}$ along the diagonal map $H_\uu \to H_\uu \oplus H_\uu$ has de~Rham splitting
$$
\left\{
\begin{array}{l}
\bX \longmapsto \bX-\Li_m(\zeta)\bX\bY^{m-1}\cdot (\bt_\zeta + (-1)^{m+1}\bt_\zetabar) \cr 
\bY \longmapsto \bY-\Li_m(\zeta)\bY^m\cdot (\bt_\zeta + (-1)^{m+1}\bt_\zetabar).
\end{array}
\right.
$$
\end{cor}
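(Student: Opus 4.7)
The plan is to deduce the claimed splitting directly from the preceding proposition by pulling back the two splittings along the diagonal $H_\uu \to H_\uu \oplus H_\uu$ and then rewriting the result using Lemma~\ref{lem:Li}.

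Concretely, the preceding proposition gives de~Rham splittings of $V_{m,\zeta}$ and $V_{m,\zetabar}$ that assemble, by the universal property of the pullback, into a de~Rham splitting of $\Delta^\ast(V_{m,\zeta} \oplus V_{m,\zetabar})$. Viewed inside $\p(E_\uu',\vv_1)/D^2$, this ``naive'' pullback splitting sends
$$
\bY \longmapsto \bY - \Li_m(\zeta)\bY^m \cdot \bt_\zeta - \Li_m(\zetabar)\bY^m \cdot \bt_\zetabar,
$$
and similarly for $\bX$. Lemma~\ref{lem:Li} then permits writing $\Li_m(\zetabar) = (-1)^{m+1}\Li_m(\zeta) + (-1)^m q(2\pi i)^m$ for some $q \in \Q$. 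Substituting, the two $\Li_m$-dependent terms collapse into a single multiple of the symmetric combination $\bt_\zeta + (-1)^{m+1}\bt_\zetabar$, leaving behind a residual term $-(-1)^m q(2\pi i)^m\bY^m \cdot \bt_\zetabar$ that lies entirely in the sub $S^mH_\uu \cdot \bt_\zetabar$. Since de~Rham splittings of the sequence differ exactly by maps from the quotient into the sub, one may modify the splitting to cancel this residual term, yielding the claimed formula for $\bY$. The computation for $\bX$ is verbatim, with $\bX\bY^{m-1}\cdot\bt_\zetabar$ playing the role of $\bY^m\cdot\bt_\zetabar$.

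The argument is essentially formal once Lemma~\ref{lem:Li} is available; the only care required is in the bookkeeping between the $\zeta$- and $\zetabar$-coefficients and in tracking signs from the parity of $m$. The real content of the corollary lies in Lemma~\ref{lem:Li} itself -- a consequence of the Fourier expansion of the Bernoulli polynomial $B_m$ -- which guarantees that the obstruction to passing from the naive pullback to the symmetric form is a $\Q$-multiple of $(2\pi i)^m$ and is therefore absorbable into the ambiguity of a de~Rham splitting. Without such a rationality statement no clean formula depending only on $\Li_m(\zeta)$ (rather than on both $\Li_m(\zeta)$ and $\Li_m(\zetabar)$) could be written down.
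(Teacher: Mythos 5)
Your argument is correct and is essentially the paper's own proof, just written out in detail: the paper likewise pulls back the splittings of $V_{m,\zeta}$ and $V_{m,\zetabar}$ from the preceding proposition and invokes Lemma~\ref{lem:Li} to replace $\Li_m(\zetabar)$ by $(-1)^{m+1}\Li_m(\zeta)$ modulo $(2\pi i)^m\Q$, the residual $(2\pi i)^m\Q$-term being absorbable precisely as you say. Your closing observation that the whole content lies in Lemma~\ref{lem:Li}, since splittings (equivalently, the extension class in $\C/(2\pi i)^m\Q$) are only defined up to rational multiples of $(2\pi i)^m$, is exactly the point of the paper's two-line proof.
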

\begin{proof}
Use the splitting in the previous statement for $\zeta$ and $\zetabar$. Then recall $\Li_m(\zeta) + (-1)^m \Li_m(\zetabar) \in (2\pi i)^m\Q$.
\end{proof}

This splitting may be described in terms of the derivations which appear in the KZB connection. For $m \ge 2$, the derivations $\e_{m+1,\zeta}$\label{not:ep} in \eqref{eqn:KZBrest} are given by 
$$
\e_{m+1,\zeta} \equiv \bX^{m-1}\cdot(\bt_\zeta+(-1)^{m+1}\bt_\zetabar) \bmod D^2 \Der \p_N
$$
\cite[\S7.1]{hopper}. Then define
\begin{align}
\label{eqn:eop}
\begin{split}
\e^\op_{m+1,\zeta} &= \frac{1}{(m-1)!} \left(\bY\frac{\partial}{\partial \bX}\right)^{m-1} \cdot \e_{m+1,\zeta} \cr 
&\equiv \bY^{m-1}\cdot(\bt_\zeta+(-1)^{m+1}\bt_\zetabar) \bmod D^2 \Der \p_N.
\end{split}
\end{align}
\begin{cor}
The de~Rham splitting of the pull-back of $V_{m,\zeta} \oplus V_{m,\zetabar}$ along the diagonal map $H_\uu \to H_\uu \oplus H_\uu$ is
$1 + \Li_m(\zeta)\,\e^\op_{m+1,\zeta} \bmod D^2 \Der \p_N$.
\end{cor}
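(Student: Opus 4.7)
The plan is to verify by direct calculation that applying the derivation $1 + \Li_m(\zeta)\,\e^\op_{m+1,\zeta}$ to the generators $\bX$ and $\bY$ of $H_\uu$ reproduces exactly the $\Q$-de~Rham splitting formula established in Corollary \ref{cor:Vsplit}. The corollary is thus a translation of that splitting from the language of maps on generators into the language of derivations on $\p_N / D^2$.

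First I would unwind the definition \eqref{eqn:eop}: modulo $D^2\Der\p_N$, the derivation $\e^\op_{m+1,\zeta}$ is the inner derivation $\ad_w$ associated to the element
$$
w := \ad_\bY^{m-1}\bigl(\bt_\zeta + (-1)^{m+1}\bt_\zetabar\bigr) \in D^1\p_N,
$$
which by the paper's convention is written $\bY^{m-1}\cdot(\bt_\zeta+(-1)^{m+1}\bt_\zetabar)$. Since $w \in D^1$, any further bracket $[w,\cdot]$ stays in $D^1$, so the computation below is unambiguous modulo $D^2$.

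Next I would compute the action of $\e^\op_{m+1,\zeta}$ on the two generators. Using $[w,\bX] = -\ad_\bX(w)$ and $[w,\bY] = -\ad_\bY(w)$, we obtain
\begin{align*}
\e^\op_{m+1,\zeta}(\bX) &\equiv -\bX\bY^{m-1}\cdot\bigl(\bt_\zeta + (-1)^{m+1}\bt_\zetabar\bigr) \bmod D^2, \\
\e^\op_{m+1,\zeta}(\bY) &\equiv -\bY^{m}\cdot\bigl(\bt_\zeta + (-1)^{m+1}\bt_\zetabar\bigr) \bmod D^2.
\end{align*}
Therefore
\begin{align*}
\bigl(1 + \Li_m(\zeta)\,\e^\op_{m+1,\zeta}\bigr)(\bX) &\equiv \bX - \Li_m(\zeta)\,\bX\bY^{m-1}\cdot\bigl(\bt_\zeta + (-1)^{m+1}\bt_\zetabar\bigr), \\
\bigl(1 + \Li_m(\zeta)\,\e^\op_{m+1,\zeta}\bigr)(\bY) &\equiv \bY - \Li_m(\zeta)\,\bY^{m}\cdot\bigl(\bt_\zeta + (-1)^{m+1}\bt_\zetabar\bigr),
\end{align*}
both modulo $D^2$. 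Comparing with the explicit formulas for the splitting given in Corollary \ref{cor:Vsplit} shows these agree on generators, and since a de~Rham splitting of the extension is determined by its values on $\bX$ and $\bY$ (modulo $D^2$), the claim follows.

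There is no real obstacle here; the statement is essentially a reformulation of Corollary \ref{cor:Vsplit}. The only things that require care are the sign coming from $[w,\cdot] = -\ad_{\cdot}(w)$ and the verification that everything can be computed modulo $D^2$ because $w \in D^1$. Once these are handled, the identification with $\e^\op_{m+1,\zeta}$ is immediate from the defining formula \eqref{eqn:eop}.
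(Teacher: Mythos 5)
Your proposal is correct and matches the paper's (implicit) argument: the paper treats this corollary as an immediate reformulation of Corollary \ref{cor:Vsplit} using the congruence \eqref{eqn:eop}, exactly the translation you carry out, with the sign bookkeeping $[w,\bX]=-\bX\cdot w$, $[w,\bY]=-\bY\cdot w$ handled correctly. Nothing further is needed.
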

\begin{remark}
In the case $\zeta = 1$, we see $V_{m,1}$ splits when $m$ is even and has de~Rham splitting $1+\frac12\zeta(m)\e^\op_{m+1,1}$ when $m$ is odd.
\end{remark}
\begin{remark}
It follows from Lemma \ref{lem:Li} and \eqref{eqn:eop} that $\Li_m(\zeta)\,\e^\op_{m,\zeta}$ is invariant under complex conjugation.
\end{remark}

\section{Hodge realizations of \texorpdfstring{$\Ext^1_{\MTM_N}(\Q,\Q(m))$}{TEXT}}
\label{sec:Eext}

We observed in \S\ref{sec:cycpoly} that the Hodge realization of $\p(U_N,\vv_1)/D^2$ is the sum of extensions $-E_{m,\zeta}(1)$. Since $E_{m,\zeta}$ is a subquotient of the mixed Tate motive $\p(U_N,\vv_1)$, it is the realization of a rational class in $\Ext_{\MTM_N}^1(\Q,\Q(m))$ \cite[\S 5]{DG}. 
Consider the composition 
\begin{multline}
    \label{eqn:extLi}
    \Span_\Q\{E_{m,\zeta}\mid\zeta \in \bmu_N\} \hookrightarrow \Ext^1_{\MTM_N}(\Q,\Q(m))
    \cr
    \to \Ext^1_\MHS(\Q,\Q(m)) \xrightarrow{\sim} \C/(2\pi i)^m\Q,
\end{multline}
where the middle map is the Hodge realization functor and the last map is the period map.
The image of each extension $E_{m,\zeta}$ in $\C/(2\pi i)^m\Q$ is $\Li_m(\zeta)$. When $m \ge 2$, Goncharov \cite{gonch:poly} proved there is an isomorphism 
$$
\Span_\Q \{\Li_m(\zeta) \mid \zeta \in \bmu_N\} \cong K_{2m - 1}(\cO_N) \otimes \Q. 
$$
Thus, by \eqref{eqn:K} the inclusion in \eqref{eqn:extLi} is an isomorphism. Therefore, $\Q$-linear relations in $\Ext^1_{\MTM_N}(\Q,\Q(m))$ correspond exactly to $\Q$-linear relations between the periods $\Li_m(\zeta)$ in $\C/(2\pi i)^m\Q$. 

\begin{remark}
When $m = 1$, the extensions $E_{1,\zeta}$ do {\em not} span $\Ext^1_{\MTM_N}(\Q,\Q(1))$. One must add extensions with period $\log p$ for all primes $p$ that divide $N$. This is essentially equivalent to Dirichlet's unit theorem.
\end{remark}



We recall a useful fact about polylogarithms. 
\begin{lemma}
Suppose $\ell$ is a positive integer. Then the {\em distribution relation} of polylogarithms is
\begin{equation}
    \label{eqn:dist}
    \ell^{m-1}\sum_{w^\ell = z} \Li_m(w) = \Li_m(z).
\end{equation}
\end{lemma}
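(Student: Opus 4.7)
The plan is to prove the identity first on the open unit disk using the defining power series of $\Li_m$, where absolute convergence justifies interchanging summations. The identity then propagates to all of $\C \smallsetminus [1,\infty)$ by analytic continuation, since both sides are holomorphic functions of $z$ on any simply connected domain avoiding the branch locus.

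First, I would fix $z$ with $0 < |z| < 1$ and choose any $\ell$-th root $w_0$ of $z$. The set of all $\ell$-th roots is then $\{\zeta^j w_0 : 0 \le j \le \ell - 1\}$ where $\zeta = e^{2\pi i / \ell}$ is a primitive $\ell$-th root of unity. Since $|w_0| = |z|^{1/\ell} < 1$, the series $\Li_m(\zeta^j w_0) = \sum_{k \ge 1} \frac{(\zeta^j w_0)^k}{k^m}$ converges absolutely for each $j$, so we may interchange the finite sum over $j$ and the infinite sum over $k$:
\begin{equation*}
\sum_{w^\ell = z} \Li_m(w) \;=\; \sum_{k \ge 1} \frac{w_0^k}{k^m} \sum_{j=0}^{\ell - 1} \zeta^{jk}.
\end{equation*}

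The inner geometric sum $\sum_{j=0}^{\ell-1}\zeta^{jk}$ equals $\ell$ when $\ell \mid k$ and $0$ otherwise. Keeping only the terms with $k = \ell n$ gives
\begin{equation*}
\sum_{w^\ell = z} \Li_m(w) \;=\; \ell \sum_{n \ge 1} \frac{w_0^{\ell n}}{(\ell n)^m} \;=\; \ell^{1-m} \sum_{n \ge 1} \frac{z^n}{n^m} \;=\; \ell^{1-m}\, \Li_m(z),
\end{equation*}
and multiplying both sides by $\ell^{m-1}$ yields the claimed identity on $\{0 < |z| < 1\}$.

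Finally, the right hand side extends to a holomorphic (multivalued) function on $\C \smallsetminus \{0,1\}$, and the left hand side is holomorphic on any simply connected domain avoiding the preimage of $\{0,1\}$ under $w \mapsto w^\ell$ once we make a compatible choice of the $\ell$ branches. The identity of two holomorphic functions agreeing on an open set extends to all of this common domain. The only subtlety, which is really a bookkeeping matter rather than an obstacle, is to specify that the $\ell$ summands on the left are taken using a consistent labeling of the branches of $z^{1/\ell}$; this is harmless because the sum over all $\ell$-th roots is a symmetric function of those roots and hence single-valued in $z$.
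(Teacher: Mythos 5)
Your proof is correct and follows essentially the same route as the paper, which simply cites the power series expansion of $\Li_m$ (referring to Goncharov for details): expand each summand, apply the orthogonality of $\ell$-th roots of unity, and resum. The analytic-continuation remarks are fine but not needed for the paper's use of the relation, which is at roots of unity where (for $m \ge 2$) the series computation applies directly.
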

The proof follows directly from the power series expansion of the polylogarithm function. Details can be found in \cite[\S 2]{gonch:poly}. 

\begin{cor}
Suppose $\zeta$ is a primitive $d$th root of unity. If $p$ is prime, then $\Li_m(\zeta)$ is a $\Q$-linear combination of values of $\Li_m$ at primitive $(dp)$th roots of unity. 
\end{cor}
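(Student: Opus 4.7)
The plan is to apply the distribution relation \eqref{eqn:dist} with $\ell = p$ and $z = \zeta$, which yields
$$\Li_m(\zeta) = p^{m-1} \sum_{w^p = \zeta} \Li_m(w),$$
and then analyze which summands are $\Li_m$-values at primitive $(dp)$th roots of unity. A $p$th root $w$ of $\zeta$ has order $n$ with $n/\gcd(n,p) = \ord(w^p) = d$, so $n = d\cdot\gcd(n,p) \in \{d, dp\}$. The two cases split naturally according to whether $p \mid d$.

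First I would handle the easier case $p \mid d$: here $\gcd(n,p) = 1$ would force $n = d$ and hence $p$ invertible mod $d$, contradicting $p \mid d$, so every $p$th root of $\zeta$ is in fact a primitive $(dp)$th root of unity. The distribution relation then writes $\Li_m(\zeta)$ directly as the required $\Q$-linear combination.

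The case $p \nmid d$ is the main obstacle. Here $p$ is invertible mod $d$ with inverse $k$, and $\zeta^k$ is the \emph{unique} $p$th root of $\zeta$ of order $d$; the remaining $p-1$ roots are primitive $(dp)$th roots. Let $V := \Span_\Q\{\Li_m(\omega) : \omega \text{ primitive } dp\text{th root of unity}\}$. The distribution relation then reads
$$\Li_m(\zeta) - p^{m-1}\Li_m(\zeta^k) \in V.$$
To eliminate the unwanted term $\Li_m(\zeta^k)$, I would iterate the same identity with $\zeta$ replaced by $\zeta^k$, $\zeta^{k^2}$, and so on. Since $k \in (\Z/d)^\times$ has finite multiplicative order $j$, after $j$ iterations we obtain
$$\bigl(1 - p^{j(m-1)}\bigr)\,\Li_m(\zeta) \in V.$$
For $m \ge 2$ the coefficient $1 - p^{j(m-1)}$ is a nonzero rational (as $p \ge 2$ and $j \ge 1$), so dividing yields $\Li_m(\zeta) \in V$, as desired. (The case $m = 1$ is excluded by the context, consistent with the earlier remark that $\Ext^1_{\MTM_N}(\Q,\Q(1))$ requires the additional $\log p$ classes.)
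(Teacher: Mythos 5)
Your proof is correct and follows essentially the same route as the paper's: apply the distribution relation with $\ell = p$, observe that the $p \mid d$ case is immediate, and in the $p \nmid d$ case iterate the relation on the single non-primitive root until $\zeta$ recurs and solve for $\Li_m(\zeta)$. Your bookkeeping is tighter than the paper's, though: you identify the unique non-primitive $p$th root as $\zeta^k$ with $k \equiv p^{-1} \bmod d$, pin down the number of iterations as $j = \ord_d(k) = \ord_d(p)$, and verify the resulting coefficient $1 - p^{j(m-1)}$ is nonzero for $m \ge 2$. By contrast the paper asserts ``after no more than $p$ iterations'' (imprecise -- the correct bound is the multiplicative order of $p$ mod $d$, which can exceed $p$, e.g.\ $p=2$, $d=11$) and does not explicitly check the final coefficient is nonvanishing or flag the role of $m \ge 2$, both of which you handle cleanly.
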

\begin{proof}
There exists a primitive $(dp)$th root of unity $\xi$ such that $\xi^p = \zeta$. Then by the distribution relation,
$$
\Li_m(\zeta)  = p^{m-1}\sum_{w^p = \zeta} \Li_m(w) = p^{m-1}\sum_{k = 0}^{p - 1} \Li_m(\xi^{kd + 1}).
$$
If $p\mid d$, then $\gcd(kd+1,p) = 1$ for all $k$; hence, $\gcd(kd+1,dp) = 1$ and each $\xi^{kd+1}$ is primitive, completing the proof. On the other hand, if $p\nmid d$, then there is exactly one $\ell \in \{0,1,\ldots,p-1\}$ such that $p\mid \ell d+1$. Then $\xi^{\ell d +1}$ is a $d$th primitive root of unity. Every root $\xi^{kd +1}$ with $k \neq \ell$ is a primitive $(dp)$th root of unity. One can then use the distribution relation again to rewrite $\Li_m(\xi^{\ell d+1})$ as a sum of values of $\Li_m$. After no more than $p$ iterations, the original primitive $d$th root of unity $\zeta$ will appear in the sum with a coefficient not equal to 1. One can then solve for $\Li_m(\zeta)$ in terms of $\Li_m(\eta)$ with $\eta \in \bmu_{dp}$ primitive. 
\end{proof}
\begin{prop}
\label{prop:extbasis}
When $m \ge 2$, the extensions $E_{m,\zeta}$ with $\zeta \in \bmu_N$ primitive and $\Im \zeta > 0$ form a basis of $\Ext_{\MTM_N}^1(\Q,\Q(m))$. 
\end{prop}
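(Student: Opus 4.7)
The plan is to leverage the identification
$$
\Ext^1_{\MTM_N}(\Q,\Q(m)) \;\cong\; \Span_\Q\{\Li_m(\zeta) : \zeta \in \bmu_N\} \;\subset\; \C/(2\pi i)^m\Q,
$$
established for $m \ge 2$ in \eqref{eqn:extLi} and the paragraph immediately after, to convert the proposition into a question about $\Q$-linear dependencies among the periods $\Li_m(\zeta)$. Under this correspondence $E_{m,\zeta}$ is sent to the class of $\Li_m(\zeta) \bmod (2\pi i)^m\Q$, so I only need to (i) show the $E_{m,\eta}$ with $\eta\in\bmu_N$ primitive and $\Im \eta > 0$ already span, and then (ii) compare the size of this set with $\dim_\Q \Ext^1_{\MTM_N}(\Q,\Q(m))$ coming from \eqref{eqn:dim}.

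For (i), I would reduce in two stages. First, given any $\zeta\in\bmu_N$ of order $d < N$, choose a prime $p$ with $dp \mid N$ and apply the distribution-relation corollary stated just before the proposition to write $\Li_m(\zeta)$ as a $\Q$-linear combination of $\Li_m(\xi)$ with each $\xi$ of order exactly $dp$. Iterating this step strictly raises the order of the arguments while keeping it a divisor of $N$, so after at most a bounded number of iterations every $\Li_m(\zeta)$ becomes a $\Q$-linear combination of $\Li_m(\eta)$ with $\eta$ a primitive $N$th root of unity. Second, Lemma \ref{lem:Li} gives
$$
\Li_m(\eta) + (-1)^m \Li_m(\bar\eta) \in (2\pi i)^m\Q,
$$
which modulo $(2\pi i)^m\Q$ becomes $\Li_m(\bar\eta) \equiv (-1)^{m+1}\Li_m(\eta)$; transported back through \eqref{eqn:extLi} this is the relation $E_{m,\bar\eta} = (-1)^{m+1}E_{m,\eta}$ in $\Ext^1_{\MTM_N}(\Q,\Q(m))$. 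So one can discard every $E_{m,\bar\eta}$ in favor of $E_{m,\eta}$ and restrict to $\Im\eta > 0$.

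For (ii), assuming $N \ge 3$, no primitive $N$th root of unity is real, so the $\varphi(N)$ primitive roots partition into $\varphi(N)/2$ complex conjugate pairs $\{\eta,\bar\eta\}$, giving exactly $\varphi(N)/2$ elements with $\Im\eta > 0$. Comparing with \eqref{eqn:dim}, this equals $\dim_\Q \Ext^1_{\MTM_N}(\Q,\Q(m))$, so a spanning set of this size must already be a basis. The main point that will need care is the termination and bookkeeping of the distribution-relation iteration in step (i), since each application may introduce several new non-primitive roots; but since the orders strictly increase at each step while remaining divisors of $N$, the process terminates in finitely many steps, after which Lemma \ref{lem:Li} cleanly collapses conjugate pairs. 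Once that reduction is recorded, the rest of the argument is a straightforward dimension count against \eqref{eqn:dim}.
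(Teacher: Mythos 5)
Your proposal is correct and follows essentially the same route as the paper: iterate the distribution-relation corollary to reduce to primitive $N$th roots, use Lemma \ref{lem:Li} to identify $E_{m,\zetabar}$ with $(-1)^{m+1}E_{m,\zeta}$ and keep only $\Im\zeta>0$, then conclude by a dimension count against \eqref{eqn:dim} via the isomorphism in \eqref{eqn:extLi}. Your explicit restriction to $N\ge 3$ in the counting step is a reasonable (and if anything slightly more careful) reading of the statement, which the paper's own proof handles the same way.
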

\begin{proof}
Suppose $d \mid N$ and $\eta$ is a primitive $d$th root of unity. There are primes $p_1,\ldots,p_n$ (not necessarily distinct) such that $N = dp_1\cdots p_n$. One can apply the result of the previous lemma $n$ times to write $\Li_m(\eta)$ as $\Q$-linear combination of $\Li_m(\zeta)$ with $\zeta \in \bmu_N$ primitive. Thus, the extensions $E_{m,\zeta}$ with $\zeta \in \bmu_N$ primitive generate $\Ext^1_{\MTM_N}(\Q,\Q(m))$. It follows from Lemma \ref{lem:Li} that $E_{m,\zetabar} \equiv (-1)^{m+1} E_{m,\zeta} \bmod \Q(m)$. Thus, we need only consider those $E_{m,\zeta}$ with positive imaginary part. The result follows from counting dimension and the fact that the inclusion in \eqref{eqn:extLi} is an isomorphism. 
\end{proof}

\begin{example}
\label{ex:E1}
Suppose $N = p^n$ is a prime power. Let $\zeta \in \bmu_N$ be a primitive $(p^k)$th root of unity with $0 < k \le n$. It follows from the distribution relation that
\begin{equation}
    \label{eqn:ppdist}
    \Li_m(\zeta) = (p^{n-k})^{m-1} \sum_{\substack{\eta \in \bmu_N \\ \eta^{p^{n - k}} = \zeta}} \Li_m(\eta).
\end{equation}
Note that each of the roots $\eta$ in the sum are primitive. Then 
\begin{align*}
\zeta(m) = \Li_m(1) &= N^{m-1}\sum_{\zeta \in \bmu_N} \Li_m(\zeta) \cr
&= N^{m-1} \zeta(m) + N^{m-1}\sum_{k = 1}^n \sum_{\substack{\zeta \in \bmu_N \\ \text{$p^k$-primitive}}} \left((p^{n-k})^{m-1} \sum_{\substack{\eta \in \bmu_N \\ \eta^{p^{n - k}} = \zeta}} \Li_m(\eta)\right) \cr
&= N^{m-1} \zeta(m) + N^{m-1} \sum_{\substack{\zeta \in \bmu_N \\ \text{primitive}}} \sum_{k = 1}^n (p^{n-k})^{m-1}\Li_m(\zeta) \cr
&= N^{m-1} \zeta(m) + N^{m-1}  \frac{1-N^{m-1}}{1-p^{m-1}}\sum_{\substack{\zeta \in \bmu_N \\ \text{primitive}}} \Li_m(\zeta).
\end{align*}
Rearranging, 
$$
\zeta(m) = \frac{N^{m-1}}{1 - p^{m-1}} \sum_{\substack{\zeta \in \bmu_N \\ \text{primitive}}} \Li_m(\zeta).
$$
Thus, as extensions in $\Ext^1_{\MTM_N}(\Q,\Q(m))$,
\begin{equation}
    \label{eqn:1dist}
    E_{m,1} = \frac{N^{m-1}}{1 - p^{m-1}} \sum_{\substack{\zeta \in \bmu_N \\ \text{primitive}}} E_{m,\zeta},
\end{equation}
where summation is the Baer sum of extensions. Note that by Lemma \ref{lem:Li} this last statement is trivial when $m$ is even.
\end{example}

\section{The action of \texorpdfstring{$H_1(\k_N)$}{TEXT}}
\label{sec:gen}

For the remainder of the paper, we are concerned only with the de~Rham fiber functor of $\MTM_N$, so we will omit the $\DR$ decoration from $\cG_N^\DR$, $\mathcal{K}_N^\DR$, and $\k_N^\DR$.

The Hochschild--Serre spectral sequence associated to the short exact sequence \eqref{eqn:Gses} converges:
$$
E_2^{s,t} = H^s(\Gm,H^t(\mathcal{K}_N,\Q(m))) \Rightarrow H^{s+t}(\cG_N,\Q(m)).
$$
The group $\Gm$ is reductive, and thus $H^s(\Gm,-) = 0$ for $s > 0$. It follows that
\begin{align*}
H^t(\mathcal{K}_N,\Q(m))^\Gm &= H^0(\Gm, H^t(\mathcal{K}_N,\Q(m))) \cr 
&= H^t(\cG_N,\Q(m)) \cr
&= \Ext^t_{\MTM_N} (\Q,\Q(m)).
\end{align*}
Therefore,
\begin{align*}
    H_1(\k_N,\Q) = \prod_{m > 0} \Ext^1_{\MTM_N}(\Q,\Q(m))^\vee \otimes \Q(m).
\end{align*}

When $m \ge 2$, we have a basis of $\Ext^1_{\MTM_N}(\Q,\Q(m))$ from Proposition \ref{prop:extbasis}. One can then specify a dual basis \label{not:sigbar}
$$
\{\bsigmabar_{m,\zeta} \mid \zeta \in \bmu_N \text{ primitive}, \Im \zeta > 0\}
$$
of $H_1(\k_N)$. We can also define $\bsigmabar_{m,\zeta}$ for $\zeta$ primitive and $\Im \zeta < 0$ by the relations induced by Lemma \ref{lem:Li}: $\bsigmabar_{m,\zetabar} = (-1)^{m+1} \bsigmabar_{m,\zeta}$. We then choose {\em non-canonical} lifts $\bsigma_{m,\zeta} \in \k_N$ \label{not:sig} of each $\bsigmabar_{m,\zeta} \in H_1(\k_N)$ satisfying the same relation
\begin{equation}
    \label{eqn:sigma}
    \bsigma_{m,\zetabar} = (-1)^{m+1}\bsigma_{m,\zeta}.
\end{equation}

Suppose $\{\bv_0^\DR,\bv_m^\DR\}$ is a $\Q$-de~Rham basis of $E_{m,\zeta}$ with $\bv_m \in M_{-2m} E_{m,\zeta}$. The action of $\exp \bsigma_{m,\zeta} \in \mathcal{K}_N$ on $E_{m,\zeta}$ stabilizes the associated graded $\Gr^M_\bullet E_{m,\zeta}$. Thus, 
$$
\exp \bsigma_{m,\zeta} : 
\begin{cases}
\bv_0^\DR \longmapsto \bv_0^\DR + c \bv_m^\DR \cr
\bv_m^\DR \longmapsto \bv_m^\DR
\end{cases}
$$
for some $c \in \Q$. Scale $\bsigma_{m,\zeta}$ such that $c = 1$ and thus $\bsigma_{m,\zeta}$ acts on $E_{m,\zeta}$ by
$$
\bsigma_{m,\zeta} : 
\begin{cases}
\bv_0^\DR \longmapsto \bv_m^\DR \cr
\bv_m^\DR \longmapsto 0.
\end{cases}
$$
The action of $\bsigma_{m,\zeta}$ on basis elements $E_{m,\xi}$ with $\Im \xi > 0$ and $\xi \neq \zeta$ is trivial. 
\begin{prop}
Suppose $V_1,\ldots,V_n \in \Ext_{\MTM_N}^1(\Q,\Q(m))$. The action of $\bsigma_{m,\zeta}$ on the pull-back of $\bigoplus_k V_k$ along the diagonal map $\Delta : \Q \to \Q^n$ is the sum of the actions of $\bsigma_{m,\zeta}$ on each $V_k$ individually.
\end{prop}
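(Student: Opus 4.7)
The proof is essentially an exercise in functoriality of the motivic Galois action. My plan is to exploit the fact that $\bsigma_{m,\zeta} \in \k_N$ acts on every object of $\MTM_N$ in a way that is natural with respect to morphisms, together with an explicit choice of de~Rham basis for the pullback.

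First, I would set up notation. For each $V_k$, choose a $\Q$-de~Rham basis $\{v_0^{(k)},v_m^{(k)}\}$ adapted to the weight filtration (with $v_m^{(k)} \in M_{-2m}V_k^\DR$), so that, as in the discussion preceding the statement, $\bsigma_{m,\zeta}(v_0^{(k)}) = c_k v_m^{(k)}$ and $\bsigma_{m,\zeta}(v_m^{(k)}) = 0$ for some $c_k \in \Q$. The pullback $V := \Delta^\ast(\bigoplus_k V_k)$ sits in the commutative diagram of short exact sequences in $\MTM_N$
$$
\xymatrix{
0 \ar[r] & \bigoplus_k \Q(m) \ar[r] \ar@{=}[d] & V \ar[r] \ar[d]^-\iota & \Q \ar[r] \ar[d]^-\Delta & 0 \\
0 \ar[r] & \bigoplus_k \Q(m) \ar[r] & \bigoplus_k V_k \ar[r] & \bigoplus_k \Q \ar[r] & 0,
}
$$
and its de~Rham realization admits the basis $\{w_0,w_m^{(1)},\ldots,w_m^{(n)}\}$ with $\iota(w_0) = \sum_k v_0^{(k)}$ and $\iota(w_m^{(k)}) = v_m^{(k)}$, where again $w_m^{(k)} \in M_{-2m}V^\DR$.

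Since $\iota$ is a morphism in $\MTM_N$, it is equivariant for the action of $\bsigma_{m,\zeta}$. Applying this equivariance to $w_0$ gives
$$
\iota\bigl(\bsigma_{m,\zeta}(w_0)\bigr) = \bsigma_{m,\zeta}\bigl(\textstyle\sum_k v_0^{(k)}\bigr) = \sum_k c_k v_m^{(k)} = \iota\bigl(\textstyle\sum_k c_k w_m^{(k)}\bigr),
$$
and since $\iota$ is injective, $\bsigma_{m,\zeta}(w_0) = \sum_k c_k w_m^{(k)}$. The same equivariance (or the parallel calculation via the projections $\bigoplus V_k \to V_k$) gives $\bsigma_{m,\zeta}(w_m^{(k)}) = 0$. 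Reading the class of $V$ in $\Hom(\Q,\bigoplus_k\Q(m)) = \bigoplus_k \Q(m)$ via $\bsigma_{m,\zeta}$, this identifies it with the sum of the classes $c_k \in \Q(m)$ arising from the individual actions on the $V_k$, which is the claim.

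There is no real obstacle here: the only thing to verify beyond naturality is that the basis $\{w_0,w_m^{(k)}\}$ is compatible with the weight filtration on the pullback, which is immediate from the construction of $V$ as a fiber product (the kernel of $V\to\Q$ is exactly $\bigoplus_k \Q(m)$). The result then persists under Baer sums of extensions, which is the form in which it will be applied in the subsequent computation of $\phi_\elp(\bsigma_{m,\zeta})$ on $\p(E_\uu',\vv_1)/D^2$.
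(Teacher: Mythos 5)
Your proof is correct and follows essentially the same route as the paper: both set up the pullback diagram whose vertical maps (in particular the inclusion $\iota$ of $\Delta^\ast(\bigoplus_k V_k)$ into $\bigoplus_k V_k$) are realizations of morphisms in $\MTM_N$, hence $\k_N$-equivariant, and conclude that the direct-sum action of $\bsigma_{m,\zeta}$ pulls back to the sum of the individual actions. Your version merely spells out the paper's one-line equivariance argument with an explicit de~Rham basis, which is a harmless elaboration rather than a different method.
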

\begin{proof}
Consider the diagram of de~Rham realizations below. 
$$
\xymatrix{
0 \ar[r] & (\Q(m)^\DR)^n \ar[r] \ar@{=}[d] & \bigoplus_kV_k^\DR  \ar[r] & \Q^n \ar[r] & 0 \cr
0 \ar[r] & (\Q(m)^\DR)^n \ar[r] & \Delta^{\ast}(\bigoplus_kV_k^\DR) \ar[u] \ar[r] & \Q \ar[u]_\Delta \ar[r] & 0
}
$$
The vertical maps are reaizations of morphisms in $\MTM_N$ and thus $\k_N$-equivariant. Thus, the direct sum of the actions of $\bsigma_{m,\zeta}$ on each $V_k$ pulls back to the their sum. 
\end{proof}
\begin{cor}
The action of $\bsigma_{m,\zeta}$ on $E_{m,\zetabar}$ is $(-1)^{m+1}$ times the action of the $\bsigma_{m,\zeta}$ on $E_{m,\zeta}$.
\end{cor}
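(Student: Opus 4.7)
The plan is to deduce the corollary from the preceding relation $\bsigma_{m,\zetabar} = (-1)^{m+1}\bsigma_{m,\zeta}$ by showing that the Galois action on any length-two extension depends Baer-linearly on the extension class. First I would observe that the action of $\k_N$ on any $V \in \Ext^1_{\MTM_N}(\Q,\Q(m))$ is trivial on the weight-graded pieces $\Q$ and $\Q(m)$, so $[\k_N,\k_N]$ annihilates $V$ and the action factors through the abelianization $H_1(\k_N)$. The element $\bsigma_{m,\zeta}$ therefore acts on $V$ only through its class $\bsigmabar_{m,\zeta}\in H_1(\k_N)$.

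Next I would use the identification
$$
H_1(\k_N,\Q) \cong \prod_{m > 0} \Ext^1_{\MTM_N}(\Q,\Q(m))^\vee \otimes \Q(m)
$$
established at the start of \S\ref{sec:gen}. Under this identification, the Galois action of $\bsigmabar_{m,\zeta}$ on an extension $V \in \Ext^1_{\MTM_N}(\Q,\Q(m))$ is precisely the evaluation pairing applied to the class $[V]$, which is bilinear in $[V]$. Concretely, if $[V'] = c\cdot [V]$ in $\Ext^1_{\MTM_N}(\Q,\Q(m))$ for some $c \in \Q$ (Baer scaling), then under compatible choices of de~Rham generators the action of $\bsigma_{m,\zeta}$ on $V'$ equals $c$ times its action on $V$, by naturality of the comparison isomorphism under the morphism $V \to V'$ in $\MTM_N$ realizing the scaling.

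Finally I would apply this with $c = (-1)^{m+1}$, $V = E_{m,\zeta}$, and $V' = E_{m,\zetabar}$. The identity $[E_{m,\zetabar}] = (-1)^{m+1}[E_{m,\zeta}]$ in $\Ext^1_{\MTM_N}(\Q,\Q(m))$ is exactly the statement recorded in the proof of Proposition \ref{prop:extbasis}: Lemma \ref{lem:Li} gives $\Li_m(\zetabar) - (-1)^{m+1}\Li_m(\zeta) \in (2\pi i)^m \Q$, so the two extensions differ by a trivial extension. Combining these observations yields the claim.

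The only subtle point is the compatibility of the chosen lifts $\bv_0^\DR$, $\bv_m^\DR$ across the two extensions — but since the normalization of $\bsigma_{m,\zeta}$ was fixed so that it sends $\bv_0^\DR \mapsto \bv_m^\DR$ on $E_{m,\zeta}$ and the Baer-scaling compatibility is natural with respect to morphisms in $\MTM_N$, the factor $(-1)^{m+1}$ appears cleanly with no further normalization ambiguity. This is the step I would double-check, but I do not anticipate it as a genuine obstacle.
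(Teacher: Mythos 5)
Your argument is correct and is essentially the paper's: the paper proves the corollary by observing that the Baer sum $E_{m,\zeta}+(-1)^m E_{m,\zetabar}$ splits (by Lemma \ref{lem:Li}, and in $\MTM_N$ and not just in MHS because of the identification \eqref{eqn:extLi} with $K$-theory), and then invoking $\k_N$-equivariance of the pullback/Baer-sum diagram --- which is exactly your ``action is Baer-linear in the extension class'' step, packaged concretely. Your factorization through $H_1(\k_N)$ and the evaluation pairing is a harmless rephrasing of the same mechanism, and your closing caveat about normalizations is handled in the paper by the fixed de~Rham generators $\bv_0^\DR,\bv_m^\DR$ of \S\ref{sec:gen}.
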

\begin{proof} Consider the Baer sum $E_{m,\zeta} + E_{m,\zetabar}$ in the diagram below.
$$
\xymatrix{0 \ar[r] & \Q(m)^\DR \oplus \Q(m)^\DR \ar[r] \ar[d]_\Sigma & \Delta^{\ast}(E_{m,\zeta}\oplus (-1)^mE_{m,\zetabar}) \ar[d] \ar[r] & \Q \ar@{=}[d] \ar[r] & 0 \cr
0 \ar[r] & \Q(m)^\DR \ar[r] & E_{m,\zeta}+(-1)^mE_{m,\zetabar} \ar[r] & \Q \ar[r] & 0
}
$$
Since $E_{m,\zeta} + (-1)^mE_{m,\zetabar}$ splits as a MHS and thus also in $\MTM_N$, the action of $\k_N$ is trivial. The result follows from $\k_N$-equivariance of the vertical maps.
\end{proof}

\section{Galois representation of \texorpdfstring{$\k_N$}{TEXT}}
\label{sec:rep}

Recall action of $\k_N$ on $\p(U_N,\vv_1)$ and $\p(E_\uu',\vv_1)$ 
$$
\xymatrix{\k_N \ar[r]^-{\phi_\cyc} \ar[rd]_-{\phi_\elp} & \Der \p(U_N,\vv_1) \ar[d]^{\Psi_N} \cr & \Der \p_N(E_\uu',\vv_1),}
$$
where the Hain map $\Psi_N$ \eqref{eqn:hain} is $\k_N$-equivariant. For $\delta \in \Gr_{-2m}^M \Der \p(E_\uu',\vv_1)$, define its {\em head} to be the class $\hat{\delta} \in \Gr_{-k}^W \Gr_{-2m}^M \Der \p(E_\uu',\vv_1)$ for the least $k$ such that $\hat{\delta}$ is nonzero. The objective of this section is to compute the head of $\phi_\elp(\bsigma_{m,\zeta}) \in \Gr_{-2m}^M \Der \p(E_\uu',\vv_1)$ for each generator $\bsigma_{m,\zeta} \in \k_N$.
\begin{lemma}
\label{lem:low}
The image of $\phi_\elp$ commutes with the residue of the $\G_1(N)$ KZB connection \eqref{eqn:KZBrest} along $q = 0$.
\end{lemma}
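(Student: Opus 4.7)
The plan is to identify the residue $R$ of the KZB connection at $q=0$ as a morphism $V \to V(-1)$ in the category $\MTM_N$, where $V$ is the pro-object of $\MTM_N$ whose Hodge realization is the limit MHS on $\p(E_\uu',\vv_1)$ provided by Theorem~\ref{thm:obj}. Once this is established, the commutation $[R,\phi_\elp(g)] = 0$ for every $g \in \k_N$ follows from naturality of the motivic Galois action together with the fact that $\mathcal{K}_N$ acts trivially on the Tate twist $\Q(-1)$.

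Reading $R$ off \eqref{eqn:KZBrest}, I have
$$
R = \bY\frac{\partial}{\partial\bX} + \frac12\sum_{\substack{\zeta\in\bmu_N \\ m\ge 0}} \frac{B_{2m+2}}{(2m)!(2m+2)}\,\e_{2m+2,\zeta} \;\in\; \Der\p_N.
$$
I would first verify that $R$ is a morphism of MHS $V \to V(-1)$. This is an instance of the standard principle that, for an admissible variation, the logarithm of the local monodromy at a boundary divisor is a morphism of the limit MHS into its Tate twist. Concretely, using Theorem~\ref{thm:avmhs} to identify the relative weight filtration $M_\bullet$ of~\eqref{eqn:filtrations} as the weight filtration of the limit MHS, the leading term $\bY\partial/\partial\bX$ lowers $F^\bullet$ by one and $M_\bullet$ by two, with an analogous check for the cuspidal corrections $\e_{2m+2,\zeta}$. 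Rationality on the de~Rham side is immediate from the Bernoulli coefficients, and rationality on the Betti side comes from the fact that $R$ is (up to a factor of $2\pi i$ absorbed by the Tate twist) the logarithm of the topological monodromy around the cusp.

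Next I would invoke the full faithfulness of $\MTM_N \hookrightarrow \cH$ to promote $R$ to a morphism in $\MTM_N$. Naturality of the Galois representation $\phi$ then gives, for each $g \in \k_N$, a commutative square
$$
\begin{array}{ccc}
V^\DR & \xrightarrow{\;R\;} & V(-1)^\DR \\
\phi_\elp(g)\downarrow & & \downarrow\phi_{V(-1)}(g) \\
V^\DR & \xrightarrow{\;R\;} & V(-1)^\DR.
\end{array}
$$
Since $\k_N$ acts trivially on $\Q(-1)$, the right vertical arrow equals $\phi_\elp(g)$ under the canonical de~Rham identification $V(-1)^\DR = V^\DR$, giving $[R,\phi_\elp(g)] = 0$ in $\Der\p_N$.

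The main obstacle is the second step: verifying that $R$ is a morphism of MHS rather than merely of filtered vector spaces. The $\bY\partial/\partial\bX$ piece is transparent, but the Bernoulli-weighted sum of cuspidal derivations requires one to confirm that each $\e_{2m+2,\zeta}$ respects the $\Q$-Betti structure of the limit MHS with the correct Tate shift. That this is even possible rests on the motivic character of the limit MHS established in Theorem~\ref{thm:obj}, which is why the argument is structured to invoke that theorem before any commutation computation.
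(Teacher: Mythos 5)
Your argument is correct and is essentially the paper's proof: the paper simply observes that the residue spans a copy of $\Q(1)$ inside $\Der\p(E_\uu',\vv_1)$ and that $\k_N$ acts trivially on semisimple (Tate) objects, which is exactly your identification of $R$ as a Tate-twist morphism $V \to V(-1)$ combined with triviality of the $\k_N$-action on $\Q(-1)$. Your additional steps (checking $R$ is a morphism of limit MHS via the log-of-monodromy principle and promoting it to $\MTM_N$ by full faithfulness) merely spell out what the paper's two-sentence proof leaves implicit.
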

\begin{proof}
The residue of the KZB connection form \eqref{eqn:KZBrest} along $q = 0$ spans a copy of $\Q(1)$ in $\Der \p(E_\uu',\vv_1)$. 
The action of $\k_N$ on semisimple objects of $\MTM_N$ is trivial. 
\end{proof}
Define the subalgebra of {\em special derivations}\label{not:SDer} to be
$$
\SDer \p_N := \{\delta \in \Der \p_N \mid \delta(\bt_1) = 0, \delta(\bt_\zeta) = [u_\zeta,\bt_\zeta]\text{ for some $u_\zeta \in \p_N$}\}.
$$
This is the Lie algebra of the {\em special automorphisms} of $\p_N$ given by 
$$
\SAut \p_N := \{\sigma \in \Aut\p_N \mid \sigma(\bt_1) = \bt_1,\sigma(\bt_\zeta) = e^{u_\zeta}\bt_\zeta e^{-u_\zeta} \text{ for some $u_\zeta \in \p_N$}\}.
$$
\begin{lemma}
\label{lem:ad}
The sub-VMHS of $\bP_{\G_1(N)}$ with fiber $\SDer \p_N$ restricted to the first-order neighborhood of the zero section of $\E_{\G_1(N)} \to Y_1(N)$ is admissible.
\end{lemma}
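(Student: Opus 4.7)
The plan is to realize $\SDer\,\p_N$ as a strict sub-VMHS of the admissible variation $\Der\,\bP_{\G_1(N)}$, and then invoke the standard fact that a strict sub-variation of an admissible VMHS is itself admissible. The admissibility of $\bP_{\G_1(N)}$ (Theorem \ref{thm:avmhs}) gives admissibility of $\Der\,\bP_{\G_1(N)}$, so the content of the lemma is to verify the sub-VMHS structure of $\SDer\,\p_N$ and its strict compatibility with the Hodge and weight filtrations.

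I would first verify monodromy-invariance via the coordinate-free characterization of $\SDer$: the conditions $\delta(\bt_1) = 0$ and $\delta(\bt_\zeta) \in [\p_N, \bt_\zeta]$ are invariant under conjugation of the $\bt_\zeta$ by elements of $\p_N$, which is precisely how the monodromy of $\bP_{\G_1(N)}$ acts around each puncture $\zeta \in \bmu_N$. The role of restricting to the first-order neighborhood of the zero section is to make the distinguished treatment of $\bt_1$ intrinsic: on that neighborhood, $\bt_1$ is canonically identified with the residue of the KZB connection along the identity section, so the condition $\delta(\bt_1) = 0$ is well-defined without reference to a basis of $\p(E', x)$. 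Compatibility with the Hodge and weight filtrations \eqref{eqn:filtrations} follows since those filtrations are defined in terms of the degree gradings in $\bX$, $\bY$, and each $\bt_\zeta$, and the equations cutting out $\SDer$ are homogeneous with respect to these gradings; hence $\SDer$ is a graded sub-Lie algebra to which the filtrations restrict strictly.

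The main technical obstacle is the admissibility itself: one must confirm the existence of a relative weight filtration for the limit MHS of $\SDer\,\p_N$ both at the cusps of $Y_1(N)$ and at the zero section in the first-order neighborhood. Strictness of the filtrations on $\SDer\,\p_N$ reduces this to the corresponding statement for $\Der\,\p_N$, which is available from Theorem \ref{thm:avmhs} together with \eqref{eqn:filtrations}: the relative weight filtration at $\partial/\partial q + \partial/\partial w$ is $M_\bullet$, and this transfers to derivations by functoriality. The relative weight filtration on $\SDer\,\p_N$ is then obtained by intersecting $M_\bullet \Der\,\p_N$ with the sub-bundle, which yields the desired admissibility.
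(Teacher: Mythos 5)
Your argument is correct in substance and ends up leaning on the same pillar as the paper, namely the admissibility of the KZB variation (Theorem \ref{thm:avmhs}), but the verification you make is different. The paper's proof works on the de~Rham side: it observes that the residues of the KZB connection at the cusps are spanned by the derivations $\e_{m,\zeta}$, which are special by \cite[\S7.1]{hopper}, and that the residue along the zero section is $\ad\bt_1$, also special; since the connection data cutting out the limit structure preserves $\SDer\p_N$, admissibility of $\bP_{\G_1(N)}$ passes to the sub. You instead verify the sub-VMHS structure on the Betti side (monodromy acts by conjugating the $\bt_\zeta$ and fixing $\bt_1$, hence preserves the special condition; the filtrations \eqref{eqn:filtrations} restrict because $\SDer$ is graded) and then invoke the stability of admissibility under sub-variations (Kashiwara, Steenbrink--Zucker). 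That categorical fact is legitimate and makes your first two paragraphs a complete proof; it also buys you a proof that does not require knowing the explicit residues, whereas the paper's route is shorter because those residues were already computed. Your nice observation about why the restriction to the first-order neighborhood of the zero section is needed (to make $\delta(\bt_1)=0$ intrinsic via the residue $\ad\bt_1$) is implicit in the paper but worth making explicit.

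One caution about your last paragraph: the claim that the relative weight filtration on $\SDer\p_N$ is simply $M_\bullet\Der\p_N$ intersected with the subbundle, and that this ``yields the desired admissibility,'' is not automatic. For a general nilpotent-stable subspace compatible with $W_\bullet$, the restricted filtration need not satisfy the isomorphism condition \eqref{eqn:sl2}, so intersecting does not by itself produce a relative weight filtration. In your setup this is rescued either by the categorical fact you already invoked in the first paragraph (which makes the third paragraph redundant), or, as in the paper, by noting that the residues themselves lie in (hence act on) $\SDer\p_N$, so the entire limit mixed Hodge structure package, including $M_\bullet$ and the $\sl_2$-isomorphisms, restricts to the sub because $\SDer\p_N$ underlies a sub-MHS of the limit. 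As written, the intersection argument alone has a gap; with either repair the proof stands.
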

\begin{proof}
The derivations $\e_{m,\zeta}$ are special \cite[\S7.1]{hopper}, and thus the residues of the KZB connection at each cusp are also special. The residue along the zero section $w = 1$ is $\ad \bt_1$ is clearly special as well. It follows from the admissibility of KZB (Theorem \ref{thm:avmhs}) that the sub-VMHS of interest is admissible as well. 
\end{proof}
\begin{lemma}
The image of $\phi_\elp$ is contained in $\SDer \p(E_\uu',\vv_1)$.
\end{lemma}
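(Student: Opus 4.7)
The plan is to combine Lemma \ref{lem:ad} with the pro-motivic structure from Theorem \ref{thm:obj}. By Lemma \ref{lem:ad}, $\SDer\p$ is an admissible sub-VMHS of $\Der\p$ near the zero section, so passing to the limit MHS at $\vv_1$ exhibits $\SDer\p(E_\uu',\vv_1)$ as a sub-pro-MTM of $\Der\p(E_\uu',\vv_1)$. It therefore suffices to verify that every $\phi_\elp(\delta)$ with $\delta \in \k_N$ satisfies the two defining conditions: $\phi_\elp(\delta)(\bt_1) = 0$, and $\phi_\elp(\delta)(\bt_\zeta) \in [\p(E_\uu',\vv_1),\bt_\zeta]$ for $\zeta \neq 1$.

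For the first condition I would pull the problem back to the cyclotomic side. Since $\vv_1 \in T_1 U_N$ is a tangential base point anchored at the puncture $1$, the canonical small loop $\gamma_1$ satisfies $\log\gamma_1 = -\Le\,\ee_1$ exactly, by the formula $\Theta(\gamma_1) = e^{-\Le\ee_1}$ from Section \ref{sec:KZ}. Hence $\Q\,\ee_1 \subset \p(U_N,\vv_1)$ is a rational sub-MHS, and by Theorem \ref{thm:DG} it is a sub-pro-MTM isomorphic to $\Q(1)$. Since $\k_N$ acts trivially on $\Q(1)$ and the Hain map is $\k_N$-equivariant by Theorem \ref{thm:obj}, we obtain $\phi_\elp(\delta)(\bt_1) = \Psi_N(\phi_\cyc(\delta)(\ee_1)) = 0$.

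For the second condition the naive version of the previous argument fails: when $\zeta \neq 1$, the Betti element $\log\gamma_\zeta$ is not a scalar multiple of $\ee_\zeta$ in the full Lie algebra, so $\Q\,\ee_\zeta$ is not a sub-MHS of $\p(U_N,\vv_1)$. Nonetheless, the motivic open immersion $E_\uu' \hookrightarrow E_\uu' \cup \{\zeta\}$ exhibits the normal closure of $\bt_\zeta$ as a sub-pro-MTM of $\p(E_\uu',\vv_1)$ preserved by $\phi_\elp(\k_N)$. To upgrade "lies in the normal closure" to "lies in $[\p(E_\uu',\vv_1),\bt_\zeta]$", I would switch the base point to $\vv_\zeta \in T_\zeta U_N$ using the motivic path torsor of Theorem \ref{thm:DG} (extended to $E_\uu'$ via Theorem \ref{thm:obj}). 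At $\vv_\zeta$, the rigidity argument of the previous paragraph shows that $\Q\,\ee_\zeta \subset \p(U_N,\vv_\zeta)$ is a $\Q(1)$ sub-pro-MTM fixed by $\k_N$, so the corresponding generator at $\vv_\zeta$ is fixed by $\cG_N$. Pulling this fixity back along the motivic path torsor, whose $\cG_N$-induced variation is an inner automorphism of $\p(E_\uu',\vv_1)$, yields $\phi_\elp(\delta)(\bt_\zeta) = [u_\zeta,\bt_\zeta]$ with $u_\zeta$ the infinitesimal variation of the torsor.

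The main obstacle I expect is the final step: verifying that the $\cG_N$-variation of the motivic path torsor $\tensor*[_{\vv_1}]{\Pi}{^\mot_{\vv_\zeta}}$ is genuinely an inner automorphism of $\p(E_\uu',\vv_1)$, rather than some more general outer automorphism. This is the standard rigidity of motivic path torsors, known for $\P^1 - \{0,\bmu_N,\infty\}$ from Theorem \ref{thm:DG} and the tannakian formalism, where base-point change in the motivic fundamental groupoid is always conjugation by an element of the unipotent part of the motivic Galois group. Extending this rigidity to $E_\uu'$ should follow from the star and cylinder relations of Section \ref{sec:pEv}, together with the Hain-map intertwining of the motivic path torsor structures on $U_N$ and $E_\uu'$.
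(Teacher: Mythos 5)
Your proposal is correct and follows essentially the same route as the paper: write $\exp\bt_\zeta$ as $\ell\kappa\ell^{-1}$ for a path $\ell$ from $\vv_1$ to $\vv_\zeta$ and the canonical small loop $\kappa$ at $\vv_\zeta$, observe that the local Lie algebra at the puncture is a copy of $\Q(1)$ (a pro-object of $\MTM_N^\ss$) fixed by $\mathcal{K}_N$, and conclude that the Galois action can only move the connecting path, hence acts on $\bt_\zeta$ by conjugation; the paper handles $\bt_1$ directly by taking $\ell$ trivial rather than via the Hain map, and does not need Lemma \ref{lem:ad} at this stage. The one point you flag as the main obstacle is in fact immediate in this formulation: once $\mathcal{K}_N$ acts compatibly on the completed path torsors of $E_\uu'$ and fixes $\kappa$, one has $(\exp\bsigma)(\ell\kappa\ell^{-1}) = (\ell^\bsigma\ell^{-1})\,\ell\kappa\ell^{-1}\,(\ell^\bsigma\ell^{-1})^{-1}$, and $\ell^\bsigma\ell^{-1}$ is a loop based at $\vv_1$, hence equal to $e^u$ for some $u \in \p(E_\uu',\vv_1)$ — innerness is automatic and no appeal to the star or cylinder relations is needed.
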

\begin{proof}
Fix $\zeta \in \bmu_N \subset \Gm \subset E_\uu$. Let $\ell$ denote a path from $\vv_1$ to $\vv_\zeta$, and let $\kappa$ denote a small loop based at $\vv_\zeta$ traveling around $\zeta$ once counter-clockwise. Then, up to conjugation, the natural inclusion $\pi_1(E_\uu',\vv_1) \to \exp\p(E_\uu',\vv_1)$ maps $\ell\kappa\ell^{-1} \mapsto \exp\bt_\zeta$.

Let $D$ be a contractible neighborhood of $\zeta$ containing no other roots of unity. Then $\p(D - \{\zeta\},\vv_\zeta) \cong \mathbb{L}(\log \kappa)^\wedge$, which is a pro-object of $\MTM^\ss_N$. Thus, the action of $\mathcal{K}_N$ fixes $\log\kappa$ and also $\kappa$. Therefore, if $\exp \bsigma \in \mathcal{K}_N$, we have
\begin{align*}
    (\exp \bsigma) (\exp \bt_\zeta) &= \ell^\bsigma \kappa (\ell^\bsigma)^{-1} \cr
    &= \ell^\bsigma(\ell^{-1}\ell)\kappa(\ell^{-1}\ell)(\ell^{\bsigma})^{-1} \cr
    &= (\ell^\bsigma \ell^{-1}) (\exp\bt_\zeta) (\ell^\bsigma\ell^{-1})^{-1}.
\end{align*}
Note that $\ell^\bsigma\ell^{-1}$ is a loop based at $\vv_1$ and thus is identified with $e^u$ for some $u \in \p(E_\uu',\vv_1)$. Applying logarithms yields 
\begin{align*}
(\exp \bsigma) (\bt_\zeta) &= \log(e^ue^{\bt_\zeta}e^{-u}) \cr
&= \log(\Ad(e^u) \exp \bt_\zeta) \cr
&= \log(\exp \Ad(e^u) \bt_\zeta) \cr
&= \Ad(e^u) \bt_\zeta.
\end{align*}
If $\zeta = 1$ and $\ell$ is the trivial path, then $u = 0$ and $(\exp\bsigma)(\bt_1) = \bt_1$. Thus, $\mathcal{K}_N$ acts on $\p(E_\uu',\vv_1)$ via special automorphisms. Again applying logarithms shows 
$$
\bsigma(\bt_\zeta) = \log (\Ad(e^u))(\bt_\zeta) = \log(\exp (\ad u)) \cdot \bt_\zeta = [u,\bt_\zeta].
$$
Thus, $\phi_\elp(\bsigma) \in \SDer \p(E_\uu',\vv_1)$. 
\end{proof}
\begin{lemma}
$W_{-2}\SDer \p_N = M_{-2}W_{-2}\SDer\p_N$.
\end{lemma}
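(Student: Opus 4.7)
The plan is to reduce to a bigraded setting. By inspection of \eqref{eqn:filtrations}, both $M_\bullet$ and $W_\bullet$ are determined by the generator degrees $\deg_\bX$, $\deg_\bY$, $\deg_{\bt_\zeta}$, and the defining relation $\sum_\zeta \bt_\zeta = [\bX,\bY]$ is homogeneous in each such degree. Thus $\p_N$ carries a bigrading by $(M\text{-weight}, W\text{-weight})$: the generators $\bX, \bY, \bt_\zeta$ have bi-weights $(0,-1)$, $(-2,-1)$, $(-2,-2)$ respectively. Derivations inherit a bigrading, and the special condition is compatible with it: given $\delta(\bt_\zeta) = [u_\zeta,\bt_\zeta]$, splitting $u_\zeta$ into bi-homogeneous pieces shows that each bi-graded component of $\delta$ is itself a special derivation (and each bi-graded piece of $\delta(\bt_1)=0$ is still zero). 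Since the inclusion $M_{-2}W_{-2}\SDer\p_N \subseteq W_{-2}\SDer\p_N$ is trivial, the lemma reduces to the reverse containment, and by bigrading it suffices to show that any bi-homogeneous $\delta \in \SDer\p_N$ of bi-weight shift $(0,-d)$ with $d \geq 2$ vanishes.

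Fix such a $\delta$. The key observation is that any non-zero Lie word in $\p_N$ of $M$-weight $0$ has $\deg_\bY = 0 = \deg_{\bt_\zeta}$ for every $\zeta$, and hence is a scalar multiple of $\bX$. Applying this to $u_\zeta$, which is bi-homogeneous of bi-weight $(0,-d)$: one gets $u_\zeta \in \Q\bX$, but $\bX$ has $W$-weight $-1 \neq -d$, so $u_\zeta = 0$ and hence $\delta(\bt_\zeta) = 0$ for every $\zeta$. Exactly the same argument shows $\delta(\bX) \in \Q\bX \cap W_{-1-d}\p_N = 0$.

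It remains to treat $\delta(\bY)$. Applying $\delta$ to the relation $\sum \bt_\zeta = [\bX,\bY]$ yields
$$
0 \;=\; \sum_\zeta \delta(\bt_\zeta) \;=\; [\delta(\bX), \bY] + [\bX, \delta(\bY)] \;=\; [\bX, \delta(\bY)].
$$
Eliminating $\bt_1$ via the relation identifies $\p_N$ with the completed free Lie algebra on $\{\bX,\bY\} \cup \{\bt_\zeta : \zeta \neq 1\}$, which has rank $N + 1 \geq 2$. In a free Lie algebra of rank at least two, the centralizer of any non-zero generator is the one-dimensional subalgebra it generates, so $\delta(\bY) \in \Q\bX$; the $W$-weight argument again forces $\delta(\bY) = 0$, and therefore $\delta = 0$.

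The only non-formal inputs are the description of the $M$-weight $0$ part of $\p_N$ as $\Q\bX$ (which collapses the $u_\zeta$) together with the freeness of $\p_N$ (which identifies the centralizer of $\bX$); both follow immediately from the elimination of $\bt_1$. I expect the final step --- translating the single Leibniz-rule constraint into a centralizer calculation --- to be the only conceptually substantive part of the argument.
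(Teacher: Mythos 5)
Your argument is correct in substance and rests on the same mechanism as the paper's proof: (i) specialness, which lets you write $\delta(\bt_\zeta)=[u_\zeta,\bt_\zeta]$ and control that value; (ii) the observation that the $M$-weight-zero part of $\p_N$ is exactly $\Q\bX$, whose $W$-weight $-1$ is incompatible with the required shift; and (iii) applying $\delta$ to the relation $[\bX,\bY]=\sum_\zeta\bt_\zeta$ so that $\delta(\bY)$ is pinned down through $[\bX,\delta(\bY)]$. The difference is one of packaging: the paper argues directly at the level of the filtrations, showing $\delta(\bX)\in M_{-2}\p_N$ and $\delta(\bt_\zeta),\delta(\bY)\in M_{-4}\p_N$, whereas you pass to the bigraded setting and prove the strictly stronger statement that the offending bihomogeneous components vanish outright. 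Your route has the merit of making explicit the freeness input (the centralizer of a generator of a free Lie algebra of rank $\ge 2$ is the line it spans), which the paper leaves implicit in its final step ``it follows that $\delta(\bY)\in M_{-4}\p_N$''; and your use of it is legitimate because a bihomogeneous element for the $(M,W)$-bigrading lies in a bounded range of total degrees, so completion causes no trouble.

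One small incompleteness: the reduction ``it suffices to treat bi-weight shift $(0,-d)$'' is not quite everything. A bigraded component of $\delta$ could a priori have \emph{positive} $M$-shift $+2k$ (for instance $\bX\,\partial/\partial\bY$ is a special derivation of $M$-shift $+2$), and these components must also be shown to vanish for $\delta$ to lie in $M_{-2}\SDer\p_N$. They die even more easily, by your own key observation: such a component sends $\bX$, $\bY$, and each $\bt_\zeta$ to elements of $M$-weight $\ge 0$, hence into $\Q\bX\cup\{0\}$, and the $W$-weight $-1$ of $\bX$ is incompatible with a $W$-shift $\le -2$; so these components vanish without even invoking specialness or the relation. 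With that sentence added, your proof is complete.
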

\begin{proof}
Suppose $\delta \in W_{-2} \SDer \p_N$. The nontrivial terms of $\delta(\bX)$ are Lie words with degree at least one in $\bY$ or $\bt_\zeta$, and thus $\delta(\bX) \in M_{-2} \p_N$. We also have $\delta(\bt_\zeta) = [u_\zeta,\bt_\zeta]$ for some $u_\zeta \in \p_N$ since $\delta$ is special. It follows from \eqref{eqn:filtrations} that if $u_\zeta \in W_{-2}$, then $u_\zeta \in M_{-2}$ as well. Thus, $\delta(\bt_\zeta) \in M_{-4}\p_N$. It remains to show $\delta(\bY) \in M_{-4}\p_N$. We know 
$$
[\bX,\delta(\bY)] = \delta([\bX,\bY]) - [\delta(\bX),\bY)].
$$
Since $[\bX,\bY] = \sum\bt_\zeta$, we have $\delta([\bX,\bY]) \in M_{-4}\p_N$. Since $\delta(\bX) \in M_{-2}\p_N$, we also know $[\delta(\bX),\bY] \in M_{-4}\p_N$. It follows that $\delta(\bY) \in M_{-4}\p_N$; hence, $\delta \in M_{-2}\SDer\p_N$.
\end{proof}
\begin{prop}
If $m \ge 2$, then $\phi_\elp(\bsigma_{m,\zeta}) \in W_{-m-1} \SDer \p(E_\uu',\vv_1)$.
\end{prop}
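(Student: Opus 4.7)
The plan is to combine the Hodge-and-weight structure of $\bsigma_{m,\zeta}$ with the special-derivation constraint just established, and then exploit the defining relation $\sum_\zeta \bt_\zeta = [\bX,\bY]$ of $\p_N$ to kill the would-be ``head'' of the image. Using the canonical splitting \eqref{eqn:DRsplit} I may choose the lift $\bsigma_{m,\zeta} \in F^{-m}\cap M_{-2m}\k_N$. Since $\phi_\elp$ is the de~Rham realization of the $\k_N$-action on the mixed Tate motive $\p(E_\uu',\vv_1)$ (Theorem \ref{thm:obj}), and the motivic weight filtration on the limit MHS is $M_\bullet$, the derivation $\delta := \phi_\elp(\bsigma_{m,\zeta})$ lies in $F^{-m}\cap M_{-2m}\SDer\p(E_\uu',\vv_1)^\DR$.

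Preliminary $W$-bounds are read off generator by generator using \eqref{eqn:filtrations} together with the vanishing of iterated self-brackets. Any nonzero Lie word $w \in \p_N$ with $F$-weight $-m'$ and $M$-weight $-2m'$ satisfies $\deg_\bY(w) + \sum\deg_{\bt_\zeta}(w) = m'$, so its $W$-weight equals $-\bigl(m' + \deg_\bX(w) + \sum\deg_{\bt_\zeta}(w)\bigr)$; the extremal case $\deg_\bX(w) = \sum\deg_{\bt_\zeta}(w) = 0$ would make $w$ a pure bracket of $\bY$, which vanishes as soon as $m' \geq 2$. Taking $m' = m$ for $\delta(\bX) \in F^{-m}\cap M_{-2m}\p_N$ and $m' = m+1$ for $\delta(\bY) \in F^{-m-1}\cap M_{-2m-2}\p_N$ yields $\delta(\bX) \in W_{-m-1}\p_N$ and $\delta(\bY) \in W_{-m-2}\p_N$. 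For the $\bt$-generators the previous lemma writes $\delta(\bt_\zeta) = [u_\zeta,\bt_\zeta]$; choosing $u_\zeta \in F^{-m}\cap M_{-2m}\p_N \subset W_{-m-1}\p_N$ gives $\delta(\bt_\zeta) \in W_{-m-3}\p_N$.

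The bound on $\delta(\bX)$ is then promoted to $W_{-m-2}$ via the Leibniz rule applied to $\sum_\zeta \bt_\zeta = [\bX,\bY]$:
\begin{equation*}
[\delta(\bX),\bY] + [\bX,\delta(\bY)] = \sum_{\zeta \in \bmu_N} [u_\zeta,\bt_\zeta].
\end{equation*}
The second term on the left and the right-hand sum both lie in $W_{-m-3}\p_N$, so the same holds for $[\delta(\bX),\bY]$. A spanning set of $\Gr_{-m-1}^W\Gr_{-2m}^M\p_N$ is $\{\ad_\bY^{m-1}(\bt_\zeta),\ \ad_\bY^m(\bX) : \zeta \in \bmu_N\}$, subject only to the derived relation $\sum_\zeta \ad_\bY^{m-1}(\bt_\zeta) = -\ad_\bY^m(\bX)$. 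Writing the head of $\delta(\bX)$ as $\sum_\zeta a_\zeta \ad_\bY^{m-1}(\bt_\zeta) + b\,\ad_\bY^m(\bX)$ and bracketing with $\bY$ produces $\sum_\zeta (b - a_\zeta)\ad_\bY^m(\bt_\zeta)$ in $\Gr_{-m-2}^W\Gr_{-2m-2}^M\p_N$ after applying the analogous relation one degree higher. Linear independence of the $\ad_\bY^m(\bt_\zeta)$ forces $a_\zeta = b$ for every $\zeta$, and the lower-degree relation then shows the head itself vanishes. Hence $\delta(\bX) \in W_{-m-2}\p_N$, and assembling the three bounds gives $\delta \in W_{-m-1}\SDer\p(E_\uu',\vv_1)^\DR$.

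The main obstacle is the head computation in the third paragraph: the defining relation of $\p_N$ is indispensable, since without it $\ad_\bY^m(\bX)$ and $\sum_\zeta \ad_\bY^{m-1}(\bt_\zeta)$ would be independent, the head could survive, and the conclusion $\delta \in W_{-m-1}$ would fail. A secondary care point is verifying that the auxiliary elements $u_\zeta$ can indeed be chosen in $F^{-m}\cap M_{-2m}\p_N$ compatibly with the MHS structure, which follows from the admissibility in Lemma \ref{lem:ad}.
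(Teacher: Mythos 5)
Your argument is correct, but it is a genuinely different proof from the one in the paper. The paper's proof is Hodge-theoretic: it uses Lemma \ref{lem:low} (the image of $\phi_\elp$ commutes with the residue of the KZB connection at $q=0$) together with admissibility of the special-derivation sub-VMHS (Lemma \ref{lem:ad}) to conclude that $\delta=\phi_\elp(\bsigma_{m,\zeta})$ is a lowest weight vector of the $\sl_2$-structure on $\Gr^W_{-k}\SDer\p(E_\uu',\vv_1)$; the symmetry \eqref{eqn:sl2} of the relative weight filtration, combined with the lemma $W_{-2}\SDer\p_N=M_{-2}W_{-2}\SDer\p_N$, then forces $2m-2k\le -2$, i.e.\ $k\ge m+1$. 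You instead do explicit multidegree bookkeeping in $\p_N$: choosing a weight-homogeneous lift so that $\delta$ is bigraded of type $(F^{-m},M_{-2m})$, reading off $\delta(\bX)\in W_{-m-1}$, $\delta(\bY)\in W_{-m-2}$, $\delta(\bt_\zeta)\in W_{-m-3}$ from \eqref{eqn:filtrations} and the vanishing of pure-$\bY$ brackets, and then killing the would-be head of $\delta(\bX)$ in $\Gr^W_{-m-1}\Gr^M_{-2m}$ by combining the Leibniz rule on $\sum_\zeta\bt_\zeta=[\bX,\bY]$ with the linear independence of the $\ad_\bY^m(\bt_\zeta)$ one degree lower; I checked this head computation and it is sound (and consistent with Theorem \ref{thm:head}, whose leading term indeed sends $\bX$ into $W$-weight $-m-2$). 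What each approach buys: yours is elementary and makes no use of the nilpotent orbit/$\sl_2$ machinery or of Lemma \ref{lem:low}, but it leans on more input about $\delta$ (exact $F$- and $M$-homogeneity, hence a homogeneous choice of lift) and on the specific presentation of $\p_N$; the paper's argument uses only $\delta\in M_{-2m}$ and nontriviality, is presentation-free, and is the natural one given that Lemmas \ref{lem:low}--\ref{lem:ad} were already established. One small repair: your claim that $u_\zeta$ may be chosen in $F^{-m}\cap M_{-2m}\p_N$ does not follow from admissibility (Lemma \ref{lem:ad}); the correct and easier reason is that $\delta(\bt_\zeta)=[u_\zeta,\bt_\zeta]$ is concentrated in the single $M$-degree $-2m-2$, so replacing $u_\zeta$ by its $M$-degree $-2m$ graded component (automatically in $F^{-m}$) changes nothing — alternatively, the bad bigraded component of $[u_\zeta,\bt_\zeta]$ would come from a pure $\bY^m$ component of $u_\zeta$, which vanishes for $m\ge 2$, so no choice of $u_\zeta$ is needed at all.
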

\begin{proof}
Set $\delta = \phi_\elp(\bsigma_{m,\zeta})$, and choose $k \in \Z$ such that $\delta \in W_{-k} \SDer \p(E_\uu',\vv_1)$ but $\delta \notin W_{-k-1} \SDer \p(E_\uu',\vv_1)$. Since $m \ge 2$, we know $\delta \in  M_{-4}\SDer \p(E_\uu',\vv_1) \subset W_{-2} \SDer \p(E_\uu',\vv_1)$; that is, $k \ge 2$. Since the sub-VMHS of special derivations is admissible (Lemma \ref{lem:ad}), it follows from Lemma \ref{lem:low} that $\delta$ is a lowest weight vector in the $\sl_2$-representation $\Gr^W_{-k} \SDer \p(E_\uu',\vv_1)$. Since $\delta \in M_{-2m} \SDer \p(E_\uu',\vv_1)$ is nontrivial (\S\ref{sec:gen}), the isomorphism \eqref{eqn:sl2} implies the corresponding {\em nonzero} highest weight vector is in $\Gr^M_{2m - 2k}\Gr_{-k}^W \SDer \p(E_\uu',\vv_1)$. Since $k \ge 2$, it follows from the previous lemma that $2m - 2k \le -2$, or equivalently, $k \ge m + 1$. 

\end{proof}
\begin{theorem}
\label{thm:head}
When $N \ge 3$ and $\zeta \in \bmu_N$ is primitive, the head of $\phi_\elp(\bsigma_{m,\zeta})$ is congruent to 
$$
\e^\op_{m+1,\zeta} + \sum_{\eta \text{ not primitive}} c_\eta \e^\op_{m+1,\eta}
$$
in $\Gr_W^{-m-2} \Der (\p(E_\uu',\vv_1)/D^2)$ for some constants $c_\eta \in \Q$.
\end{theorem}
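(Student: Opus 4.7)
The plan is to transport the question across the Hain map $\Psi_N$, compute the action of $\bsigma_{m,\zeta}$ on the cyclotomic polylog $\p(U_N,\vv_1)/D^2$ using the extension data from Section~\ref{sec:cycpoly}, and then exploit the $\sl_2$-structure on the head (arising from admissibility of the sub-VMHS of special derivations, Lemma~\ref{lem:ad}) to promote a formula on the lowest-weight vector $\bY\in H_\uu$ to an equality of heads.

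First, I would compute $\phi_\cyc(\bsigma_{m,\zeta})$ modulo $D^2$. By Section~\ref{sec:cycpoly}, the depth-$m$ subquotient of $\p(U_N,\vv_1)/D^2$ is the diagonal pullback of $\bigoplus_{\eta\in\bmu_N}-E_{m,\eta}(1)$. By Section~\ref{sec:gen}, the normalization of $\bsigma_{m,\zeta}$ makes it act as the identity on $E_{m,\zeta}$, as $(-1)^{m+1}$ on $E_{m,\overline{\zeta}}$, and trivially on the other primitive basis elements of Proposition~\ref{prop:extbasis}. For non-primitive $\eta\in\bmu_N$ (including $\eta=1$), iterated applications of the distribution relation~\eqref{eqn:dist}, as in Example~\ref{ex:E1}, produce a unique expansion $E_{m,\eta}=c_\eta E_{m,\zeta}+\cdots$ in the primitive basis, and $\bsigma_{m,\zeta}$ acts on $E_{m,\eta}$ with coefficient $c_\eta$. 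Assembling these contributions,
$$
\phi_\cyc(\bsigma_{m,\zeta})(\ee_0)\equiv -\bigl(\ee_0^m\cdot\ee_\zeta+(-1)^{m+1}\ee_0^m\cdot\ee_{\overline{\zeta}}\bigr)-\sum_{\eta\text{ not primitive}} c_\eta\,\ee_0^m\cdot\ee_\eta\pmod{D^2},
$$
up to strictly lower-$W$-weight corrections coming from the limit MHS extension data.

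Next, I apply $\k_N$-equivariance of $\Psi_N$: by \eqref{eqn:haindiag}, $\phi_\elp(\bsigma_{m,\zeta})\circ\Psi_N=\Psi_N\circ\phi_\cyc(\bsigma_{m,\zeta})$. From \eqref{eqn:hain}, $\Psi_N(\ee_0)=\frac{\bX}{e^\bX-1}\cdot\bY\equiv\bY$ modulo strictly lower $W$-weight, and $\Psi_N(\ee_0^m\cdot\ee_\eta)\equiv\bY^m\cdot\bt_\eta$ modulo $D^2$. Combined with $\e^\op_{m+1,\eta}(\bY)\equiv-\bY^m\cdot\bigl(\bt_\eta+(-1)^{m+1}\bt_{\overline{\eta}}\bigr)\pmod{D^2}$ coming from \eqref{eqn:eop}, this yields
$$
\phi_\elp(\bsigma_{m,\zeta})(\bY)\equiv \e^\op_{m+1,\zeta}(\bY)+\sum_{\eta\text{ not primitive}} c_\eta\,\e^\op_{m+1,\eta}(\bY)\pmod{D^2},
$$
up to strictly lower-$W$-weight terms.

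Finally, I would promote this $\bY$-level equality to the claimed head-level equality in $\Gr_W^{-m-2}\Der(\p(E_\uu',\vv_1)/D^2)$. The preceding proposition places $\phi_\elp(\bsigma_{m,\zeta})\in W_{-m-1}\SDer\p(E_\uu',\vv_1)$, and by Lemmas~\ref{lem:low} and~\ref{lem:ad} its head is a lowest-weight vector for the $\sl_2$-action on $\Gr^W_\bullet\SDer\p_N$ generated by the residue $\bY\,\partial/\partial\bX$ of the KZB connection. The evaluation map $\delta\mapsto\delta(\bY)$ is $\sl_2$-equivariant, sending lowest-weight special derivations modulo $D^2$ injectively to lowest-weight elements of $\p(E_\uu',\vv_1)/D^2$ of the appropriate weight; since $\bY$ is the lowest-weight vector of $H_\uu$ and the vectors $\e^\op_{m+1,\eta}(\bY)$ span the relevant lowest-weight subspace of the target, matching on $\bY$ suffices to pin down the head. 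The main technical obstacle is the bookkeeping of the lower-$W$-weight corrections: one must verify that the power-series tail of $\Psi_N(\ee_0)=\frac{\bX}{e^\bX-1}\cdot\bY$ and the non-head contributions from the regularized iterated integrals used in Section~\ref{sec:cycpoly} genuinely strictly lower $W$-weight, so that they do not contaminate the head in $\Gr_W^{-m-2}$; this verification reduces to the filtration analysis underlying Proposition~\ref{prop:hain}.
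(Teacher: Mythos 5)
Your proposal follows the paper's strategy very closely: compute the action of $\bsigma_{m,\zeta}$ on the cyclotomic polylog using the extension basis of Proposition~\ref{prop:extbasis} together with the normalization of \S\ref{sec:gen}, express non-primitive contributions via the distribution relation as in Example~\ref{ex:E1}, push the result through the Hain map, and invoke the $W$-weight bound of the preceding proposition to identify the head. The one place you genuinely deviate is the last step. The paper determines the full derivation (in particular its effect on $\bX$) by the explicit monodromy computation of Corollary~\ref{cor:Vsplit}, i.e.\ preservation of the rational structure under $h_q = \exp(2\pi i\,\bY\partial/\partial\bX)$, and then reads off the head from the $W$-weight of $\e^\op_{m+1,\zeta}$. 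You instead argue abstractly that evaluation at $\bY$ is $\sl_2$-equivariant and \emph{injective} on lowest-weight special derivations modulo $D^2$. That injectivity statement is not true at general weight: for example $\delta(\bX) = [\bX,\bY]\cdot(\bt_\zeta - \bt_{\zetabar})$, $\delta(\bY)=0$ defines a nonzero lowest-weight depth-$1$ special derivation modulo $D^2$ killed by evaluation at $\bY$. To make your argument work one must also fix the $M$-grading (degree $-2m$), observe that such examples live in the wrong $M$-weight, and check that only the $\e^\op_{m+1,\eta}$'s survive there; you assert this under the phrase ``of the appropriate weight'' without carrying out the check, whereas the paper's route via Corollary~\ref{cor:Vsplit} avoids the issue entirely. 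A smaller inaccuracy: your formula for $\phi_\cyc(\bsigma_{m,\zeta})(\ee_0)$ holds \emph{exactly} modulo $D^2$, since on the cyclotomic side $M_\bullet = W_\bullet$ and $\bsigma_{m,\zeta}$ is $M$-homogeneous of degree $-2m$; the lower-$W$-weight tails you flag (from $\frac{\bX}{e^{\bX}-1}\cdot\bY$) arise only after applying $\Psi_N$ on the elliptic side.
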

\begin{proof}
Recall from \S\ref{sec:cycpoly} the limit MHS of the cyclotomic polylog $\p(U_N,\vv_1)/D^2$ is the pull-back of the direct product of extensions $-E_{m,\zeta}(1)$. For $\zeta \in \bmu_N$ primitive, the action of $\bsigma_{m,\zeta}$ on $-E_{m,\zeta}(1)$ is given by 
$$
\bsigma_{m,\zeta} : \ee_0 \longmapsto -\ee_0^m\cdot \ee_\zeta.
$$
(see \S\ref{sec:gen}). Since $-E_{m,\zetabar} \equiv (-1)^mE_{m,\zeta}$ as extensions of MHS, the action of $\bsigma_{m,\zeta}$ on $-E_{m,\zetabar}(1)$ is 
$$
\bsigma_{m,\zeta} : \ee_0 \longmapsto (-1)^m \ee_0^m \cdot \ee_\zetabar.
$$
Applying the Hain map, the action of $\bsigma_{m,\zeta}$ on the pull-back of the direct sum $V_{m,\zeta}\oplus V_{m,\zetabar}$ is equivalent to $\e^\op_{m+1,\zeta}$ (this is essentially the same argument as in Corollary \ref{cor:Vsplit}). Moreover, $\bsigma_{m,\zeta}$ acts trivially on $-E_{m,\xi}(1)$, and therefore $V_{m,\zeta}$, for all primitive $\xi \notin \{\zeta,\zetabar\}$. 
The constants $c_\eta$ are determined by the decomposition of $E_{m,\eta}$ for $\eta$ not primitive into a sum of extensions $E_{m,\xi}$ with $\xi$ primitive (as in Proposition \ref{prop:extbasis}). 
Finally, since each derivation $\e^\op_{m+1,\zeta}$ has $W$-weight $-m-1$, it follows from the previous result that this is the head of $\phi_\elp(\bsigma_{m,\zeta})$.
\end{proof}
Since the derivations $\e_{m+1,\zeta}$ are linear independent, it follows that the representation $\phi_\elp$ is injective in depth 1. Moreover, the map is remains injective after mapping to the quotient where $\bt_\zeta = 0$ for all $\zeta \in \bmu_N$ not primitive. Thus, the quadratic relations between the generators $\bsigma_{m,\zeta}$ (with $\zeta \in \bmu_N$ primitive) in depth 2 correspond exactly to quadratic relations between the derivations $\e_{m+1,\zeta}$ with $\zeta$ primitive. 

Finally, we include calculations of the constants $c_\eta$ in Theorem \ref{thm:head} for values of $N$ where the formulas are relatively simple. 
\begin{theorem}
When $N = p^n$ is a prime power and $\zeta$ is a primitive $N$th root of unity, the head of $\phi_\elp(\bsigma_{m,\zeta})$ is congruent to 
$$
\e^\op_{m+1,\zeta} + \frac{N^{m-1}}{1-p^{m-1}} \e^\op_{m+1,1} + \sum_{k=1}^{n-1} p^{k(m-1)}\e^\op_{m+1,\zeta^{p^k}} 
$$ 
in $\Gr_W^{-m-2} \Der (\p(E_\uu',\vv_1)/D^2)$.
\end{theorem}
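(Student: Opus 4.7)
My plan is to specialize Theorem~\ref{thm:head} to $N = p^n$ by explicitly computing the constants $c_\eta$ appearing there. The proof of Theorem~\ref{thm:head} shows that $c_\eta$ is determined by the coefficient of $E_{m,\zeta}$ in the decomposition of $E_{m,\eta} \in \Ext^1_{\MTM_N}(\Q,\Q(m))$ in the primitive basis from Proposition~\ref{prop:extbasis}, since $\bsigma_{m,\zeta}$ is dual to $E_{m,\zeta}$ in that basis. The problem thus reduces to a direct computation with the distribution relations for polylogarithms.

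The non-primitive elements of $\bmu_N$ are $\eta = 1$ together with the primitive $p^j$-th roots of unity for $1 \leq j < n$. For the latter, setting $k = n - j$, the refined distribution relation \eqref{eqn:ppdist} gives
$$
E_{m,\eta} = p^{k(m-1)} \sum_{\substack{\xi \in \bmu_N \\ \xi^{p^k} = \eta}} E_{m,\xi},
$$
and a short check shows every $\xi$ in the sum is itself primitive of order $p^n$. Since $\bsigma_{m,\zeta}$ pairs non-trivially only with $E_{m,\zeta}$ and $E_{m,\zetabar} = (-1)^{m+1} E_{m,\zeta}$, the condition $\xi^{p^k} = \eta$ with $\xi \in \{\zeta, \zetabar\}$ selects precisely the conjugate pair $\eta \in \{\zeta^{p^k}, \overline{\zeta}^{p^k}\}$, with pairing $p^{k(m-1)}$ on $E_{m,\zeta^{p^k}}$ and $(-1)^{m+1} p^{k(m-1)}$ on $E_{m,\overline{\zeta}^{p^k}}$. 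All other primitive $p^j$-th roots $\eta$ contribute zero. For $\eta = 1$, Example~\ref{ex:E1} already provides the decomposition $E_{m,1} = \tfrac{N^{m-1}}{1-p^{m-1}} \sum_{\xi \text{ primitive}} E_{m,\xi}$, from which the pairing equals $\tfrac{N^{m-1}}{1-p^{m-1}}(1+(-1)^{m+1})$.

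To extract the coefficients $c_\eta$, I would compare coefficients of each $\bt_\eta$ on both sides of the head identity, using
$$
\e^\op_{m+1,\eta}(\bY) \equiv -\bY^m \cdot \bigl(\bt_\eta + (-1)^{m+1}\bt_{\overline{\eta}}\bigr) \bmod D^2.
$$
For the non-self-conjugate $\eta = \zeta^{p^k}$ with $1 \leq k \leq n-1$, the comparison gives $c_{\zeta^{p^k}} = p^{k(m-1)}$ directly, matching the theorem. For $\eta = 1$, the factor $1+(-1)^{m+1}$ built into $\e^\op_{m+1,1}$ cancels the same factor in the pairing, yielding $c_1 = N^{m-1}/(1-p^{m-1})$; when $m$ is even both sides vanish and the coefficient is immaterial. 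All remaining non-primitive $\eta$ contribute zero.

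The main subtlety lies in the bookkeeping around self-conjugate roots (namely $\eta = 1$, and also $\eta = -1$ when $p = 2$ and $k = n-1$): the factor $1+(-1)^{m+1}$ appears both in the definition of $\e^\op_{m+1,\eta}$ and in the pairing $\bsigma_{m,\zeta}(E_{m,\eta})$, and one must verify these factors cancel consistently in all parity combinations of $m$ and $p$. Beyond this routine check, no new ideas beyond Theorem~\ref{thm:head} and Example~\ref{ex:E1} are required.
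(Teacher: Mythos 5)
Your proposal is correct and follows essentially the same route as the paper's proof: it specializes Theorem~\ref{thm:head} by computing the constants $c_\eta$ from the decomposition of each non-primitive $E_{m,\eta}$ into the primitive basis, using \eqref{eqn:ppdist} for the primitive $p^j$-th roots and Example~\ref{ex:E1} for $\eta = 1$, and then transferring to the derivations $\e^\op_{m+1,\eta}$ via the Hain map. Your careful parity bookkeeping at the self-conjugate roots ($\eta = 1$, and $\eta = -1$ when $p = 2$) is exactly the cancellation the paper relies on, and your factor $1+(-1)^{m+1}$ for the $\eta=1$ term is the correct one (consistent with $E_{m,1}$ splitting for $m$ even).
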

\begin{proof}
By the previous statement, $\e^\op_{m+1,\zeta}$ is the only derivation indexed by a primitive root of unity to appear in the head. For $1 \leq k < n$, one deduces from \eqref{eqn:ppdist} that $\bsigma_{m,\zeta}$ acts on $-E_{m,\zeta^{p^k}}(1)$ by
$$
\bsigma_{m,\zeta} : \ee_0 \longmapsto -p^{k(m-1)} \ee_0^m\cdot \ee_{\zeta^{p^k}}.
$$
Similarly, the action on $-E_{m,\zetabar^{p^k}}(1)$ is
$$
\bsigma_{m,\zeta} : \ee_0 \longmapsto (-1)^m p^{k(m-1)} \ee_0^m\cdot \ee_{\zetabar^{p^k}}.
$$
The extension $E_{m,1}$ decomposes as a $\Q$-linear combination of $E_{m,\zeta}$ with $\zeta$ primitive (see Example \ref{ex:E1}). It follows that $\bsigma_{m,\zeta}$ acts on $-E_{m,1}(1)$ by 
$$
\bsigma_{m,\zeta} : \ee_0 \longmapsto -\frac{N^{m-1}}{1-p^{m-1}}\left(1 + (-1)^m\right)\ee_0^m \cdot \ee_1. 
$$
Applying the Hain map yields the result.
\end{proof}

\begin{cor}
When $N$ is prime, the head of $\phi_\elp(\bsigma_{m,\zeta})$ reduces to 
$$
\phi_\elp(\bsigma_{m,\zeta}) \equiv \e^\op_{m+1,\zeta} + \frac{1}{N^{1-m}-1} \e^\op_{m+1,1}.
$$
\end{cor}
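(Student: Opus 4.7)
The plan is to derive this corollary as the specialization of the preceding theorem to the case $n=1$. When $N$ is prime, we may write $N = p^n$ with $p = N$ and $n = 1$, so the sum $\sum_{k=1}^{n-1} p^{k(m-1)}\e^\op_{m+1,\zeta^{p^k}}$ is empty. The preceding theorem then collapses to
$$
\phi_\elp(\bsigma_{m,\zeta}) \equiv \e^\op_{m+1,\zeta} + \frac{N^{m-1}}{1 - p^{m-1}}\,\e^\op_{m+1,1}
$$
in $\Gr^{-m-2}_W \Der(\p(E_\uu',\vv_1)/D^2)$.

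The only remaining step is the elementary algebraic identity $\frac{N^{m-1}}{1 - p^{m-1}} = \frac{1}{N^{1-m}-1}$, obtained by setting $p = N$ and multiplying numerator and denominator by $N^{1-m}$:
$$
\frac{N^{m-1}}{1 - N^{m-1}} = \frac{N^{m-1}\cdot N^{1-m}}{(1 - N^{m-1})\cdot N^{1-m}} = \frac{1}{N^{1-m} - 1}.
$$
There is no real obstacle here; the corollary is simply the $n=1$ case of the theorem rewritten in a more compact form.
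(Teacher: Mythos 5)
Your proposal is correct and matches the paper's intent: the corollary is stated as an immediate specialization of the prime power theorem (with no separate proof given), obtained exactly as you do by taking $n=1$, $p=N$ so the sum over $1 \le k \le n-1$ is empty, and rewriting $\frac{N^{m-1}}{1-N^{m-1}} = \frac{1}{N^{1-m}-1}$. Nothing is missing.
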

\begin{remark}
When $N = 1$, one simply has $\phi_\elp(\bsigma_{m,1}) \equiv \frac{1}{2}\e^\op_{m+1,1}$. This is consistent with \cite[Prop. 29.4]{hain:MEM}. When $N = 2$, one can compute 
$$
\phi_\elp(\bsigma_{m,-1}) \equiv \frac12 \e^\op_{m+1,-1} + \frac{1}{2(N^{1-m}-1)} \e^\op_{m+1,1}.
$$
Note that these images are trivial when $N = 1,2$ and $m$ is even.
\end{remark}

To determine the head of each $\phi_\elp(\bsigma_{m,\zeta})$ when $N$ is not a prime power, one must decompose each $E_{m,\zeta}$ in terms of the basis of $\Ext_{\MTM_N}^1(\Q,\Q(m))$ in Proposition \ref{prop:extbasis}. The computation can be arduous for large $N$, so we will only compute the heads of $\phi_\elp$ when $N = 6$ (the only one of Deligne's exceptional values \cite{deligne:23468} that is not a prime power).

\begin{example}
Fix a primitive sixth root of unity $\zeta$. It follows from Lemma \ref{lem:Li} that $\Li_m(\zeta^5) \equiv (-1)^{m+1} \Li_m(\zeta)$ and $\Li_m(\zeta^4) \equiv (-1)^{m+1}\Li_m(\zeta^2) \bmod (2\pi i)^m\Q$. One can use the distribution relation \eqref{eqn:dist} to verify $\Li_m(-1) = (2^{1-m} - 1)\Li_m(1)$ and $\Li_m(\zeta) \equiv (2^{1-m} + (-1)^m)\Li_m(\zeta^2) \bmod (2\pi i)^m \Q.$  Thus, 
\begin{align*}
    \zeta(m) &= 6^{m-1} \sum_{\eta \in \bmu_6} \Li_m(\eta) \cr
    &\equiv 6^{m-1} \left(2^{1-m}\zeta(m) + \frac{1 + (-1)^{m+1}}{1-2^{m-1}}\Li_m(\zeta) \right) \bmod (2\pi i)^m\Q.
\end{align*}
Rearranging yields
$$
\zeta(m) \equiv (1 + (-1)^{m+1})\frac{6^{m-1}}{(1-3^{m-1})(1 - 2^{m-1})}\Li_m(\zeta) \bmod (2\pi i)^m\Q. 
$$
 Thus, the action of $\phi_\cyc(\bsigma_{m,\zeta})$ on $\p(U_6,\vv_1)$ is
\begin{align*}
\phi_\cyc(\bsigma_{m,\zeta}) : \ee_0 \longmapsto -\ee_0^m & \cdot (\ee_\zeta + (-1)^{m + 1} \ee_{\zeta^5}) \cr
&- \frac{1}{2^{1-m} + (-1)^m} \ee^m_0 \cdot (\ee_{\zeta^2} +(-1)^{m+1} \ee_{\zeta^4}) \cr
&- (1 + (-1)^{m+1})\frac{6^{m-1}}{(1-3^{m-1})(1 - 2^{m-1})}\ee_0^m \cdot \ee_1 \cr
&- (1 + (-1)^{m+1})\frac{1}{(3^{1-m}-1)}\ee_0^m \cdot \ee_{-1}.
\end{align*}
Finally, apply the Hain map to see
\begin{align*}
    \phi_\elp(\bsigma_{m,\zeta}) \equiv \e^\op_{m+1,\zeta} &+ \frac{1}{2^{1-m} + (-1)^m} \e^\op_{m+1,\zeta^2} \cr
    &+ \frac{6^{m-1}}{(1-3^{m-1})(1 - 2^{m-1})} \e^\op_{m+1,1} \cr
    &+ \frac{1}{(3^{1-m}-1)} \e^\op_{m+1,-1}.
\end{align*}
mod $W_{-m-2}\Der (\p(E_\uu',\vv_1)/D^2)$. 
\end{example}

\section{Hecke action on $K_{2m - 3}(\cO_N) \otimes \Q$}

The Galois representation of $H_1(\k_N)$ together with the KZB connection induces a Hecke action on the odd $K$-groups $K_{2m-3}(\cO_N) \otimes \Q$. 

\subsection{Coefficients of KZB}

Let $\Pol^\elp_{N,\vec{1}}$ denote the linearization of $\Pol^\elp_N$ along the section $z = 0$ (see Appendix \ref{sec:transport}). The connection on $\Pol^\elp_{N,\vec{1}}$ is given by 
$$
\nabla = d + \left(\bY \frac{\partial}{\partial \bX} - \frac12 \sum_{\substack{m \ge 2\\\zeta \in \bmu_N}} \frac{m - 1}{(2\pi i)^{m}} G_{m,\zeta}(\tau) \e_{m,\zeta}\right)\frac{dq}{q} + \bt_1 \frac{dz}{z},
$$
where the coefficients are Eisenstein series
$$
G_{m,\zeta}(\tau) = \sum_{\substack{k,\ell \in \Z \\ (k,\ell) \neq (0,0)}} \frac{\zeta^k}{(k\tau + \ell)^m}
$$
(see \cite[\S12]{hopper} for background on the formula for $\nabla$). Set $\E_{m,N} = \Span_\Q \{G_{m,\zeta} \mid \zeta \in \bmu_N \text{ primitive}\}$. When $m \ge 3$, the KZB connection determines (up to constant multiple) a canonical map
\begin{align}
\begin{split}
    \label{eqn:heckeMap}
    \psi_{m,N} : \mathcal{E}_{m,N} &\longrightarrow \Ext^1_{\MTM_N}(\Q,\Q(m-1)) \cr 
    G_{m,\zeta} &\longmapsto E_{m-1,\zeta}
\end{split}
\end{align}
by restricting $\Pol^\elp_{N,\vec{1}}$ to the subquotient spanned by $\bY$ and $\bY^{m-1}\cdot \bt_\zeta$ and computing the limit MHS at $q = 0$ (as in \S\ref{sec:lmhsPolEll}). The maps $\psi_{m,N}$ respect shifts in level via
\begin{align*}
    [d] : \mathcal{E}_{m,N} &\longrightarrow \mathcal{E}_{m,dN} \cr 
    f(\tau) &\longmapsto d^{m-1}f(d\tau).
\end{align*}
This is because
$$
[d](G_{m,\zeta}) = d^{m-1}G_{m,\zeta}(d\tau) = d^{m-2}\sum_{\xi^d = \zeta} G_{m,\xi}(\tau),
$$
which is analogous to the distribution relation
$$
\Li_{m-1}(\zeta) = d^{m-2} \sum_{\xi^d = \zeta} \Li_{m-1}(\zeta).
$$
That is, the diagram 
$$
\xymatrix{\E_{m,N} \ar[r]^-{\psi_{m,N}} \ar[d]_{[d]} & \Ext^1_{\MTM_N}(\Q,\Q(m-1)) \ar@{^{(}->}[d] \cr \E_{m,dN} \ar[r]_-{\psi_{m,dN}} & \Ext^1_{\MTM_{dN}}(\Q,\Q(m-1))}
$$
commutes.

\subsection{Hecke action}
\label{sec:hecke}

For a sublattice $\Lambda \subset \Lambda_\tau = \Z\tau \oplus \Z$, set 
$$
G_{m,\zeta}(\Lambda) = \sum_{\substack{(k,\ell) \neq (0,0) \\ k\tau + \ell \in \Lambda}} \frac{\zeta^k}{(k\tau + \ell)^m}.
$$
Fix a prime $p \nmid N$. Then 
\begin{align*}
    T_pG_{m,\zeta}(\tau) &= p^{m-1}\sum_{[\Lambda_\tau : \Lambda] = p} G_{m,\zeta}(\Lambda) \cr 
    &= p^{m-1}\left(G_{m,\zeta}(\tau) + pG_{m,\zeta}(p\Lambda_\tau)\right) \cr 
    &= p^{m-1}G_{m,\zeta}(\tau) + G_{m,\zeta^p}(\tau).
\end{align*}
Observe that $\E_{m,N}$ is $T_p$-invariant and that $T_p$ and $[d]$ commute when $p \nmid dN$. 

Define an analogous operator on $\Ext_{\MTM_N}(\Q,\Q(m))$ given by 
\begin{equation}
    \label{eqn:heckeLi}
    T_p: \Li_{m}(\zeta) \longmapsto p^{m}\Li_{m}(\zeta) + \Li_{m}(\zeta^p)
\end{equation}
so that $T_p$ and $\psi_{m,N}$ commute. Since $p\nmid N$, the operator $T_p$ respects all distribution relations between the periods $\Li_m(\zeta)$ with $\zeta \in \bmu_N$ and thus is well-defined.
\begin{theorem}
    When $m \ge 3$, there is a natural identification between $\E_{m,N}^\vee$ and $K_{2m - 3}(\cO_N) \otimes \Q$ compatible with changes in level, which induces a natural action of the prime to $N$ Hecke operators on $K_{2m - 3}(\cO_N) \otimes \Q$.
\end{theorem}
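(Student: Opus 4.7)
The plan is to show that the map $\psi_{m,N} : \E_{m,N} \to \Ext^1_{\MTM_N}(\Q, \Q(m-1))$ from \eqref{eqn:heckeMap} is a $\Q$-linear isomorphism for $m \ge 3$. Combined with the Deligne--Goncharov identification $\Ext^1_{\MTM_N}(\Q, \Q(m-1)) \cong K_{2m-3}(\cO_N) \otimes \Q$ from \eqref{eqn:K}, this yields a natural identification $\E_{m,N}^\vee \cong K_{2m-3}(\cO_N) \otimes \Q$ (the dual appears because the KZB construction implicitly pairs $\E_{m,N}$ against the extension group via periods).

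To establish that $\psi_{m,N}$ is an isomorphism, I would first check well-definedness of the assignment $G_{m,\zeta} \mapsto E_{m-1,\zeta}$. A direct substitution $(k,\ell) \mapsto (-k,-\ell)$ in the series defining $G_{m,\zeta}$ yields the relation $G_{m,\bar\zeta} = (-1)^m G_{m,\zeta}$, while Lemma \ref{lem:Li} supplies the matching relation $E_{m-1,\bar\zeta} = (-1)^m E_{m-1,\zeta}$ in the extension group. These are the only $\Q$-linear relations among the spanning sets on either side, so the assignment descends to a well-defined linear map. Proposition \ref{prop:extbasis} identifies $\{E_{m-1,\zeta}\}$ for $\zeta \in \bmu_N$ primitive with $\Im \zeta > 0$ as a basis of $\Ext^1_{\MTM_N}(\Q, \Q(m-1))$, and a dimension count via \eqref{eqn:dim} gives both spaces $\Q$-dimension $\varphi(N)/2$ when $N \ge 3$, so the map is bijective; the cases $N = 1, 2$ are checked directly using the parity vanishing of odd-weight Eisenstein series and the comparison with \eqref{eqn:dim}. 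Compatibility with changes in level is precisely the commutative square for $[d]$ and the inclusion $\Ext^1_{\MTM_N} \hookrightarrow \Ext^1_{\MTM_{dN}}$ recorded earlier in the section.

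For the Hecke action, the excerpt shows $T_p G_{m,\zeta} = p^{m-1} G_{m,\zeta} + G_{m,\zeta^p}$, which preserves $\E_{m,N}$ whenever $p \nmid N$. One next checks that the analogous operator $E_{m-1,\zeta} \mapsto p^{m-1} E_{m-1,\zeta} + E_{m-1,\zeta^p}$ is well-defined on $\Ext^1_{\MTM_N}(\Q, \Q(m-1))$: since $p \nmid N$, raising to the $p$-th power permutes $\bmu_N$ and preserves the distribution relations \eqref{eqn:dist} among the periods $\Li_{m-1}(\zeta)$, so all $\Q$-linear relations in the extension group are respected. By construction $\psi_{m,N}$ intertwines these operators. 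Dualizing and invoking the Deligne--Goncharov isomorphism then transports the prime-to-$N$ Hecke action to $K_{2m-3}(\cO_N) \otimes \Q$.

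The main obstacle is pinning down the normalization of $\psi_{m,N}$, which the excerpt defines only ``up to constant multiple'' via the limit MHS of the sub-quotient of $\Pol^\elp_{N,\vec{1}}$ spanned by $\bY$ and $\bY^{m-1} \cdot \bt_\zeta$. One must verify, using the Eisenstein coefficient $\frac{m-1}{(2\pi i)^m}$ appearing in the KZB connection form together with Corollary \ref{cor:Vsplit} from Section \ref{sec:lmhsPolEll}, that $G_{m,\zeta}$ is sent to the extension with period exactly $\Li_{m-1}(\zeta)$ rather than some nonzero scalar multiple, so that $\psi_{m,N}$ coincides unambiguously with the map of \eqref{eqn:heckeMap}. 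Once this normalization is fixed, the remaining steps are formal consequences of $T_p$-equivariance.
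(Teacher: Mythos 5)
Your argument reaches the statement by a more elementary route than the paper's, and it is essentially sound, but it leans on one assertion that you never justify and that is exactly what the paper's machinery is designed to supply. You claim that the only $\Q$-linear relations among the $G_{m,\zeta}$ with $\zeta\in\bmu_N$ primitive are $G_{m,\zetabar}=(-1)^mG_{m,\zeta}$, i.e.\ that $\dim_\Q\E_{m,N}=\varphi(N)/2$ for $N\ge3$. This is true (one can extract it from the $q$-expansions: the $q^n$-coefficient of $G_{m,\zeta}$ is a Dirichlet convolution of $d\mapsto d^{m-1}$ with $k\mapsto\zeta^{k}+(-1)^m\zeta^{-k}$, and invertibility of that convolution together with linear independence of characters rules out further relations), but it requires an argument and is the crux of your well-definedness and bijectivity claims. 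The paper deliberately avoids assuming it: its proof uses only the trivial bound $\dim\E_{m,N}\le\varphi(N)/2$ and obtains the reverse inequality from the commutative square relating $\psi_{m,N}^\vee$ to $\phi_\elp$ and the operator \eqref{eqn:eop}, where injectivity of $\phi_\elp$ on $\Gr^W_{-2m+2}H_1(\k_N)$ comes from Theorem \ref{thm:head} (the heads $\e^\op_{m+1,\zeta}$ are linearly independent). So the independence of the Eisenstein series comes out of the motivic argument rather than going into it, and the diagram also records that the identification is compatible with the Galois representation $\phi_\elp$, which is what ``natural'' is carrying here and what feeds the depth-two applications. If you supply a proof or citation for the independence of the $G_{m,\zeta}$, your basis-to-basis argument, combined with Proposition \ref{prop:extbasis}, Lemma \ref{lem:Li}, and \eqref{eqn:dim}, does give a complete proof of the isomorphism; your level-compatibility and Hecke steps then match the paper's, and both treatments are at the same level of rigor in checking that $T_p$ respects the distribution and conjugation relations when $p\nmid N$. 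Finally, the normalization worry is harmless: the scalar coming from the KZB coefficient $(m-1)/(2\pi i)^m$ is the same for every $\zeta$, so $\psi_{m,N}$ is only ambiguous up to one overall nonzero constant, which affects none of the conclusions.
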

\begin{proof}
From Deligne and Goncharov, we know
\begin{equation}
\label{eqn:Kisom}
\Ext^1_{\MTM_N}(\Q,\Q(m-1)) \cong \Gr^W_{2m-2} H^1(\k_N) \cong K_{2m - 3}(\cO_N) \otimes \Q. 
\end{equation}
It follows that there is a commutative diagram 
$$
\xymatrix{\Gr_{-2m + 2}^WH_1(\k_N) \ar[r]^-{\psi_{m,N}^\vee} \ar[d]_{\phi_\mathrm{ell}} & \mathcal{E}_{m,N}^\vee \ar[d] \cr \SDer \p_N/D^2 \ar[r]_\op & \SDer \p_N/D^2,}
$$
where $\psi_{m,N}^\vee$ is the dualization of \eqref{eqn:heckeMap}, the right vertical map is induced by KZB, and the lower horizontal map is \eqref{eqn:eop}. The map $\psi_{m,N}^\vee$ is injective since $\phi_\mathrm{ell}$ and $\op$ are injective. Thus, $\psi_{m,N}$ is an isomorphism since $\Gr^W_{-2m+2} H_1(\k_N)$ and $\E^\vee_{m,N}$ have the same dimension. Compatibility with changes in level follows from the fact that $[d]$, $\psi_{m,N}$, and $T_p$ commute when $p\nmid N$ is prime. Finally, the natural action of $T_p$ with $p \nmid N$ on $K_{2m - 3}(\cO_N) \otimes \Q$ is given by \eqref{eqn:heckeLi} via the isomorphism \eqref{eqn:Kisom}.
\end{proof}


\appendix

\part*{Appendices}


\section{Tannakian categories}
\label{sec:tannaka}
We will give a very brief description of tannakian categories. For a full discussion including proof of Theorem \ref{thm:tannaka}, we refer the reader to \cite{delmilne}.

Suppose $F$ is a field of characteristic zero and $G$ is an affine group scheme over $F$. Denote the category of $F$-linear representations of $G$ by $\Rep_F(G)$ and the sub-category of finite-dimensional representations by $\Rep_F^\fte(G)$.
\begin{definition}
A {\em neutral tannakian category $\mathcal{C}$} over $F$ is a category equivalent to one of the form $\Rep_F^\fte(G)$ for some affine $F$-group $G$.
\end{definition}
Suppose $(\mathcal{C},\otimes)$ is a rigid abelian $F$-linear tensor category with identity object $\mathds{1}$ such that $\End_{\mathcal{C}}(\mathds{1}) = F$. Suppose $\omega : \mathcal{C} \to \Vec_F$ is a faithful $F$-linear exact tensor functor, which we call a {\em fiber functor}. Given fiber functors $\omega_1$ and $\omega_2$, a {\em natural isomorphism} $\eta$ from $\omega_1$ to $\omega_2$ is a family of isomorphisms $\eta_V : \omega_1(V) \to \omega_2(V)$ indexed by $V \in \Ob (\mathcal{C})$ such that $\eta_{V_2}  \circ \omega_1(f) = \omega_2(f) \circ \eta_{V_1}$ for all morphisms $f \in \Hom_{\mathcal{C}}(V_1,V_2)$. A natural isomorphism from a fiber functor $\omega$ to itself is called a {\em natural automorphism}. Let $\Aut^\otimes(\omega)$ denote the set of natural automorphisms of $\omega$ compatible with the tensor product.
\begin{theorem}[{{\cite[Thm 3.2]{delmilne}}}]
\label{thm:tannaka}
If $\omega$ is a fiber functor of a neutral tannakian category, then $\Aut^\otimes(\omega)$ is represented by an affine group scheme $G$ over $F$ and there is an equivalence of categories $\mathcal{C} \to \Rep_F^\fte(G)$ corresponding to $\omega$.
\end{theorem}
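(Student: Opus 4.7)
The plan is to carry out the standard Tannakian reconstruction in three stages: first build the Hopf algebra whose spectrum represents $\Aut^\otimes(\omega)$, then upgrade $\omega$ to a functor landing in $\Rep_F^\fte(G)$, and finally verify this upgraded functor is an equivalence.

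To construct $G$, I would form, for each object $V \in \Ob(\mathcal C)$, the coalgebra $C_V := \omega(V)^\vee \otimes \omega(V)$ and take
$$
A := \varinjlim_{V} C_V = \int^{V \in \mathcal C} \omega(V)^\vee \otimes \omega(V),
$$
with transition maps induced by morphisms in $\mathcal C$. The point is that, since $\omega$ is $F$-linear and exact, a natural transformation $\eta : \omega \to \omega$ is determined by the collection of matrix coefficients $\eta_V \in \End_F(\omega(V))$, which by linear algebra duality sit in $\Hom_F(A, F)$; the naturality of $\eta_V$ in $V$ corresponds exactly to the coend relations. Thus $A$ represents $\End^\otimes(\omega)$ as a functor on $F$-algebras. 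The tensor structure on $\mathcal C$ endows $A$ with a commutative product (from $\omega(V) \otimes \omega(W) \cong \omega(V \otimes W)$), the unit object gives a counit-style augmentation, and the rigidity of $\mathcal C$ produces an antipode from the duality $\omega(V^\vee) \cong \omega(V)^\vee$. Altogether $A$ is a commutative Hopf algebra, and the subfunctor of tensor-compatible automorphisms is $G := \Spec A$, an affine group scheme over $F$.

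Next I would define the reconstruction functor $\rho : \mathcal C \to \Rep_F^\fte(G)$. The tautological $G$-action on $\omega(V)$ comes from the universal natural automorphism $\eta^\univ \in \Aut^\otimes(\omega)(A)$: the map $\omega(V) \to \omega(V) \otimes A$ assembling the matrix coefficients is a comodule structure, hence a representation of $G$, and is natural in $V$. Faithfulness of $\rho$ is immediate from faithfulness of $\omega$. For fullness, given $f : \omega(V) \to \omega(W)$ commuting with $G$, one argues that $f$ is fixed by the full automorphism group of $\omega$ on all of $\mathcal C$, and so by the reconstruction of morphisms from their restrictions (this uses the dual object $V^\vee \otimes W$ and the bijection $\Hom_\mathcal C(V,W) \cong \Hom_\mathcal C(\mathds 1, V^\vee \otimes W)$ combined with the analogous bijection in $\Rep_F^\fte(G)$) $f$ must come from a morphism in $\mathcal C$.

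The essential surjectivity will be the main obstacle, and is where rigidity together with abelianness is used crucially. Given a finite-dimensional representation $U$ of $G$, the key subordinate claim is that $U$ is a subquotient of $\omega(V)$ for some $V$, together with the statement that subquotients in $\Rep_F^\fte(G)$ of $\omega(V)$ actually come from subquotients of $V$ in $\mathcal C$. The first is shown by noting that $U$ factors through a finitely generated Hopf subalgebra of $A$, which by the colimit presentation of $A$ is contained in the image of $C_V$ for some $V \in \mathcal C$; hence $U$ embeds into $\omega(V)^{\oplus n}$ for some $n$. The second is proved by exploiting that $\mathcal C$ is abelian and $\omega$ is exact: a $G$-subrepresentation $U' \subset \omega(V)$ corresponds to a projector $p \in \End_G(\omega(V^{\oplus n})) = \End_\mathcal C(V^{\oplus n})$ (using fullness proved above), and the image of $p$ in $\mathcal C$ maps under $\omega$ to $U'$. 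Combining these, $U \cong \omega(V'')$ for some $V'' \in \mathcal C$, completing the equivalence.

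Finally, I would check that the construction of $G$ and the equivalence $\rho$ are compatible: under $\rho$, the fiber functor $\omega$ becomes the forgetful functor $\Rep_F^\fte(G) \to \Vec_F$, and $\Aut^\otimes$ of the forgetful functor is canonically $G$ itself, closing the loop. The main technical obstacle throughout is this passage from subobjects in the representation category to subobjects in $\mathcal C$, which is what forces the rigid abelian hypotheses to intervene precisely at the right place.
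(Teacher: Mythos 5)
You should be aware that the paper contains no proof of this statement at all: it is quoted verbatim from Deligne--Milne, and the surrounding text explicitly defers to \cite{delmilne} ``for a full discussion including proof.'' So there is no internal argument to compare yours against; the only fair comparison is with the cited source, and your outline is essentially the standard reconstruction that Deligne--Milne carry out: build the Hopf algebra $A = \varinjlim_V \omega(V)^\vee \otimes \omega(V)$, set $G = \Spec A$, equip each $\omega(V)$ with its tautological comodule structure, and then prove the resulting functor $\mathcal C \to \Rep_F^\fte(G)$ is fully faithful and essentially surjective, with rigidity and abelianness entering through the passage from subrepresentations of $\omega(V)$ back to subobjects of $V$. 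In that sense your proposal is the ``same approach,'' just compressed.

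The one place where the compression hides the real content is the fullness step. Saying that a $G$-equivariant map $f : \omega(V) \to \omega(W)$ ``is fixed by the full automorphism group of $\omega$'' and therefore descends to $\mathcal C$ via $\Hom_{\mathcal C}(V,W) \cong \Hom_{\mathcal C}(\mathds{1}, V^\vee \otimes W)$ only reduces the problem to the genuinely hard claim: that $G$-invariant vectors of $\omega(X)$ are exactly the images of morphisms $\mathds{1} \to X$, i.e.\ that the comodule structure on $\omega(X)$ detects the largest trivial subobject of $X$. A priori $\Aut^\otimes(\omega)$ could be too small for this, and ruling that out is where Deligne--Milne spend most of their effort (reduction to the tensor subcategory generated by a single object, the $\End$-construction and their Prop.~2.8--2.11, using $\End_{\mathcal C}(\mathds{1}) = F$ and exactness of $\omega$). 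Your essential-surjectivity argument then quotes this fullness, so the whole proof hinges on a step you have only gestured at; as a roadmap it is correct and faithful to the cited proof, but as written it would not stand alone without filling in that invariants-versus-$\Hom(\mathds{1},-)$ comparison.
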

We refer to affine group scheme $\Aut^\otimes(\omega)$ as the {\em fundamental group of $\mathcal{C}$ with respect to $\omega$}, denoted $\pi_1(\mathcal{C},\omega)$. When the choice of fiber functor is clear, we sometimes omit $\omega$ from the notation. Given two fiber functors $\omega_1$ and $\omega_2$, we define $\Isom^\otimes(\omega_1,\omega_2)$ as the set of natural isomorphisms from $\omega_1$ to $\omega_2$. It has the structure of a right $\pi_1(\mathcal{C},\omega_1)$-torsor and left $\pi_1(\mathcal{C},\omega_2)$-torsor.

\section{Admissible variations of MHS}
\label{sec:avmhs}

We assume familiarity with mixed Hodge structures (MHS). Here we give the definition of admissible variations of MHS and a standard example. The reader can find relevant background in \cite{SZ,kashiwara}.

Suppose $X$ is a smooth projective variety over $\C$ and $D$ is a divisor with normal crossings in $X$. Let $Y = X - D$. Relevant examples are
\begin{itemize}
    \item where $Y$ is a modular curve, $X$ is its natural compactification, and $D$ is the set of cusps; and
    \item where $Y$ is the universal elliptic curve $\E_\G'$ over the modular curve $Y_\G$ with single-valued $N$-torsion removed, $X$ is the compactification $\overline{\E}_\G$ over $X_\G$, and $D$ is the union of the set of single-valued $N$-torsion sections and singular fibers over the cusps of $Y_\G$.
\end{itemize}

Let $\V$ be a $\Q$-local system of finite rank over $Y$ with unipotent local monodromy at every smooth point of $D$. Let $\cV = \V \otimes_\Q \cO_X$ be the associated flat vector bundle. Denote Deligne's canonical extension of $\cV$ to $X$ by $\overline{\cV}$. Then $\overline{\cV}$ has natural connection 
$$
\nabla : \overline{\cV} \to \overline{\cV} \otimes \Omega_X^1(\log D)
$$
with logarithmic singularities along $D$. Since the local monodromy operators are unipotent, the residues of $\nabla$ at each smooth point of $D$ are nilpotent.
\begin{definition}
A {\em variation of MHS} (VMHS) $\V$ over $Y$ consists of a local system $\V_\Q$ over $Y$ of finite dimensional rational vector spaces, together with
\begin{enumerate}[(i)]
    \item a finite increasing filtration $W_\bullet$ of $\V_\Q$ by $\Q$-local systems 
    $$
    0 \subseteq W_a \V \subseteq \cdots \subseteq W_{r - 1}\V \subseteq W_r\V \subseteq \cdots \subseteq W_b\V = \V,
    $$
    and
    \item a finite decreasing filtration $F^\bullet$ of $\cV$ by holomorphic subbundles.
\end{enumerate}
These are required to statisfy
\begin{enumerate}[(i)]
    \item {\em Griffiths' transversality}
    $$
    \nabla(F^p\cV) \subseteq F^{p-1}\cV\otimes \Omega^1_Y = F^p(\cV \otimes \Omega^1_Y).
    $$
    and
    \item The fiber $V_y$ above any point $y \in Y$ is a MHS with weight and Hodge filtrations cut out by $W_\bullet$ and $F^\bullet$, respectively.
\end{enumerate}
\end{definition}
\begin{definition}
\label{def:admissible}
Suppose that $X$ is a curve. A VMHS $\V$ over $Y$ is {\em admissible} if the following additional conditions hold. 
\begin{enumerate}[(i)]
    \item The subbundles $F^p\cV$ extend to holomorphic subbundles of the canonical extension $\overline{\cV}$. 
    \item For $P \in D$, let $L_P = -\Res_P \nabla$ and $V_P$ be the fiber of $\overline{\cV}$ above $P$. There exists an increasing {\em relative weight filtration} $M_\bullet$ such that 
    \begin{enumerate}[(a)]
        \item $L_P(M_rV_P) \subseteq M_{r - 2}V_P$ and $L_P(W_mV_P) \subseteq W_mV_P$ for all $m$ and $r$, and
        \item $L_P^r$ induces an isomorphism 
        \begin{equation}
            \label{eqn:sl2}
            L_P^r : \Gr^M_{m + r} \Gr^W_mV_P \to \Gr_{m - r}^M \Gr^W_mV_P
        \end{equation}
        for all $m$ and $r$. 
    \end{enumerate}
\end{enumerate}
\end{definition}
If $X$ is a curve and $\V$ is admissible, the fibers $V_P$ over $P \in D$ have canonical {\em limit MHS} for each choice of tangent vector $\vv \in T_PX$. We typically denote this MHS by $V_{P,\vv}$ (or $V_\vv$ if the choice of point is clear). The weight and Hodge filtrations of $V_{P,\vv}$ are $M_\bullet$ and the restriction of $F^\bullet$ to $V_P$, respectively. Meanwhile, the $\Q$-structure of $V_{P,\vv}$ is determined by the elements 
\begin{equation}
    \label{eqn:Qstructure}
    \lim_{t \to 0} t^{-L_P} v(t) \in V_P, 
\end{equation}
where $t$ is a local holomorphic coordinate of $X$ centered at $P$ such that $\vv = \partial/\partial t$ and $v(t)$ is a local flat section of $\V_\Q$.


\begin{remark}
$W_m V_{P,\vv}$ is a sub-MHS of $V_{P,\vv}$ for all $m$.
\end{remark}
\begin{remark}
When $\dim X > 1$, a VMHS $\V$ is said to be admissible if its restriction to any curve is admissible \cite{kashiwara}.
\end{remark}

\section{Iterated integrals}

Suppose $X$ is a manifold, $\omega_1,\ldots,\omega_m$ are smooth 1-forms on $X$, and $\beta : [0,1] \to X$ is a piecewise smooth path. Define the {\em iterated integral} 
$$
\int_\beta \omega_1 \cdots \omega_m := \int\limits_{0 \le t_1 \le \cdots \le t_m \le 1} f_1(t_1) \cdots f_m(t_m) \, dt_1 \cdots dt_m,
$$
where $\beta^\ast\omega_k = f_k(t) \, dt$. These integrals satisfy an inversion property 
\begin{equation}
    \label{eqn:inversion}
    \int_{\beta^{-1}} \omega_1 \cdots \omega_m = (-1)^m \int_\beta \omega_m\cdots \omega_1
\end{equation}
and the {\em shuffle product}
\begin{equation}
    \label{eqn:shuffle}
    \int_\beta \omega_1 \cdots \omega_m\int_\beta \omega_{m+1} \cdots \omega_{m + n} = \sum_\sigma \int_\beta \omega_{\sigma(1)} \cdots \omega_{\sigma(m + n)},
\end{equation}
where $\sigma$ are permutations of $\{1,\ldots,m+n\}$ that preserve the order of $\{1,\ldots, m\}$ and $\{m+1,\ldots,m+n\}$. By convention, the iterated integral with empty integrand is 1. See \cite{chen,hain:bowdoin} for more details. 


\subsection{Cyclotomic multiple $\zeta$-values}
\label{sec:Nmzv}

The {\em multiple polylogarithms} are analytic functions given by the power series \label{not:polylog}\eqref{eqn:Lisum}, where $n_1,\ldots,n_m$ are positive integers and $n_m \geq 2$. The series converges in the closed polydisk $|z_i| \le 1$ for all $i  = 1,\ldots,m$. Evaluated at $z_1 = \cdots = z_m = 1$, the multiple polylogarithms yield the classical {\em multiple $\zeta$-values} (MZVs)
$$
\zeta(n_1,\ldots,n_m) := \sum_{0<k_1<k_2<\cdots<k_m} \frac{1}{k_1^{n_1}k_2^{n_2}\cdots k_m^{n_m}},
$$
where $m$ is the {\em depth} and the sum $n_1 + \cdots + n_m$ is the {\em weight} of the MZV. The values of the multiple polylogarithms at $N$th roots of unity are called {\em $N$-cyclotomic MZVs} and have similar convention for length and depth. Notably, the multiple polylogarithms may be written as iterated integrals.
\begin{prop}
\begin{multline}
    \label{eqn:Li}
    \Li_{n_1,\ldots,n_m}(z_1,\ldots,z_m) = \cr  \int_0^{z_1 \cdots z_m} \frac{dt}{1 - t} \underbrace{\frac{dt}{t} \cdots \frac{dt}{t}}_{\substack{(n_1 - 1) \\ \text{times}}}\frac{dt}{z_1 - t} \underbrace{\frac{dt}{t} \cdots \frac{dt}{t}}_{\substack{(n_2 - 1) \\ \text{times}}} \cdots \frac{dt}{z_1\cdots z_{m-1} - t} \underbrace{\frac{dt}{t} \cdots \frac{dt}{t}}_{\substack{(n_m - 1) \\ \text{times}}}.
\end{multline}
\end{prop}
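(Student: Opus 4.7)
The plan is to establish the identity by double induction on the pair $(m, n_m)$, with the outer induction on the depth $m$ and the inner induction on the last exponent $n_m$. The base case $m = 1$ is the classical iterated-integral representation $\Li_n(z) = \int_0^z \frac{dt}{1-t} \left(\frac{dt}{t}\right)^{n-1}$; one verifies this by noting that both sides vanish at $z = 0$ and that their derivatives in $z$ coincide: the termwise derivative of $\sum z^k/k^n$ gives $\Li_{n-1}(z)/z$ for $n \ge 2$ (and $1/(1-z)$ for $n = 1$), while the derivative-of-iterated-integral rule applied to the trailing form $\frac{dt}{t}$ yields the same.

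For the inductive step, denote the right-hand side of the proposition by $R$, viewed as a function of $u := z_1 \cdots z_m$ with $z_1, \ldots, z_{m-1}$ held fixed. If $n_m \ge 2$, I would differentiate $R$ with respect to $u$ to peel off the trailing form $\frac{dt}{t}$, obtaining $u\, dR/du = R_{n_m \to n_m - 1}$, where $R_{n_m \to n_m - 1}$ is the right-hand side with $n_m$ decreased by one. On the left, direct termwise differentiation of the defining series gives $z_m \frac{d}{dz_m}\Li_{n_1,\ldots,n_m}(z_1,\ldots,z_m) = \Li_{n_1,\ldots,n_{m-1},n_m - 1}(z_1, \ldots, z_m)$, and since $z_m \frac{d}{dz_m} = u \frac{d}{du}$ when applied to functions of $u$ alone (with $z_1, \ldots, z_{m-1}$ held fixed), the inner induction on $n_m$ reduces the problem to the case $n_m = 1$.

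In the case $n_m = 1$, differentiating $R$ once more with respect to $u$ peels off the final form $\frac{dt}{z_1 \cdots z_{m-1} - t}$, yielding $dR/du = \tilde R(u)/(z_1 \cdots z_{m-1} - u)$, where $\tilde R(u)$ is an iterated integral from $0$ to $u$ whose outer denominators run $1, z_1, z_1 z_2, \ldots, z_1 \cdots z_{m-2}$ and whose trailing exponents are $n_1, \ldots, n_{m-1}$. Matching this denominator pattern and upper limit to the depth-$(m-1)$ formula forces $\tilde R(u) = \Li_{n_1,\ldots,n_{m-1}}(z_1, \ldots, z_{m-2}, z_{m-1} z_m)$ by the outer inductive hypothesis, since $z_1 \cdots z_{m-2} \cdot (z_{m-1} z_m) = u$. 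On the left, applying the geometric sum $\sum_{j \ge 0} z_m^{k_{m-1}+j} = z_m^{k_{m-1}}/(1 - z_m)$ inside the defining series of $\Li_{n_1,\ldots,n_{m-1},1}$ yields $\frac{d}{dz_m}\Li_{n_1,\ldots,n_{m-1},1}(z_1,\ldots,z_m) = (1-z_m)^{-1}\, \Li_{n_1,\ldots,n_{m-1}}(z_1,\ldots,z_{m-2}, z_{m-1}z_m)$. Combining via $\frac{d}{dz_m} = (u/z_m)\frac{d}{du}$ together with $z_1 \cdots z_{m-1} - u = z_1 \cdots z_{m-1}(1 - z_m)$ shows that the left and right sides of the proposition have identical $z_m$-derivatives, and since both vanish at $z_m = 0$ (the left side because the defining power series starts at $z_m^1$, the right side because the upper limit collapses), they agree.

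The main obstacle is the content-bearing identification in the case $n_m = 1$: removing the trailing form $\frac{dt}{z_1 \cdots z_{m-1} - t}$ produces an iterated integral that represents not $\Li_{n_1,\ldots,n_{m-1}}(z_1, \ldots, z_{m-1})$ (whose natural upper limit would be $z_1 \cdots z_{m-1}$), but rather $\Li_{n_1,\ldots,n_{m-1}}(z_1, \ldots, z_{m-2}, z_{m-1} z_m)$, the surplus factor $z_m$ being absorbed into the last argument to match the upper limit $u$. Keeping this bookkeeping straight — and checking it against the corresponding merge-the-last-two-arguments identity for the defining power series — is the only genuinely non-routine aspect; the remaining steps are standard differentiation and the inductive hypothesis.
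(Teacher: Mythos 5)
Your argument is correct, but it follows a genuinely different route from the paper. The paper's proof is a one-step direct computation: expand each kernel $\frac{1}{z-t}$ as the geometric series $\frac{1}{z}\sum_{r\ge 0}(t/z)^r$, integrate the resulting monomials term by term through the nested simplex, and watch the nested sum \eqref{eqn:Lisum} reappear; no induction or differentiation is involved. You instead run a double induction (outer on the depth $m$, inner on $n_m$), peel off the trailing form by differentiating in the upper endpoint $u=z_1\cdots z_m$, and in the crucial case $n_m=1$ match the remaining integral against the depth-$(m-1)$ formula with the last two arguments merged, $w_{m-1}=z_{m-1}z_m$; the series-side computation $\frac{\partial}{\partial z_m}\Li_{n_1,\ldots,n_{m-1},1}=(1-z_m)^{-1}\Li_{n_1,\ldots,n_{m-1}}(z_1,\ldots,z_{m-2},z_{m-1}z_m)$ is exactly right, and the chain-rule bookkeeping $z_1\cdots z_{m-1}-u=z_1\cdots z_{m-1}(1-z_m)$ closes the step. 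Your route trades the paper's manipulation of nested multi-indexed series for the standard differentiation rules of iterated integrals plus the merge-the-last-arguments identity, at the cost of having to check integration constants; that check is fine because the series vanishes at $z_m=0$ and the iterated integral vanishes as $u\to 0$ (the leading form $\frac{dt}{1-t}$ is regular at $t=0$, so the integral is $O(u\log^{*}u)$). Two small points worth making explicit if you write this up: you are implicitly using the paper's ordering convention (the first listed form is integrated nearest the basepoint, so $d/du$ peels off the \emph{last} form), and the identity should be asserted on the open polydisk $|z_i|<1$ (or wherever both sides converge), with boundary cases handled by continuity, since for $n_m=1$ the series need not converge at $|z_m|=1$.
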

\begin{proof}
Express each term $\frac{1}{z - t}$ as the power series $\frac{1}{z}\sum_{r \ge 0}(t/z)^r$ and evaluate directly to recover the sum \eqref{eqn:Lisum}.
\end{proof}

\subsection{Chen's transport formula} 

Suppose $V \times X$ is a trivial vector bundle over a manifold $X$ with connection $\nabla = d + \omega$, where $\omega$ is a smooth 1-form on $X$ taking values in $\End V$. 
\begin{prop}
Suppose $\beta : [0,1] \to X$ is a piecewise smooth path. Then the {\em inverse parallel transport} of $V \times X \to X$ with respect to $\nabla$ along $\beta$ is given by 
\begin{equation*}
    \label{eqn:chenT}
    T(\beta)^{-1} = 1 + \int_\beta \omega + \int_\beta \omega\omega + \int_\beta \omega \omega \omega + \cdots.
\end{equation*}
\end{prop}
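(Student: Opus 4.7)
The plan is to reduce the statement to the classical Volterra series for the fundamental solution of a first-order linear ODE on the interval. Pulling the connection $\nabla = d + \omega$ back along $\beta$ gives an $\End V$-valued one-form $\beta^\ast\omega = \eta(t)\,dt$ on $[0,1]$. A section $s:[0,1] \to V$ is parallel along $\beta$ precisely when $\dot s(t) + \eta(t)s(t) = 0$, so the parallel transport $T(t) : V \to V$ (sending $s(0) \mapsto s(t)$) satisfies $\dot T(t) = -\eta(t) T(t)$ with $T(0) = \id$. Setting $U(t) := T(t)^{-1}$, one computes $\dot U(t) = U(t)\eta(t)$ with $U(0) = \id$, equivalently the Volterra integral equation
\[
U(t) = \id + \int_0^t U(s)\eta(s)\,ds.
\]

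Next, I would iterate this integral equation by Picard substitution. After $m$ substitutions the tail involves $U$ integrated against an $m$-fold ordered product of $\eta$'s, and one obtains
\[
U(t) = \id + \sum_{k = 1}^{m} \int_{0 \le s_1 \le \cdots \le s_k \le t} \eta(s_1) \cdots \eta(s_k)\, ds_1 \cdots ds_k + R_m(t),
\]
where $R_m$ is the $m$-fold iterated remainder still involving $U$. Specializing to $t = 1$ and comparing to the iterated-integral convention of the appendix, the finite sum is exactly $1 + \sum_{k=1}^m \int_\beta \omega^{\otimes k}$, so the claim reduces to showing $R_m(1) \to 0$ as $m \to \infty$.

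The remaining step will be convergence of the tail $R_m$. In the analytic setting, piecewise smoothness of $\beta$ bounds $\|\eta\|_\infty$ on $[0,1]$, and a standard estimate gives $\|R_m(1)\| \le \|U\|_\infty \cdot \|\eta\|_\infty^m / m!$, so convergence is immediate and uniform. In the formal/completed settings used throughout the paper (for instance, taking $V$ to be a completed group algebra with connection form valued in a pro-nilpotent ideal, as in \S\ref{sec:piMHS} and \S\ref{sec:kzb}), every iterated integral of length $\ge m$ lies in the $m$th power of the augmentation ideal, so the series converges in the adic topology and each finite-dimensional quotient is resolved after finitely many terms.

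The main obstacle, such as it is, is essentially bookkeeping: reconciling the sign/order conventions between $\dot T = -\eta T$ versus $\dot U = U\eta$, and between the ``time-ordered'' Dyson expansion and the iterated-integral notation $\int_\beta \omega_1 \cdots \omega_m$ fixed in the appendix (which orders variables by $0 \le t_1 \le \cdots \le t_m \le 1$). Once these are aligned, the proof amounts to recognizing the Picard iterates as iterated integrals and invoking the convergence statement appropriate to the setting in which the proposition is being applied.
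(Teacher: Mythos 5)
Your argument is correct: pulling back to the Volterra equation $\dot U = U\eta$, iterating, and matching the Picard iterates with the simplex convention $0 \le t_1 \le \cdots \le t_m \le 1$ (plus the $1/m!$ remainder bound, or adic convergence in the completed settings) is the standard proof. The paper itself gives no proof here — it defers to Chen and to Hain's lemmas — and your argument is essentially the same one found in those references, so there is nothing to add beyond the convention bookkeeping you already flag.
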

This is the inverse of Chen's formula \cite[\S 3]{chen}. See \cite[Lemma 2.5]{hain:bowdoin} and \cite[Lemma 5.4]{hain:kzb} for a proof.

If the connection $\nabla$ is flat, then $T(\beta)^{-1}$ depends only on the homotopy class of $\beta$. Using the topologist's convention for path multiplication, inverse transport is multiplicative: $T(\alpha \beta)^{-1} = T(\alpha)^{-1}T(\beta)^{-1}$. This induces the following property of iterated integrals. 
\begin{prop}
\label{prop:shuffle}
Suppose $\omega_1,\ldots,\omega_m \in E^1(X)$ and $\alpha$ and $\beta$ are paths in $X$ such that $\alpha(1) = \beta(0)$. Then 
$$
\int_{\alpha\beta}\omega_1 \cdots \omega_m = \sum_{r = 0}^m \int_\alpha \omega_1 \cdots \omega_r \int_\beta \omega_{r+1}\cdots \omega_m.
$$
\end{prop}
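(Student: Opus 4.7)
The plan is to deduce the identity from the preceding transport proposition by applying it to a suitable ``universal'' connection whose coefficients are formal non-commuting variables, then reading off the claim from the multiplicativity of (inverse) parallel transport along a concatenation of paths. This matches the phrasing of the text (``This induces the following property''), and has the advantage of packaging the combinatorial bookkeeping into a single algebraic identity.

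Concretely, let $T_m$ denote the truncation at degree $m$ of the tensor algebra on formal symbols $X_1,\ldots,X_m$, viewed as a finite dimensional vector space. On the trivial bundle $T_m \times X$ consider the connection $\nabla = d + \omega$, where
\[
\omega = \omega_1 X_1 + \omega_2 X_2 + \cdots + \omega_m X_m \in E^1(X) \otimes \End T_m,
\]
with each $X_i$ acting by left multiplication. By the previous proposition, the inverse parallel transport along any piecewise smooth path $\gamma$ is
\[
T(\gamma)^{-1} = 1 + \sum_{n \ge 1} \sum_{i_1,\ldots,i_n \in \{1,\ldots,m\}} \Bigl(\int_\gamma \omega_{i_1}\cdots \omega_{i_n}\Bigr) X_{i_1} \cdots X_{i_n},
\]
the sum of course truncated in $T_m$. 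Since parallel transport along a concatenation is the composition of the individual transports (a general ODE fact that requires no flatness of $\nabla$), one has $T(\alpha\beta)^{-1} = T(\alpha)^{-1} T(\beta)^{-1}$ in $\End T_m$.

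Now compare the coefficient of the monomial $X_1 X_2 \cdots X_m$ on each side. On the left this coefficient is exactly $\int_{\alpha\beta} \omega_1\cdots \omega_m$. On the right, a monomial $X_1 X_2\cdots X_m$ can arise only by choosing some $r \in \{0,1,\ldots,m\}$ and taking the term $(\int_\alpha \omega_1\cdots \omega_r)\,X_1\cdots X_r$ from $T(\alpha)^{-1}$ and the term $(\int_\beta \omega_{r+1}\cdots \omega_m)\,X_{r+1}\cdots X_m$ from $T(\beta)^{-1}$; no other choice of indices produces the ordered word $X_1\cdots X_m$. Summing over $r$ yields the right-hand side of the claimed identity.

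There is no real obstacle beyond verifying that multiplicativity of inverse transport along concatenated paths holds in the non-flat setting, which follows by solving the defining ODE piecewise on the two subintervals and using uniqueness. As a sanity check, the same identity can be obtained directly by parameterizing $\alpha\beta$ on $[0,1]$ with $\alpha$ on $[0,1/2]$ and $\beta$ on $[1/2,1]$, partitioning the ordered simplex $\{0\le t_1 \le \cdots \le t_m \le 1\}$ according to which of the $t_i$ lie in $[0,1/2]$ versus $[1/2,1]$, and rescaling each of the $m+1$ resulting pieces; this gives an independent verification and confirms the index ranges.
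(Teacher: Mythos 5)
Your argument is correct and is essentially the paper's own derivation: the paper obtains this proposition exactly by combining Chen's formula for inverse transport with the multiplicativity $T(\alpha\beta)^{-1}=T(\alpha)^{-1}T(\beta)^{-1}$, and your universal connection on the truncated tensor algebra with coefficient extraction at $X_1\cdots X_m$ just makes that one-line deduction explicit (your remark that multiplicativity needs no flatness, and the simplex-partition cross-check, are sound as well).
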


\subsection{Regularized iterated integrals}
\label{sec:reg}

Suppose $X$ is a smooth curve. Fix a point $P \in X$ and suppose $\omega_1,\ldots,\omega_r \in H^0(X,\Omega^1_X(\log P))$ are holomorphic 1-forms on $X-P$ with at worst logarithmic singularities at $P$. Fix a tangent vector $\vv \in T_PX$. We would like to define iterated integrals of the form
$$
\int_\vv^Q \omega_1 \cdots \omega_r,
$$
where $Q \in X$. Integrals of this form are said to be {\em regularized at $\vv$}. 

For simplicity, we will let $X = \C$ and $P$ be the origin as this is the only case needed in \S\ref{sec:polMHS}. Moreover, the definition is easily generalized to any smooth curve $X$ by choosing a holomorphic coordinate centered at $P$. Consider the trivial bundle $V \times \C \to \C$ with flat connection $\nabla = d + \Omega$ where $\Omega \in H^0(\C,\Omega^1(\log 0)) \otimes \End V$ and $L = -\Res_0\Omega$ is nilpotent (e.g. $V$ underlies an admissible VMHS over $\C$). 

Set $\Omega = \omega_1A_1 + \cdots +\omega_rA_r$, where each $\omega_j \in H^0(\C,\Omega^1(\log 0))$ and $A_j \in \End V$. 
Meanwhile, set $\overline{\omega}_j = \Res_0 \omega_j \frac{dz}{z}$ and $\overline{\Omega} = \overline{\omega}_1 A_1 + \cdots + \overline{\omega}_rA_r$. Observe that transport with respect to the linearized connection $d + \overline{\Omega}$ from $z\in \C - \{0\}$ to $\lambda \in \C - \{0\}$ is given by 
\begin{equation}
\begin{aligned}
\label{eqn:Tmono}
    T_{\overline{\Omega}}([z,\lambda]) = T_{\overline{\Omega}}([\lambda,z])^{-1} &= 1 + \int_\lambda^z \overline{\Omega} + \int_\lambda^z \overline{\Omega}\,\overline{\Omega} + \cdots \cr
    &= 1 + \sum_{m \ge 1} \frac{(\log z/\lambda)^m}{m!}(-L)^m \cr
    &= (z/\lambda)^{-L}.
\end{aligned}
\end{equation}
Fix a point $Q \in \C -\{0\}$. The {\em regularized iterated integrals} from $\lambda\partial/\partial z$ to $Q$ are the coefficients of the transport \eqref{eqn:Qstructure} from $\lambda\partial/\partial z$ to $Q$:
$\lim_{z \to 0} (z/\lambda)^{-L} T_\Omega([Q,z/\lambda])$. Using the computation \eqref{eqn:Tmono}, the coefficient of the $A_{\ell_1}\cdots A_{\ell_n}$ term is given by 
$$
\int_{\lambda\partial/\partial z}^Q \omega_{\ell_1} \cdots \omega_{\ell_n} := \lim_{z \to 0} \sum_{j = 1}^n \int_\lambda^z \overline{\omega}_{\ell_1} \cdots \overline{\omega}_{\ell_j} \int_z^Q \omega_{\ell_{j+1}} \cdots \omega_{\ell_n}.
$$
This is the same formula as Brown's ``mortar board regularization'' \cite[\S4]{brown:MMV}. Concretely, the regularization of an iterated integral at $\partial/\partial z\in T_0\C$ has the effect of setting $\lim\limits_{z \to 0} \log z$ to zero. 

\begin{example}
\label{ex:Emhs}
The fibers of the logarithm variation $\bE$ over $\C - \{0\}$ are extensions of $\Z$ by $\Z(1)$. The fiber over $z$ has rational basis $\{2\pi i\ee_1, \ee_0 - (\log z)\ee_1\}$ where $\ee_0$ and $\ee_1$ are the de~Rham generators of $\Z$ and $\Z(1)$, respectively. The fiber at $\pm 1$ splits over $\Z$ and the fibers at roots of unity split over $\Q$. The associated holomorphic vector bundle $\cV$ has connection $\nabla = d + L \frac{dz}{z}$, where $L$ is the nilpotent endomorphism
$$
L : \left\{ 
\begin{array}{rcl}
     \ee_0 & \longmapsto & \ee_1  \\
     \ee_1 & \longmapsto & 0.
\end{array}
\right.
$$
Transport from $Q \in \C -\{0\}$ to $\lambda \partial/\partial z$ is given by 
\begin{align*}
    T([Q,\lambda\partial/\partial z]) = T([\lambda \partial/\partial z,Q])^{-1} &= 1 + \sum_{n \ge 1} L^n\int_{\lambda\partial/\partial z}^Q \underbrace{\frac{dz}{z} \cdots \frac{dz}{z}}_{n} \cr
    &= 1 + L\int_{\lambda\partial/\partial z}^Q \frac{dz}{z} \cr
    &= 1 + L\lim_{z \to 0} \left(\int_\lambda^z \frac{dz}{z} + \int_z^Q \frac{dz}{z}\right) \cr
    &= 1 + L(\log Q - \log \lambda).
\end{align*}
Observe that the $\log z$ terms canceled. Setting $Q = 1$, we also observe the limit MHS at $\lambda\partial/\partial z$ splits over $\Z$ if $\lambda = \pm 1$ and over $Q$ if $\lambda$ is a root of unity. In fact, the limit variation over $T_0\C - \{0\}$ is isomorphic to $\bE$ via the natural identification $T_0\C \cong \C$.


\end{example}

\section{Regularized transport of linearized connections}
\label{sec:transport}

Let $\cV$ be a trivial vector bundle $V \times \C^2 \to \C^2$ with flat connection $$
\nabla = d +\Omega = d +  A(x,y)\frac{dx}{x} + B(x,y)\frac{dy}{y}.
$$
The functions $A$ and $B$ are holomorphic and take values in $\End V$. Let $\nabla_0$ denote the linearized connection along the divisor $y = 0$
$$
\nabla_0 = d + \Omega_0 = d + A(x,0)\frac{dx}{x} + B(x,0) \frac{d\lambda}{\lambda}.
$$
This defines a connection on the normal bundle of $y = 0$, where the point $(x,\lambda)$ is the vector $\lambda \partial/\partial y \in T_{(x,0)} \C^2$. 

Further, suppose the residues $A(0,y)$ and $B(x,0)$ are nilpotent. Then $\cV$ is Deligne's canonical extension of the local system of flat sections of $\cV$ over $\C^\times \times \C^\times$ \cite{deligne:can}. Thus, there is holomorphic change of gauge $h : \C^2 \to \Aut V$ with $h(0,0) = \id$ such that 
$$
\widetilde{\Omega} = h\Omega h^{-1} - (dh)h^{-1} = A(0,0)\frac{dx}{x} + B(0,0)\frac{dy}{y}.
$$
For convenience, set $A = A(0,0)$ and $B(0,0)$. Since the connection is flat, we have $[A,B] = 0$. Computing residues yields
$$
h(x,0)B(x,0)h(x,0)^{-1} = B(0,0).
$$
Thus, we also have 
$$
\widetilde{\Omega} = h(x,0)\Omega_0 h(x,0)^{-1} - (dh(x,0))h(x,0)^{-1}.
$$
\begin{prop}
Regularized transport with respect to $\nabla$ from $\v_0 = \lambda_0\partial/\partial y$ anchored at $(x_0,0)$ to $\v_1 = \lambda_1\partial/\partial y$ anchored at $(x_1,0)$ is equal to transport with respect to $\nabla_0$ from $(x_0,\lambda_0)$ to $(x_1,\lambda_1)$. 
\end{prop}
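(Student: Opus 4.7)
The plan is to use the gauge transformation $h$ to reduce both transports to a model flat connection with constant nilpotent coefficients, then transfer the identity back via the covariance of regularized transport under gauge change.

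First I would verify the identity in the model gauge. In the trivialization where $\widetilde{\Omega} = A\,\frac{dx}{x} + B\,\frac{dy}{y}$ with $A = A(0,0)$, $B = B(0,0)$ and $[A,B] = 0$, parallel transport of $\widetilde{\nabla}$ between any two points of $(\C^\times)^2$ is the commuting product $(x_1/x_0)^{-A}(y_1/y_0)^{-B}$. Since the residue of $\widetilde{\Omega}$ along $y = 0$ is the same $B$, the regularization operator $(y/\lambda_i)^{\mp B}$ at each tangential base point commutes with the $A$-factor, and the $y$-dependence cancels to give
\[
\overline{T}_{\widetilde{\Omega}}(\v_0,\v_1) = (x_1/x_0)^{-A}(\lambda_1/\lambda_0)^{-B}.
\]
On the normal-bundle side, the linearized connection is $\widetilde{\Omega}_0 = A\,\frac{dx}{x} + B\,\frac{d\lambda}{\lambda}$, and transport from $(x_0,\lambda_0)$ to $(x_1,\lambda_1)$ along any path in $(\C^\times)^2$ is the same closed form. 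This settles the model case.

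Second, I would establish the gauge covariance of both transports. From $\widetilde{\Omega} = h\Omega h^{-1} - (dh)h^{-1}$, standard flat-section calculus gives $T_{\widetilde{\Omega}}(p,q) = h(q)\, T_\Omega(p,q)\, h(p)^{-1}$. Comparing residues at $y = 0$ yields $B = h(x,0)\, B(x,0)\, h(x,0)^{-1}$, so the regularization operators at $\v_0$ and $\v_1$ are conjugate to those for $\widetilde{\Omega}$ by $h(x_0,0)$ and $h(x_1,0)$, respectively. Since $h$ is holomorphic and hence $h(x_i, y) \to h(x_i, 0)$ as $y \to 0$, passing to the regularized limit should give
\[
\overline{T}_{\widetilde{\Omega}}(\v_0,\v_1) = h(x_1, 0)\, \overline{T}_\Omega(\v_0,\v_1)\, h(x_0, 0)^{-1}.
\]
The identical argument applied to $\Omega_0$ and $\widetilde{\Omega}_0$ (for which $h(x,0)$ is exactly the gauge supplied in the excerpt) yields $T_{\widetilde{\Omega}_0} = h(x_1, 0)\, T_{\Omega_0}\, h(x_0, 0)^{-1}$. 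Combining both gauge covariances with the model-case equality of Step~1, the factors of $h(x_i, 0)$ cancel and the proposition follows.

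The hard part will be the rigorous justification of the gauge covariance in Step~2. Regularized transport is a limit of a product in which $(y/\lambda_i)^{\mp L}$ and $T_\Omega$ each diverge individually, and the divergences cancel only because the residue is nilpotent. One cannot simply replace $h(x_i, y)$ by $h(x_i, 0)$ inside the limit; instead, one writes $h(x_i, y) = h(x_i, 0) + O(y)$, notes that the surviving divergence of the transport is only polynomial in $\log y$, and shows that the resulting error term is $O(y \cdot (\log y)^k)$ and hence vanishes in the limit. Organizing this estimate cleanly, ideally by working with the transport as an iterated integral on the real-oriented blow-up of $\{y=0\}$ and checking gauge covariance of each level of the iterated integral expansion separately, is where the only real technical work lies.
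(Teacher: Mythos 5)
Your proposal is correct and follows essentially the same route as the paper: gauge to the constant model $\widetilde{\Omega}$ via $h$, conjugate the regularization factors through $h(x_i,0)$ using $B=h(x,0)B(x,0)h(x,0)^{-1}$, compute in the model using $[A,B]=0$, and un-gauge to $\Omega_0$; the paper does all of this in one chain of equalities rather than splitting it into a model case plus two gauge-covariance statements. The analytic point you flag (replacing $h(x_i,y)$ by $h(x_i,0)$ inside the limit against factors growing only polynomially in $\log y$) is treated as immediate in the paper's proof, so your extra care there is a refinement, not a different argument.
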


    \begin{tikzpicture}[scale = 1]
        \draw (-4,0) -- (0,0);
        \draw (0,0) node[right] {$y = 0$};
        \draw (-2,-.5) node {$\C^2$}; 
        \draw[thick,->] (-3,0) -- (-3.15,.2);
        \draw[thick,->] (-1,0) -- (-1,.25);
        \draw (-3,0) node [below] {$x_0$};
        \draw (-1,0) node [below] {$x_1$};
        \draw (-3.7,.4) node[above] {$\lambda_0 \partial /\partial y$};
        \draw (-.25,.2) node[above] {$\lambda_1 \partial /\partial y$};
        \draw[dashed,blue,thick] plot [smooth] coordinates {(-3.15,.2) (-3.15, .4) (-2.7, .9) (-2.25, 1) (-1.6, .85) (-1.1, .6) (-1, .25)};
    \end{tikzpicture}
    \qquad
    \begin{tikzpicture}[scale = 1]
        \draw (-4,0) -- (0,0);
        \draw (0,0) node[right] {$\lambda = 0$};
        \draw (-2,-.6) node {$N_{y = 0}$}; 
        \draw[fill=black] (-3, 1) circle (.05cm);
        \draw[fill=black] (-1, .5) circle (.05cm);
        \draw (-3.1,.8) node[below,left] {$(x_0,\lambda_0)$};
        \draw (-.9,.7) node[above,right] {$(x_1,\lambda_1)$};
        \draw (-3,.1) -- (-3,-.1) node[below] {$x_0$};
        \draw (-1,.1) -- (-1,-.1) node[below] {$x_1$};
        \draw[dashed,blue,thick] (-3,1) -- (-1,.5);
    \end{tikzpicture}
\begin{proof}
Apply the definition of regularized transport and use the facts above.
\begin{align*}
    T_\Omega(\v_0,\v_1) &= \lim_{y \to 0} (y/\lambda_1)^{B(x_1,0)} T_\Omega((x_0,y),(x_1,y))(y/\lambda_0)^{-B(x_0,0)} \cr
    &= \lim_{y \to 0} (y/\lambda_1)^{B(x_1,0)} h^{-1}(x_1,y)T_{\widetilde{\Omega}}((x_0,y),(x_1,y))h(x_0,y)(y/\lambda_0)^{-B(x_0,0)} \cr 
    &= \lim_{y \to 0} (y/\lambda_1)^{B(x_1,0)} h(x_1,0)^{-1}(x_0/x_1)^{-A}h(x_0,0)(y/\lambda_0)^{-B(x_0,0)} \cr 
    &= h(x_1,0)^{-1}\left(\lim_{y \to 0} (y/\lambda_1)^{B} (x_1/x_0)^{-A}(y/\lambda_0)^{-B}\right)h(x_0,0) \cr
    &= h(x_1,0)^{-1}T_{\widetilde{\Omega}}((x_0,\lambda_0),(x_1,\lambda_1))h(x_0,0) \cr
    &= T_{\Omega_0}((x_0,\lambda_0),(x_1,\lambda_1))
\end{align*}
\end{proof}
\begin{cor}
Regularized transport from $\lambda_0\partial/\partial y \in T_{(x_0,0)}\C^2$ to 
$$
\lambda_1\partial/\partial y + \mu\partial/\partial x \in T_{(0,0)}\C^2
$$ 
with respect to $\nabla$ is equal to regularized transport from $(x_0,\lambda_0)$ to $\mu \partial/\partial \lambda \in T_{(0,\lambda_1)} \C$ with respect to $\nabla_0$.
\end{cor}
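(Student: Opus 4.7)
The plan is to deduce this corollary from the preceding proposition by introducing an intermediate tangent vector along the $x$-direction of approach to the corner $(0,0)$ and then regularizing. The proposition handles transport between tangential endpoints anchored at nonzero points of the divisor $y = 0$; the new ingredient here is a second regularization at the corner in the $x$-direction.

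Concretely, for each small nonzero $s$, set $\v_s := \lambda_1\partial/\partial y \in T_{(\mu s, 0)}\C^2$. Since $\mu s \neq 0$, the proposition applies and yields
\[
T_\Omega(\v_0, \v_s) \;=\; T_{\Omega_0}\bigl((x_0, \lambda_0),\, (\mu s, \lambda_1)\bigr).
\]
I then let $s \to 0$ after multiplying on the appropriate side by the $x$-regularization factor built from the residue $A(0,0)$. On the left-hand side, this is precisely the definition in \S\ref{sec:reg}, applied along the curve $x = \mu s$, of the regularized transport $T_\Omega(\v_0, \v_1)$ to the corner tangent vector $\v_1 := \lambda_1\partial/\partial y + \mu\partial/\partial x$. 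On the right-hand side, the same factor and limit convert $T_{\Omega_0}((x_0, \lambda_0), (\mu s, \lambda_1))$ into the $\nabla_0$-regularized transport from $(x_0, \lambda_0)$ to $\mu\partial/\partial x \in T_{(0, \lambda_1)}\C^2$ (taking the statement's ``$\mu\partial/\partial \lambda$'' to be a typographical shorthand for this transverse tangent vector on the normal bundle).

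The main subtlety will be ensuring that the regularizing endomorphisms on the two sides agree exactly, not merely up to form. This rests on two features already exploited in the proof of the proposition: the commutation $[A(0,0), B(0,0)] = 0$ forced by flatness of $\nabla$, which lets the $y$- and $x$-regularizations commute so that the iterative regularization is well-defined and independent of order; and the gauge transformation $h$ with $h(0,0) = \id$, which identifies the $x$-residue of $h\Omega h^{-1} - (dh)h^{-1}$ at the corner with the $x$-residue $A(0,0)$ of $\Omega_0$ at $(0,\lambda_1)$ on the normal bundle. Once these are checked, comparison of the two limits is purely formal, since for the constant-coefficient connection $\widetilde{\Omega}$ the commuting residues $A(0,0)$ and $B(0,0)$ allow transport to be written as a single commuting product of powers of the coordinates.
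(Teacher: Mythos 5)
Your proposal is correct and follows essentially the same route as the paper: apply the preceding proposition at nearby points $x_1=\mu s\neq 0$ on the divisor, insert the $x$-regularization factor coming from the residue $A(0,0)$ (the paper writes $(x_1/\mu)^{A(0,0)}$), and let $x_1\to 0$, identifying the two sides with the corner-regularized transport for $\nabla$ and the regularized transport for $\nabla_0$ at the transverse tangent vector. Your reading of the statement's ``$\mu\partial/\partial\lambda$'' as the tangent vector $\mu\partial/\partial x$ at $(0,\lambda_1)$, and your appeal to $[A(0,0),B(0,0)]=0$ and $h(0,0)=\id$, are consistent with what the paper's proof uses implicitly.
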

\begin{tikzpicture}[scale = 1]
        \draw (-4,0) -- (0,0);
        \draw (-.5,2) -- (-.5,-.5) node[below] {$x = 0$};
        \draw (0,0) node[right] {$y = 0$};
        \draw (-2,-.5) node {$\C^2$}; 
        \draw[thick,->] (-3,0) -- (-3.15,.2);
        \draw[thick,->] (-.5,0) -- (-.7,.2);
        \draw (-3,0) node [below] {$x_0$};
        \draw (-3.7,.4) node[above] {$\lambda_0 \partial /\partial y$};
        \draw (1,.2) node[above] {$\lambda_1 \partial /\partial y + \mu\partial/\partial x$};
        \draw[dashed,blue,thick] plot [smooth] coordinates {(-3.15,.2) (-3.15, .4) (-2.7, .9) (-2.25, 1) (-1.6, .85) (-.7,.2)};
    \end{tikzpicture}
    \begin{tikzpicture}[scale = 1]
        \draw (-4,0) -- (0,0);
        \draw (-.5,2) -- (-.5,-.5) node[below] {$x = 0$};
        \draw (0,0) node[right] {$\lambda = 0$};
        \draw (-2,-.6) node {$N_{y = 0}$}; 
        \draw[fill=black] (-3, 1) circle (.05cm);
        \draw (-2.8,1.3) node[above,left] {$(x_0,\lambda_0)$};
        \draw (-.5,1.5) node[right] {$\lambda$};
        \draw (-1.1,1.7) node {$\mu\partial/\partial\lambda$};
        \draw (-3,.1) -- (-3,-.1) node[below] {$x_0$};
        \draw[dashed,blue,thick] (-3,1) -- (-.8,1.44);
        \draw[thick,->] (-.5,1.5) -- (-.8,1.44);
    \end{tikzpicture}
\begin{proof}
From the previous statement, we know 
$$
T_{\nabla_0}((x_0,\lambda_0),(x_1,\lambda_1)) = \lim_{y \to 0} (y/\lambda_1)^{B(x_1,0)} T_\nabla((x_0,y),(x_1,y)) (y/\lambda_0)^{-B(x_0,0)}.
$$
Right multiplying by $(x_1/\mu)^{A(0,0)}$ and taking limits as $x_1 \to 0$ yields the standard regularized transport operators
\begin{multline*}
    \lim_{x_1\to 0} (x_1/\mu)^{A(0,0)} T_{\nabla_0}((x_0,\lambda_0),(x_1,\lambda_1)) \cr 
    = \lim_{\substack{ x_1 \to 0 \\ y \to 0}} (x_1/\mu)^{A(0,y)}(y/\lambda_1)^{B(x_1,0)} T_\nabla((x_0,y),(x_1,y)) (y/\lambda_0)^{-B(x_0,0)}.
\end{multline*}
The left hand side is transport in $N_{y=0}$ and the right hand side is transport in $\C^2$ since $\exp(-2\pi i A(0,y))$ is the monodromy operator about $x = 0$ at $(x_1,y)$. 
\end{proof}
\begin{theorem}
\label{thm:hainCan}
The Hain map sends the canonical limit MHS of $\p(U_N,\vv_1)$ to the canonical limit MHS of $\p(E_\uu',\vv_1)$.
\end{theorem}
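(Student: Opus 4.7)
The canonical MHS on $\p(U_N,\vv_1)$ is determined by regularized Chen transport of the $\KZ_N$ connection, with regularization at the tangent vector $\vv_1\in T_1 U_N$. The canonical limit MHS on $\p(E_\uu',\vv_1)$ is the limit of the KZB variation at the iterated tangential base point $\uu+\vv_1$ anchored at the point $(q,w)=(0,1)$ of $\overline{\E}_{\G_1(N)}$. To prove the theorem, my plan is to show that under the two comparison isomorphisms, the topological inclusion $\p(U_N,\vv_1)\hookrightarrow \p(E_\uu',\vv_1)$ induced by $U_N\hookrightarrow E_\uu'$ corresponds exactly to $\Psi_N$. Since the Hain map is visibly $\Q$-linear on the generators $\ee_0,\ee_\zeta$, it suffices to prove commutativity of
$$
\xymatrix{
\p(U_N,\vv_1)\otimes\C \ar[r] \ar[d]_{\Theta^{\KZ}} & \p(E_\uu',\vv_1)\otimes \C \ar[d]^{\Theta^{\KZB}} \\
\p_N^\KZ\otimes\C \ar[r]^{\Psi_N} & \p_N\otimes\C,
}
$$
where the vertical isomorphisms are regularized transport.

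The main step is to analyze the KZB transport at the corner. Near $(q,w)=(0,1)$, the two divisors $q=0$ and $w=1$ of $\overline{\E}_{\G_1(N)}$ cross; the base point $\uu+\vv_1$ is an iterated tangent vector at this corner. I would apply the corollary of Appendix~\ref{sec:transport}: if $\nabla$ is a flat connection with nilpotent logarithmic residues along two coordinate divisors, regularized transport with respect to $\nabla$ from a tangential base point anchored at $q=0$ to the doubly tangential base point at the corner agrees with regularized transport of the linearization $\nabla_0$ along $q=0$. Taking $\nabla$ to be $\nabla^{\KZB}_{\G_1(N)}$ in the local coordinates $(q,w-1)$ and pulling back to $U_N\hookrightarrow E_\uu'$, the $dq/q$ part of \eqref{eqn:KZBrest} drops out and the linearization is
$$
\nabla_0 \;=\; d + \frac{\bX}{e^\bX-1}\cdot\bY\,\frac{dw}{w} + \sum_{\zeta\in\bmu_N}\bt_\zeta\,\frac{dw}{w-\zeta},
$$
which is exactly the image of $\nabla_{\KZ_N}$ under $\Psi_N$. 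Hence the KZB regularized transport of a loop $\gamma\in\pi_1(U_N,\vv_1)\subset\pi_1(E_\uu',\vv_1)$ to $\uu+\vv_1$ equals $\Psi_N$ applied to the KZ regularized transport of $\gamma$ to $\vv_1$.

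Since both fundamental group Lie algebras are generated (as Lie algebras over $\Q$, modulo the relation $[\bX,\bY]=\sum\bt_\zeta$ on the elliptic side) by the loop classes $\gamma_\zeta$ together with the circles $\alpha,\beta$ around the nodal fiber, it remains to check that the $\alpha,\beta$ generators are also sent correctly. The cycle $\alpha$ corresponds to the monodromy around $q=0$, whose transport is already encoded in the residue of $\nabla^{\KZB}_{\G_1(N)}$; that residue is $\bY\partial/\partial\bX$ modulo operators landing in $D^{\ge 1}$, and is compatible with $\Psi_N$ by direct inspection of the formulas. The cycle $\beta$ gives rise to the factor of automorphy $e^{-\bX}$ that identifies the two boundary circles of the annulus $\cA_{|q|}$; its consistency with the Hain map image of $\ee_\infty$ under $\Psi_N(\ee_\infty)=\frac{\bX}{e^{-\bX}-1}\cdot\bY$ follows from the cylinder relation of \S\ref{sec:pEv}. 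Putting these three types of generators together proves the diagram commutes, and hence the Hain map transports the $\Q$-Betti structure on $\p(U_N,\vv_1)$ to the $\Q$-Betti structure on $\p(E_\uu',\vv_1)$.

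The hard part is the first step: reducing corner-regularized KZB transport to linearized KZ transport. Checking the hypotheses of the appendix corollary requires verifying that the residues of $\nabla^{\KZB}_{\G_1(N)}$ at both $q=0$ and $w=1$ are nilpotent in the relevant filtration, and carefully tracking the passage from the local $(q,w-1)$ coordinates to the global coordinates on $\E_{\G_1(N)}'$ where the full KZB formula \eqref{eqn:KZBrest} is stated. The factor-of-automorphy contribution to the $\frac{dq}{q}$ coefficient in \eqref{eqn:KZBrest}, i.e.\ the derivations $\e_{2m+2,\zeta}$, lies in $D^{\ge 1}$ and therefore lifts consistently to $\p_N$ but does not affect the linearization $\nabla_0$ restricted along $U_N$; this is what makes the linearization collapse to the Hain map image of $\nabla_{\KZ_N}$.
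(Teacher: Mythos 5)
Your proposal is correct and takes essentially the same route as the paper: the whole content of the paper's proof is the Appendix corollary on regularized transport of linearized connections, applied at the corner $(q,w)=(0,1)$, so that the corner-regularized KZB transport defining the limit MHS at $\uu+\vv_1$ becomes transport to a smooth point of the nodal cubic followed by transport in the fiber direction by the connection $\Psi_N(\Omega_{\KZ_N})$, which is exactly your main step. Your extra verification on the generators $\alpha,\beta$ is superfluous (they do not lie in the source $\p(U_N,\vv_1)$, so commutativity of the comparison square only needs to be checked on the classes coming from loops in $U_N$), and your phrase ``the $dq/q$ part drops out'' should more precisely say that the linearization retains the residue at $q=0$ as the coefficient in the normal direction but that this term does not contribute to transport along paths inside the fiber; neither point affects the validity of the argument.
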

\begin{proof}
By the previous statement, regularized transport in $\bP_N$ to $\p(E_\uu',\vv_1)$ is equal first transporting to a smooth point of the nodal cubic $E_0$ and then transporting with respect to the KZ connection to $\vv_1$.
\end{proof}

\section{Index of notation}

\begin{longtable}{ll @{\extracolsep{\fill}} r}
$\bmu_N$ & the set of $N$th roots of unity & p.~\pageref{not:bmu} \cr$\Gm$ & the multiplicative group of a field (usually $\C$) & p.~\pageref{not:Gm} \cr
$\Li_m$ & the $m$th polylogarithm function & p.~\pageref{not:Li} \cr
$\h$ & the upper half plane & p.~\pageref{not:h} \cr
$\piun(X,x)$ & the unipotent completion of $\pi_1(X,x)$ & p.~\pageref{not:piun} \cr
$\p(X,x)$ & the Lie algebra of $\piun(X,x)$ & p.~\pageref{not:p} \cr
$\omega^\bullet$ & rational fiber functor of $\MTM_N$ & p.~\pageref{not:DGw} \cr
$\cO_N$ & the ring of $S$-integers of $\Q(\bmu_N)$ with $S = \{p \mid p|N\}$ & p.~\pageref{not:ON} \cr
$\MTM_N$ & the category of mixed Tate motives over $\cO_N$ & p.~\pageref{not:MTMN} \cr
$\cG_N^\bullet$ & the fundamental group of $\MTM_N$ with respect to $\omega^\bullet$ & p.~\pageref{not:GbulletN}\cr
$\K_N^\bullet$ & the prounipotent radical of $\cG_N^\bullet$ & p.~\pageref{not:KNS} \cr
$\k_N^\DR$ & the Lie algebra of $\K_N^\DR$ & p.~\pageref{not:kOkS} \cr
$\Le$ & the Lefschetz period of $\Q(-1)$ & p.~\pageref{not:Le} \cr 
$\cH$ & Brown's category of Hodge realizations & p.~\pageref{not:HR} \cr
$\w_\cH^\bullet$ & fiber functors of $\cH$ & p.~\pageref{not:HRff} \cr
$\w_s$ & the 1-form $\frac{dw}{w-s}$ & p.~\pageref{not:ws} \cr
$U_N$ & $\Gm - \bmu_N$ & p.~\pageref{not:UN} \cr
$\ww_\zeta$ & the tangent vector $\zeta \partial/\partial w \in T_0\C$ & p.~\pageref{not:wz} \cr
$\vv_\zeta$ & the tangent vector $\zeta \partial/\partial w \in T_\zeta\C$ & p.~\pageref{not:vz} \cr
$\cP^\m_N$ & the ring of motivic periods of $\MTM_N$ & p.~\pageref{not:PN} \cr
$\dch$ & the straight line path from $\ww_1$ to $-\vv_1$ & p.~\pageref{not:dch} \cr
$\Phi^\m_{0\eta}$ & the motivic Drinfeld associator from $\ww_\zeta$ to $-\vv_\zeta$ & p.~\pageref{not:drin} \cr
$\p_N^\KZ$ & non-canonical fiber of KZ local system & p.~\pageref{not:KZ} \cr
$Y_\G$ & the modular curve $\G \bbs \h$ & p.~\pageref{not:YG} \cr
$X_\G$ & the compactification of $Y_\G$ & p.~\pageref{not:XG} \cr
$\E_\G$ & the universal elliptic curve over $Y_\G$ & p.~\pageref{not:EG} \cr
$\p_N$ & non-canonical fiber of KZB local system & p.~\pageref{not:pN} \cr
$\E_\G'$ & $\E_\G$ minus single-valued $N$-torsion & p.~\pageref{not:EGminus} \cr
$\bX, \bY, \bt_\zeta$ & generators of $\p_N$ & p.~\pageref{not:XY} \cr
$\uu$ & the tangent vector $\partial/\partial q$ at $q = 0$ in $Y_\G$ & p.~\pageref{not:u} \cr
$E_\uu$ & the first order smoothing of nodal cubic in the direction of $\uu$ & p.~\pageref{not:Eu} \cr
$E_\uu'$ & $E_\uu$ punctured at $\bmu_N \hookrightarrow E_\uu$ & p.~\pageref{not:Eup} \cr
$\H$ & the local system $R_1f_\ast \Q$ associated to $f : \E_\G \to Y_\G$ & p.~\pageref{not:H} \cr
$E_{m,\zeta}$ & the extension of $\Q$ by $\Q(m)$ with period $\Li_m(\zeta)$ & p.~\pageref{not:Eext} \cr
$\e_{m,\zeta}$ & derivations of $\p_N$ in KZB & p.~\pageref{not:ep} \cr
$\bsigmabar_{m,\zeta}$ & generator of $H_1(\k_N)$ dual to $E_{m,\zeta}$ & p.~\pageref{not:sigbar}\cr
$\bsigma_{m,\zeta}$ & non-canonical lift of $\bsigmabar_{m,\zeta}$ to $\k_N$ & p.~\pageref{not:sig} \cr
$\SDer$ & special derviations & p.~\pageref{not:SDer} \cr


\end{longtable}

\end{document}